\theoremstyle{plain}
\newtheorem{proposition}{Proposition}
\newtheorem{theorem}[proposition]{Theorem}
\newtheorem*{conjecture*}{Conjecture}
\newtheorem{definition}[proposition]{Definition}
\newtheorem{corollary}[proposition]{Corollary}
\newtheorem{lemma}[proposition]{Lemma}
\newtheorem{remark}[proposition]{Remark}
\newtheorem{example}[proposition]{Example}
\newtheorem{proposition-definition}[proposition]{Proposition/Definition}
\newtheorem*{proposition*}{Proposition}
\newtheorem*{theorem*}{Theorem}
\newtheorem*{maintheorem*}{Main Theorem}
\newtheorem*{maincorollary*}{Main Corollary}
\newtheorem*{corollary*}{Corollary}
\newtheorem*{lemma*}{Lemma}
\newtheorem*{remark*}{Remark}
\newtheorem*{definition*}{Definition}
\newtheorem*{example*}{Example}
\newtheorem*{examples*}{Examples}
\newtheorem*{criterion*}{Generation Criterion}
\newtheorem*{explanation*}{Explanation}
\numberwithin{proposition}{section}
\numberwithin{equation}{section}
\newcommand{\N}{\mathbb{N}}
\newcommand{\Z}{\mathbb{Z}}
\newcommand{\R}{\mathbb{R}}
\newcommand{\C}{\mathbb{C}}
\newcommand{\K}{\mathbb{K}}
\newcommand{\D}{\mathbb{D}}
\newcommand{\scrC}{\mathcal{C}}
\newcommand{\scrW}{\mathcal{W}}
\newcommand{\CP}{\C\mathbb{P}}
\renewcommand{\P}{\mathbb{P}}
\newcommand{\bA}{\mathcal{A}}
\newcommand{\bB}{\mathcal{B}}
\newcommand{\bE}{\mathcal{E}}
\newcommand{\wE}{\mathcal{W}(E)}
\newcommand{\bM}{\mathcal{M}}
\newcommand{\bN}{\mathcal{N}}
\renewcommand{\hom}{\mathit{hom}}
\newcommand{\scrF}{\mathcal{F}}
\newcommand{\Mu}{\boldsymbol \mu}
\begin{document}

\title[The monotone wrapped Fukaya category]{The monotone wrapped Fukaya category and the open-closed string map}

\author{Alexander F. Ritter}
\address{A. F. Ritter, Mathematical Institute, University of Oxford, England.}
\email{ritter@maths.ox.ac.uk}
\author{Ivan Smith}
\address{I. Smith, Centre for Mathematical Sciences, University of Cambridge, England.}
\email{I.Smith@dpmms.cam.ac.uk}

\date{version: \today}

\begin{abstract}
We build the wrapped Fukaya category $\mathcal{W}(E)$ for any monotone symplectic manifold $E$, convex at infinity. We define the open-closed and closed open-string maps, $\mathrm{OC}:\mathrm{HH}_*(\mathcal{W}(E))\to SH^*(E)$ and $\mathrm{CO}: SH^*(E)\to \mathrm{HH}^*(\mathcal{W}(E))$. We study their algebraic properties and prove that the string maps are compatible with the $c_1(TE)$-eigenvalue splitting of $\mathcal{W}(E)$. We extend Abouzaid's generation criterion from the exact to the monotone setting.
We construct an acceleration functor $\mathcal{AF}: \mathcal{F}(E)\to \mathcal{W}(E)$ from the compact Fukaya category which on Hochschild (co)homology commutes with the string maps and the canonical map $c^*:QH^*(E)\to SH^*(E)$. We define the $SH^*(E)$-module structure on the Hochschild (co)homology of $\mathcal{W}(E)$ which is compatible with the string maps (this was proved independently for exact convex symplectic manifolds by Ganatra).
%
%
%
The module and unital algebra structures, and the generation criterion, also hold for the compact Fukaya category $\mathcal{F}(E)$, and also hold for closed monotone symplectic manifolds.

As an application, we show that the wrapped category of $\mathcal{O}(-k) \rightarrow \C\P^m$ is proper (cohomologically finite) for $1\leq k \leq m$. For any monotone negative line bundle $E$ over a closed monotone toric manifold $B$, we show that $SH^*(E)\neq 0$, $\mathcal{W}(E)$ is non-trivial, and $E$ contains a non-displaceable monotone Lagrangian torus $\mathcal{L}$ on which $\mathrm{OC}$ is non-zero.
\end{abstract}
\maketitle
\setcounter{tocdepth}{1}
\vspace{-10mm}
\tableofcontents
\vspace{-14mm}
\section{Introduction}
\subsection{Preamble}
The wrapped Fukaya category $\wE$ of a non-compact convex symplectic manifold $E$, introduced in \cite{FukayaSeidelSmith,Abouzaid-Seidel} in the exact setup, is of increasing importance both in symplectic topology (topology of Stein manifolds \cite{MaydanskiySeidel}, the nearby Lagrangian conjecture  \cite{AbouzaidMaslovZeroLagrangians}) and in Homological Mirror Symmetry \cite{AbouzaidAurouxKatzarkovOrlov,SeidelPOP}. The wrapped category is a version of the Fukaya category, which incorporates both closed Lagrangian submanifolds and non-compact Lagrangians modelled on a Legendrian cone at infinity, and which takes into account the dynamics of the Reeb flow at infinity.  Incorporating non-compact Lagrangians typically leads to a substantial increase in complexity; the wrapped Floer cohomology of a cotangent fibre is isomorphic to the homology of the based loop space \cite{AbbondandoloSchwarz}, in particular it is of infinite rank.

Despite its increasing prominence, there are surprisingly few cases in which the wrapped category has been computed. Essentially complete descriptions are known for cotangent bundles \cite{AbouzaidWrappedBasedLoops} and for punctured spheres \cite{AbouzaidAurouxKatzarkovOrlov}, whilst we have partial information for certain Stein manifolds (given as Lefschetz fibrations \cite{MaydanskiySeidel} or plumbings \cite{AbouzaidSmith}), and some general structural information concerning the behaviour of the category under Weinstein surgery \cite{BEE}.

\subsection{The generation criterion}
Following Abouzaid \cite{Abouzaid}, we study
\begin{equation}\label{EqOC}
\mathrm{OC}:\mathrm{HH}_*(\wE) \to SH^*(E),
\end{equation}
 the open-closed string map  from Hochschild homology to symplectic cohomology. Abouzaid's generation criterion states that for an exact convex symplectic manifold $E$ and a collection $L_1,\ldots,L_n \in \mathrm{Ob}(\wE)$ of exact Lagrangian submanifolds, if $\mathrm{OC}$ restricted to the subcategory generated by $L_1,\ldots,L_n$ hits the unit $1\in SH^*(E)$, then $L_1,\ldots,L_n$ split-generate the category $\wE$ (to \emph{split-generate} means to generate up to taking idempotent projections).

For monotone convex symplectic manifolds $E$, one works instead with monotone orientable Lagrangian submanifolds, and examples naturally arise in the context of (non-compact) Fano toric varieties. In this context, holomorphic discs bounding Lagrangians and holomorphic bubbles in $E$ (and therefore, the quantum cohomology $QH^*(E)$) play a prominent role, unlike the exact setup above where these cannot arise. This complicates the Floer theory, in fact we will explain in detail in \ref{Subsection the role of monotonicity} that the wrapped category is defined as a collection of mutually orthogonal $A_{\infty}$-categories $\mathcal{W}_{\lambda}(E)$ indexed by the spectrum of quantum multiplication by the first Chern class $c_1(TE)$ acting by quantum product on the quantum cohomology $QH^*(E)$ (where $\lambda\in \mathrm{Spec}(c_1(TE))$ lies in the Novikov field $\Lambda$ defined in \ref{Subsection Novikov ring}). This is necessary because if two Lagrangians $L_0,L_1$ involved different $m_0=\lambda$ eigenvalues, then the differential of the 
Lagrangian Floer ``chain complex'' $CF^*(L_0,L_1)$ would not square to zero.

Recall symplectic cohomology $SH^*(E)$ is a unital graded-commutative algebra that comes with a unital algebra homomorphism $c^*:QH^*(E)\to SH^*(E)$ which decomposes 
$$c^*: QH^*(E)=\oplus_{\lambda} QH^*(E)_{\lambda} \to \oplus_{\lambda} SH^*(E)_{\lambda} \subset SH^*(E)$$
into generalized $\lambda$-eigenspaces under multiplication by $c_1(TE)$.
We prove that these two ``eigensummand decompositions'' are related, imposing a restriction on the image of \eqref{EqOC}:
$$
\mathrm{OC}: \mathrm{HH}_*(\mathcal{W}_{\lambda}(E))\to SH^*(E)_{\lambda}\subset SH^*(E).
$$
Therefore the generation criterion, as stated above in terms of hitting the unit, is essentially never going to hold. We will extend Abouzaid's generation criterion \cite{Abouzaid} as follows.

\begin{theorem} \label{Theorem Introduction 2}
Let $\scrC \subset \mathcal{W}_{\lambda}(E)$ be a full subcategory. If $\mathrm{OC}|_{\scrC}$ hits an invertible element in $SH^*(E)_{\lambda}$, then $\scrC$ split-generates $\mathcal{W}_{\lambda}(E)$.\\[0.5mm]
This also holds for the compact Fukaya category $\mathcal{F}_{\lambda}(E)$ using $\mathrm{OC}|_{\scrC}: \scrF(E)_{\lambda} \to QH^*(E)_{\lambda}$, and it also holds if we replace $E$ by a closed monotone symplectic manifold $B$.
\end{theorem}
\begin{example}\label{Example Intro 1}
If $QH^*(E)_{\lambda}$ $($respectively $SH^*(E)_{\lambda})$ is $1$-dimensional, and thus a field, then it suffices to show that the image under $\mathrm{OC}$ of some Hochschild cycle is non-zero. For closed monotone $B$ we prove that $\mathrm{OC}^0[\mathrm{pt}]\neq 0$ in \ref{Subsection Generation results for closed toric Fano varieties}, and for $E$ convex and monotone, $\mathrm{OC}^0[\mathrm{pt}]\neq 0$ when $\lambda=m_0(L)\neq 0$ (see \ref{Subsection Generation results for convex symplectic manifolds}). However, one still needs to find an actual Lagrangian $L$ with $m_0(L)=\lambda$ for which $[\mathrm{pt}]\in CF^*(L,L)$ $($respectively $CW^*(L,L))$ is a cycle, since otherwise $[\mathrm{pt}]$ will not survive to cohomology. For toric manifolds we have a good supply of Lagrangian tori arising from critical points of the superpotential (see \ref{Section Brief survey on Landau Ginzburg}).
\end{example}

\begin{example}
Theorem \ref{Theorem Introduction 2} applies to the Clifford torus in $B=\C P^m$ taken with one of $1+m$ choices of  holonomy. It also applies to the lift of the Clifford torus to a sphere bundle of the complex line bundle $E=\mathcal{O}(-k)\to \C P^m$ where $1\leq k \leq m$ (to ensure monotonicity), taken with one of $1+m-k$ choices of holonomy. More generally, for any closed Fano variety $B$ for which the superpotential $W$ is Morse with distinct critical values, we show that $\mathcal{F}_{\lambda}(B)$ is split-generated by the monotone torus $L_z$ with holonomy data associated to $z\in \mathrm{Crit}(W)$.
\end{example}

The proof of Theorem \ref{Theorem Introduction 2} requires two ingredients. Firstly, we remark that Abouzaid's generation criterion is a fairly general consequence of homological algebra once the wrapped Fukaya category can be constructed together with the necessary Floer machinery to define string maps and a coproduct (we recall this in Section \ref{Subsection Outline of the proof of generation}). Thus our first contribution is a foundational one, namely we construct this machinery in the monotone setting. As discussed further in Section \ref{Subsection Linear vs quadratic Hams}, this is a fairly substantial undertaking. In particular, we need a new double-telescope complex for the coproduct, which has not appeared before (see Section \ref{Section The coproduct}). 

Secondly, we prove that $\mathrm{HH}_*(\mathcal{W}_{\lambda}(E))$ is a module over $SH^*(E)$, and that $c_1(TE)-\lambda\, \mathrm{Id}$ acts nilpotently on each element of $\mathrm{HH}_*(\mathcal{W}_{\lambda}(E))$ (Section 
\ref{Section OC and CO maps are module homs}). Moreover we show that $\mathrm{OC}$ is an $SH^*(E)$-module homomorphism (Section \ref{Section algebraic structures}). Therefore, $\mathrm{OC}$ lands in $SH^*(E)_{\lambda}$ and hitting any invertible in that eigensummand is as good as hitting the identity of $SH^*(E)_{\lambda}$. 

The module structure was constructed independently, in the exact case, by Ganatra \cite{Ganatra}, and the underlying $QH^*$-module structure is also used in forthcoming work of Abouzaid-Fukaya-Oh-Ohta-Ono \cite{AFOOO} on generation in the closed case.

\subsection{The acceleration functor}
\label{Subsection Intro The acceleration functor}

For convex symplectic manifolds $E$, one can also define the compact Fukaya category $\mathcal{F}(E)$ (Section \ref{Section Fukaya category}). In the monotone setup, this means the objects are \emph{closed} monotone orientable Lagrangian submanifolds (again one must in fact restrict these to all involve the same $m_0=\lambda$ eigenvalue), and the morphism spaces are the Lagrangian Floer complexes $CF^*(L_0,L_1)$. When $L_0,L_1$ are transverse, this complex is generated by the intersections $L_0\cap L_1$, and in the non-transverse case one introduces a \emph{compactly-supported} Hamiltonian $H$ so that the time one flow $\varphi_H^1(L_0)$ of $L_0$ becomes transverse to $L_1$ (we recall this in \ref{Subsection When Lagrangians do not intersect transversely}). 
Closed Lagrangians are also objects of the wrapped category, and the wrapped Floer complex $CW^*(L_0,L_1)$ can be loosely thought of (at least on cohomology) as a direct limit of Floer complexes $CF^*(\varphi_H^1(L_0),L_1)$ for a non-compactly supported Hamiltonian $H$ which is linear at infinity, where in the direct limit we increase the slope of $H$ via continuation maps. We remark that the substantial work required to build the wrapped category is precisely the chain-level construction of this direct limit procedure using a telescope construction that we adopt from Abouzaid-Seidel \cite{Abouzaid-Seidel}. In fact, we will also need a new double telescope chain model for the direct limit when we come to the construction of the coproduct in Section \ref{Section The coproduct}.

Therefore one might expect there to be an $A_{\infty}$-functor, called \emph{acceleration functor}
$$
\mathcal{AF}: \mathcal{F}(E)\to \mathcal{W}(E)
$$
which is the natural inclusion of objects, and the inclusion of the ``slope zero'' part $CF^*(L_0,L_1)$ into the above direct limit construction. However, a rigorous construction of this functor is again a non-trivial task, because ``slope zero'' Hamiltonians are not in general allowed in $\mathcal{W}(E)$ because for non-compact Lagrangians $L_0,L_1$ which have intersections at infinity, for ``slope zero'' $H$ the compactly supported flow $\varphi_H^1(L_0)$ of $L_0$ cannot become transverse to $L_1$.

\begin{theorem}\label{Theorem introduction acceleration diagram}
For any monotone convex $E$, there is an acceleration functor which fits into commutative diagrams, which we call \emph{acceleration diagrams},
$$
\xymatrix@C=50pt{ \mathrm{HH}_*(\scrF_{\lambda}(E)) \ar@{->}^{\mathrm{HH}_*(\mathcal{AF})}[r] \ar@{->}_{\mathrm{OC}}[d] & \mathrm{HH}_*(\mathcal{W}_{\lambda}(E))
\ar@{->}^{\mathrm{OC}}[d] & 
\mathrm{HH}^*(\scrF_{\lambda}(E)) \ar@{<-}^{\mathrm{HH}^*(\mathcal{AF})}[r] \ar@{<-}_{\mathrm{CO}}[d] & \mathrm{HH}^*(\mathcal{W}_{\lambda}(E))
\ar@{<-}^{\mathrm{CO}}[d] \\
 QH^*(E)_{\lambda}
\ar@{->}[r]^-{c^*} & SH^*(E)_{\lambda}
&
 QH^*(E)
\ar@{->}[r]^-{c^*} & SH^*(E)
}
$$
Moreover, the maps in the diagram are $QH^*(E)$-module homomorphisms, and the maps in the diagram on the right are unital algebra homomorphisms.
\end{theorem}

\noindent {\bf Remark.} \emph{$HH^*$ is not in general functorial under $A_{\infty}$-functors. We construct $HH^*(\mathcal{AF})$ at the cochain level as an inclusion of a non-full subcategory, for which $HH^*(\mathcal{AF})$ makes sense.}
%
\begin{corollary}
 Let $E$ be a convex toric Fano variety. Let $z\in \mathrm{Crit}(W)$ be a critical point of the superpotential with non-zero critical value $\lambda=W(z)\neq 0$. If the generalized eigensummand $QH^*(E)_{\lambda}$ is $1$-dimensional then the Lagrangian torus $L_z$ corresponding to $z$ split-generates $\scrF_{\lambda}(E)$ and also $\mathcal{W}_{\lambda}(E)$, in particular $\mathcal{W}_{\lambda}(E)$ is then proper (cohomologically finite).
\end{corollary}

We remark that just as for closed Fano toric varieties taken with the monotone toric symplectic form \cite[Theorem 7.11]{FOOOtoric} (compare also with the discussion in the non-compact case in \cite[Section 7.9]{Ritter5}) one can show that the Lagrangian tori $L_z$, with holonomy data, arising from critical points of the superpotential are in fact monotone. The proof of the Corollary uses the fact that the $0$-part of the $\mathrm{OC}$ map for the compact category,
$$
\mathrm{OC}^0:HF^*(L,L) \to QH^*(E)
$$
is in fact non-zero for $L=L_z$, and therefore hits an invertible in the field $QH^*(E)_{\lambda}$ (using the assumption), so via the acceleration diagram, using that $c^*$ is an algebra map, also 
$$\mathrm{OC}^0:HW^*(L,L) \to SH^*(E)$$
for $L=\mathcal{AF}(L_z)=L_z$ will hit an invertible element in $SH^*(E)_{\lambda}$. In particular, observe that in general $c^*:QH^*(E)_{\lambda}\to SH^*(E)_{\lambda}$ sends unit to unit, since $c^*:QH^*(E)\to SH^*(E)$ does.
%
%
%

\subsection{Linear versus quadratic Hamiltonians}\label{Subsection Linear vs quadratic Hams}
There are two known constructions of the wrapped category of exact convex symplectic manifolds, which take advantage of Hamiltonian functions which have either linear slope at infinity or which grow quadratically at infinity. In either set-up one must deal with the fact that $CF^*(L,L)$ actually means $CF^*(\varphi_{H}(L),L)$ (using a Hamiltonian function to displace $L$ slightly to achieve self-transversality) so products in Floer theory cannot be defined as $CF^*(L,L)^{\otimes k}\to CF^*(L,L)$. Rather, they are maps\footnote{we are implicitly using canonical identifications of type 
$CF^*(\varphi_{H_j}(L),L)\equiv CF^*(\varphi_{H+H_j}(L),\varphi_{H}(L))$.}
\begin{equation} \label{Eqn:basicproblem}
CF^*(\varphi_{H_1}(L),L) \otimes  CF^*(\varphi_{H_2}(L), L)\otimes \cdots  \otimes CF^*(\varphi_{H_k}(L),L) \longrightarrow CF^*(\varphi_{H_1+\cdots+H_k}(L),L)
\end{equation}
For non-compact $L$, the Floer complex on the right of \eqref{Eqn:basicproblem} may not be isomorphic, at the chain level, to the factors on the left, even if all the $H_i$ coincide. The Hamiltonians $H_i$ are not compactly supported, and ``large'' Hamiltonians $H_1+\cdots+H_k$ can cause $CF^*(\varphi_{H_1+\cdots+H_k}(L),L)$ to have many generators (the more we flow $L$, the more times it wraps around $L$, creating new intersections).

In the setting of linear slope Hamiltonians,  as in \cite{Abouzaid-Seidel}, one defines the morphism groups via a ``telescope" (homotopy direct limit) construction, $CW^*(L,L) = \oplus_k CF^*(\varphi_{kH}(L), L)[\mathbf{q}]$, involving arbitrarily  large multiples of a fixed Hamiltonian of linear slope; whilst in the setting of quadratic Hamiltonians, one uses a rescaling trick (conjugating by a suitable multiple of the Liouville flow) to identify the output chain complex in \eqref{Eqn:basicproblem} with the input complexes appearing on the left.
Unfortunately, the rescaling trick is only available in the exact case, with no analogue  for a general monotone convex symplectic manifold. Abouzaid \cite{Abouzaid} proved the generation criterion in the exact setup using quadratic Hamiltonians, and Ganatra uses the same setup in his discussion \cite{Ganatra} of the module structure of the string maps.

The proof of each of the module structure of the $\mathrm{OC}$-map and the generation criterion in the telescope model is surprisingly involved: the necessary modifications are numerous and non-trivial, and  constitute one of the main contributions of this paper.

Although the quadratic Hamiltonian formalism as in \cite{Abouzaid,Ganatra} bypasses the need in the exact setup for a telescope model, that formalism also has technical complications: a subtle perturbation scheme is needed to isolate the Hamiltonian orbits (see \cite[Definition 4.10]{Ganatra}) and the conformal rescaling scheme using the Liouville flow is rather involved. Our telescope model formalism also adapts to the exact setup, and avoids both these issues. Another benefit of our approach, is that it should adapt to general convex symplectic manifolds, without any monotonicity assumptions, at the cost of appealing to Kuranishi spaces or polyfolds.
\\
{\bf Remark.} \emph{In the non-monotone setting, the $m_0=\lambda$ values which index the categories $\mathcal{W}_{\lambda}(E)$ are not a priori tied to the eigenvalues of $c_1(TE)\in QH^*(E)$, see \ref{Subsection the role of monotonicity}.}
\subsection{Algebraic structure}
In \ref{Subsection psi structure map}, we construct a unital algebra homomorphism
$$
\psi:SH^*(E)\to \mathrm{End}(\wE), \; c\mapsto \psi_c
$$
where $\mathrm{End}(\wE)$ denotes the bimodule endomorphisms of the diagonal $\wE$-bimodule. Composing with the canonical map $\mathrm{End}(\wE) \to \mathrm{End}(\mathrm{HH}_*(\wE))$, the $\psi$-string map defines an $SH^*(E)$-module structure on $\mathrm{HH}_*(\wE)$.
\\
\noindent {\bf Remark.} \emph{Ganatra \cite{Ganatra} independently constructed the map $\psi$ for exact convex symplectic manifolds using the quadratic Hamiltonian formalism of
\cite{Abouzaid}: the construction is conceptually the same as ours, although technically 
quite different as explained in \ref{Subsection Linear vs quadratic Hams}.
 Ganatra calls $\psi$ the \emph{2-pointed closed-open string map} $_{2}\mathrm{CO}$, due to its similarities with $\mathrm{CO}$, and he calls the chain complex of bimodule pre-endomorphisms $\mathrm{end}(\wE)$ the \emph{$2$-pointed Hochschild chain complex}. The emphasis in \cite{Ganatra} is to view the module structure as the cap-product action of $\mathrm{CO}(c)\in \mathrm{HH}^*(\wE)$ on $\mathrm{HH}_*(\wE)$, whereas we have preferred to work directly with bimodule endomorphisms. The two approaches are equivalent due to the following diagram.}

\begin{theorem}
There is a commutative diagram of unital algebra homomorphisms
$$
\xymatrix@C=50pt{ QH^*(E) \ar@{->}^-{c^*}[r] \ar@{->}_-{\mathrm{CO}}[rrd]  & SH^*(E) \ar@{->}^-{\psi}[r]  \ar@{->}^-{\mathrm{CO}}[rd] & \mathrm{End}(\wE)
\ar@{->}^-{\mathrm{HH}_*(\cdot)}[r] & \mathrm{End}(\mathrm{HH}_*(\wE)) \\
& & \mathrm{HH}^*(\wE) \ar@{->}_-{\strut\textrm{cap product}}[ru] \ar@{->}_{\mu_{\wE}\circ_{\mathrm{left}}}[u] &
}
$$
using composition product for $\mathrm{End}(\wE)$ and $\mathrm{End}(\mathrm{HH}_*(\wE))$, cup product for $\mathrm{HH}^*(\wE)$.

The vertical map (defined in \ref{Subsection Hochschild cohomology as endomorphisms of the diagonal bimodule}) is an isomorphism.
The wrapped category is cohomologically unital, the cohomological units are $e_L=\mathrm{CO}^0([E])(1)\in CW^*(L,L)$ where $\mathrm{CO}^0(\cdot)(1): QC^*(E)\to CW^*(L,L)$. Moreover $\mathrm{CO}([E])\in \mathrm{HH}^*(\wE)$ is a unit for the cup product.
\end{theorem}

\begin{remark}
String maps were introduced by Seidel \cite{SeidelICM}.
In the closed monotone case, Albers \cite{Albers} defined maps $\tau: HF_*(H)\to HF^*(L,\varphi_H^1(L))$ and $\chi:HF_*(L,\varphi_H^1(L)) \to HF_*(H)$, identifiable with $\mathrm{CO}^0:QH^*(B)\to HF^*(L,L)$ and $\mathrm{OC}^0:HF^*(L,L) \to QH^*(B).$ Biran-Cornea \cite{BiranCorneaPearly} studied the $QH^*(B)$-module structure of these maps.
In the exact case, the $SH^*(E)$-module structure of $HW^*(L,L)$ was studied by Ritter \cite{Ritter3}.
\end{remark}

We will also prove that the string maps are compatible with the module structure:
$$
a\bullet \mathrm{OC}(x) = \mathrm{OC}(\psi_a(x))
\qquad \qquad
\mathrm{CO}(a\bullet b) = \psi_a\circ \mathrm{CO}(b) = \mathrm{CO}(a)*\mathrm{CO}(b)
$$
where $\bullet$ is the $\mu^2_{SH^*(E)}$-product,
$a,b\in SH^*(E)$, $x\in \mathrm{HH}_*(\wE)$, $\circ$ is defined in \ref{Subsection Conventient abbreviations: the symbols circ and Mu}, and the cup product $*$ on $\mathrm{HH}^*(\wE)$ is defined in \ref{Subsection Product on Hochschild cohomology}.

\subsection{Eigensummand decomposition of $\mathbf{OC}$ and $\mathbf{CO}$}

\begin{theorem}
The open-closed string map $\mathrm{OC}: \mathrm{HH}_*(\mathcal{W}_{\lambda}(E))\to SH^*(E)$ must land in the generalized $\lambda$-eigenspace $SH^*(E)_{\lambda}$ $($and analogously for $\mathcal{F}_{\lambda}(E)$ and $QH^*(E))$.\\
\indent
The closed-open string map $\mathrm{CO}: SH^*(E)_{\mu} \to \mathrm{HH}^*(\mathcal{W}_{\lambda}(E))$, restricted to the generalized $\mu$-eigenspace, will vanish if $\mu\neq \lambda$ $($and analogously for $\mathrm{CO}: QH^*(E)_{\mu} \to \mathrm{HH}^*(\mathcal{F}_{\lambda}(E)))$.
\end{theorem}

At the heart of this result is an observation due to Kontsevich, Seidel and Auroux \cite{Auroux}. Namely, $\mathrm{CO}^0: QH^*(E)\to HF^*(L,L)$ maps $c_1(TE)$ to the unit $\mathrm{CO}^0([E])$ rescaled by the $m_0(L)=\lambda$ value
 (see \ref{Subsection Relating m0 and c1 evals}). So the module action of $c_1(TE)-\lambda \, \mathrm{Id}$ on $HF^*(L,L)$ vanishes as $\mathrm{CO}^0(c_1(TE)-\lambda\, \mathrm{Id})=0\in HF^*(L,L)$. This implies that the bimodule endomorphism $\psi_{c_1(TE)-\lambda\, \mathrm{Id}}$ (after subtracting an exact term, since now we work at the chain level) has vanishing $0|0$-part. It follows that $\psi_{c_1(TE)-\lambda\, \mathrm{Id}}$ acts nilpotently on any given word in $\mathrm{CC}_*(\bM)$ (composing many times, eventually one of the maps in the composite must be taken with $0|0$-part, as the word has finite length, see \ref{Subsection Morphisms of bimodules induce chain maps on CC}). This implies that any given cycle in $\mathrm{CC}_*(\bM)$ is killed by a sufficiently large quantum power of $c_1(TE)-\lambda\, \mathrm{Id}$, therefore $\mathrm{OC}$ evaluated on the cycle must land in the $\lambda$-generalized eigensummand of $SH^*(E)$ (respectively $QH^*(E)$). One can dualize this argument (more precisely, this requires a subtle filtration argument) to show that $\mathrm{CO}$ must also respect eigensummands.

\subsection{Applications}
The total space of the line bundle $\mathcal{O}(-k) \rightarrow \C\P^m$ is monotone whenever $1\leq k \leq m$.  The ring $SH^*(\mathcal{O}(-k))$ was computed explicitly in \cite{Ritter4,Ritter5}, and we will show that
$$
SH^*(\mathcal{O}(-k))\cong \mathrm{Jac}(W)
$$
recovers the Jacobian ring of the superpotential for $\mathcal{O}_{\P^m}(-k)$. This is a proof of closed-string mirror symmetry in this example. Note that unlike the case of closed Fano toric manifolds, the Jacobian ring does not recover the quantum cohomology $QH^*(\mathcal{O}(-k))$ (see \ref{Subsection Generation for O of -k}).

Then, making heavy use of the module structure and compatibility with eigenvalue splittings, we compute the open-closed string map explicitly, and prove that the wrapped category is proper (cohomologically finite), split-generated by closed Lagrangian submanifolds.   By contrast, in all previous situations in which the wrapped Fukaya category has been computed (punctured spheres, certain plumbings, cotangent bundles), it either vanishes or is cohomologically infinite. The special case of $\mathcal{O}(-1) \rightarrow \P^1$ had been studied previously \cite[Sec.4.4]{SmithQuadrics}.

Local Calabi-Yau's (total spaces of canonical bundles over Fano varieties) are well-known testing grounds for many aspects of mirror symmetry, with rich symplectic topology \cite{SeidelSuspending,Bridgeland}. Monotone negative line bundles form another class of ``local symplectic manifolds'', which arise naturally in symplectic birational geometry, cf.\,\cite{SmithQuadrics}.  As remarked above, whilst in known computations in the Stein case the wrapped category $\wE$ either vanishes or is extremely complicated, the line bundles $\mathcal{O}(-k) \rightarrow \P^n$ show qualitatively different behaviour. 


Making use of the module structure, the compatibility with eigenvalue splittings and a Galois action on Floer cohomology, we prove:
\begin{theorem}
\strut

For any monotone negative line bundle $E \to B$ over a closed monotone toric manifold $B$,
\begin{enumerate}
\item the symplectic cohomology $SH^*(E)$ is non-zero;
\item the wrapped category $\mathcal{W}(E)$ is non-trivial; 
\item in particular, there is a non-displaceable Lagrangian torus $L\subset E$ with holonomy data such that $HF^*(L,L)\cong HW^*(L,L)\neq 0$, on which $\mathrm{OC}^0$ is non-zero.
\end{enumerate}
\end{theorem}
The proof of the theorem involves showing that critical points $p\in \mathrm{Crit}(W_B)$ of the superpotential of the base with non-zero critical value $W_B(p)\neq 0$ give rise to critical points of the superpotential $W_E$, which in turn give rise to Lagrangian tori $L\subset E$ with holonomy data such that $HF^*(L,L)\neq 0$. In particular, these exist since a recent result of Galkin \cite{Galkin} implies that $W_B$ has at least one critical point $p$ with $W_B(p)\neq 0$. The $QH^*(E)$-module structure on $HF^*(L,L)\neq 0$ implies that $c_1(TE)$ is non-nilpotent in $QH^*(E)$. By Ritter \cite{Ritter4}, 
$$SH^*(E)\cong QH^*(E)/(\textrm{generalized 0-eigenspace of quantum multiplication by }\pi^* c_1(E))$$
for any monotone negative line bundle $\pi: E\to B$.
As $c_1(TE)$ and $\pi^*c_1(E)$ are positively proportional, it follows that $\pi^*c_1(E)$ is also not nilpotent so
$SH^*(E)\neq 0$. By the argument explained at the end of \ref{Subsection Intro The acceleration functor}, $\mathrm{OC}^0:HF^*(L,L)\to QH^*(E)$ is non-zero and therefore by the acceleration diagram in Theorem \ref{Theorem introduction acceleration diagram} also $\mathrm{OC}^0:HW^*(L,L)\to SH^*(E)$ is non-zero. Therefore the category $\mathcal{W}_{\lambda}(E)$ containing $L$ is cohomologically non-trivial.

It is not hard to envisage other applications.  In a series of papers, Abouzaid \cite{AbouzaidFibreGenerates, AbouzaidWrappedBasedLoops, AbouzaidMaslovZeroLagrangians} proved that the cotangent fibre $T_q^*Q$ generates the wrapped category $\scrW(T^*Q)$ of the cotangent bundle of a closed manifold equipped with its canonical exact symplectic structure $d\theta$.  From a dynamical point of view, there is a great deal of interest in the convex non-exact manifolds defined by cotangent bundles equipped with magnetic forms, i.e. by $(T^*Q, d\theta + \pi^*\sigma)$ where $\sigma \in \Omega^2(Q)$ is some closed but not necessarily exact form pulled back from the base.  It seems likely that our results can be used to show a cotangent fibre still generates the wrapped category after turning on the magnetic potential.

 
\subsection{Remarks and Acknowledgements}\strut

\noindent\textbf{Remark about signs and characteristic.} Although our constructions rely on working in characteristic zero, for simplicity of exposition we will not discuss signs at all in this paper. A very careful treatment of signs in Floer theory is carried out in Seidel \cite{Seidel}, Abouzaid-Seidel \cite{Abouzaid-Seidel}, Abouzaid \cite{Abouzaid}, Ganatra \cite{Ganatra} and Ritter \cite{Ritter3}. In particular, in several constructions we added a remark about a ``sign correction factor''. These are not strictly meaningful in the absence of a detailed discussion of signs, as they are only defined relative to a certain convention for identifying orientation lines of the inputs with those of the outputs, for which a number of choices exist. So these remarks are understood to follow the conventions of \cite{Seidel,Abouzaid-Seidel,Abouzaid}.
\\[2mm]
\noindent\textbf{Acknowledgements.} \emph{We thank: Mohammed Abouzaid for his continuing interest in the project; Denis Auroux and Sheel Ganatra for helpful conversations; the anonymous referee for very extensive comments; Janko Latschev and the referee for pointing out the method in \ref{Subsection A comment about the existence of universal choices of auxiliary data} to implicitly construct the auxiliary data (previous arXiv versions make explicit constructions); Nick Sheridan and the referee for pointing out how to adapt the proof of the eigensummand decomposition for $\mathrm{OC}$ directly to $\mathrm{CO}$ in Theorem \ref{Theorem CO respects esummands}, bypassing duality issues.}\\[2mm]
{\textbf{Current version.} \emph{Since the first version \cite{Ritter-Smith-version1} of this paper appeared (Jan. 2012) two papers have appeared with significant conceptual overlap: (1) Ganatra's PhD and preprint \cite{Ganatra} studies the wrapped Fukaya category and the string maps in the exact setup, (2) Sheridan's preprint \cite{Sheridan} studies the Fukaya category of closed monotone symplectic manifolds. Still to appear, the work in progress by Abouzaid-Fukaya-Oh-Ohta-Ono \cite{AFOOO} will study the Fukaya category of closed toric manifolds without monotonicity assumptions. A large part of an earlier version \cite{Ritter-Smith-version1} constructed a commutative diagram relating the Fukaya category of a closed monotone symplectic manifold $B$ and of a convex (non-compact) monotone symplectic manifold $E$ related by a Lagrangian correspondence $\Gamma \subset \overline{B}\times E$. This part was removed to keep this paper of reasonable length; we hope this material will appear elsewhere.
}

\section{Hochschild (co)homology of $A_{\infty}$-categories}
\label{Section algebra}
We summarize the general algebraic constructions that we need in the paper.
For ease of reading, we accompany the $A_{\infty}$-relations of this Section with pictures of the corresponding natural bubbling phenomena arising in the context of Fukaya categories. For a thorough treatment of the $A_{\infty}$-module theory we refer to Seidel \cite{Seidel-Subalgebras}, Abouzaid \cite{Abouzaid}, and Ganatra \cite{Ganatra}.
\\
{\bf Remark.} \emph{ We restrict ourselves to non-curved $A_{\infty}$-categories.}%
%
\subsection{$A_{\infty}$-categories, and grading conventions}
\label{Subsection Grading conventions}

Recall that an $A_{\infty}$-category $\bA$ is: a collection of objects $\mathrm{Ob}(\bA)$; graded vector spaces $\mathrm{hom}_{\bA}(X_0,X_1)$ for all $X_0,X_1\in \mathrm{Ob}(\bA)$, working over a chosen ground field; composition maps 
$$
\mu_{\bA}^r: \bA(X_r,\ldots,X_0) \to \bA(X_r,X_0),
$$
where we abbreviate
$$
\bA(X_r,X_{r-1},\ldots,X_0)=\hom_{\bA}(X_{r-1},X_r)\otimes \hom_{\bA}(X_{r-2},X_{r-1}) \otimes \cdots \otimes \hom_{\bA}(X_0,X_1)
$$
for $r\geq 1$ (for $r=0$ define it to equal the ground field; and we will typically denote generators by $x_r \otimes \cdots \otimes x_1$); 
and finally the $\mu_{\bA}^r$ must satisfy the following $A_{\infty}$-relations.
\begin{center}\input{Ainfinity.tex}\end{center}
%
%
$$
\sum (-1)^{\sigma_1^{S-1}}\mu_{\bA}^r(x_r \otimes \cdots \otimes x_{R+1} \otimes \mu_{\bA}^{R-S+1}(x_R,\ldots,x_S),x_{S-1},\ldots,x_1)=0 
$$
summing over all obvious choices of $R,S$, and we explain the sign and the gradings below. 
The $A_{\infty}$-relations imply that $\mu_{\bA}^1$ is a differential and the multiplication $\mu_{\bA}^2$ is a chain map.
%
\indent Let $|x|$ be the grading of a morphism in $\bA$, and define
 $\|x\|=|x|-1$ the reduced grading. Then grading signs are determined by the convention that the compositions $\mu_{\bA}^r$ are acting from the right with degree $1$. Abbreviate
$$
\sigma_i^j = \sigma(x)_i^j = \sum_{\ell=i}^j \|x_{\ell}\|,
$$
which is the reduced grading of $x_{j} \otimes x_{j-1} \otimes \cdots \otimes x_i$. The second expression emphasizes the letter $x$ which is used to denote the variables indexed by $i,i+1,\ldots, j$.
%
\subsection{Bimodules}
\label{Subsection Bimodules}
An $\bA$-bimodule $\bM$ is a collection of graded vector spaces $\bM(X,Y)$ for any objects $X,Y$ of $\bA$, together with multi-linear maps
%
%
$$
\mu_{\bM}^{r|s}: \bA(X_r,\ldots,X_0)\otimes \bM(X_0,Y_0) \otimes \bA(Y_0,\ldots,Y_s) \to \bM(X_r,Y_s)
$$
for $r,s\geq 0$, and any objects $X_i,Y_j$ of $\bA$, satisfying the following $A_{\infty}$-relations.
%
\begin{center}\input{bimodule.tex}\end{center}
%
%
$$
\begin{array}{ll} 
0= \sum (-1)^{\star} \mu_{\bM}^{r-R+S|s}(x_r, \ldots, x_{R+1}, {\scriptstyle\mu_{\bA}^{R-S+1}(x_R , \ldots , x_{S})} , x_{S-1} , \ldots , x_1, \underline{m} , y_1 , \ldots , y_s)+ \\
\;\;\,\, +
\sum (-1)^{\diamond} \mu_{\bM}^{r-R|s-S}(x_r , \ldots , x_{R+1} , {\scriptstyle\underline{\mu_{\bM}^{R|S}(x_{R},\ldots,x_1,\underline{m},y_1,\ldots,y_S)}} , y_{S+1} , \ldots , y_s) + \\
\;\;\,\, +
\sum (-1)^{\diamond} \mu_{\bM}^{r|s-R+S}(x_r , \ldots , x_1 , \underline{m} , y_1 , \ldots , y_{R-1} , {\scriptstyle\mu_{\bA}^{R-S+1}(y_R , \ldots , y_{S})} , y_{S+1} , \ldots , y_s)
\end{array}
$$
summing over all obvious choices of $R,S$, and where the signs are
\begin{equation}\label{Eqn signs diamond and star}
\diamond = \sigma(y)_{S+1}^{s} \qquad \qquad \star = \sigma(y)_1^s + \mathrm{deg}(\underline{m}) + \sigma(x)_1^{S-1}.
\end{equation}
Note: generators of the domain of $\mu_{\bM}^{r|s}$ are written $x_r\otimes \cdots \otimes x_1 \otimes \underline{m} \otimes y_1 \otimes \cdots \otimes y_s$, where we underline the bimodule element.

The above equations imply in particular that $\mu_{\bM}^{0|0}$ is a differential, and that the left multiplication $\mu_{\bM}^{1|0}$ and right multiplication $\mu_{\bM}^{0|1}$ are chain maps.

\begin{example*}
The diagonal bimodule $\bM=\bA$ has $\bM(X_0,Y_0)=\hom_{\bA}(Y_0,X_0)=\bA(X_0,Y_0)$ and composition maps $\mu_{\bM}^{r|s}(x_r,\ldots,x_1,\underline{m},y_1,\ldots,y_s)=(-1)^{1+\sigma(y)_1^s}\mu_{\bA}^{r+1+s}(x_r,\ldots,x_1,\underline{m},y_1,\ldots,y_s)$.
\end{example*}
%
%
\subsection{Convenient abbreviations: $\circ$ and $\Mu$}
\label{Subsection Conventient abbreviations: the symbols circ and Mu}
%
The abbreviation $A\circ B$ will mean: compose in all possible ways, respecting the Koszul sign convention (operators act from the right)
\begin{equation}\label{Eq Circ 1} A\circ B (x_n,\ldots,x_1)=\sum (-1)^{\| B\| \sigma_1^{s-1}} A^{n-r+s}(x_n,\ldots,x_{r+1},B^{r-s+1}(x_r,\ldots,x_s),x_{s-1},\ldots,x_1),
\end{equation}
where $\sigma_1^{s-1}$ uses reduced degrees unless one of the $x_j$ is a module input (then the unreduced $\mathrm{deg}(\underline{x}_j)$ is used) and $\|B\|$ is the degree of $B$ (to be specified). In particular, the sum includes insertions of $B^0$ with no inputs, which means $B^0:\K \to \bA(X,X)$ evaluated at $1\in \K$.

In the context of operators which by definition must receive a module input, it is understood that $A\circ B (x_r,\ldots,x_1,\underline{m},y_1,\ldots,y_s)$ means
\begin{equation}\label{Eq Circ 2}\sum (-1)^{\| B\| \sigma_{S+1}^s} A^{r-R|s-S}(x_r,\ldots,x_{R+1},\underline{B^{R|S}(x_R,\ldots,x_1,\underline{m},y_1,\ldots,y_S)},x_{S+1},\ldots,y_s).
\end{equation}
In particular, in this case the lowest order insertion of $B$ is $B^{0|0}(\underline{m})$ which receives one input.

The operator $B=\Mu$ (with $\|B\|=1$) will mean applying both $\mu^{r|s}_{\bM}$ and $\mu^r_{\bA}$ as appropriate: that is depending on whether or not a module input is present in the sub-word that $B$ receives.\\[1mm]
\begin{tabular}{lll} {\bf Example.} &\emph{The $A_{\infty}$-relations for an $A_{\infty}$-category $\bA$ are:} &
$
\mu_{\bA} \circ \mu_{\bA}=0.
$
\\ & \emph{The $A_{\infty}$-relations for an $\bA$-bimodule $\bM$ are:} &
$
\mu_{\bM} \circ \Mu = 0. \quad (\textrm{or\,: }\Mu \circ \Mu =0.)
$
\end{tabular}
%
%
\subsection{Pre-morphisms of bimodules $\mathbf{hom}(\bM,\bN)$}
\label{Subsection Bimodules pre-morphism}

A pre-morphism $f: \bM \to \bN$ of $\bA$-bimodules  is a collection of multi-linear maps (for $r,s\geq 0$):
$$
f^{r|s}: \bA(X_r,\ldots,X_0)\otimes \bM(X_0,Y_0) \otimes \bA(Y_0,\ldots,Y_s) \to \bN(X_r,Y_s).
$$
Denote $\mathrm{hom}(\bM,\bN)$ the collection of these; $\mathrm{end}(\bM)=\mathrm{hom}(\bM,\bM)$ the pre-endomorphisms.
%
%
\subsection{Yoneda composition product of pre-morphisms}
\label{Subsection Yoneda composition of pre-morphism}

The Yoneda product is the composition of pre-morphisms using \eqref{Eq Circ 2}, $$\circ: \mathrm{hom}(\bM_2,\bM_3)\otimes \mathrm{hom}(\bM_1,\bM_2) \to \mathrm{hom}(\bM_1,\bM_3).$$ 
\subsection{The unit pre-morphism}
\label{Subsection Unit pre-morphism}

The unit $1_{\bM}\in \mathrm{end}(\bM)$ for the ring $(\mathrm{end}(\bM),\circ)$ is
$$
1_{\bM}^{0|0}=\mathrm{identity}: \bM(X_0,Y_0)\to \bM(X_0,Y_0), \quad 1_{\bM}^{r|s}=0 \textrm{ if }r\geq 1\textrm{ or }s\geq 1.
$$
%
%
\subsection{Differential of a pre-morphism}
\label{Subsection differential of premorph}

For $f\in \mathrm{hom}(\bM,\bN)$, define
$$
\delta f = \mu_{\bN}\circ f - (-1)^{\|\Mu\|\, \| f\|}f \circ \Mu
$$
using the conventions in \ref{Subsection Conventient abbreviations: the symbols circ and Mu} where $\|\Mu\|=1$, $\|f\|=\mathrm{deg}(f)$. One can now verify that the category of bimodules is a dg-category.
%
\subsection{Morphisms of bimodules}
\label{Subsection Bimodules morphism}
A pre-morphism $f: \bM \to \bN$ is a morphism if $\delta f=0$, equivalently $f\in \ker \delta$.
Explicitly the $A_{\infty}$-relations $\delta f=0$ are written below.
%
\begin{center}\input{bimodulemorphism.tex}\end{center}
%
%
%
$$
\begin{array}{ll}
 \sum (-1)^{\star} f^{r-R+S|s}(x_r, \ldots , x_{R+1} ,  {\scriptstyle \mu_{\bA}^{R-S+1}(x_R, \ldots, x_S)}, x_{S-1} , \ldots x_1 , \underline{m} , y_1 , \ldots , y_s) +\\[1mm]
\,\, +\sum (-1)^{\diamond} f^{r-R|s-S}(x_r, \ldots , x_{R+1} ,  {\scriptstyle\underline{\mu_{\bM}^{R|S}(x_R,\ldots,x_1,\underline{m},y_1,\ldots,y_S)}} , y_{S+1} , \ldots , y_s)+\\[1mm]
\,\, +\sum (-1)^{\diamond} f^{r|s-S+R}(x_r, \ldots , x_1 , \underline{m} , y_1 , \ldots , y_{R-1} ,  {\scriptstyle\mu_{\bA}^{S-R+1}(y_R, \ldots, y_S)}, y_{S+1} , \ldots , y_s) \\[1mm]
 =(-1)^{\mathrm{deg}(f)}\sum (-1)^{\diamond\,\mathrm{deg}(f)} \mu_{\bN}^{r-R|s-S}(x_r, \ldots, x_{R+1},  {\scriptstyle\underline{f^{R|S}(x_R,\ldots,x_1,\underline{m},y_1,\ldots,y_S)}},y_{S+1},\ldots,y_s)
\end{array}
$$
summing over the obvious choices of $R,S$, and where the signs are as in \eqref{Eqn signs diamond and star}.
\begin{example*}
 For $\bM=\bN$, the unit pre-morphism $1_{\bM}$ from \ref{Subsection Yoneda composition of pre-morphism} satisfies $\delta 1_{\bM}=0$.
\end{example*}
The vector space of morphisms, modulo boundaries of pre-morphisms, will be denoted
$$
\mathrm{Hom}(\bM,\bN)=\ker \delta / \mathrm{im}\, \delta \qquad \qquad \mathrm{End}(\bM)=\mathrm{Hom}(\bM,\bM).
$$
%
%
\subsection{Shift of bimodules}
\label{Subsection Bimodules shift}
The shift operation $\bM[1]$ on a bimodule $\bM$ is defined by shifting the degrees down by $1$, so $\mathrm{deg}_{\bM[1]}(\underline{m})=\mathrm{deg}_{\bM}(\underline{m})-1$, but also changing signs: $$\mu_{\bM[1]}^{r|s}(x_r,\ldots,x_1,\underline{m},y_1,\ldots,y_s) = (-1)^{1+\sigma(y)_{1}^s}\mu_{\bM}^{r|s}(x_r,\ldots,x_1,\underline{m},y_1,\ldots,y_s)$$
That sign ensures that $\mathrm{Hom}^{n}(\bM,\bN[1])=\mathrm{Hom}^{n+1}(\bM,\bN)$ are identifiable chain complexes. On the other hand, for $\mathrm{Hom}^{n}(\bM[1],\bN)\cong \mathrm{Hom}^{n-1}(\bM,\bN), f \mapsto f[1]$ we require a sign change: $f[1]^{r|s}(x_r,\ldots,x_1,\underline{m},y_1,\ldots,y_s)=(-1)^{\sigma(y)_1^s}f^{r|s}(x_r,\ldots,x_1,\underline{m},y_1,\ldots,y_s)$.
%
%
\begin{example*}
 $\bM=\bA[1]$ has composition maps $\mu_{\bA[1]}^{r|s}=\mu_{\bA}^{r+1+s}$.
\end{example*}
%
%
\subsection{Cohomology, products and units}
\label{Subsection cohomological unit}

For an $A_{\infty}$-category $\bA$, the cohomology category $H^*(\bA)$ has morphism spaces equal to the cohomology of the chain complex $(\bA(X_0,X_1),\mu_{\bA}^{1})$ with (associative) composition maps equal to the $\mu^2$-product (up to sign): $$[x]\cdot [y]=(-1)^{|y|}\mu_{\bA}^2(x,y)=(-1)^{\|y\|+1}\mu_{\bA}^2(x,y).$$ 

The cohomology $H^*(\bM)$ of a bimodule $\bM$ is the cohomology of the chain complexes $(\bM(X,Y),\mu_{\bM}^{0|0})$.
A morphism is a quasi-isomorphism if the map on cohomology is an isomorphism. A feature of the $A_{\infty}$-theory is that quasi-isomorphisms are always invertible up to homotopy (this applies to $A_{\infty}$-categories and to bimodules).

An $A_{\infty}$-category $\bA$ is cohomologically unital (c-unital) if every object $X\in \mathrm{Ob}(\bA)$ has an endomorphism $e_X \in \hom^0_{\bA}(X,X)$ which is a unit for $H^*(\bA)$ with the above product (note the degree $|e_X|=0$, so $\|e_X\|=-1$). These are called cohomological units. This is weaker than strict-units $e \in \hom^0_{\bA}(X,X)$, which would require $\mu_{\bA}^1(e)=0$, $(-1)^{\|y\|+1}\mu_{\bA}^2(e,y)=y=\mu_{\bA}^2(y,e)$ for all $y$, and higher $\mu_{\bA}^k=0$ if we insert $e$ in any slot. Up to quasi-isomorphism, one can replace a cohomologically unital $A_{\infty}$-category by a strictly unital one and only consider strictly unital functors and strictly unital $A_{\infty}$-transformations.
A bimodule $\bM$ is cohomologically unital if $(-1)^{\mathrm{deg}(\cdot)+1}\mu_{\bM}^{1|0}(e_X,\cdot)$ and $\mu_{\bM}^{0|1}(\cdot,e_X)$ are identity maps on cohomology. This is weaker than strict-unitality, which requires
 $(-1)^{\mathrm{deg}(\underline{m})+1}\mu_{\bM}^{1|0}(e,\underline{m})=\underline{m}=\mu_{\bM}^{0|1}(\underline{m},e)$ and higher $\mu_{\bM}^{r|s}=0$ if we insert $e$ in any slot. 
For strictly unital $A_{\infty}$-algebras, the dg category of strictly unital bimodules is quasi-equivalent to that of all bimodules.
%
\subsection{Hochschild homology}
\label{Subsection cyclic bar complex}
%
Let $\bM$ be an $\bA$-bimodule. Its cyclic bar complex in dimension $n\geq 0$ is the vector space
$$
\mathrm{CC}_n(\bA,\bM) = \bigoplus \bM(X_0,X_n) \otimes {\bA}(X_n,\ldots,X_0)
$$
summing over all $X_j\in \mathrm{Ob}(\bA)$. A typical generator will be denoted $\underline{m}\otimes x_{n} \otimes \cdots \otimes x_1$. We underline the element belonging to the bimodule. 
The grading for $\mathrm{CC}_n(\bA,\bM)$ is: 
$$
\|\underline{m} \otimes x_{n} \otimes \cdots \otimes x_1\| = \mathrm{deg}(\underline{m})+\sigma_1^{n}.
$$
The bar differential $b: \mathrm{CC}_n(\bA,\bM) \to \mathrm{CC}_{n-1}(\bA,\bM)$ on a generator $\underline{m}\otimes x_{n}\otimes \cdots \otimes x_1$ is
%
%
$$
\begin{array}{ll}
\sum (-1)^{\sigma_1^{s-1}} \underline{m} \otimes x_{n} \otimes
\cdots \otimes x_{r+1} \otimes \mu_{\bA}^{r-s+1}(x_r,\ldots,x_s) \otimes x_{s-1} \otimes \cdots \otimes x_1\; +
\\
\sum (-1)^{\dagger} \underline{\mu_{\bM}^{s|n-r+1}(x_{s}, \ldots, x_1, \underline{m}, x_{n}, \ldots,x_{r})} \otimes x_{r-1} \otimes \cdots \otimes x_{s+1}
\end{array}
$$
summing over all obvious choices of $r,s$, where $\dagger=\sigma_1^{s}(\mathrm{deg}(\underline{m})+\sigma_{s+1}^{n})+\sigma_{s+1}^{r-1}$.

The Hochschild homology is the resulting homology:
$$
\mathrm{HH}_n(\bA,\bM) = H_n(\mathrm{CC}_*(\bA,\bM);b).
$$
\begin{example*}
 $\mathrm{HH}_n(\bA) = H_n(\mathrm{CC}_*(\bA,\bA);b)$ is the Hochschild homology of the diagonal (see \ref{Subsection Bimodules}).
\end{example*}
%
\subsection{Morphisms of bimodules induce chain maps on $\mathbf{CC}_*$}
\label{Subsection Morphisms of bimodules induce chain maps on CC}
\begin{lemma}\label{Lemma functoriality of HH}
 A morphism of $\bA$-bimodules $f: \bM \to \bN$ induces a chain map
$$
\mathrm{CC}_*(f): \mathrm{CC}_*(\bA,\bM) \to \mathrm{CC}_*(\bA,\bN).
$$
\end{lemma}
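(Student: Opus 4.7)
The plan is to mimic the ``bimodule'' part of the bar differential $b$, simply replacing $\mu_{\bM}^{s|n-r}$ throughout by the bimodule morphism $f^{s|n-r}$. Explicitly, on a generator $\underline{m}_n\otimes x_{n-1}\otimes\cdots\otimes x_0\in\mathrm{CC}_n(\bB,\bM_1)$ I set
$$
\mathrm{CC}_*(f)(\underline{m}_n\otimes x_{n-1}\otimes\cdots\otimes x_0)=\sum(-1)^{\dagger}\underline{f^{s|n-r}(x_{s-1},\ldots,x_0,\underline{m}_n,x_{n-1},\ldots,x_r)}\otimes x_{r-1}\otimes\cdots\otimes x_s,
$$
summed over $0\le s\le r\le n$, using the same sign convention $\dagger=\sigma_0^{s-1}(\mathrm{deg}(\underline{m}_n)+\sigma_s^{n-1})+\sigma_s^{r-1}$ appearing in $b$. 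This is a map of graded vector spaces of degree $\mathrm{deg}(f)$.

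It remains to verify $b\circ\mathrm{CC}_*(f)=(-1)^{\mathrm{deg}(f)}\mathrm{CC}_*(f)\circ b$. Expanding both sides produces sums of compositions in which two of the operations $\mu_{\bB}$, $\mu_{\bM_1}$, $\mu_{\bM_2}$, $f^{\bullet|\bullet}$ are nested inside one another. I would group the resulting terms by the choice of ``surviving'' outer tensor factors $x_{r-1}\otimes\cdots\otimes x_s$, and within each group reorganise the pair according to which operation is the outer one. There are precisely four possible shapes: an $f$ outer with $\mu_{\bB}$ inner (either inside the block already consumed by $f$, giving a term on the $\mathrm{CC}_*(f)\circ b$ side, or inside the surviving factors, giving a term on the $b\circ\mathrm{CC}_*(f)$ side); $\mu_{\bM_2}$ outer with $f$ inner; and $f$ outer with $\mu_{\bM_1}$ inner. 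These four shapes match exactly the four summands of the $A_{\infty}$-relation for the bimodule morphism $f$ from Section \ref{Subsection Bimodules}, with $\underline{m}_n$ playing the role of $\underline{m}$ and the remaining $x$'s cyclically distributed as the $x_i$'s and $y_j$'s of that relation. Each group therefore vanishes.

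The main obstacle is sign bookkeeping. The cyclic arrangement of the bar complex forces one to distinguish whether a given operation straddles the ``seam'' between $x_0$ and $x_{n-1}$; tracking the grading contributions $\sigma_0^{s-1}$, $\sigma_s^{n-1}$ and the Koszul shift by $\mathrm{deg}(f)$ consistently with the signs $\star$ and $\diamond$ prescribed in the $A_{\infty}$-relation for $f$ is the only delicate point. Since each term is ultimately a Koszul-shifted instance of that relation, however, the matching can be carried out by direct inspection once the indexing convention is fixed.
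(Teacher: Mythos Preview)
Your approach is exactly the paper's: define $\mathrm{CC}_*(f)$ by cyclically wrapping $f^{\bullet|\bullet}$ around the module element (mimicking the $\mu_{\bM}$--part of $b$), and deduce the chain--map identity from the $A_\infty$--relation for the bimodule morphism $f$; the paper in fact gives only the formula and asserts ``this is a chain map,'' so your verification sketch goes further than the original. One correction: the sign is not literally the $\dagger$ from $b$ --- the paper's sign is $\sigma_0^{r-1}(\deg(\underline{m}_n)+\sigma_r^{n-1}) + \deg(f)\,\sigma_r^{n-s-1}$ (in their indexing), i.e.\ the Koszul factor from commuting $f$ past the surviving tensor factors carries a $\deg(f)$, exactly the shift you flagged as delicate.
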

\begin{proof}
 Define 
$$\mathrm{CC}_n(f): \bM(X_0,X_n)\otimes \bA(X_n,\ldots,X_0) \to \mathrm{CC}_*(\bA,\bN)
$$
by sending $\underline{m}\otimes x_{n} \otimes \cdots \otimes x_1$ to 
$$\textstyle
\sum (-1)^{\diamond} \underline{f^{r|n-s+1}(x_{r}, \cdots, x_1, \underline{m}, x_{n}, \cdots , x_s)} \otimes  x_{s-1} \otimes \cdots \otimes x_{r+1}
$$
where $\diamond = \sigma_1^{r} (\deg(\underline{m})+\sigma_{r+1}^{n}) + \deg(f)\sigma_{r+1}^{s-1}$. It is a chain map by Lemma \ref{Lemma functoriality of HH image of exact elements}(1).
\end{proof}
\begin{lemma}\label{Lemma functoriality of HH image of exact elements}
\strut \begin{enumerate}
\item
$
\mathrm{CC}_*(\delta f) = b\circ \mathrm{CC}_*(f)- (-1)^{\mathrm{deg}(f)} \mathrm{CC}_*(f) \circ b.
$
\item $ \mathrm{CC}_*(f\circ g) =  \mathrm{CC}_*(f) \circ  \mathrm{CC}_*(g)$.
\end{enumerate}
\end{lemma}
\begin{proof}
$
\mathrm{CC}_*(\delta f) = 
\mathrm{CC}_*(\mu_{\bN}\circ f) - (-1)^{\| f\|} \mathrm{CC}_*(f \circ \Mu)=
b\circ \mathrm{CC}_*(f)- (-1)^{\| f\|} \mathrm{CC}_*(f) \circ b.
$ The second claim follows immediately from definitions.
\end{proof}
\begin{corollary}\label{Corollary HHomology module over bimodule endos}
The following is a unital algebra homomorphism, using composition products,
\begin{equation}\label{Eqn End M to End HHM}
\mathrm{End}(\bM) \to \mathrm{End}(\mathrm{HH}_*(\bM)), \; f\mapsto \mathrm{HH}_*(f).
\end{equation}
Thus $\mathrm{HH}_*(\bM)$ is a module over the unital ring of bimodule endomorphisms.
\end{corollary}
\begin{proof}
By Lemma \ref{Lemma functoriality of HH image of exact elements}, if $\delta f=0$ then $\mathrm{CC}_*(f)$ is a chain map (up to the sign $(-1)^{\mathrm{deg}(f)}$), so $\mathrm{HH}_*(f)\in \mathrm{End}(\mathrm{HH}_*(\bM))$ is well-defined for $f\in \ker \delta$. The map \eqref{Eqn End M to End HHM} is well-defined by Lemma \ref{Lemma functoriality of HH image of exact elements}(1): $\mathrm{HH}_*(\delta f)=0 \in \mathrm{End}(\mathrm{HH}_*(\bM))$.
Moreover, \eqref{Eqn End M to End HHM} preserves products by Lemma \ref{Lemma functoriality of HH image of exact elements}(2). Unitality holds already at the chain level: $1_{\bM}\mapsto \mathrm{CC}_*(1_{\bM})=\mathrm{id}_{\mathrm{CC}_*(\bM)}$.
\end{proof}
%

\begin{lemma}\label{Lemma trick if bimodule map has zero 0part}
If $\psi:\bM\to \bM$ is a bimodule endomorphism with $\psi^{0|0}=0$ (at the chain level), then $\mathrm{HH}_*(\psi)$ acts nilpotently on $\mathrm{HH}_*(\bM)$ $($meaning, given $c\in \mathrm{HH}_*(\bM)$, there is an integer $N$ possibly depending on $c$ such that $\mathrm{HH}_*(\psi)^N(c)=0)$.
\end{lemma}
\begin{proof}
An element of type $\underline{m}\otimes x_n \otimes \cdots \otimes x_1$ is called a length $n$ word. We claim that $\psi^N=\psi\circ \cdots \circ \psi$ (composing $N$ times at the chain level) must vanish on a word of length less than $N$. Indeed, consider a word $x_r \otimes \cdots x_1 \otimes \underline{m} \otimes y_1 \otimes \cdots \otimes y_s$ with $r+s<N$. If one of the $\psi$ factors in $\psi^N$ only receives a module input, then we get zero since $\psi^{0|0}=0$ by assumption. If all $\psi$ factors receive more than just a module input, then by definition of composition each $\psi$ factor eats at least one non-module input from the $r+s$ available inputs $x_r,\ldots,x_1,y_1,\ldots,y_s$. But by the pigeonhole principle, there are not enough available inputs since $N>r+s$.

Given a cycle $c\in \mathrm{CC}_*(\bM)$, let $N$ be larger than the length of any word occuring in the finite linear combination comprising $c$ to deduce 
$\mathrm{CC}_*(\psi^N)(c)=0$ at the chain level. Thus $\mathrm{HH}_*(\psi)^N[c]=\mathrm{HH}_*(\psi^N)[c]=0$ by Corollary \ref{Corollary HHomology module over bimodule endos}.
\end{proof}
%
\subsection{Hochschild cohomology}
\label{Subsection Hochschild cohomology}
The Hochschild cochain complex is
$$
\begin{array}{rcl}
\mathrm{CC}^d(\bA,\bM) &=& \displaystyle\prod_{n\geq 0}\prod \mathrm{Hom}({\bA}(X_n,\ldots,X_0),{\bM}(X_n,X_0)[d])\\[2mm]
 &=& \displaystyle\prod_{n\geq 0}\prod \mathrm{Hom}^d({\bA}(X_n,\ldots,X_0),{\bM}(X_n,X_0))
\end{array}
$$
where the unlabelled product is over all $X_j\in \mathrm{Ob}(\bA)$, and we used reduced gradings for $\bA$ but not for $\bM$ (for non-reduced gradings the shift is $[d-n]$ instead of $[d]$).

We write $\varphi=(\varphi^n)_{n\geq 0}$ for a typical element of $\mathrm{CC}^d(\bA,\bM)$, so $\varphi^n(x_n,\ldots,x_1)\in \bM(X_n,X_0)$ for $x_j\in \hom_{\bA}(X_{j-1},X_j)$. Using the notation \ref{Subsection Yoneda composition of pre-morphism}, the differential is defined by
$$
\delta \varphi = \mu_{\bM}\circ \varphi - (-1)^{\|\mu_{\bA}\|\, \| \varphi\|}\varphi \circ \mu_{\bA} 
$$
where $\|\mu_{\bA}\|=1$, $\| \varphi\|=d$. Explicitly $\delta\varphi(x_n,\ldots,x_1)\in \bM(X_n,X_0)$ is as follows.
%
\begin{center}\input{Hcohomology.tex}\end{center}
%
%
%
%
$$\begin{array}{l}
\sum (-1)^{d\, \sigma_{1}^{s-1}}\mu_{\bM}^{n-r|s-1}(x_n,\ldots,x_{r+1},\underline{\varphi^{r-s+1}(x_r,\ldots,x_s)},x_{s-1},\ldots,x_1)-\\
\qquad -(-1)^{d}\sum (-1)^{\|\mu\|\cdot \sigma_1^{s-1}}\varphi^{n-r+s}(x_n,\ldots,x_{r+1},\mu_{\bA}^{r-s+1}(x_r,\ldots,x_s),x_{s-1},\ldots,x_1).
\end{array}
$$
\noindent {\bf Remark.} \emph{When $\bM=\bN=\bA$, replacing all $\mu$ above by $\psi\in \mathrm{CC}^*(\bA,\bA)$ defines the \emph{Gerstenhaber bracket} operation $[\varphi,\psi]$ on $\mathrm{CC}^*(\bA,\bA)$. More generally in that notation, $\delta=[\mu_{\bA},\cdot],$ and the $A_{\infty}$-relations $\mu_{\bA}\circ \mu_{\bA}=0$ correspond to $\delta\circ \delta=0$ via the Jacobi identity. Similarly, $\delta \circ \delta =0$ in general follows from the bimodule $A_{\infty}$-relations.}\\[1mm]
The Hochschild cohomology is the resulting cohomology:
$$
\mathrm{HH}^n(\bA,\bM) = H^n(\mathrm{CC}^*(\bA,\bM);\delta).
$$
\begin{example*}
 $\mathrm{HH}^n(\bA) = H^n(\mathrm{CC}^*(\bA,\bA);\delta)$ is the Hochschild cohomology of the diagonal.
\end{example*}
\begin{example*}
For cocycles $[\varphi]\in \mathrm{HH}^*(\bA,\bM)$, the $0$-part $\varphi^0:\K \to \bM(X,X)$ has $0=(\delta \varphi)^0(1)=\mu_{\bM}^{0|0}(\underline{\varphi^0(1)})$. So $\varphi^0(1)\in H^*(\bM(X,X);\mu_{\bM}^{0|0})$. So for $\bM=\bA$, $\varphi^0(1)\in H^*(\bA(X,X);\mu_{\bA}^1)$. 
\end{example*}
\subsection{Cup product on Hochschild cohomology}
\label{Subsection Product on Hochschild cohomology}
The cup product on $\mathrm{CC}^*(\bA,\bA)$, denoted $\varphi * \psi$, is defined as follows on $(x_n,\ldots,x_1)\in \bA(X_n,\ldots,X_0)$:
$$
\sum (-1)^{\sharp} \mu_{\bA}^{\ddag}(x_n,\ldots,x_{R+1},\varphi^{R-S+1}(x_{R},\ldots,x_S),x_{S-1},\ldots,x_{r+1},\psi^{r-s+1}(x_{r},\ldots,x_s),x_{s-1},\ldots,x_1)
$$
where $\sharp = \|\varphi\|\, \sigma_{1}^{S-1}+\|\psi\|\,\sigma_{1}^{s-1}$ (and of course $\ddag=n-R+S-r+s$).
One can check this descends to a product on the cohomology $\mathrm{HH}^*(\bA)$.

\noindent {\bf Remark.} \emph{Composition of maps, as in \ref{Subsection Yoneda composition of pre-morphism}, would not give a product on $\mathrm{HH}^*(\bA)$, indeed the differential of a composition is the Gerstenhaber bracket. The bracket terms do not occur in \ref{Subsection Yoneda composition of pre-morphism} because pre-morphisms of bimodules must receive module inputs.} 
\begin{example*}
The $0$-part $(\varphi * \psi)^0(1)\in \bA(X,X)$ equals $\mu_{\bA}^2(\varphi^0(1),\psi^0(1))$, the usual product. 
\end{example*}

\noindent We thank Nick Sheridan and the referee for explaining the following analogue of Lemma \ref{Lemma trick if bimodule map has zero 0part}.

\begin{lemma}\label{Lemma trick CO}
If $\varphi\in \mathrm{HH}^*(\bA)$ satisfies $\varphi^0=[e_X]\in \mathrm{Hom}(X,X)$ for all objects $X$, where $[e_X]$ is the cohomological unit, then cup product with $\varphi$ defines an isomorphism on $\mathrm{HH}^*(\mathcal{A})$.
\end{lemma}
\begin{proof}
Consider the length filtration on the Hochschild cochain complex
$$
F^{\geq p}\mathrm{CC}^* = \textrm{Hochschild cochains in } \mathrm{CC}^*(\mathcal{A})\textrm{ with at least }p\textrm{ inputs}.
$$
The cup product respects the filtration:
\begin{equation}\label{Eqn cup product on filtration}
*: F^{\geq p}\mathrm{CC}^* \otimes F^{\geq q}\mathrm{CC}^* \to F^{\geq p+q}\mathrm{CC}^*.
\end{equation}
The $E_1$-page for the filtration is the Hochschild cohomology of the cohomological category $H^*(\mathcal{A})$, which is just an ordinary linear category (compare Equation (1.13) in Seidel \cite{Seidel}).
The length filtration is \emph{exhaustive} (i.e. $\mathrm{CC}^*=\cup F^{\geq p}\mathrm{CC}^*$) and \emph{complete} (i.e. $\mathrm{CC}^*=\varprojlim F^{\geq p}\mathrm{CC}^*$ taken over the reverse inclusions $F^{\geq p}\mathrm{CC}^* \supset F^{\geq p+1}\mathrm{CC}^*$). So the Eilenberg-Moore comparison theorem \cite[Theorem 5.5.11]{Weibel} applies to the map of filtered complexes
$f=\varphi * \cdot : \mathrm{CC}^*(\mathcal{A}) \to \mathrm{CC}^*(\mathcal{A})$ (the map respects the filtration due to \eqref{Eqn cup product on filtration} taking $p=0$ and fixing the first input $\varphi\in F^{\geq 0}\mathrm{CC}^*$). 
Notice that on the $E_1$ page the map $f$ induces the map $f^1: H^*(\mathcal{A}) \to H^*(\mathcal{A})$ given by the identity, so it is an isomorphism. By the comparison theorem, the map $f$ on cohomology, namely $\varphi * \cdot: \mathrm{HH}^*(\mathcal{A}) \to \mathrm{HH}^*(\mathcal{A})$, must also be an isomorphism (this version of the comparison theorem does not require the spectral sequence to converge).
\end{proof}

\subsection{The unit in Hochschild cohomology}
\label{Subsection Unit in Hochschild cohomology}
The analogue of the unit map in \ref{Subsection Unit pre-morphism} would not work for the cup product. If the $A_{\infty}$-category $\bA$ has \emph{strict} units (see \ref{Subsection cohomological unit}), then define
$$1_{\mathrm{CC}^*}^0: \K \to \bA(X_0,X_0),\; 1_{\mathrm{CC}^*}^0(1)=1_{X_0}, \qquad
1_{\mathrm{CC}^*}^r=0 \textrm{ for }r>0.$$
Strict unitality then implies that $1_{\mathrm{CC}^*}$ is a unit for $(\mathrm{HH}^*(\bA),*)$.
%
%
\subsection{Hochschild cohomology as endomorphisms of the diagonal bimodule}
\label{Subsection Hochschild cohomology as endomorphisms of the diagonal bimodule}
\indent There is a chain map from the Hochschild complex to pre-morphisms
 $$\mu_{\bM}\circ_{\mathrm{left}}:\mathrm{CC}^*(\bA, \bM) \to \mathrm{hom}(\bA,\bM),\; \varphi\mapsto \mu_{\bM}\circ_{\mathrm{left}}\varphi.$$ 
  Explicitly, $\mu_{\bM}\circ_{\mathrm{left}}\varphi (x_r,\ldots,x_1,\underline{m},y_1,\ldots,y_s)$ equals the following:
$$
\sum (-1)^{\|\varphi\|\star} \mu_{\bM}^{r-R+S|s}(x_r,\ldots,x_{R-1},\varphi^{R-S+1}(x_R,\ldots,x_S),x_{S-1},\ldots, x_1,\underline{m},y_1,\ldots,y_s)).
$$
where $\star = \sigma(y)_1^s + \mathrm{deg}(\underline{m}) + \sigma(x)_1^{S-1}$.

\begin{lemma}\label{Lemma algebra hom from CC to hom}
The map $\mu_{\bM}\circ_{\mathrm{left}}$ is a chain map, inducing $$\mathrm{HH}^*(\bA,\bM)\to \mathrm{Hom}(\bA,\bM).$$ 
For the diagonal bimodule $\bM=\bA$, the map is the first term of an $A_{\infty}$-homomorphism,
 $$\mu_{\bM}\circ_{\mathrm{left}}:\mathrm{CC}^*(\bA) \to \mathrm{end}(\bA),\; \varphi \mapsto \mu_{\bM}\circ_{\mathrm{left}}\varphi,$$ 
(where the $A_{\infty}$-structure on $\mathrm{CC}^*(\bA)$ was constructed by Getzler \cite{Getzler}, and using the dg-algebra structure on $\mathrm{end}(\bA)$). This in turn induces an algebra homomorphism on cohomology
$$
\mu_{\bM}\circ_{\mathrm{left}}:\mathrm{HH}^*(\bA) \to \mathrm{End}(\bA).
$$
\end{lemma}
\begin{proof}[Sketch Proof]
This is a purely algebraic result. We will only summarize part of the argument, by showing the bubblings that arise in the context of Fukaya categories:
\begin{center}\input{CapYoneda.tex}\end{center}
Consider a $1$-family of Floer solutions defined on the domain in the first picture. Such a family can break as in the last two pictures (the other breakings involve terms coming from $\delta \varphi$, $\delta \psi$, $\mu_{\bA}$, $\mu_{\bM}$, but these can be ignored at the cohomology level). The three small bubbles in the middle picture are the cup product $\psi*\varphi$, so the middle picture is $\mu_{\bM}\circ_{\mathrm{left}}(\psi*\varphi)$, the picture on the right is the Yoneda composition $(\mu_{\bM}\circ_{\mathrm{left}}\psi)\circ (\mu_{\bM}\circ_{\mathrm{left}}\varphi)$.

For the second claim, we remark that the construction of the $A_{\infty}$-functor generalizes the formula for $F^1(\varphi)=\mu_{\bM}\circ_{\mathrm{left}}\varphi$ as follows: $F^n(\varphi_n,\ldots,\varphi_1)(x_r,\ldots,x_1,\underline{m},y_1,\ldots,y_s)$ equals
$$
\begin{array}{ll}
\sum (-1)^{\star\star}\!\!\!\! & \mu_{\bM}^{\dagger|s}(x_r,{\scriptstyle \ldots},x_{k_n+1},
\varphi_n^{k_n-\ell_n+1}(x_{k_n},{\scriptstyle \ldots},x_{\ell_n}),x_{\ell_n-1},{\scriptstyle \ldots},x_{k_2+1},\varphi_2^{k_2-\ell_2+1}(x_{k_2},{\scriptstyle \ldots},x_{\ell_2}),
\\
& \qquad x_{\ell_2-1},{\scriptstyle \ldots},x_{k_1+1},
\varphi_1^{k_1-\ell_1+1}(x_{k_1},{\scriptstyle \ldots},x_{\ell_1}),x_{\ell_1-1},{\scriptstyle \ldots},x_1,\underline{m},y_1,{\scriptstyle \ldots},y_s).
\end{array}
$$
where $\dagger= r-\sum k_j + \sum \ell_j$, and $\star\star = \sum \|\varphi_j\|(\sigma(y)_1^s + \mathrm{deg}(\underline{m}) + \sigma(x)_1^{\ell_j-1})$.
\end{proof}

\begin{lemma}\label{Lemma if c-unital then quasi-iso CC to hom}
If $\bA$ is cohomologically unital (see \ref{Subsection cohomological unit}), then $\mu_{\bM}\circ_{\mathrm{left}}$ is a quasi-isomorphism.
\end{lemma}

The result $\mathrm{HH}^*(\bA)\cong \mathrm{End}(\bA)$ is often stated in the literature; the only detailed proof in the literature that we are aware of is due to Ganatra \cite[(2.201)]{Ganatra} (Ganatra considers algebras, but the proof extends immediately to categories, and note that Ganatra places $\varphi$ on the right of the module input rather than the left).

\begin{corollary}\label{Corollary HCohom to Hhom}
If $\bA$ is c-unital,
 $\mathrm{HH}^*(\bA)\cong \mathrm{End}(\bA)$ is a unital algebra isomorphism.
\end{corollary}
\begin{proof}
This follows from the previous two Lemmas. In particular, a posteriori $(\mathrm{HH}^*(\bA),*)$ has a unit: the preimage of $1_{\bM}\in \mathrm{End}(\bA)$. If $\bA$ is strictly unital, one can see directly that $1_{\mathrm{CC}^*}\mapsto 1_{\bA}$ at the chain level (using the units from \ref{Subsection Unit pre-morphism} and \ref{Subsection Unit in Hochschild cohomology}).
\end{proof}

\noindent {\bf Remark.} \emph{There is an analogous isomorphism  $\mathrm{HH}_*(\bA,\bM)\cong H^*(\bM\otimes_{\bA^{\mathrm{opp}}\otimes \bA} \bA)$ of vector spaces (viewing $\bM$, $\bA$ respectively as right, left modules over $\bA^{\mathrm{opp}}\otimes \bA$, see \ref{Subsection Left modules and right modules}-\ref{Subsection tensor products}), in particular}
$
\mathrm{HH}_*(\bA) \cong H^*(\bA\otimes_{\bA^{\mathrm{opp}}\otimes \bA} \bA).
$
%
%
\subsection{Cap product action of $HH^*$ on $HH_*$}
\label{Subsection Cap product}
%
\strut

\begin{corollary}\label{Corollary from HCoh to HHom}
The map $\mu_{\bM}\circ_{\mathrm{left}}$ induces an algebra homomorphism
$$
\mathrm{HH}_*(\mu_{\bM}\circ_{\mathrm{left}}): \mathrm{HH}^*(\bA) \to \mathrm{End}(\bA) \to \mathrm{End}(\mathrm{HH}_*(\bA)),
$$
therefore $\mathrm{HH}_*(\bA)$ is an $\mathrm{HH}^*(\bA)$-module.
\end{corollary} 
\begin{proof}Combine Lemma \ref{Lemma algebra hom from CC to hom} (for the diagonal bimodule $\bM =\bA$) and Lemma \ref{Lemma functoriality of HH}.\end{proof}
More generally for any $\bA$-bimodule $\bM$, that composite 
$$
\mathrm{CC}_*(\mu_{\bM}\circ_{\mathrm{left}}):\mathrm{CC}^*(\bA,\bA)\to \mathrm{hom}(\bA,\bM) \to \mathrm{End}(\mathrm{CC}_*(\bA,\bM)),
$$
is the \emph{cap product}. Explicitly $\varphi\in \mathrm{CC}^*(\bA,\bA)$ acts on $c=\underline{m}\otimes x_{n}\otimes \cdots \otimes x_1 \in \mathrm{CC}_*(\bA,\bM)$ by:
$$
\sum (-1)^{\Box}{\mu_{\bM}^{r-R+S|n-s+1}(x_{r},{\scriptstyle\ldots},x_{R+1},{\scriptstyle\varphi^{R-S+1}(x_R,\ldots,x_S)},x_{S-1},{\scriptstyle\ldots},x_1,\underline{m},x_{n},{\scriptstyle\ldots}, x_s)}\otimes x_{s-1}\otimes {\scriptstyle\cdots} \otimes x_{r+1}
$$
where $\Box = 
\sigma_1^{r} (\deg(\underline{m})+\sigma_{r+1}^{n}) + 
\|\varphi\| (\sigma_{r+1}^{n}+\deg(\underline{m})+\sigma_{1}^{S-1})+
\sigma_{r+1}^{s-1}$.
One can check that cocycles $\varphi$ act as chain maps on $\mathrm{CC}_*(\bA,\bM)$.
%
\subsection{Functorial properties}
\label{Subsection Functorial properties}
%
The Hochschild homology $\mathrm{HH}_*(\bA,\bM)$ is covariantly functorial in $\bA$ and $\bM$. Indeed,
let $f:\bA \to \bB$ be an $A_{\infty}$-functor. Recall this means a map on objects $f: \mathrm{Ob}(\bA) \to \mathrm{Ob}(\bB)$ together with a 
grading-preserving multi-linear map
$$
f^n: {\bA}(X_n,\ldots,X_0) \to \bB(fX_n,fX_0)=\hom_{\bB}(f X_0, f X_n) 
$$
for each $n\geq 1$ (using reduced gradings), satisfying the $A_{\infty}$-relations:
$$
\begin{array}{l}
 0=\sum \mu_{\bB}^{m}(f^{k_{m}}(x_n, \ldots , x_{n-k_{m}+1}),\ldots,f^{k_1}(x_{k_1},\ldots,x_1)) - \\
\;\;\,\, - \sum (-1)^{\sigma_1^{s-1}} f^{n-r+s}(x_n, \ldots , x_{r+1} ,  \mu_{\bA}^{r-s+1}(x_r, \ldots, x_s) , x_{s-1} , \ldots x_1)
\end{array}
$$
summing over the obvious $r,s$, and over all partitions $k_1+\cdots+k_{m}=n$ and all $m$.

\begin{lemma}\label{Lemma change of rings}
 An $A_{\infty}$-functor $f: \bA \to \bB$ induces a \emph{change of rings} for bimodules:
$$
\bB\textrm{-Bimod} \to \bA\textrm{-Bimod}, \quad M \mapsto f^*M,
$$
which turns a $\bB$-bimodule $\bM$ into an $\bA$-bimodule $f^*\bM$. There is a tautological chain map
$$
\mathrm{taut}_f:\mathrm{CC}_*(\bA,f^*\bM) \to \mathrm{CC}_*(\bB,\bM).
$$
Moreover, $f$ induces a canonical morphism $\mathrm{diag}_f: \bA \to f^*\bB$ of $\bA$-bimodules (where $\bA$, $\bB$ are the diagonal bimodules), yielding a chain map
$$
\mathrm{taut}_f\circ
\mathrm{CC}_*(f_{\mathrm{diag}}):
\mathrm{CC}_*(\bA,\bA) \to \mathrm{CC}_*(\bA,f^*\bB) \to \mathrm{CC}_*(\bB,\bB).
$$
\end{lemma}
\begin{proof}
For any $X,Y \in \mathrm{Ob}(\bA)$, define
$f^*\bM(X,Y)=\bM(f X, f Y)$ and
let
$$\textstyle
\mu_{f^*\bM}^{r|s} = \sum \mu_{\bM}^{\tilde{r}|\tilde{s}}  \left( f^{k_{\tilde{r}}}\otimes \cdots \otimes f^{k_1} \otimes \underline{\mathrm{Id}} \otimes f^{\ell_{1}}\otimes \cdots \otimes f^{\ell_{\tilde{s}}} \right)
$$
summing over all partitions $k_1 + \cdots + k_{\tilde{r}}=r$ and $\ell_1 + \cdots + \ell_{\tilde{s}} = s$, and all $\tilde{r},\tilde{s}$, where $\underline{\mathrm{Id}}$ is the identity map on the module input.
%
The fact that $f^*\bM$ is an $A_{\infty}$-bimodule is a consequence of the $A_{\infty}$-relations satisfied by $f$. Define $\mathrm{taut}_f$ on $\mathrm{CC}_n(\bA,f^*\bM)$ by
$$\textstyle
\mathrm{taut}_f=\sum {\underline{\mathrm{Id}}} \otimes f^{\ell_1} \otimes \cdots \otimes f^{\ell_{\tilde{n}}}
$$
summing over all partitions $\ell_1+\cdots + \ell_{\tilde{n}}=n$ and all $\widetilde{n}$. The $A_{\infty}$-relations again imply that $\mathrm{taut}_f$ determines a chain map. The canonical bimodule map in the claim is:
$$\begin{array}{rcl}\mathrm{diag}_f : \bA(X_r,\ldots,X_0)\otimes \underline{\bA(X_0,Y_0)}\otimes \bA(Y_0,\ldots,Y_s)& \to & \underline{f^*\bB(X_r,Y_s)}= \underline{\bB(fX_r,fY_s)}\\[2mm]
\mathrm{diag}_f(x_r,\ldots,x_1,\underline{m},y_1,\ldots,y_s) &=&
f^{r+s+1}(x_r,\ldots,x_1,m,y_1,\ldots,y_s).
\end{array}
$$
So explicitly, the composite map $\mathrm{CC}_*(\bA,\bA) \to \mathrm{CC}_*(\bB,\bB)$ in the claim is:
$$
\textstyle
\sum (-1)^{\diamond} \underline{f^{r+n-s+1}(x_{r}, {\scriptstyle \ldots}, x_1, \underline{m}, x_{n}, {\scriptstyle \ldots} , x_s)} \otimes  f^{\ell_1}(x_{s-1},{\scriptstyle \ldots},x_{s-\ell_1}) \otimes \cdots \otimes f^{\ell_{\widetilde{s}}}(x_{r+\ell_{\widetilde{s}}},{\scriptstyle \ldots},x_{r+1})
$$
summing over $r,s$ and all partitions $\ell_1 + \cdots + \ell_{\tilde{s}} = s$,
where $\diamond=\sigma_1^r(\mathrm{deg}(\underline{m})+\sigma_{r+1}^n)$.
\end{proof}

The Hochschild cohomology $\mathrm{HH}^*(\bA,\bM)$ is covariantly functorial in $\bA$ and contravariantly functorial in $\bM$. However, unlike $\mathrm{HH}_*(\bA,\bA)$ above, the Hochschild cohomology $\mathrm{HH}^*(\bA,\bA)$ of the diagonal bimodule is not in general functorial in $\bA$ with respect to $A_{\infty}$-functors.

\begin{lemma}\label{Lemma HH under fully faithful functors}
The inclusion of a full subcategory $\bA \subset \bB$, induces a chain map
$$
\mathrm{restriction}:\mathrm{CC}^*(\bB,\bB) \to \mathrm{CC}^*(\bA,\bA). 
$$
\end{lemma}
\begin{proof}
For a cochain $\varphi\in \mathrm{CC}^*(\bB,\bB)$, we take the composite
$$
\bA(X_n,\ldots,X_0) = \bB(X_n,\ldots,X_0) \to \underline{\bB(X_n,X_0)} = \underline{\bA(X_n,X_0)}
$$
where the middle map is $\varphi^n$. 
\end{proof}

\begin{theorem}\label{Theorem Morita invariance}
If an $A_{\infty}$-functor $f: \bA \to \bB$ is a quasi-equivalence\footnote{meaning the underlying cohomology level functor is an equivalence. For example, a quasi-isomorphism.} then $\mathrm{HH}_*(\bA)\cong \mathrm{HH}_*(\bB)$ and $\mathrm{HH}^*(\bB)\cong \mathrm{HH}^*(\bA)$ for the diagonal bimodules.
\end{theorem}

The Theorem is essentially the $A_{\infty}$-analogue of Morita invariance (see Seidel \cite[Sec.(2d), Remark 2.7]{Seidel}), and we will not prove it. One approach to proving the Theorem, when $\bA$ is c-unital, is via Corollary \ref{Corollary HCohom to Hhom}: $\mathrm{HH}^*(\bA)\cong \mathrm{End}(\bA)$. Indeed, one checks that the map $\bB\textrm{-Bimod} \to \bA\textrm{-Bimod}$, induced by $f$ via Lemma \ref{Lemma change of rings}, is an equivalence of categories which sends the diagonal bimodule to the diagonal bimodule, and thus $\mathrm{End}(\bB)\cong \mathrm{End}(\bA)$. For $\mathrm{HH}_*$ one uses $\mathrm{HH}_*(\bA)\cong H^*(\bA\otimes_{\bA^{\mathrm{opp}}\otimes \bA}\bA)$ (see the Remark after Corollary \ref{Corollary HCohom to Hhom}).
%
%
%
\subsection{Left modules and right modules}
\label{Subsection Left modules and right modules}
We follow Seidel \cite[Sec.(1j)]{Seidel} and \cite[page 94]{Seidel-Subalgebras}.
Given two $A_{\infty}$-categories $\bA_1,\bA_2$, there is a generalization of the definition of $\bA$-bimodules $\mathcal{M}$ in \ref{Subsection Bimodules} to $(\bA_1,\bA_2)$-bimodules $\mathcal{M}$. Such a bimodule comprises a collection of graded vector spaces $\bM(X,Y)$ for any objects $X\in \bA_1$, $Y\in \bA_2$, together with multi-linear maps
$$
\mu_{\bM}^{r|s}: \bA_1(X_r,\ldots,X_0)\otimes \bM(X_0,Y_0) \otimes \bA_2(Y_0,\ldots,Y_s) \to \bM(X_r,Y_s)
$$
for $r,s\geq 0$, and any objects $X_i\in \bA_1,Y_j\in\bA_2$ satisfying the $A_{\infty}$-relations written in \ref{Subsection Bimodules} except we distinguish between using $\mu_{\bA_1}$ (on the $x$ elements) and $\mu_{\bA_2}$ (on the $y$ elements). 

Now consider the two $A_{\infty}$-categories $\mathcal{A}$, $\mathbb{K}$. Here the ground field $\mathbb{K}$ is viewed as an $A_{\infty}$-category with one object $X_{\mathbb{K}}$, with $\mathrm{hom}_{\mathbb{K}}(X_{\mathbb{K}},X_{\mathbb{K}})=\mathbb{K}\cdot \mathrm{id}_{X_{\mathbb{K}}}$ in degree $0$, and all $\mu^r_{\mathbb{K}}=0$.

A \emph{left $\mathcal{A}$-module} $\mathcal{L}$ is an $(\mathcal{A},\mathbb{K})$-bimodule $\mathcal{L}$ with $\mu_{\mathcal{L}}^{r|s}=0$ for $s\geq 1$. We abbreviate the graded vector spaces by $\mathcal{L}(X)=\mathcal{L}(X,X_{\mathbb{K}})$, and the multi-linear maps by
$$
\mu_{\mathcal{L}}^{r|}=\mu_{\mathcal{L}}^{r|0}: \mathcal{A}(X_r,\ldots,X_0) \otimes \mathcal{L}(X_0) \to \mathcal{L}(X_r).
$$
A \emph{right $\mathcal{A}$-module} $\mathcal{R}$ is a $(\mathbb{K},\mathcal{A})$-bimodule  $\mathcal{R}$ with $\mu_{\mathcal{R}}^{r|s}=0$ for $r\geq 1$. Let $\mathcal{R}(X)=\mathcal{R}(X_{\mathbb{K}},X)$,
$$
\mu_{\mathcal{R}}^{|s}=\mu_{\mathcal{R}}^{0|s}:\mathcal{R}(Y_0)\otimes \mathcal{A}(Y_0,\ldots,Y_s) \to \mathcal{R}(Y_s).
$$
%
%
\subsection{Tensor products}
\label{Subsection tensor products}
Let $\mathcal{A}$ be an $A_{\infty}$-category, and $\mathcal{L},\mathcal{R}$  a left and a right $\mathcal{A}$-module (see \ref{Subsection Left modules and right modules}). 
We will define a chain complex $\mathcal{R}\otimes_{\mathcal{A}} \mathcal{L}$ and an $\mathcal{A}$-bimodule $\mathcal{L}\otimes \mathcal{R}$ such that
$$
\mathrm{HH}_*(\mathcal{A},\mathcal{L}\otimes \mathcal{R}) \cong H^*(\mathcal{R}\otimes_{\mathcal{A}} \mathcal{L}).
$$
The chain complex is given by
$$
\mathcal{R}\otimes_{\mathcal{A}} \mathcal{L} = \bigoplus_{X_i\in \mathrm{Ob}(\mathcal{A})} \mathcal{R}(X_n)\otimes \mathcal{A}(X_n,\ldots,X_0) \otimes \mathcal{L}(X_0)
$$
graded by the sum of the degrees, using reduced degrees for $\mathcal{A}$, 
and the differential (of degree $+1$) on a generator $\underline{\mathfrak{r}}\otimes x_n\otimes \cdots \otimes x_1\otimes \underline{\mathfrak{l}}$ is defined by
$$
\begin{array}{l}
\textstyle
\quad\!\sum (-1)^{\mathrm{deg}(\underline{\mathfrak{l}})+\sigma_1^{R-1}}\; {\scriptstyle
\underline{\mu_{\mathcal{R}}^{|n-R+1}(\underline{\mathfrak{r}}, x_n\, \ldots, x_{R})}}\otimes x_{R-1}\otimes \cdots \otimes x_1 \otimes \underline{\mathfrak{l}} \;+ \\
+\sum (-1)^{\mathrm{deg}(\underline{\mathfrak{l}})+\sigma_1^{S-1}}\; \underline{\mathfrak{r}}\otimes x_n\otimes \cdots \otimes x_{R+1}\otimes {\scriptstyle 
\mu_{\mathcal{A}}^{R-S+1}(x_R, \ldots, x_S)} \otimes x_{S-1} \otimes \cdots \otimes x_1\otimes \underline{\mathfrak{l}} \;+ \\
 +\sum \underline{\mathfrak{r}}\otimes x_n\otimes \cdots \otimes x_{S+1} \otimes {\scriptstyle \underline{\mu_{\mathcal{L}}^{S|}(x_S,\ldots,x_1,\underline{\mathfrak{l}})}}.
\end{array}
$$
summing over the obvious $R,S$. The $\mathcal{A}$-bimodule is defined by
$$
(\mathcal{L}\otimes \mathcal{R})(X,Y) = \mathcal{L}(X)\otimes \mathcal{R}(Y)
$$
with composition maps
$$
\mu_{\mathcal{L}\otimes \mathcal{R}}^{r|s}: \mathcal{A}(X_r,\ldots,X_0)\otimes \mathcal{L}(X_0)\otimes \mathcal{R}(Y_0) \otimes \mathcal{A}(Y_0,\ldots,Y_s) \to \mathcal{L}(X_r)\otimes \mathcal{R}(Y_s)
$$
defined on generators by 
$$
\begin{array}{l}
 \mu_{\mathcal{L}\otimes \mathcal{R}}^{0|s}(\underline{\mathfrak{l}}, \underline{\mathfrak{r}}, y_1, \ldots, y_s) = \underline{\mathfrak{l}}\otimes \underline{\mu^{|s}_{\mathcal{R}}(\underline{\mathfrak{r}},y_1, \ldots, y_s)}\\
 \mu_{\mathcal{L}\otimes \mathcal{R}}^{r|0}(x_r,\ldots,x_1,\underline{\mathfrak{l}}, \underline{\mathfrak{r}}) = (-1)^{\mathrm{deg}(\underline{\mathfrak{r}})} \underline{\mu^{r|}_{\mathcal{L}}(x_r, \ldots, x_1,\underline{\mathfrak{l}})}\otimes \underline{\mathfrak{r}}\\
  \mu_{\mathcal{L}\otimes \mathcal{R}}^{0|0}(\underline{\mathfrak{l}}, \underline{\mathfrak{r}}) = 
  \underline{\mathfrak{l}}\otimes \underline{\mu^{|0}_{\mathcal{R}}(\underline{\mathfrak{r}}}) +
    (-1)^{\mathrm{deg}(\underline{\mathfrak{r}})} \underline{\mu^{0|}_{\mathcal{L}}(\underline{\mathfrak{l}})}\otimes \underline{\mathfrak{r}}
\end{array}
$$
and $\mu_{\mathcal{L}\otimes \mathcal{R}}^{r|s}$ is zero whenever $r,s$ are both non-zero.

Define $\mathcal{T}: \mathrm{CC}_*(\mathcal{L}\otimes \mathcal{R}) \to \mathcal{R}\otimes_{\mathcal{A}} \mathcal{L}$ by reordering the positions of $\underline{\mathfrak{r}}$ and $\underline{\mathfrak{l}}$:
$$
\mathcal{T}(\underline{\mathfrak{l}},\underline{\mathfrak{r}},y_1,\ldots,y_s)=
(-1)^{\mathrm{deg}(\mathfrak{l})(\mathrm{deg}(\mathfrak{r})+\sigma_1^s)} \underline{\mathfrak{r}}\otimes y_1 \otimes \cdots \otimes y_s \otimes \underline{\mathfrak{l}}.
$$
 After the reordering, the bar differential becomes precisely the differential defined for $\mathcal{R}\otimes_{\mathcal{A}} \mathcal{L}$, so $\mathcal{T}$ is a chain isomorphism inducing the isomorphism $\mathrm{HH}_*(\mathcal{A},\mathcal{L}\otimes \mathcal{R}) \cong H^*(\mathcal{R}\otimes_{\mathcal{A}} \mathcal{L})$.

\subsection{Grading conventions}
\label{Subsection A remark about gradings}
We summarize here all grading conventions in the paper, and we do not discuss these further (see \cite{Seidel,Abouzaid,Abouzaid-Seidel} for a detailed discussion of gradings).

\begin{enumerate}
\addtocounter{enumi}{-1}
 \item The grading group in general is $\R/2N\Z$ where $N$ is the minimal Chern number (so $N\Z=c_1(TE)(\pi_2(E))$), as in general it is not possible to assign a grading to a Hamiltonian chord or orbit without additional choices. When working with a single Lagrangian, one can refine the grading group to $\R$;
 \item The Novikov variable $t$ for $E$ has cohomological degree $|t|=2\lambda_E\in \R$;
 \item Unreduced gradings for Lagrangian Floer cohomology are as in Seidel \cite{Seidel}, although in general the grading group will only be $\R/2\Z$. This improves to an $\R$-grading in some situations, such as for the endomorphism algebra of a single Lagrangian, assuming monotonicity, and grading the Novikov variable as above;
 \item For Hamiltonian Floer cohomology, our grading conventions are determined by requiring $QH^*(M)\cong HF^*(H)$ to have degree $0$
 for closed monotone manifolds;
 \item and so $c^*: QH^*(E) \to SH^*(E)$ has degree $0$;
 \item Products preserve unreduced gradings, units lie in unreduced grading $0$;
 \item We never reduce gradings of module inputs/outputs or of $HF^*(H),\, SH^*(E)$;
 \item We reduce gradings of morphisms of $A_{\infty}$-categories, so all $\mu^d$ have degree $+1$;
 \item $\mathcal{AF}: \mathrm{HH}_*(\scrF(E)) \to \mathrm{HH}_*(\wE)$ has degree $0$;
 \item $\mathrm{CO}: SH^*(E) \to HW^*(K,K)$ has degree $0$ (unreduced gradings);
 \item $\mathrm{OC}: \mathrm{HH}_*(\wE) \to SH^*(E)$ has degree $\mathrm{dim}_{\C}E$;
 \item $\Delta^{r|s}: \mathcal{W}(L_r,\ldots,L_0)\otimes \underline{\mathcal{W}(L_0,L_0')} \otimes \mathcal{W}(L_0',\ldots,L_s')  \to \underline{\mathcal{W}(L_r,K,L_s')}$ has degree $\mathrm{dim}_{\C}E$ (the underlined terms are viewed as modules, so their degrees are not reduced).
\end{enumerate}
\section{The wrapped Fukaya category}
\label{Section wrapped fukaya cat}

\subsection{Symplectic manifolds conical at infinity}
\label{Subsection Symplectic manifolds conical at infinity}
$(E,\omega)$ will always denote a (non-compact) convex symplectic manifold. Here \emph{convex} means that $E$ is conical at infinity: outside of a compact domain $E^{\mathrm{in}}\subset E$ there is a symplectomorphism
$$
\psi: (E\setminus E^{\mathrm{in}},\omega|_{E\setminus E^{\mathrm{in}}}) \cong (\Sigma \times [1,\infty), d(R \alpha)).
$$
where $(\Sigma,\alpha)$ is a contact manifold, and $R$ is the coordinate on $[1,\infty)$. We call $E\setminus E^{\mathrm{in}}$ the \emph{conical end} of $E$.
The conical condition implies that outside of $E^{\mathrm{in}}$ the symplectic form becomes exact: $\omega=d\Theta$ where $\Theta=\psi^*(R\alpha)$. It also implies that the Liouville vector field $Z=\psi^*(R\partial_R)$ (defined by $\omega(Z,\cdot) = \Theta$) will point strictly outwards along $\partial E^{\mathrm{in}}$, and that $\psi$ is induced by the flow of $Z$ for time $\log R$, so we can write $\Sigma=\partial E^{\mathrm{in}}$, $\alpha=\Theta|_{\Sigma}$ (pull-back).

By a \emph{conical structure} $J$ we mean an $\omega$-compatible almost complex structure on $E$ (so $\omega(\cdot,J\cdot)$ is a $J$-invariant metric) satisfying the \emph{contact type condition} $
J^*\Theta=dR
$ for large $R$.
On $\Sigma$ this implies $J Z = Y$ where $Y$ is
the Reeb vector field for $(\Sigma,\alpha)$ defined by $\alpha(Y)=1$, $d\alpha(Y,\cdot)=0$. 
By choosing $\alpha$ or $\Sigma$ generically, one ensures that the periods of the closed orbits of $Y$ form a countable closed subset of $[0,\infty)$.
%
\subsection{Monotonicity assumptions, Maslov index}
\label{Subsection monotonicity assumptions}
We always assume $(E,\omega)$ is \emph{monotone}:
$$
c_1(TE) = \lambda_E \, [\omega]
$$
for some constant $\lambda_E >0$ in $\R$.
A submanifold $L\subset (E,\omega)$ is Lagrangian if $\mathrm{dim}_{\R} L = \dim_{\C} E$ and $\omega|_L = 0$. We will always assume that the Lagrangians are \emph{monotone}, meaning:
$$\omega[u] = \lambda_{L}\, \mu([u]) \quad \textrm{for } [u]\in \pi_2(E,L)$$
for some constant $\lambda_{L}>0$, where $\mu$ is the Maslov index and we abbreviated $\omega[u]=\int_{\D} u^*\omega$.
Recall the Maslov index $\mu(u)\in \Z$ is associated to any  
continuous disc $u: (\D ,\partial \D ) \to (E,L)$, and $\mu(u)$ is even for orientable $L$.
The Maslov index only depends on the $\pi_2(E,L)$ class of $u$.
``Gluing in a sphere'' $v:S^2 \to E$ causes
$\mu(v\# u) = 2c_1(TE)([v]) + \mu(u)$. 
This gluing relation forces $E$ to be monotone, and the monotonicity constants are therefore related by
$\lambda_{L} = \tfrac{1}{2\lambda_E}.$
%
\subsection{The Novikov field}
\label{Subsection Novikov ring}
We will always work over the field
$$
\Lambda = \{ \sum_{i=0}^{\infty} a_i t^{n_i}: a_i\in \K, n_i\in \R, \lim n_i = \infty \},
$$
called the \emph{Novikov field},
where $\K$ is some chosen ground field, and $t$ is a formal variable.
The choice of $\K$ will not matter, except in the toric examples at the end of the paper, where we will specialize to $\K=\C$. 
The Novikov field is graded by placing $t$ in (real) degree
$$
|t|=2\lambda_E.
$$
Floer/holomorphic solutions will be counted with Novikov weights, e.g. for the quantum product on $QH^*(E)$, holomorphic spheres $u$ are counted with weight $\pm t^{\omega[u]}$, lying in degree $$2\lambda_E \omega[u] = 2c_1(TE)[u].$$
Recall by \ref{Subsection A remark about gradings}, the grading is understood to lie in $\R/2N$, because there is an ambiguity in gradings in Floer theory (arising from attaching spheres which causes gradings to change by $c_1(TE)$ integrated over spheres). For simply-connected $E$ (and as usual $L,E$ monotone), a part of the Floer theory, e.g. $QH^*(E)$, $SH^*(E)$, $HF^*(L,L)$, $HW^*(L,L)$, is in fact $\R$-graded.

\subsection{The choice of Lagrangians in $\mathbf{\mathrm{Ob}(\mathcal{W}(E))}$}
\label{Subsection The objects of the wrapped Fukaya category}

The objects of $\mathcal{W}(E)$ are the closed monotone orientable Lagrangian submanifolds of $E$, together with all non-compact monotone orientable Lagrangian submanifolds $L\subset E$ which are \emph{conical at infinity}, meaning

\begin{enumerate}
 \item\label{Item transverse to bdry} $L$ intersects $\Sigma$ transversally;
 \item\label{Item conical on the end} the pull-back $\Theta|_L=0$ near $\Sigma$ and on the conical end.
\end{enumerate}

Conditions \eqref{Item transverse to bdry}-\eqref{Item conical on the end} ensure that $\ell=L\cap \Sigma$ is a Legendrian submanifold of the contact manifold $(\Sigma,\Theta|_{\Sigma})$, and that $L$ has the form $\ell\times [1-\epsilon,\infty)\subset \Sigma \times [1-\epsilon,\infty)$ near the conical end. Conversely, a monotone Lagrangian $L\subset E^{\mathrm{in}}$  with $\Theta|_L=0$ near $\Sigma$ can be extended to an $L$ of the above form. 
In fact, any monotone Lagrangian $L\subset E^{\mathrm{in}}$ which intersects $\Sigma$ in a Legendrian submanifold, can be Hamiltonianly isotoped in $E^{\mathrm{in}}$ relative to $\partial E^{\mathrm{in}}$ so that $\Theta|_L$ will vanish near $\Sigma$. This can be proved as in \cite[Lemma 3.1]{Abouzaid-Seidel} (it is a local argument, and the Liouville flow is defined near $\Sigma$).
Finally, as will be explained in detail in \ref{Subsection the role of monotonicity}, one must in fact restrict $\mathrm{Ob}(\wE)$ to Lagrangians having the same $m_0$-value.

\begin{remark}[Grading, characteristic]
For $\wE$ to be graded one would require $2c_1(E, L)=0 \in H^2 (E,L;\Z)$. For $\wE$ to be defined over a field of characteristic $\neq 2$ one would require $w_2(L)=0 \in H^2 (L; \Z/2)$ and then an $L$ should come with a choice of Pin structure. One also needs to make a choice of grading for the Lagrangians. For a discussion of this brane data, we refer to \cite{Seidel,Abouzaid-Seidel}: we will not discuss gradings or orientation signs for sake of brevity.
\end{remark}

\subsection{Reeb chords and Hamiltonian chords}
\label{Subsection Reeb chords and Hamiltonian chords}
Fix a Hamiltonian $H: E \to [0,\infty)$ which on the conical end has the form $H(\sigma,R)=R$ for $(\sigma,R)\in \Sigma\times [1,\infty)$. Later, in \ref{Subsection symplectic cohomology} (see the Convention), we will make a global rescaling of $H$, but the reader can ignore this for now.

Let $X$ be the Hamiltonian vector field: $\omega(\cdot,X)=dH$. The Reeb vector field on $\Sigma$ is $X|_{\Sigma}$.

Fix a countable collection $L_i\in \mathrm{Ob}(\wE)$, so they give rise to $\ell_i\subset \Sigma$.

A \emph{Reeb chord of period} $w$ is a trajectory $[0,1]\to \Sigma$ of $wX|_{\Sigma}$ with ends on $\ell_i,\ell_j$ (for some, possibly non-distinct, indices $i,j$). It is an \emph{integer Reeb chord} if $w$ is an integer. By rescaling $\omega$ (or by choosing $\Sigma$ generically), there are no integer Reeb chords.
An \emph{integer $X$-chord of weight $w$} is a flowline $[0,1]\to E$ of $wX$ with ends on $L_i,L_j$ for integer values $w\geq 1$. These lie in $E^{\mathrm{in}}$ (otherwise, there would be one in the conical end projecting to an integer Reeb chord in $\Sigma$).
For a generic choice of $H$ (see \cite{Abouzaid-Seidel}), integer $X$-chords are non-degenerate and any point of $L_i$ is not both an endpoint and a starting point of two (possibly non-distinct) integer $X$-chords (in particular, no critical point of $H$ may lie in $L_i\cap L_j$). So only finitely many integer $X$-chords have ends on given $L_i,L_j$, and they lie in $E^{\mathrm{in}}$.

\subsection{The Floer differential $\mathfrak{d}$}
\label{Subsection the floer differential}
Given $L_i,L_j\in \mathrm{Ob}(\mathcal{W}(E))$, let $CF^*(L_i,L_j;wH)$ denote the free $\Lambda$-module generated by the (finitely many) integer $X$-chords of weight $w$ with ends on $L_i,L_j$. Fix a conical structure $J$ on $E$ (we remark that the contact type condition in \ref{Subsection Symplectic manifolds conical at infinity} on the conical end is equivalent to $JZ=X$). The chain differential 
$${\mathfrak{d}}:CF^*(L_i,L_j;wH) \to CF^{*+1}(L_i,L_j;wH)$$
is defined as follows on a generator $y$. Let $u: \R \times [0,1] \to E$ be a non-constant isolated (modulo $\R$-translation) solution of Floer's equation $\partial_s u+J(\partial_t u - wX)=0$, satisfying: Lagrangian boundary conditions $u(\cdot,0)\in L_i$ and $u(\cdot,1)\in L_j$; and asymptotic conditions $u\to x,y$ as $s\to -\infty,+\infty$ respectively, where $x$ is an $X$-chord. Then $u$ contributes $$\pm t^{E_{\mathrm{top}}(u)}\, x$$ to $\mathfrak{d} y$, where $E_{\mathrm{top}}(u)\geq 0 \in \R$ is defined by:
$$E_{\mathrm{top}}(u) 
= \int_S u^*\omega - d(u^*(wH)\wedge dt) 
= \int_S u^*\omega +wH(x)- wH(y) 
= \frac{1}{2}\int_{S}\|du-wX\otimes dt\|^2\, ds\wedge dt.$$
We will always omit discussing orientation signs $\pm$, which is done carefully in \cite{Abouzaid-Seidel} by replacing each $\Lambda x$ by an orientation line.

The strip $\R\times [0,1]$ can also be viewed as the disc with two boundary punctures at $\pm 1$:
\begin{center}\input{mu1.tex}\end{center}
This domain is unstable: there is a residual $\R$-translation symmetry in $\mathrm{PSL}(2,\R)$ fixing $\pm 1$. Non-constant solutions arise in $\R$-families, and we quotient moduli spaces of non-constant solutions by this action, and only then count the $0$-dimensional part to define $\mathfrak{d}$.

If ${\mathfrak{d}}^2=0$, then the Floer cohomology of $L_i,L_j$ using $wH$ is defined as
$$
\boxed{HF^*(L_i,L_j;wH) = H^*(CF^*(L_i,L_j;wH); \mathfrak{d})}
$$
Since $E$ is monotone, ${\mathfrak{d}}^2=0$ precisely when $L_i,L_j$ have the same $m_0$-value (see \ref{Subsection the role of monotonicity}).

\subsection{The role of monotonicity}
\label{Subsection the role of monotonicity}

We keep the discussion quite general by supposing that we have a $1$-dimensional family of ``Floer solutions'' and we study the possible bubblings.

First observe that sphere bubbling is ruled out for dimension reasons, since by monotonicity a non-constant holomorphic sphere will increase the index of the family by at least $2$.

In the proof that ${\mathfrak{d}}^2=0$, a non-constant $J$-holomorphic sphere of Maslov index 2 (Chern number 1) may bubble off from an $s$-independent strip $u(s,t)=x(t)$, where $x$ is an integer $X$-chord of weight $w$. But this bubbling is ruled out for generic $H,J$, as follows. Recall that for generic $J$, the $J$-holomorphic spheres with $c_1=1$ come in a family of real dimension $2n - 4$, so they sweep out a real codimension $2$ set. In principle, this set could hit our moduli
space of strips. But for index reasons, the only bubbling-off allowed is that of a constant strip. Now, the image of a constant strip is a Hamiltonian chord, which is $1$-dimensional, so generically this does avoid that moduli space of spheres. 

Now consider disc bubbling.
By disc bubble, we mean a non-constant holomorphic disc $u:\D \to E$, bounding a single Lagrangian $u(\partial\D)\subset L$ say, with one boundary marked point $z_0\in \D$ landing at a prescribed generic point $p_L=u(z_0)\in L$ (so $z_0$ plays the role of the node when our family breaks). The index of families is additive, so only isolated discs will bubble off in our $1$-dimensional family and so, by monotonicity and a dimension count, the disc must have Maslov index $\mu(u)=2$.

\noindent {\bf Technical Remark about transversality.} \emph{Recall that transversality can be achieved for simple discs (i.e. somewhere injective discs). By Lazzarini \cite{Lazzarini}, for any holomorphic disc $u: (\D,\partial \D)\to (X,\mathcal{L})$, where $X$ is a smooth almost complex manifold and $\mathcal{L}$ is a totally real submanifold, there is an underlying simple holomorphic disc $v: (\D,\partial \D)\to (X,\mathcal{L})$ with image contained in $u(\D)$, and the relative homology class $[u]\in H_2(X,\mathcal{L})$ is a linear combination $\sum n_i [v_i]$ of such simple discs, where $n_i\in \Z_{>0}$. In our case, $X=E,$ $\mathcal{L}=L$ are monotone, and the Maslov index is additive, so Maslov index $2$ discs (which is the minimal Maslov index, as $L$ is orientable) must be simple: otherwise $u$ has strictly larger area than $v_i$ and thus strictly larger Maslov index. It follows that transversality can be achieved for Maslov 2 discs.
}

Now consider the moduli space $\mathcal{M}_1(\beta)$ of all such Maslov $2$ discs in the class $\beta\in \pi_2(E,L)$ but do not impose a condition on the value at the marked point. Let $\mathrm{ev}:\mathcal{M}_1(\beta) \to L$ be the evaluation at the marked point. Then $\mathrm{ev}_*[\mathcal{M}_1(\beta)]$ determines a locally finite cycle in $L$ (it is a cycle and not just a chain, because $\mathcal{M}_1(\beta)$ cannot have boundary degenerations since, by monotonicity, $2$ is the minimal Maslov index of non-constant disc bubbles). For dimension reasons this lf-cycle must be a scalar multiple of $[L]$ (viewed as a locally finite cycle since $L$ may be non-compact - we explain this in \ref{Subsection construction of m0}). Then define
$$
\mathfrak{m}_0(L) = \sum t^{\omega[\beta]}\, \mathrm{ev}_*[\mathcal{M}_1(\beta)]= m_0(L)\, [L] \in C_{\mathrm{dim}(L)}^{\mathrm{lf}}(L;\Lambda), \textrm{ where }m_0(L)\in \Lambda.
$$

When there are no such $\mu=2$ discs, then $\mathfrak{m}_0(L)=0$, and if this holds for all objects $L$ then the $A_{\infty}$-category is called \emph{unobstructed}, and no disc bubbling discussion is necessary. In the applications we have in mind, we typically expect $\mu=2$ discs. So we briefly explain why in the monotone case all bubbling contributions will in fact cancel out when we restrict the category to Lagrangians having the same $m_0$-value. In other words, monotonicity allows us to pretend that our $A_{\infty}$-categories $\mathcal{W}(E,\lambda)$ are all unobstructed.

Firstly, a standard Gromov compactness argument, looking at the possible degenerations of Maslov 2 discs with two boundary punctures, with Lagrangian boundary conditions $L_i,L_j$ between the punctures, shows that:
$$
\mathfrak{d}\circ \mathfrak{d} (x) = (m_0(L_i)-m_0(L_j))\, x.
$$
For this argument, we refer the reader to Oh \cite{Oh1,Oh2}.
Thus, one cannot even define the cohomology $HF^*(L_i,L_j;wH)$ when $m_0(L_i)\neq m_0(L_j)$, let alone the category.
If we restrict to only Lagrangians with equal $m_0(L)$ values, then $\mathfrak{d}\circ \mathfrak{d}=0$. For higher $A_{\infty}$-operations, the bubbling of such Maslov 2 discs is ruled out for dimension reasons (the index of the main component would have negative virtual dimension).
We summarise this in the following Lemma (we will explain the claim about eigenvalues of $c_1$ in \ref{Subsection Relating m0 and c1 evals}).

\begin{lemma}\label{Lemma fukaya cats split according to evalues}
$\wE$ is a disjoint union of $A_{\infty}$-categories $\mathcal{W}_{\lambda}(E)$ indexed by $\lambda\in \Lambda$, where all $L\in \mathrm{Ob}(\mathcal{W}_{\lambda}(E))$ satisfy $m_0(L)=\lambda$. 
 Since $E$ is monotone, and assuming $\mathrm{char}\, \K=0$, the non-triviality of $\mathcal{W}_{\lambda}(E)$ implies that $\lambda$ is an eigenvalue of quantum cup product by $c_1$: $$c_1(TE):QH^*(E) \to QH^*(E).$$ The analogous statement holds for the compact category $\scrF(E)$ or the Fukaya category $\scrF(B)$ of a closed (rather than convex) monotone manifold $B$. \end{lemma}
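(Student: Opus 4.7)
The plan is to prove the two assertions in sequence, as the splitting is essentially a repackaging of the discussion preceding the Lemma, while the eigenvalue claim requires invoking the closed-open string map.

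First I would formalize the splitting. By the monotonicity discussion of Section \ref{Subsection the role of monotonicity}, the $A_\infty$-composition $\mu^d$ applied to a chain of Floer complexes $CF^*(L_{i-1}, L_i)$ satisfies the $A_\infty$-relations only when the obstruction $\mathfrak{m}_0(L_i)$ contributes equally at both sides of each $L_i$, which (using the observation that $\mu^2(x, [L]) = x$ and $\mu^2([L], x) = \pm x$) amounts to $m_0(L_0) = m_0(L_1) = \cdots = m_0(L_d)$. Thus $\mu^d$ is only defined on chains of objects sharing a common $m_0$-value $\lambda \in \Lambda$, and one declares $\mathrm{hom}_{\mathcal{W}(E,\lambda)}(L, L') = 0$ when $m_0(L) \neq m_0(L')$. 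This gives a decomposition of $\mathcal{W}(E)$ as the direct sum, over $\lambda \in \Lambda$, of the full $A_\infty$-subcategories $\mathcal{W}(E,\lambda)$ generated by objects with $m_0(L) = \lambda$.

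For the eigenvalue statement, I would invoke the closed-open map $\mathrm{CO}_L : QH^*(E) \to HF^*(L,L)$, defined by counting Floer half-cylinders with an interior marked point constrained to lie on a cycle in $E$ and a boundary marked point on $L$. Three facts about $\mathrm{CO}_L$ are needed: (i) it is a unital ring homomorphism, so $\mathrm{CO}_L(1_{QH}) = [L]$; (ii) $\mathrm{CO}_L(c_1(TE)) = m_0(L) \cdot [L]$, which follows from identifying a disc with interior marked point constrained to a Poincar\'e dual of $c_1$ with a Maslov $2$ disc bounding $L$ via the index/constraint count (this is precisely the computation underlying the Kontsevich--Seidel--Auroux observation cited in the statement); (iii) the ring structure intertwines $\mathrm{CO}_L$ with the $QH^*(E)$-module structure on $HF^*(L,L)$.

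Given these, the argument concludes as follows. Let $p(x) \in \Lambda[x]$ be the characteristic polynomial of quantum cup-product by $c_1(TE)$ acting $\Lambda$-linearly on the finite-dimensional vector space $QH^*(E)$. By Cayley--Hamilton, $p(c_1(TE)) \star 1 = 0$ in $QH^*(E)$. Applying the ring map $\mathrm{CO}_L$ and using (ii) yields
\[
0 = \mathrm{CO}_L\bigl(p(c_1(TE))\bigr) = p\bigl(\mathrm{CO}_L(c_1(TE))\bigr) = p\bigl(m_0(L)\bigr)\cdot [L] \in HF^*(L,L).
\]
If $\mathcal{W}(E,\lambda)$ is non-trivial, pick $L$ with $m_0(L)=\lambda$ and $HF^*(L,L)\neq 0$; then $[L] \neq 0$ and we conclude $p(\lambda) = 0$, so $\lambda$ is an eigenvalue of $c_1(TE) \star$. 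The analogous arguments for $\scrF(E)$ and $\scrF(B)$ are identical, using the corresponding closed-open maps to ordinary quantum cohomology (no conical end, no symplectic cohomology needed).

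The main obstacle is establishing property (ii), the identity $\mathrm{CO}_L(c_1(TE)) = m_0(L)\cdot [L]$, which demands a careful moduli-space argument matching $c_1$-constrained discs to the index-$2$ discs counted by $\mathfrak{m}_0(L)$; this is standard in the monotone setting but is the only step that is not a formal algebraic consequence of the preceding material. All other steps are either recaps of the obstruction discussion or straightforward manipulations with the ring map $\mathrm{CO}_L$.
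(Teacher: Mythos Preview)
Your proposal is correct and matches the paper's approach: the paper does not spell out a proof of this Lemma, instead treating the splitting as a consequence of the preceding obstruction discussion in Section~\ref{Subsection the role of monotonicity} and attributing the eigenvalue statement to Kontsevich, Seidel, and Auroux \cite[Sec.~6]{Auroux}. Your argument via the unital ring map $\mathrm{CO}_L$, the identity $\mathrm{CO}_L(c_1(TE)) = m_0(L)\cdot[L]$, and Cayley--Hamilton is exactly the Auroux argument the paper is citing.
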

\noindent\textbf{Convention.} We always treat $\wE$ as a non-curved $A_{\infty}$-category which splits into non-interacting pieces, not as a curved category. When we write $\wE$ we actually always mean $\mathcal{W}_{\lambda}(E)$ for some fixed $m_0$-value $\lambda$. The same comment pertains to $\scrF(B)$ and $\scrF(E)$.
%
%
\subsection{The construction of $\mathfrak{m}_0$ in the non-compact setting}
\label{Subsection construction of m0}
\begin{lemma}
Let $E$ be conical at infinity.
 Let $L\subset E$ be any Lagrangian submanifold conical at infinity (\ref{Subsection The objects of the wrapped Fukaya category}). Then all pseudo-holomorphic discs $u: (\D,\partial\D) \to (E,L)$ have $u(\partial \D)$ lying either in $E^{\mathrm{in}}$ or in a contact hypersurface $\Sigma \times \{R\}$ of the conical end.
\end{lemma}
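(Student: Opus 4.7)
The plan is a standard maximum-principle argument on the conical end: the contact-type condition $J^*\Theta = dR$ makes the radial coordinate subharmonic on a $J$-holomorphic disc, and the vanishing of $\Theta$ on a conical Lagrangian turns the Lagrangian boundary condition into a Neumann boundary condition. Let $V = u^{-1}(\Sigma \times (1,\infty)) \subset \D$ denote the (open) preimage of the strict conical end, and set $f = R \circ u : V \to (1,\infty)$.

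First I would show $f$ is subharmonic. Using $du \circ j = J \circ du$ and $J^*\Theta = dR$ (equivalently $\Theta = -dR \circ J$), a short computation gives $df \circ j = -u^*\Theta$, and hence $d(df \circ j) = -u^*\omega$. Since $u^*\omega \geq 0$ pointwise for a $J$-holomorphic map, and $d(df \circ j) = -\Delta f \, ds\wedge dt$ in local conformal coordinates, one obtains $\Delta f \geq 0$. Next I would extract the boundary condition: on the conical end $L$ has the form $\ell \times [1-\epsilon, \infty)$ with $\ell \subset \Sigma$ Legendrian, so $\Theta|_L = R\,\alpha|_L \equiv 0$, whence $u^*\Theta \equiv 0$ on the portion of $\partial \D$ that lies in $V$. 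Writing $\tau$ for the positively-oriented tangent to $\partial\D$ and $\nu = j\tau$ for the outward normal, this gives $\partial_\nu f = (df \circ j)(\tau) = -u^*\Theta(\tau) = 0$.

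To conclude, extend $f$ by $1$ on $\D \setminus V$; the resulting continuous function $F : \D \to [1,\infty)$ is weakly subharmonic on $\D$ (smooth subharmonic on $V$, constant and equal to its infimum outside, so trivially satisfies the submean inequality at points of $\D\setminus V$). Hence the maximum of $F$ is attained on $\partial\D$. If $\max_{\partial\D} F = 1$, then $u(\partial\D) \subset \overline{E^{\mathrm{in}}}$, the first alternative. Otherwise set $R^* = \max_{\partial \D} F > 1$ and consider $S = \{z \in \partial \D : F(z) = R^*\}$, closed by continuity. At $z_0 \in S$, $z_0$ lies in $V$ and is a boundary maximum of $f$ on a small half-disc neighborhood inside $V$; the Hopf boundary lemma, combined with $\partial_\nu f(z_0) = 0$, forces $f$ to be locally constant at $z_0$, so $S$ is also open. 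By connectedness of $\partial\D$, $S = \partial \D$, giving the second alternative $u(\partial\D) \subset \Sigma \times \{R^*\}$.

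The main obstacle I anticipate is keeping the signs straight in the subharmonicity computation and justifying the weak subharmonicity of the extended $F$ across $\partial V \cap \mathrm{int}(\D)$; after that the argument is a textbook application of Hopf's lemma. As a coda (not required for the stated lemma), for non-constant $u$ the second alternative in fact forces $u$ to be constant, since Stokes gives $\int_\D u^*\omega = \int_{\partial\D} u^*\Theta = 0$, so $du \equiv 0$; but I would omit this sharper rigidity from the proof itself.
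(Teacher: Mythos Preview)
Your proof is correct and follows essentially the same approach as the paper: both exploit that $R\circ u$ is subharmonic (from the contact-type condition $J^*\Theta=dR$) with Neumann boundary condition on $L$ (from $\Theta|_L=0$). The paper packages this as a Green's-formula convexity lemma cited from Abouzaid--Seidel, applied to the restriction $u|_{u^{-1}(\Sigma\times[R,\infty))}$ for generic $R$, whereas you argue directly via Hopf's boundary-point lemma; these are just two ways to run the same maximum principle.
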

\begin{proof}
  For general reasons of convexity \cite[Sec.7.3]{Abouzaid-Seidel}, which essentially reduce to Green's formula, any pseudo-holomorphic map $v: S \to E$ defined on a Riemann surface $S$ (which can have corners) with $v(S)\subset\Sigma\times [R,\infty)$ landing in the conical end of $E$, with $v(\partial S) \subset L\cup (\Sigma \times \{R\})$, must in fact lie entirely in $\Sigma \times \{R\}$. This maximum principle uses the fact that $\Sigma \times \{R\}$ is a contact hypersurface in $E$ on which the Liouville flow is outward-pointing. The claim follows immediately by taking $v$ to be the restriction of $u$ to $u^{-1}(\Sigma \times [R,\infty))$ (for generic $R$ so that $\Sigma \times \{R\}$ intersects the image of $u$ transversely).
 \end{proof}

By the Lemma, given a point $p_L$ of $L$, all holomorphic discs bounding $L$ satisfying $p_L\in u(\partial \D)$ must lie entirely in a compact region of $E$ determined by $p_L$. Thus all the constructions relating to the moduli space of holomorphic discs and in particular the construction of $\mathfrak{m}_0(L)$ reduce to the constructions in the closed monotone case, provided we replace cycles/chains by locally finite cycles and locally finite chains. Indeed, for $L\subset E$ a monotone Lagrangian in $\mathrm{Ob}(\mathcal{W}_{\lambda}(E))$, on each compact subset $C\subset E$ the chain $\mathfrak{m}_0$ represents (up to the scalar multiple $\lambda=m_0(L)$) the fundamental cycle for the pair $(C\cap L,\partial C\cap L)$. Exhausting $E$ by a sequence of larger and larger compact subsets $C$ proves that $\mathfrak{m}_0(L)$ is, up to the scalar multiple $\lambda=m_0(L)$, an infinite sum of chains which is precisely the locally finite cycle $[L]$.
%
%

\subsection{The homotopy $\mathfrak{K}$}
\label{Subsection kappa map}
Denote $\mathfrak{K}$ the Floer continuation map 
$$
\boxed{\mathfrak{K}: CF^*(L_i,L_j;wH) \to CF^*(L_i,L_j;(w+1)H)}
$$
determined by a monotone homotopy from $(w+1)H$ to $wH$. This involves counting isolated solutions similar to those in the definition of $\mathfrak{d}$, except now $J,H$ depend on the domain coordinate $s$ and there is no $\R$-reparametrization. The choice of homotopy $J_s,H_s$ interpolating the given data on the ends is chosen once and for all. Section \ref{Subsection the auxiliary data for wrapped category}
describes the auxiliary data carefully (for $\mathfrak{K} $ one uses: $d = 1$, $F = \{1\}$, $w_0=w + 1$, $w_1 = w$, and notice that the $\R$-translational freedom of the domain is fixed by imposing that the preferred point lies at $0$). 

The above maps $\mathfrak{K}$ are the chain maps which at the level of cohomology are used to define the \emph{cohomology direct limit} as $w\to \infty$:
$$\boxed{
HW^*(L_i,L_j)=\varinjlim HF^*(L_i,L_j;wH)}
$$
%
\subsection{Telescope construction of the wrapped complex}
\label{Subsection the wrapped floer complex}
Following \cite[Sec.3.7, Sec.5.1]{Abouzaid-Seidel},

\begin{definition}\label{Definition wrapped floer complex}
 The wrapped complex is $$\boxed{CW^*(L_i,L_j)=\bigoplus_{w=1}^{\infty} CF^*(L_i,L_j;wH)[\mathbf{q}]}$$ where $\mathbf{q}$ is a formal variable of degree $-1$ satisfying $\mathbf{q}^2=0$. This is a module over the exterior algebra $\C[\partial_{\mathbf{q}}]$, where the differentiation operator $\partial_{\mathbf{q}}$ has degree $+1$. The chain differential is chosen to respect this module structure: 
$$
\boxed{\mu^1(x+\mathbf{q}y) = (-1)^{|x|}\mathfrak{d} x + (-1)^{|y|}(\mathbf{q}\mathfrak{d} y + \mathfrak{K}  y -y)} \; (\textrm{using unreduced degrees})
$$
\end{definition}

Observe that for a $\mathfrak{d}$-cocycle $y$, the role of $\mathbf{q}y$ is to identify $y$ and $\mathfrak{K} y$ at the cohomology level, as expected in the cohomology direct limit; whereas the role of the subcomplex $(\partial_{\mathbf{q}}=0)$ is to yield representatives of $\oplus HF^*(L_i,L_j;wH)$. We refer to \cite[Sec.3.7]{Abouzaid-Seidel} for the proof that $H^*(CW^*(L_i,L_j);\mu^1) \cong HW^*(L_i,L_j)$ canonically.
%

\subsection{The auxiliary data}
\label{Subsection the auxiliary data for wrapped category}
By \cite{Abouzaid-Seidel}, there is a universal family $\mathcal{R}^{d+1,\mathbf{p}}\to \mathcal{R}^{d+1}$ over the moduli space of stable discs, with elements $(S,F,\mathbf{p},\phi)$ where (Figure \ref{Figure popsicle}):
\begin{figure}
\input{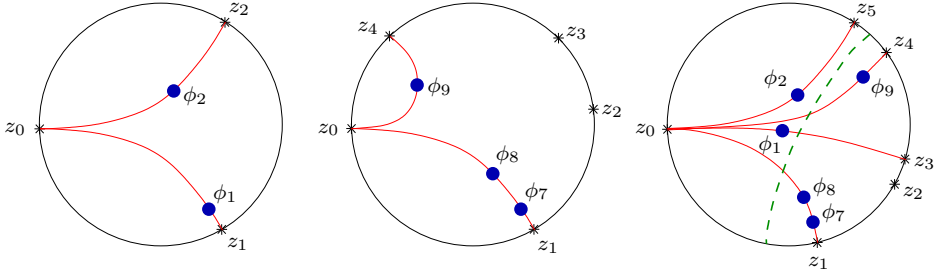}
\caption{Left: $S=$ pair-of-pants surface, $d=2$, $F=\{1,2\}$, $p_1=1$, $p_2=2$. Middle: $d=4$, $F=\{ 7,8,9 \}$, $p_7=p_8=1$, $p_9=4$. Right: the dotted line is a cut, which defines a possible breaking.} 
\label{Figure popsicle}
\end{figure}
\begin{enumerate}
  \item $F$ is a finite set of indices $f$ (with $d\geq 1$, $d+|F|\geq 2$).\\
  {\bf Remark.} \emph{Later, when we define $\mu^d$ in terms of a sum of contributions $\mu^{d,\mathbf{p},\mathbf{w}}$, we will require $F$ to be a subset of $\{ 1,\ldots,d \}$};

  \item A map $\mathbf{p}: F \to \{1,\ldots,d\}$ defining labels $p_f$ (possibly not distinct).\\
  {\bf Remark.} \emph{When we define $\mu^d$, we will require $\mathbf{p}$ to be an inclusion $F\subset \{1,\ldots,d\}$};
 
  \item \emph{{\bf Disc with $d+1$ ends}.} $S$ is a Riemann surface isomorphic to $\D$ with $d+1$ distinct boundary punctures $z_0,z_1,\ldots,z_d$ (of which $z_0$ is distinguished, and the others are ordered according to the orientation of $\partial S$); 

  \item\label{Item preferred points} \emph{{\bf Strip-like parametrizations}.} $\mathbf{\phi}=(\phi_f)_{f\in F}$ are holomorphic  $\phi_f: S \to Z$ extending to smooth isomorphisms on the compactifications $\overline{S} \to \overline{Z}$, with $\phi_f(z_0)= -\infty$, $\phi_f(z_{p_f})= +\infty$ (where $Z$ is the strip $\R\times [0,1]$ viewed as a disc with $2$ ends). This is the same as a choice of a {\bf \emph{preferred point}} on the hyperbolic geodesic joining $z_0$, $z_{p_f}$ (corresponding via $\phi_f$ to the point $(0,\frac{1}{2})$ and to the geodesic $(t=\frac{1}{2})\subset Z$);  

\end{enumerate}

 We will omit the discussion of finite symmetries \cite[Sec.2.3, 2.6, Lemma 3.7]{Abouzaid-Seidel}, but we mention that all auxiliary data constructions are done so as to preserve the symmetries which exchange preferred points lying on the same geodesic.

The above family is compactified by boundary strata described by the natural bubbling configurations one expects when a subset of the preferred points of (\ref{Item preferred points}) converges towards a boundary puncture \cite[Fig.2, Fig.3]{Abouzaid-Seidel}. We omit discussing gluing \cite[Sec.2.4]{Abouzaid-Seidel}, but we mention one important technical aspect:

\noindent {\bf Remark 1.} \emph{One actually allows the geodesics in (\ref{Item preferred points}) to be deformed (except near the ends).
We explain why in an example. Suppose we want to glue the first two discs in Figure \ref{Figure popsicle} at one puncture. Call the discs $S^-,S^+$ and we want to glue $z_1\in \overline{S^-}$ with $z_0\in \overline{S^+}$. Gluing gives a $6$-punctured disc with a geodesic and 2 curves which are not geodesics (they overlap along a  common curve carrying the preferred point $\phi_1$) but they can be deformed into two geodesics. The ``glued'' $F$ is $F=F^-\sqcup F^+=\{1,2,7,8,9\}$. There are four choices for the ``glued'' $\mathbf{p}$: $z_{p_{1}}$ can be any of the $z_1,z_2,z_3,z_4$ coming from $S^+$, and in the cases $z_2$ and $z_3$ we would need to draw a new geodesic before gluing, so that we obtain glued data as in (\ref{Item preferred points}) (up to deforming).
Now observe the third disc in Figure \ref{Figure popsicle}. The dotted line is a ``cut'': we consider a $1$-parameter family of such discs in which the auxiliary data contained on one side of the cut converges towards a common boundary puncture. In the limit, the disc breaks into two, namely $S^-$ glued with $S^+$ with ``glued'' $\mathbf{p}$ determined by $\mathbf{p}(1)=3$, and the new geodesic we draw on $S^+$ is remembering that $\phi_1$ came from the geodesic joining $z_0$ to $z_3$ before breaking.
}

Now assign further auxiliary data:
\begin{enumerate}
 \item \emph{{\bf Weights}.} $\mathbf{w}=(w_0,\ldots, w_d)$ are positive integers with $w_0=\sum_{k\geq 1} w_k+|F|$;

 \item \emph{{\bf Strip-like ends}.} Holomorphic embeddings $\epsilon_{k}: Z_+ \to S$, for $k=1,\ldots,d$, of $Z_+=[0,\infty)\times [0,1]\subset Z$ mapping the boundaries $t=0,1$ to $\partial S$ and converging to $z_k$ as $s\to +\infty$. Similarly $\epsilon_0: Z_- \to S$ for the negative strip $Z_-=(-\infty,0]\times [0,1]$;

 \item \emph{{\bf Closed $1$-forms}.} Closed $1$-forms $\alpha_k$ on $S$ for $k=1,\ldots,d$ pulling back to: $0$ on $\partial S$; $dt$ via $\epsilon_k,\epsilon_0$ on $(|s|\gg 0) \subset Z_{\pm}$; $0$  via the other $\epsilon_j$ on $(s\gg 0) \subset Z_+$.

 \item  \emph{{\bf Subclosed $1$-forms}.} $1$-forms $\beta_f$ on $S$ with $d\beta_f\leq 0$ (for the complex orientation of $S$), $d\beta_f=0$ near $\partial S$, and $\beta_f$ pulls back to: $0$ on $\partial S$; $dt$ via $\epsilon_0$ on $s\ll 0$; $0$ via the other $\epsilon_k$ on $s\gg 0$. One can choose $\beta_f$ so that $d\beta_f=0$ except on a small neighbourhood of the preferred point $\phi_f$, however this choice is not necessary (see \ref{Subsection A comment about the existence of universal choices of auxiliary data}).

\end{enumerate}

The above data determines $\gamma = \sum_{k\geq 1} w_k \alpha_k + \sum_{f\in F} \beta_f$, a $1$-form on $S$ with $d\gamma\leq 0$, $d\gamma=0$ near $\partial S$. Also $\gamma$ pulls-back to: $0$ on $\partial S$; $w_j dt$ via $\epsilon_j$ on $(|s|\gg 0) \subset Z_{\pm}$. Observe that this is consistent with Stokes's theorem, $0\leq - \int d\gamma = w_0 - \sum_{k\geq 1} w_k=|F|$.

The choices of $\alpha,\beta$ can be made to vary smoothly over the moduli space \cite[Sec.2.5,2.6]{Abouzaid-Seidel} consistently with the boundary strata, by an inductive argument (we discuss this further in \ref{Subsection A comment about the existence of universal choices of auxiliary data}). We make one important technical remark about gluing:

\noindent {\bf Remark 2.} \emph{When gluing a disc $S^-$ at $z_k^-$ with a disc $S^+$ at $z_0^+$, we require the weights to agree: $w_k^-=w_0^+$. The $\alpha$ forms glue naturally. For the $\beta$ forms: let $\beta_{f^+}=\alpha_k^-$ on $S^-$, $\beta_{f^-}=0$ on $S^+$, these glue naturally to yield $\beta_f$'s on the gluing indexed by $f\in F=F^-\sqcup F^+$.
}

The last piece of auxiliary data is the almost complex structure: 
\begin{enumerate}
 \item \emph{{\bf Almost complex structures}.} A family $(I_z)_{z\in S}$ of conical structures which pull-back via $\epsilon_j$ on $|s|\gg 0$ to almost complex structures $I^w_t$ depending only on $t,w$ (which are chosen before-hand for each integer $w$). This choice can be made smoothly over the moduli space, and compatibly with gluing;

 \item \emph{{\bf Infinitesimal deformations}.} Certain deformation data $K$ for $I$ \cite[Sec.3.2]{Abouzaid-Seidel}, namely: $K$ is an endomorphism of $TE$ which anti-commutes with $I$, such that $\omega(\cdot,K\cdot)$ is symmetric (together with a condition at infinity which ensures that $J=I \exp(-IK)$ is of contact type at infinity);

 \item \emph{{\bf Regular almost complex structures.}} The $I,K$ give rise to a family of perturbed almost complex structures $J=I \exp(-IK)$, agreeing with $I$ asymptotically, and which vary smoothly over the moduli space.%
\end{enumerate}
%
\subsection{Domain reparametrizations and moduli}
\label{Subsection A comment about domain reparametrizations and moduli}
There is a (real $3$-dimensional) $PSL(2,\R)$ reparametrization action on discs by biholomorphisms which moves the boundary punctures $z_0,z_1,\ldots,z_d$. So we may for example fix the choice of parametrization by choosing the location of three of the boundary marked points, say $z_0=-1$, $z_1=e^{-\pi i/3}$, $z_2=e^{+\pi i/3}$. Indeed, the position of the remaining boundary points $z_3,\ldots,z_d$ on the anti-clockwise boundary arc from $z_2$ to $z_0$ determine a choice of $d-3$ real local coordinates for $\mathcal{R}^{d+1}$. All the auxiliary data above is chosen for a given moduli element $[S]\in \mathcal{R}^{d+1}$, rather than a parametrized choice of disc $S$, meaning: the data is constructed consistently with reparametrizations of the underlying domain $S$ (the use of strip-like parametrizations $\phi_f$ ensure that the data is constructed independently of the choice of parametrization of the domain). Given a parametrized disc $S$, there are new parameters in the auxiliary data involved in the choice of the location of the preferred points, but we emphasize that a biholomorphism of the parametrized domain with data (so pulling back all auxiliary data) gives rise to the same moduli element. So later, when we count Floer solutions for a given parametrized domain with data, the two solutions which are related by a biholomorphism of the domain with data are considered to be the same solution. In particular, for $\mathfrak{d}$ and $\mathfrak{K}$ we emphasized that there is only one moduli element, so we fixed the domain with data once and for all (input at $+1$, output at $-1$, and for $\mathfrak{K}$ we put the preferred point at $0$), and we counted solutions (for $\mathfrak{d}$ we quotiented by the residual $\R$-translational freedom on non-constant solutions and we ignored constant solutions).
%
\subsection{Existence of universal choices of auxiliary data}
\label{Subsection A comment about the existence of universal choices of auxiliary data}
%
We mentioned that the auxiliary data from \ref{Subsection the auxiliary data for wrapped category} needs to be constructed consistently over the universal family $\mathcal{R}^{d+1,\mathbf{p}}$. If one were to attempt this construction explicitly, it would mean: given a Riemann surface $S$ with strip-like ends and some additional data $\phi_f$ (the preferred points), one needs to stipulate explicit rules for constructing the one-forms $\alpha_k$ and $\beta_f$ consistently as we vary $S$ and $\phi_f$. For example, one typically picks the support of $d\beta_f$ to be located near the preferred point $\phi_f$, although this is not necessary in general. 
This construction needs to be done compatibly with gluing/breaking, i.e. over the compactification $\overline{\mathcal{R}}^{d+1,\mathbf{p}}$, so that when disc bubbling occurs the auxiliary data converges to the correct auxiliary data on the various components. 
This amounts to constructing a section of the bundle 
\begin{equation}\label{Eqn aux bundle}
\mathcal{A}ux\to \overline{\mathcal{R}}^{d+1,\mathbf{p}}
\end{equation}
 described below. It is sufficient to prove the existence of the section, without giving an explicit construction. We keep the discussion quite general, since the same argument will apply to the construction of consistent auxiliary data when we define operations, etc. We want the fibre bundle \eqref{Eqn aux bundle}
to satisfy the following properties:
\begin{enumerate}
\item \label{Item fibre 1} the fibre over $(S,\phi_f)$ is the space of all choices of auxiliary data (so $\alpha_k,\beta_f,\,$etc.),
\item \label{Item fibre 2} the bundle is locally trivial,
\item \label{Item fibre 3} the fibre is non-empty,
\item \label{Item fibre 4} the fibre is contractible.
\end{enumerate}
If this holds, then it follows that there exists a section $\overline{\mathcal{R}}^{d+1,\mathbf{p}} \to \mathcal{A}ux$, using a standard local-to-global argument \cite[Sec.12.2]{Steenrod} (the obstruction to extending a section of a fibration lives in the relative cohomology of the base but with coefficients in the homotopy groups of the fibre \cite[Sec.6.4]{Griffiths}, so we have no obstructions by \eqref{Item fibre 4}). The section is precisely a choice of universally consistent auxiliary data. The local-to-global argument gets applied inductively over the strata of $\overline{\mathcal{R}}^{d+1,\mathbf{p}}$, so that the section constructed on the lower strata gets extended when one defines the section on the higher strata, i.e. compatibly with gluing/breaking.

The problem of finding universal data therefore reduces in practice to checking conditions \eqref{Item fibre 3} and \eqref{Item fibre 4}. For the data $\alpha_k,\beta_f$: \eqref{Item fibre 3} follows easily from de Rham theory (e.g. compare \cite[Appendix A]{Ritter3}), and \eqref{Item fibre 4} follows because the space of choices of $\alpha_k,\beta_f$ is convex: linear interpolations of two sets of choices will continue to satisfy all the requirements. Two such choices of universal data can be linearly interpolated, so a standard homotopy argument implies that the constructions we make in Floer theory for those two choices are chain homotopic (and thus homological constructions will be independent of the choice of auxiliary data).

The local-to-global argument \cite[Sec.12.2]{Steenrod} is very general, and includes the above infinite dimensional fibre bundle.  If one wanted to be more explicit, one can appeal to explicit coordinates. The space $\overline{\mathcal{R}}^{d+1,\mathbf{p}}$ carries a canonical structure of a smooth manifold with corners \cite[Cor 6.5]{Abouzaid-Seidel}. In particular the marked points and strip-like ends can be trivialized, so one can locally identify the fibres of the fibre bundle near the boundary strata so that it is easy to extend smooth sections from a boundary stratum to a higher stratum by ``averaging''.

\subsection{Moduli space of pseudo-holomorphic maps}
\label{Subsection moduli space of pseudoholo maps in wrapped case}
Let $x_k$ be $X$-chords of weight $w_k$ with ends on $(L_0,L_d)$, $(L_0,L_1)$, $(L_1,L_2)$, $\ldots$, $(L_{d-1},L_d)$, respectively for $k=0,\ldots,d$, and where $L_i\in \mathrm{Ob}(\wE)$. Denote by $\mathcal{R}^{d+1,\mathbf{p}}(\mathbf{x})$ the moduli space of solutions $u:S\to E$ of
$$
(du-X\otimes \gamma)^{0,1}\equiv \tfrac{1}{2}[(du-X\otimes \gamma) + J \circ (du-X\otimes \gamma)\circ j] = 0
$$
with Lagrangian boundary conditions $u(\partial_i S)\subset L_i$ where $\partial_i S$ is the component of $\partial S$ between $z_i$ and $z_{i+1}$; and asymptotic conditions $u\circ \epsilon_k\to x_k$ as $|s|\to \infty$.
For a generic choice of $K$ the $\mathcal{R}^{d+1,\mathbf{p}}(\mathbf{x})$ are smooth manifolds of the expected dimension \cite[Thm 3.5]{Abouzaid-Seidel}. 

In the exact setup of \cite{Abouzaid-Seidel}, there is a natural compactification of $\mathcal{R}^{d+1,\mathbf{p}}(\mathbf{x})$ modeled on the compactification of $\mathcal{R}^{d+1,\mathbf{p}}$ \cite[Sec.3.5]{Abouzaid-Seidel}. The compactness relied on two ingredients: (1) a maximum principle forcing solutions to lie in $E^{\mathrm{in}}$ \cite[Sec.7.4]{Abouzaid-Seidel}; (2) an a priori energy estimate:
$$
E(u) = \int_S \tfrac{1}{2}\|du-X\otimes \gamma\|^2 \equiv \int_S u^*\omega - u^*dH\wedge \gamma \leq \int_S u^*\omega - d(u^*H\wedge \gamma) \equiv E_{\mathrm{top}}(u)
$$
using $H\geq 0$, $d\gamma\leq 0$, so by Stokes's theorem the topological energy $E_{\mathrm{top}}$ is bounded (in the exact setup $\omega=d\Theta$) in terms of the asymptotics $\mathbf{x}$ using action functionals \cite[Sec.7.2]{Abouzaid-Seidel}.

In our non-exact setup, (1) still holds since the argument only relied on exactness of $\omega$ on the conical end. We no longer have an a priori estimate for $E_{\mathrm{top}}$ since the action functionals are multi-valued. However, we will count solutions with weight $t^{\mathrm{E}_{\mathrm{top}}(u)}$ (which is a homotopy invariant of $u$ relative to the ends), so we only need compactness for $\mathcal{R}^{d+1,\mathbf{p}}(\mathbf{x};C)\subset \mathcal{R}^{d+1,\mathbf{p}}(\mathbf{x})$, which is the union of the components which have $E_{\mathrm{top}}\leq C$. This follows as in the exact case (2) as we artificially imposed an a priori bound on $E(u)$ via $E(u)\leq E_{\mathrm{top}}(u) \leq C$.

\noindent {\bf Remark.}\,\emph{By \ref{Subsection the role of monotonicity}, monotonicity ensures that moduli spaces of Floer solutions have well-behaved compactifications and eliminates the issues with multiple covers of the general theory \cite{FOOO}.}

\subsection{The wrapped $A_{\infty}$-structure}
\label{Subsection wrapped A infinity structure}
We now define, for $d\geq 2$, the maps $\mu^{d,\mathbf{p},\mathbf{w}}$:
$$
CF^*(L_{d-1},L_d;w_dH)[\mathbf{q}]\otimes \cdots \otimes CF^*(L_0,L_1;w_1H)[\mathbf{q}] \to CF^*(L_0,L_d;w_0H)[\mathbf{q}]
$$
First we determine the $\mathbf{q}^0$ coefficient of the image. An isolated $u\in \mathcal{R}^{d+1,\mathbf{p}}(x_0,x_1,\ldots,x_d)$ will contribute $\pm t^{E_{\mathrm{top}}(u)} x_0$ to  $\mu^{d,\mathbf{p},\mathbf{w}}(\mathbf{q}^{i_d}x_d\otimes \cdots \otimes \mathbf{q}^{i_1}x_1)$ where recall
$$
\boxed{E_{\mathrm{top}}  (u) = \int_S u^*\omega - d(u^*H\wedge \gamma)}
$$
and where $i_k=1$ for $k\in \mathbf{p}(F)$, $i_k=0$ for $k\notin \mathbf{p}(F)$, and it will contribute $0$ otherwise (for orientation signs, see \cite[Sec.3.8]{Abouzaid-Seidel} and the Remark below). The conditions on $\mathbf{p}(F)$ ensure that when $i_k=1$ the formal variable $\mathbf{q}^1$ plays the algebraic role of remembering that there is a preferred point on the geodesic in $S$ connecting $z_0,z_k$. For symmetry reasons \cite[Lemma 3.7]{Abouzaid-Seidel}, there is a cancellation of the counts of pseudo-holomorphic maps involving auxiliary data where more than one preferred point lies on a geodesic, so we only need to keep track of whether there is  or there isn't a preferred point on each geodesic (encoded by $\mathbf{q}^1$, $\mathbf{q}^0$). Encoding this information in the algebra is necessary, so that the breaking analysis for the family $\mathcal{R}^{d+1,\mathbf{p}}\to \mathcal{R}^{d+1}$ will indeed yield the $A_{\infty}$-relations for the $\mu^d$ defined below (see Remarks 3 and 4 below, or see \cite[Equation (61)]{Abouzaid-Seidel}).

The $\mathbf{q}^1$ coefficient of the image is determined by the requirement that the map $\mu^{d,\mathbf{p},\mathbf{w}}$ respects the $\partial_{\mathbf{q}}$ operator as follows, for $d\geq 2$: 
$$
\partial_{\mathbf{q}}\mu^{d,\mathbf{p},\mathbf{w}}(c_d\otimes \cdots \otimes c_1)=\sum_{k=1}^d (-1)^{\sigma(c)_{k+1}^{d}} \mu^{d,\mathbf{p},\mathbf{w}}(c_d\otimes\cdots\otimes\partial_{\mathbf{q}}c_k\otimes\cdots\otimes c_1),
$$
where $\sigma(\cdot)$ is defined in \ref{Subsection Grading conventions}.
From this equation, it follows that also the $\mathbf{q}^1$ coefficient of $\mu^{d,\mathbf{p},\mathbf{w}}(\mathbf{q}^{i_d}x_d\otimes \cdots \otimes \mathbf{q}^{i_1}x_1)$ is now determined (for an example, see \cite[Ex.3.14]{Abouzaid-Seidel}). The geometrical meaning of $\partial_{\mathbf{q}}$-equivariance is explained in Remark 3 below. 
Finally, recall that $\mathbf{q}^2=0$, so we have determined the image.

Summing up all $\mu^{d,\mathbf{p},\mathbf{w}}$ as $\mathbf{p},\mathbf{w}$ vary, we obtain the $A_{\infty}$-operations for $d\geq 2$:
$$
\mu^{d}: CW^*(L_{d-1},L_d)\otimes \cdots \otimes CW^*(L_0,L_1) \to CW^*(L_0,L_d).
$$
\noindent {\bf Remark.} \emph{The sum is weighted by signs \cite[Equation (75)]{Abouzaid-Seidel}: $\mu^{d,\mathbf{p},\mathbf{w}}(\mathbf{q}^{i_d}x_d\otimes \cdots \otimes \mathbf{q}^{i_1}x_1)$ is weighted $(-1)^{\dagger}$ where $\dagger=\sum_{j=1}^d j (\|x_j\|+1) + \sum_{j\in F} \sigma(x)_{j+1}^d$ (the second sum is the Koszul sign expected from viewing $\mathbf{q}$ as an operator of degree $-1$ acting from the left, the first sum is as in \cite[(4.8)]{Abouzaid}).}\\[1mm]
\noindent {\bf Remark about symmetry.} \emph{In view of the symmetry argument mentioned above, it is always understood that, when ``summing up'', we only let $\{\mathbf{p},F\}$ vary among
%
%
inclusions of subsets $$\mathbf{p}:F\subset \{1,\ldots,d\}.$$ For example, the first picture in Figure \ref{Figure popsicle} contributes to the sum, whereas if we swapped the labels $\phi_1,\phi_2$ then the picture would not be part of the sum (by symmetry, the count of those solutions would contribute the same except for a reversal in orientation sign \cite[Lemma 9.1]{Abouzaid-Seidel}).
At a more technical level, consider the associahedra in \cite[Figure 2 and 3]{Abouzaid-Seidel}: the bottom edge, which involves a bubbling with two preferred points on one geodesic, seems like a legitimate contribution to the $A_{\infty}$-equations. The key is that the preferred points are labelled, and they can converge at different speeds to the puncture where breaking occurs. But the auxiliary data (in particular the $\beta_f$) are constructed to be independent of the choice of labelling. So the broken Floer solutions contributing to the bottom edge of that associahedron will arise in pairs, depending on the order in which we place the labels $\phi_f$. The symmetry argument implies that these pairs of contributions will cancel for orientation reasons (essentially because we are reordering the tensor of two orientation lines associated to the two preferred points). A genericity argument \cite[Theorem 3.6]{Abouzaid-Seidel} rules out that the two preferred points coincide, so the solutions come in genuine pairs.
}\\[1mm]
We conclude with two technical remarks relating to gluing:\\[1mm]
\noindent {\bf Remark 3.} \emph{In Remark 1 of \ref{Subsection the auxiliary data for wrapped category} we described the gluings of the first two discs $S^-,S^+$ in Figure \ref{Figure popsicle} for four choices of ``glued'' data $\mathbf{p}$. Let us pretend $\phi_7$ is not present. The discs $S^-,S^+$ define two operators $\varphi^-,\varphi^+$,
 whose $\mathbf{q}^0$-output is non-zero only if we input terms of the form $(\mathbf{q}y_2, \mathbf{q}y_1)$, $(\mathbf{q} x_4,x_3,x_2,\mathbf{q} x_1)$ respectively. The $\mathbf{q}^0$-outputs $x^-_0,x_0^+$ of the two operators in these respective cases are then a count of isolated solutions for the relevant data. Gluing means composing operators, but $S^-$ requests a $\mathbf{q}^1$-input at the gluing puncture (namely $\mathbf{q}y_1$). The $\mathbf{q}^1$-output of $\varphi^+$ is determined by $\partial_{\mathbf{q}}$-equivariance: 
 for example $\varphi^+(\mathbf{q}x_4,\mathbf{q}x_3,x_2,\mathbf{q} x_1)=\pm\mathbf{q} x_0^+$ (ignore signs for simplicity).
So in the composition, we take $\mathbf{q}y_1=\pm\varphi^+(\mathbf{q}x_4,\mathbf{q}x_3,x_2,\mathbf{q} x_1) \pm \varphi^+(\mathbf{q}x_4,x_3,\mathbf{q}x_2,\mathbf{q}x_1)$: those correspond to two of the four gluing choices (the other two involve $\mathbf{q}^2=0$). 
For example, $\varphi^+(\mathbf{q}x_4,\mathbf{q}x_3,x_2,\mathbf{q} x_1)$ corresponds to the choice $\mathbf{p}(1)=3$, and it arises from the breaking of Figure \ref{Figure popsicle} (Right). In particular, at the level of composing operators, each term of the composition corresponds to precisely one type of breaking. So the choices of possible ``glued'' data arising from the auxiliary data are consistent with the possible breakings of a family of solutions. So distinct operator composites do correspond to distinct breakings.  
}\\[2mm]
\begin{figure}
\input{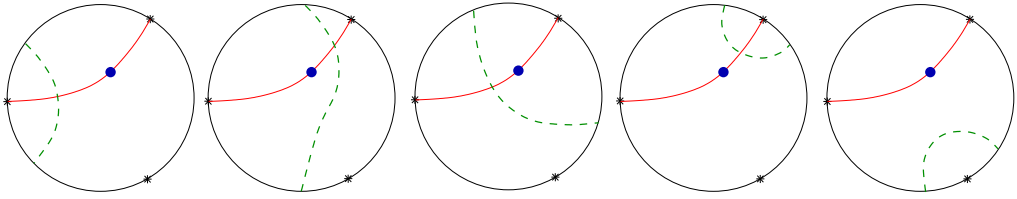}
\caption{$\mu^1\mu^2(\mathbf{q}x_2,x_1) \! +(-1)^{\|x_1\|} \mu^2(\mu^1\mathbf{q}x_2,x_1) \!+\!  \mu^2(\mathbf{q}x_2,\mu^1x_1)\!=\!0$.} 
\label{Figure cutting}
\end{figure}
\noindent {\bf Remark 4.} \emph{We now explain the $A_{\infty}$-equations for the $\mu^d$ in an example, ignoring signs. Recall $\mu^1(x) = \mathfrak{d} x$ and $\mu^1(\mathbf{q}x)=(\partial_{\mathbf{q}}+\mathfrak{K} \partial_{\mathbf{q}})(\mathbf{q}x)  + (\mathbf{q}\mathfrak{d}\partial_{\mathbf{q}}) (\mathbf{q}x)$.
Consider the $A_{\infty}$-equation $\mu^1\circ \mu^2  + \mu^2(\mu^1,\cdot) +  \mu^2(\cdot,\mu^1)=0$. Evaluate this equation on $(\mathbf{q}x_2,x_1)$. First, the $\mathbf{q}^1$-coefficient is:
 $\mathfrak{d}\mu^2(x_2,x_1)+\mu^2(\mathfrak{d}x_2,x_1)+\mu^2(x_2,\mathfrak{d}x_1)$, which is the usual $A_{\infty}$-equation without $\mathbf{q}$'s coming from the 3 possible cuts of a $3$-punctured discs with no geodesics. Secondly, the $\mathbf{q}^0$-coefficient: this has 7 terms of which two cancel (the cancelling terms are $\partial_{\mathbf{q}}\mu^2(\mathbf{q}x_2,x_1)$, $\mu^2(x_2,x_1)$ arising respectively from $\mu^1\mu^2(\mathbf{q}x_2,x_1)$, $\mu^2(\mu^1(\mathbf{q}x_2),x_1)$, ignoring signs). This leaves the five terms $\mathfrak{d}\mu^2(\mathbf{q}x_2,x_1)+\mathfrak{K} \mu^2(x_2,x_1)+\mu^2((\mathfrak{K} \partial_{\mathbf{q}}) \mathbf{q} x_2,x_1)+\mu^2((\mathbf{q}\mathfrak{d}\partial_{\mathbf{q}}) \mathbf{q} x_2,x_1) + \mu^2(\mathbf{q}x_2,\mathfrak{d}x_1)$. 
These are the breakings coming from the cuts shown
in Figure \ref{Figure cutting}. In general,  $\mathfrak{K} \partial_{\mathbf{q}}$ corresponds to the disc with 2 punctures, 1 geodesic, 1 preferred point;  $\mathbf{q}\mathfrak{d}\partial_{\mathbf{q}}$ arises as an incoming disc with $2$ punctures, 1 geodesic, no preferred point (i.e. the $\mathfrak{d}$ disc with a new geodesic drawn).}
%
\subsection{The diagonal bimodule composition maps $\mathbf{\mu^{r|s}}$}
\label{Subsection wrapped diagonal bimodule}
%
Abbreviate by $\mathcal{A}=\wE$ the $A_{\infty}$-category, and by $\bM=\mathcal{W}(E)$ the diagonal bimodule (see \ref{Subsection Bimodules}). The composition maps 
$$\mu^{r|s}: \wE(L_r,\ldots,L_0)\otimes \bM(L_0,L_0') \otimes \wE(L_0',\ldots,L_s') \to \bM(L_r,L_s'),$$
for $r,s\geq 0$ are determined by \ref{Subsection Bimodules}. For example: $\mu^{0|0}=-\mu^1$ is the Floer differential. So the moduli spaces of domains with auxiliary data are the same as those for the maps $\mu^d$. We do have an artistic preference in how we represent the parametrized domains pictorially: 
\begin{center}
\input{Bimod-comp.tex}\\
\end{center}
for $\mu^d$ we fixed the output at $-1$ leaving a $PSL(2,\R)/\R$-reparametrization freedom, now we prefer to fix two of the boundary punctures, drawn as dashes: the module output at $-1$ and the module input at $+1$ leaving a residual $\R$-translational freedom (fixing $\pm 1$).
In the picture, the map $\mathbf{p}: F \subset \{ 3',2',1',\infty,1,2\}$ is defined by the inclusion of the subset $\{3',1',\infty,2\}$.
%
\subsection{Wrapped category with local systems}
\label{Section Fukaya with local systems}
\label{Subsection Lagrangians with local systems}
\label{Subsection Novikov rings for local systems}
Let $\Lambda_0 \subset \Lambda$ be the subring of series with only non-negative powers of $t$ (see \ref{Subsection Novikov ring}). Let $\Lambda_0^{\times} \subset \Lambda_0$ be the multiplicative group of units: the series with non-zero $t^0$-coefficient. $\Lambda_0$ is a local ring with maximal ideal $\Lambda_+=\ker($evaluation $\Lambda_0 \to \K$ of the $t^0$-coefficient) consisting of the series with only strictly positive powers of $t$.

%
%
%
%
The $A_{\infty}$-category $\underline{\mathcal{W}}(E)$ equips objects $L\in \mathrm{Ob}(\wE)$ with additional data: a local system of coefficients $\underline{\Lambda}^L$ with fibre $\Lambda$ and structure group $\Lambda_0^{\times}$. So:
\begin{enumerate}
 \item $\underline{\Lambda}_x^L \cong \Lambda$ over each $x\in L$;
 \item for any path $\gamma$ in $L$ from $x$ to $y$, there is an isomorphism $P^L_{\gamma}: \underline{\Lambda}_x^L \to \underline{\Lambda}_y^L$ given by multiplication by an element in $\Lambda_0^{\times}$ depending only on $[\gamma]\in \pi_1(L,x,y)$;
 \item 
concatenating paths yields composed $P^L$'s.
\end{enumerate}

For such $L_0,L_1$ and Hamiltonian $wH$ the Floer complex is now
$$
\underline{CF}^*(L_0,L_1;wH) = \bigoplus \mathrm{Hom}_{\Lambda}(\underline{\Lambda}_{x(0)}^{L_0},\underline{\Lambda}_{x(1)}^{L_1})
$$
summing over all integer $X$-chords $x$ of weight $w$,
and a Floer solution $u: (\D\setminus \{\pm 1\},\partial\D) \to (E,L_0\cup L_1)$ asymptotic to Hamiltonian chords $x,y$ at $-1,+1$, which contributed $\pm t^{\omega[u]} x$ to the old differential $\mathfrak{d}y$, now contributes 
$$
\mathrm{Hom}_{\Lambda}(\underline{\Lambda}_{y(0)}^{L_0},\underline{\Lambda}_{y(1)}^{L_1}) \ni \phi_y %
\mapsto 
\pm t^{\omega[u]} P_{\gamma_1}^{L_1} \circ \phi_y \circ P_{\gamma_0}^{L_0} \in \mathrm{Hom}_{\Lambda}(\underline{\Lambda}_{x(0)}^{L_0},\underline{\Lambda}_{x(1)}^{L_1}),
$$
where $\gamma_0,\gamma_1$ are the paths in $L_0,L_1$ swept by $u|_{\partial\D}$ along the oriented arcs in $\partial\D$ connecting $-1$ to $+1$, and $+1$ to $-1$. This defines the new differential $\underline{\mathfrak{d}}$.

Equivalently, one considers flat $\Lambda_0^{\times}$-connections on the trivial line bundle $L \times \Lambda$ over $L$. So the holonomy is a homomorphism $\pi_1(L) \to \Lambda_0^{\times}$, which determines a class $[b_L]\in H^1(L;\Lambda_0^{\times})$, and the parallel transport $P_{\gamma}^L: \{x\}\times \Lambda \to \{y\}\times \Lambda$ for a path $\gamma$ from $x$ to $y$ is given by multiplication by $b_L[\gamma]\in \Lambda_0^{\times}$ (evaluation of cocycles on chains). Independence of the homotopy class $\gamma$ is recorded by the fact that $b_L$ is a cocycle.

The $\Lambda$-module  $\underline{CF}^*(L_0,L_1;wH) = CF^*(L_0,L_1;wH)=\oplus \Lambda x$ is generated by $X$-chords of weight $w$, except now the above $u$ contributes $\pm t^{\omega[u]}b_{L_0}[\gamma_0]b_{L_1}[\gamma_1]\, x$
to the new differential $\underline{\frak{d}} y$. For zero $b_{L_0},b_{L_1}$, which is a trivial local system, this recovers the usual $CF^*$.
If we consider a $1$-family of discs $u$ as above with fixed asymptotics $x,y$, then the homotopy class of the arcs $\gamma_0,\gamma_1$ is constant in the family, so $t^{\omega[u]}b_{L_0}[\gamma_0]b_{L_1}[\gamma_1]$ is constant. Thus $\underline{\mathfrak{d}}^2=0$ if and only if the old $\mathfrak{d}^2=0$.
For the analogue $\underline{\mathfrak{K}}$ of \ref{Subsection kappa map}: $\pm t^{\omega[u]}b_{L_0}[\gamma_0]b_{L_1}[\gamma_1]\, x$ is a contribution to 
$\underline{\mathfrak{K}} y$ precisely when $\pm t^{\omega[u]} x$ is a contribution to 
$\mathfrak{K} y$. Together with $\underline{\mathfrak{d}}$ this defines $\mu_{\underline{\mathcal{W}}(E)}^1$. 

Similarly, $\mu_{\underline{\mathcal{W}}(E)}^n$ now counts solutions $u$ with weight $\pm t^{\omega[u]}b_{L_0}[\gamma_0]\cdot b_{L_1}[\gamma_1]\cdots b_{L_n}[\gamma_n]$ where $\gamma_i$ are the images via $u$ of the oriented arcs in $\partial \D$ which land in $L_i$.


Mimicking \ref{Subsection the role of monotonicity}, the new $\underline{\mathfrak{m}}_0$'s now depend on $[b_L]\in H^1(L;\Lambda_0^{\times})$: 
$$
\underline{\mathfrak{m}}_0(L,b_L) = \sum t^{\omega[\beta]} b_L[\partial\beta]\, \mathrm{ev}_*[\mathcal{M}_1(\beta)]
\in C_{\mathrm{dim}\, L}^{\mathrm{lf}}(L;\underline{\Lambda}^L)
$$
summing over the homotopy classes $\beta\in \pi_2(E,L)$ of Maslov index $2$, where $\mathcal{M}_1(\beta)\subset \mathcal{M}_1$ corresponds to the disc bubbles $u$ in class $[u]=\beta$ (see \ref{Subsection the role of monotonicity}). As argued in \ref{Subsection the role of monotonicity}, $\mathcal{M}_1(\beta)$ is a $\mathrm{dim}(L)$-cycle so $\mathrm{ev}_*[\mathcal{M}_1(\beta)]$ is a multiple of $[L]$, and finally we define $\underline{m}_0(L,b_L)\in \Lambda_0$ by
$$
\underline{\mathfrak{m}}_0(L,b_L) = \underline{m}_0(L,b_L)\, [L].
$$
Just like in the absence of local systems, $\underline{W}(E)$ splits up according to $m_0$-values (see \ref{Subsection the role of monotonicity}).
Even though typically the $m_0$-values have changed by introducing local systems (i.e. $m_0(L) \neq \underline{m}_0(L,b_L)$), the possible $m_0$-values are still constrained to lie in the spectrum of eigenvalues of $c_1(TE)*\cdot$ because Lemma \ref{Lemma fukaya cats split according to evalues} holds also in the presence of local systems. The advantage of local systems is that a wider range of eigenvalues can now arise geometrically as $m_0$-values.

Local systems will only appear in the toric applications in Section \ref{Section Applications}, and we suppress them from the notation elsewhere.
%
%
\section{Symplectic cohomology}
\label{Section symplectic cohomology}
%
\subsection{Hamiltonian Floer cohomology}
\label{Subsection Hamiltonian Floer cohomology}

We refer to Salamon \cite{Salamon} for a precise definition of the Floer cohomology $HF^*(H^B)$ of a closed monotone symplectic manifold $(B,\omega_B)$, for a Hamiltonian $H^B: B \to \R$. \cite[Lecture 3]{Salamon} discusses the role of monotonicity. We recall that at the chain level, $CF^*(H^B)$ is generated, over the Novikov field $\Lambda$, by all $1$-periodic Hamiltonian orbits of $H^B$. This is not entirely accurate, as for time-independent Hamiltonians the $1$-orbits are not isolated: they arise in $S^1$-families, due to the freedom in the choice of basepoint. So one must make a time-dependent perturbation of $H^B$, discussed in \ref{Subsection SC Floer datum}. 
The differential $\mathfrak{d}: CF^*(H^B) \to CF^{*+1}(H^B)$ is defined as follows on generators: there is a contribution 
$$\pm t^{E_{\mathrm{top}}(u)} x_-$$
to $\mathfrak{d}(x_+)$ for each isolated (up to $\R$-translation) map $u: \R \times S^1 \to B$ solving Floer's equation
$$
\partial_s u + J^B (\partial_t u - X_{H^B})=0,
$$
with asymptotic conditions $u(s,t) \to x_{\pm}(t)$ as $s\to \pm \infty$, where $J^B$ is a fixed almost complex structure on $B$ compatible with $\omega_B$.
Above, $E_{\mathrm{top}}(u)\geq 0 \in \R$ is defined by:
$$E_{\mathrm{top}}(u) 
= \tfrac{1}{2}{\textstyle \int_{S}}\|du-X_{H^B}\otimes dt\|^2\, ds\wedge dt
= \int_S u^*\omega - d(u^*(H^B)\wedge dt) 
= {\textstyle \int_S} u^*\omega +H^B(x_-)- H^B(x_+).$$
 Up to isomorphism, the $HF^*(H^B)$ do not depend on the choice of $H^B,J^B$. Indeed, a homotopy $H^B_s,J^B_s$ which is constant for $|s|\gg 0$, induces a \emph{continuation map} $HF^*(H^B_{+\infty},J^B_{+\infty}) \to HF^*(H^B_{-\infty},J^B_{-\infty})$ by counting solutions $u$ of Floer's equation above where now $J^B_s,H^B_s$ depend on the coordinate $s\in \R$ (and we no longer have an $\R$-reparametrization action). This continuation map is always an isomorphism (the inverse is defined using the ``reversed homotopy'' $H^B_{-s},J^B_{-s}$). When $H^B$ is $C^2$-small and Morse, and $H^B,J^B$ are time-independent, then $CF^*(H^B)$ reduces to the Morse complex for $H^B$, in particular all $1$-periodic Hamiltonian orbits are constant, and so by invariance we have an isomorphism
$$
QH^*(B) \cong HF^*(H^B),
$$
which by PSS \cite{PSS} (see \ref{Subsection PSS}) intertwines the quantum product and the pair-of-pants product.

\subsection{Symplectic cohomology}
\label{Subsection symplectic cohomology}
For symplectic manifolds $E$ conical at infinity (see \ref{Subsection Symplectic manifolds conical at infinity}) $HF^*(H^E)$ depends on the choice of $H^E: E \to \R$ because the larger $H^E$ is on the conical end the likelier it is that there are many $1$-periodic Hamiltonian orbits there. The continuation maps are no longer isomorphisms, because they can no longer be reversed: only certain homotopies $H^E_s$ with $\partial_s H^E_s \leq 0$ are guaranteed to be well-defined. To construct an invariant one needs to choose a class of Hamiltonians which grow in a controlled way at infinity. We consider $H^E$
which at infinity are linear of positive slope in the radial coordinate $R$ (see \ref{Subsection Symplectic manifolds conical at infinity}). One then takes a direct limit over the continuation maps which increase the slope of $H^E$ at infinity: $HF^*(H^E_+) \to HF^*(H^E_-)$ where $\partial_s H^E_s\leq 0$. This defines the \emph{symplectic cohomology}
$$
SH^*(E) = \varinjlim HF^*(H^E),
$$
first constructed in the non-exact setup by Ritter \cite{Ritter2,Ritter4}. We refer the reader also to the original construction in the exact setup due to Viterbo \cite{Viterbo}; the survey by Seidel \cite{Seidel2}; and the construction of algebraic structures such as the pair-of-pants product by Ritter \cite{Ritter3}. We follow the conventions of \cite{Ritter4}, except we use the simpler Novikov field defined in \ref{Subsection Novikov ring} as we work in the monotone setting.

When $H^E$ is $C^2$-small on $E^{\mathrm{in}}$, Morse, time-independent, and of small enough slope on the conical end, the chain complex $CF^*(H^E)$ reduces to the Morse complex for $E$. Since this is part of the direct limit, there is a canonical map (which need not be injective/surjective)
$$
c^*:QH^*(E) \to SH^*(E).
$$
\begin{remark}
$c^*$ is a ring homomorphism, so it splits into eigensummands $c^*:QH^*(E)_{\lambda} \to SH^*(E)_{\lambda}$ indexed by the eigenvalues of quantum product by $c_1(TE)$ acting on $QH^*(E)$.
\end{remark}
\noindent \textbf{Warning (Grading).} $SH^*(E)$ is typically not $\Z$-graded. Floer cohomology is graded by the Conley-Zehnder index, which is a $\Z/2N$-grading where $N\Z = c_1(TE)(\pi_2(E))$.\\[2mm]
\indent
For the purpose of defining the open-closed string map in Section \ref{Section open-closed string map}, it is actually convenient to view also $SH^*(E)$ as a homotopy direct limit (recall \ref{Subsection the wrapped floer complex}). Namely, define 
$$\boxed{
SC^*(E)=\bigoplus_{w=1}^{\infty} CF^*(wH)[\mathbf{q}]
}
$$
with differential $\mu^1=\mu_{SC^*(E)}^1$ defined by the same formula as in Definition \ref{Definition wrapped floer complex}, 
\begin{equation}\label{Eqn differential for big SCE}
\boxed{\mu^1(x+\mathbf{q}y) = (-1)^{|x|}\mathfrak{d} x + (-1)^{|y|}(\mathbf{q}\mathfrak{d} y + \mathfrak{K}  y -y)}
\end{equation}
where $\mathfrak{d}:CF^*(wH)\to CF^{*+1}(wH)$ is the differential defining Hamiltonian Floer cohomology and $\mathfrak{K}:CF^*(wH)\to CF^*((w+1)H)$ is the Floer continuation map analogous to \ref{Subsection kappa map}. Explicitly, the auxiliary $\beta_f$ form that defines these maps can be chosen to be of the form
\begin{equation}\label{Eqn betaf for SH continuation}
\beta_f = c(s)\, dt \textrm{ where }c: \R \to [0,1], c'(s)\leq 0, \textrm{ with } c=1\textrm{ for }s\ll 0 \textrm{ and } c=0 \textrm{ for }s\gg 0
\end{equation}
and the natural choice would be to pick $c(s)\neq 0,1$ only near the circular $s$-slice of $\R \times S^1$ which contains the preferred point $\phi_f$ (although this is not strictly necessary).
Recall in general we do not necessarily need the support of $d\beta_f$ to lie near $\phi_f$. Notice the above $\beta_f$ are constructed $S^1$-invariantly, and we will use these also for the $A_{\infty}$-structure in \ref{Subsection Ainfinity product on SC}.\\ \indent
We define the product $\mu^2$ on $SC^*(E)$ and the $A_{\infty}$-operations $\mu^n$ later, in \ref{Subsection product on SH}.\\[2mm]
\textbf{Convention.} In \ref{Subsection Reeb chords and Hamiltonian chords}, we chose $H$ to have slope $1$ at infinity, but we could have rescaled $H$ by a constant $\epsilon>0$ (once and for all). From now on, we assume $H$ is Morse and $C^2$-small on $E^{\mathrm{in}}$, and linear on the conical end with slope $\epsilon>0$ smaller than the minimal Reeb period of the closed Reeb orbits in $\Sigma$. Then the only $1$-periodic orbits of $H$ lie in $E^{\mathrm{in}}$ and are constant (whilst $wH$ typically will have non-constant $1$-periodic orbits on the conical end for large $w$), and the complex $CF^*(H)$ becomes the Morse complex for $H$, so $HF^*(H)\cong QH^*(E)$. 
\begin{lemma}\label{Lemma SH is H of SC and c is inclusion}
 $SH^*(E)\cong H^*(SC^*(E);\mu^1)$ and the canonical map $c^*: HF^*(H) \to SH^*(E)$ arises as the inclusion of the subcomplex $CF^*(H)\to SC^*(E)$, where $CF^*(H)$ is the $\mathbf{q}^0$-part of the weight $w=1$ summand of $SC^*(E)=\oplus CF^*(wH)[\mathbf{q}]$.
\end{lemma}
\begin{proof}
 The first claim follows by the same arguments as in \ref{Subsection the wrapped floer complex}, the second claim follows by definition (and required the rescaling in the Convention).
\end{proof}
%
\subsection{Contractible vs Non-contractible orbits}
\label{Subsection Contractible vs noncontractible}
%
The Floer differential preserves the free homotopy class of the orbits, so one can restrict the chain complexes $CF^*(H^B)$ and $SC^*(E)$ to only contractible orbits. Denote their cohomologies by $HF^*_0(H^B)$ and $SH_0^*(E)$. 
By invariance, and considering small $H^B$ as mentioned in \ref{Subsection Hamiltonian Floer cohomology}, we deduce $HF^*_0(H^B)=HF^*(H^B)$ for any $H^B$. Similarly when $H^E$ has small slope, such as the $H$ in \ref{Subsection symplectic cohomology}, we have $HF^*(H^E)=HF^*_0(H^E)$, however this may not hold for general $H^E$. Moreover $SH_0^*(E) \subset SH^*(E)$ is a subalgebra containing $c^*(QH^*(E))$ and the unit $1=c^*(1)$, and there is a projection map $SH^*(E) \to SH^*_0(E)$ (which arises from a homomorphism of chain complexes, but it is not an algebra homomorphism). When $\pi_1(E)=1$, $SH^*(E)=SH^*_0(E)$.

\subsection{The PSS-isomorphisms}
\label{Subsection PSS}

There are mutually inverse PSS-isomorphisms \cite{PSS}
$$
\psi^+: QH^*(B) \to HF_0^*(H^B) \qquad \psi^-: HF_0^*(H^B) \to QH^*(B)
$$
defined by counting spiked planes as follows. Parametrize $\C^*=\{ z\in \C: z\neq 0 \}$ by $z=e^{-2\pi(s+it)}$ for $(s,t)\in \R \times S^1$. Consider solutions $u: \C^* \to B$ of Floer's equation
$$
\partial_s u + J^B(\partial_t u - c(s) X_{H^B}) =0
$$
where $c:\R \to [0,1]$ is a decreasing cut-off function which equals $1$ for $s\leq -2$ and equals $0$ for $s\geq -1$. Notice $u$ is $J^B$-holomorphic near the puncture $z=0$.

The \emph{geometric energy} of $u$ is $E_{\mathrm{geom}}(u)=\int_{\C^*} |\partial_s u|_{J^B}^2 \, ds\wedge dt$. If it is finite, then $u$ converges to a Hamiltonian orbit of $H^B$ as $s\to -\infty$, and $u$ extends $J^B$-holomorphically over $z=0$.

For a generic cycle $\alpha \in C_{2\dim_{\C}B-*}(B)$, define $\psi^+(\alpha)$ by counting $0$-dimensional moduli spaces of such finite energy solutions $u$ with $u(0)\in \alpha$.

The definition of $\psi^-$ is similar, except parametrize $\C^*$ by $z=e^{+2\pi(s+it)}$. In this case, solutions $u$ converge to a Hamiltonian orbit as $s\to +\infty$, and $u$ is $J^B$-holomorphic for $s\ll 0$ and extends $J^B$-holomorphically over the puncture $z=0$.
 
In \cite[Theorem 37]{Ritter4}, it is proved that there are mutually inverse PSS-isomorphisms for $E$,
$$
QH^*(E) \to HF_0^*(H^E) \qquad HF_0^*(H^E) \to QH^*(E),
$$
where $H^E: E \to \R$ is radial at infinity, $H^E=h^E(R)$, such that $h^E$ is $C^2$-small and concave (in particular $H^E$ is globally bounded). Homotoping $H$ to $H^E$, where $H$ is as in \ref{Subsection symplectic cohomology}, defines a continuation map $HF_0^*(H^E) \to HF^*(H)$ which is an isomorphism and the composite $QH^*(E) \to HF_0^*(H^E) \to HF^*(H) \to SH^*(E)$ equals the $c^*$ of \ref{Subsection symplectic cohomology} (for details, see \cite{Ritter3}). 

\subsection{Floer datum to break the $S^1$-symmetry}
\label{Subsection SC Floer datum}

Choosing $H$ generically, we may assume the $1$-orbits of $H$ are transversally non-degenerate, but we need a time-dependent perturbation to make them non-degenerate in the $S^1$-direction.
The Floer equation is $(du-X_H\otimes dt)^{0,1}=0$, and the time-dependent perturbation corresponds to replacing this equation by
$$
(du-X_H\otimes dt-X_{H_{\textrm{pert}}}\otimes c_-(s)dt-X_{H_{\textrm{pert}}}\otimes c_+(s)dt)^{0,1}=0
$$
where the \emph{Floer datum} $H_{\textrm{pert}}$ is a small time-dependent Hamiltonian supported near the $1$-orbits for $H$, ensuring that the $1$-orbit breaks into two isolated $1$-orbits \cite[Lemma 2.1]{Cieliebak-Floer}. The cut-off functions $c_-,c_+: \R \to [0,1]$ are supported respectively in the region $s\ll 0$ and $s\gg 0$. By standard arguments in Floer theory, one can show that these additional choices do not affect $HF^*(H)$ up to isomorphism.
In the non-compact setup of \ref{Subsection symplectic cohomology}, we do not need to impose that $c_{\pm}'\leq 0$ (the analogue of the condition $d\gamma\leq 0$).
This is because $H_{\mathrm{pert}}$ is compactly supported, so the maximum principle still applies at infinity since we have not changed $H$ there. So we avoid the complications \cite[Definition 4.10]{Ganatra} arising when Hamiltonians grow quadratically. We now justify the analogue of the a priori energy estimate in \ref{Subsection moduli space of pseudoholo maps in wrapped case}. 
Suppose we are counting Floer solutions $u$ of $(du-X_H\otimes \gamma)^{0,1}=0$ defined on a punctured Riemann surface $S$. Consider an end of $S$ where this equation becomes $(du - X_H\otimes w\, dt)^{0,1} = 0$, then we need to modify $X_H\otimes w\, dt=X_{wH} \otimes dt$ so that it becomes $X_{w(H+H_{\mathrm{pert},w})}\otimes dt$, where $H_{\mathrm{pert},w}:S^1 \times E \to \R$ is a small time-dependent function depending on the weight $w$ and supported near the $1$-orbits for $wH$. We can extend this to a function $H_{\mathrm{pert}}: S \times E \to \R$ supported near the ends of $S$, where it equals $c_{\pm}(s)\, H_{\mathrm{pert},w}$ if the end has weight $w$ and sign $\pm$. 
Finally, the perturbed Floer equation becomes: $(du-X_{\widetilde{H}}\otimes \gamma)^{0,1}=0$ where $\widetilde{H}=H+H_{\mathrm{pert}}$ (which now depends on $S$). 
Then the a priori energy estimate in \ref{Subsection moduli space of pseudoholo maps in wrapped case} becomes
$$
\begin{array}{rcl}
\textstyle E(u) &\equiv& \int_S u^*\omega- u^*d\widetilde{H}\wedge \gamma
\\[1mm]
& \leq & E_{\mathrm{top}}(u) + \displaystyle{\sum_{\textrm{ends}} \textrm{measure(support(}c'_{\pm}))
\cdot \sup |c'_{\pm}|\cdot  \|H_{\mathrm{pert,w}}\|_{\infty}}
\end{array}
$$
where $E_{\mathrm{top}}(u) =\int_Su^*\omega - d(u^*\widetilde{H}\wedge \gamma)$, and the support of $c_{\pm}'$ is a compact subset of $S$ whose measure is finite and independent of $u$. Above, we used that $\widetilde{H}\geq 0$ and $d\gamma\leq 0$ and that on an end, for very large $|s|$, $\gamma=w\, dt$ and $\widetilde{H}$ is $t$-dependent but not $s$-dependent.\\
\indent When we define chain level operations on $SC^*(E)=\oplus CF^*(wH)[\mathbf{q}]$, it is understood that:
\begin{itemize}
\item $CF^*(wH)$ in fact means $CF^*(w(H + H_{w,\textrm{pert}}))$ where $H_{w,\textrm{pert}}$ is the Floer datum chosen for the weight $w\geq 1$, as described above.
\item Whenever we define a chain level map (e.g. the $A_{\infty}$-structure on $SC^*(E)$, or operations involving $SC^*(E)$), we will construct an auxiliary one-form $\gamma$ determined by auxiliary forms $\alpha_k,\beta_f$.
It is always understood that the Floer equation $(du-X_H\otimes \gamma)^{0,1}=0$ defining the counts of solutions $u: S\to E$, in fact needs to be perturbed, so $X_H\otimes \gamma$ is replaced by $X_{H + H_{\textrm{pert}}}\otimes \gamma$, which equals $X_{wH} \otimes dt + X_{H_{w,\textrm{pert}}}\otimes c_{\pm}(s)dt$ at the ends.
\item The data $H_{\mathrm{pert}}$ depends on $S$, so this new auxiliary data also needs to be constructed universally and consistently, following the recipe of \ref{Subsection A comment about the existence of universal choices of auxiliary data}.
\end{itemize}

We will not mention these perturbations again, so that the notation stays under control. 
%
%
\subsection{The $A_{\infty}$-structure on $SC^*(E)$.}
\label{Subsection product on SH}
\label{Subsection Ainfinity product on SC}

Consider the moduli space of decorated domains as in the first picture below: cylinders $\R\times S^1$ with finitely many ordered positive punctures $z_n,\ldots,z_2$ along the $t=0$ line.

\begin{center}\input{SHpert2.tex}\end{center}

One can equivalently view this as the moduli space of punctured spheres, as in the second picture, where there is exactly one negative puncture $z_0$ and one distinguished positive puncture $z_1$, and ordered positive punctures $z_n,\ldots,z_2$ along the geodesic connecting $z_0$ to $z_1$, where it is understood that the cylindrical ends at the punctures must have $t=0$ lining up with the geodesic (each interior puncture carries an \emph{asymptotic marker} which determines the $t=0$ direction in the end parametrization).
 If there are three or more punctures, we can remove all parametrization freedom by considering only $\C P^1$ and fixing $z_0=0, z_2=1, z_1=\infty$ (letting $z_n,\ldots,z_2$ be ordered punctures on the real line segment $(0,1)\subset \C P^1=\C\cup \{\infty\}$).
 
 If there are only two punctures, then there is a residual $\R$-translation reparametrization freedom. The third picture above is an equivalent way to view the 3-punctured case, $n=1$, and is called a pair-of-pants. Roughly speaking, given Hamiltonian $1$-orbits $x_j$, the $x_0$-coefficient of $\mu^n(x_n,\ldots,x_1)$ is a count of Floer solutions on the domain above, taking $x_j$ to be the asymptotic condition at the puncture $z_j$.
As usual, we allow the presence of preferred points $\phi_f$ lying on the geodesic line $t=0$ (cylinder model) joining $z_0$ to a positive puncture $z_{p_f}$. It is always understood that the point $\phi_f$ belongs to the compactified cylinder where all punctures except $z_0,z_{p_f}$ have been filled in. When constructing auxiliary data, it is always understood that the $\beta_f$ forms are required to vanish at all positive punctures. 
\\[1mm]
{\bf Rotational symmetry trick.} \emph{We insist that the $\beta_f$ forms are constructed $S^1$-equivariantly in the cylinder model, universally and consistently as in \ref{Subsection A comment about the existence of universal choices of auxiliary data}. (Note this means that the support of $d\beta_f$ is not located near the preferred point $\phi_f$, but rather at best near a ``preferred circle''.) So we do not need to constrain the $\phi_f$ to the $t=0$ line, provided we work with moduli of domains: we only care about location of the $\phi_f$ up to rotating each $\phi_f$ individually.}
\\[1mm]
\indent The differential $\mu^1$ counts solutions $u: \R \times S^1 \to E$ of Floer's equation $(du-X\otimes \gamma)^{0,1}=0$ where $\gamma$ depends on the auxiliary data analogous to \ref{Subsection the auxiliary data for wrapped category}, except the ends are cylindrical rather than strip-like (as we replaced the time-interval $[0,1]$ by $S^1$). So we have a closed form $\alpha$ which pulls back to $dt$ in the cylindrical ends near the punctures; and a preferred point $\phi_f$ (if present) on the geodesic $t=0$ corresponds to the translation $\R \times S^1 \to \R \times S^1$, $\phi_f\mapsto (0,0)$. These translations allow one to consistently build $1$-forms $\beta_f$ with $d\beta_f\leq 0$, and by the above convention we build these $S^1$-invariantly, for example as in \eqref{Eqn betaf for SH continuation}.

We now discuss the construction of the pair-of-pants product $\mu^2$, so the case $n=2$, but the same argument with additional punctures defines more generally the $A_{\infty}$-structure maps $\mu^n: SC^*(E)^{\otimes n} \to SC^*(E)$ for $n\geq 2$. 
In analogy with $\mu^2_{\wE}$, one constructs
$$\mu^2=\mu^2_{SC^*(E)}:SC^*(E)\otimes SC^*(E) \to SC^*(E),$$
except the domain is now the $3$-punctured sphere (equivalently the cylinder with a positive puncture at $(s,t)=(0,0)$), rather than a $3$-punctured disc.
%

The two positive punctures $1$ and $\infty$ receive inputs from $CF^*(w_1H)[\mathbf{q}], CF^*(w_{\infty} H)[\mathbf{q}]$, whereas the negative puncture $0$ receives the output from $CF^*(w_0H)[\mathbf{q}]$. The count of solutions on these domains, with auxiliary data, determine the $\mathbf{q}^0$-contribution of 
$$\mu^2:CF^*(w_1 H)[\mathbf{q}] \otimes CF^*(w_{\infty} H)[\mathbf{q}] \to CF^*(w_0H)[\mathbf{q}].$$
 Finally, requiring $\mu^2$ to be $\partial_{\mathbf{q}}$-equivariant determines the $\mathbf{q}^1$ contributions.
 
 As usual, the auxiliary form $\gamma= \sum w_k \alpha_k + \sum \beta_f$ which determines the Floer equation pulls back to $w_1\, dt$, $w_{\infty}\, dt$, $w_0\, dt$ in the parametrizations near $1,\infty,0$ (where $w_0=w_1+w_{\infty}+|F|$). There are two closed forms $\alpha_1$, $\alpha_{\infty}$ associated to $1,\infty$ satisfying $\alpha_{\infty}=0$ near $1$ and $\alpha_{\infty}=dt$ near $0$ and $\infty$; $\alpha_1=0$ near $\infty$, and $\alpha_1=dt$ near $0$ and $1$. Along the $t=0$ line in the cylinder model, there are two geodesics: one connecting $0$ to $1$, and the other connecting $0$ to $\infty$. These geodesics may carry preferred points $\phi_f$, to which one associates $S^1$-invariant $\beta_f$ forms with $d\beta_f\leq 0$ which satisfy: $\beta_f=0$ near $1,\infty$ and $\beta_f=dt$ near $0$.
 
 As usual, the auxiliary data is built universally and consistently for all $\mu^n$, $n\geq 1$, following the argument in \ref{Subsection A comment about the existence of universal choices of auxiliary data}. The bundle $\mathcal{A}ux$ of auxiliary data is over the compactification of the moduli space of all pairs-of-pants. The additional strata of the compactification arise because of the bubbling off of a twice-punctured sphere (i.e. a cylinder) at some end. As usual one constructs the auxiliary data inductively over the strata of the compactification so that the universal auxiliary data that we already constructed for $\mu^1$ will be extended to define the universal auxiliary data for $\mu^2$. This is necessary to ensure that $\mu^2$ will be a chain map (i.e. the cylinders which break off in a $1$-family of pairs-of-pants, carry the correct universal auxiliary data chosen for $\mu^1$). Just as in \ref{Subsection A comment about the existence of universal choices of auxiliary data}, the fibre of $\mathcal{A}ux$ is non-empty by de Rham theory (a proof can be found in \cite[Appendix A]{Ritter3}) and it is contractible, indeed convex, because one can linearly interpolate the data whilst preserving all the conditions. Linear interpolation of two sets of universal auxiliary data and a homotopy argument implies that the $\mu^2$ structure on the cohomology $SH^*(E)$ does not depend on the auxiliary choices.

That $\mu^2$ is a chain map follows by considering the possible breaking of a $1$-dimensional moduli space with the above data. Namely, one cylinder can break off at one of the three punctures, carrying an $\R$-symmetry (this is an index 1 cylinder, and for index reasons there cannot be more than one cylinder breaking off). If the cylinder carries no preferred points then this count contributes to the $\mathfrak{d}$-part of $\mu^1$, if it carries one preferred point then this contributes to the $\mathfrak{K}$-part of $\mu^1$. In the definition of $\mu^2$ and in this breaking analysis, one can always forget configurations where there is more than one preferred point on a geodesic as the count of such configurations will cancel for the same reason as in \cite[Lemma 3.7]{Abouzaid-Seidel}.

\noindent {\bf Technical Remark.} \emph{The symmetry argument \cite[Lemma 3.7]{Abouzaid-Seidel} relies on showing that the transposition of two preferred points lying on the same geodesic switches the orientation sign with which the isolated solution is counted \cite[Lemma 9.1]{Abouzaid-Seidel}. This works generally both for moduli spaces of punctured discs and for moduli spaces of punctured genus zero Riemann surfaces, using that the auxiliary data has been constructed universally and consistently.}

That $\mu^2$ is graded-commutative on cohomology follows by considering a suitable family of domains in which one makes the punctures $1,\infty$ move in $\CP^1 \setminus \{0\}$ until their positions are interchanged (we need to break the $S^1$-invariance of the $\beta_f$ forms in this argument, and recall by the homotopy argument above that on cohomology we do not care whether the universal auxiliary data before/after the movement coincides).
%
\subsection{Associativity and unitality}
\label{Subsection product on SH is associative}
%
For general reasons, as in \ref{Subsection the wrapped floer complex}, the telescope construction for $SC^*(E)$ corresponds on cohomology to the direct limit construction $SH^*(E)=\varinjlim HF^*(H)$. In particular, $\mathfrak{d}$-cocycles $y\in HF^*(wH)$ generate $SH^*(E)$ (allowing $w$ to vary), and cocycles $y,\mathfrak{K}y$ are identified on cohomology via the boundary of $\mathbf{q}y$, which encodes the fact that generators of the direct limit are identified if they are related by applying continuation maps $\mathfrak{K}$. Thus on cohomology $\mu^2$ is determined by the usual pair-of-pants product $HF^*(w_1 H)\otimes HF^*(w_{\infty} H)\to HF^*(w_0 H)$ constructed as in \cite{Ritter3}.
Thus cohomological properties of $\mu^2$ follow from \cite{Ritter3} by general TQFT arguments, in particular: the product is associative and unital, where the cohomological unit for $SH^*(E)$ is the image $c^*(1)$ of $1$ under $c^*:QH^*(E)\to SH^*(E)$. Indeed, $c^*:QH^*(E)\to SH^*(E)$ is a unital ring homomorphism (this was proved in \cite{Ritter3,Ritter4}). A clean way to prove the latter is carried out in \ref{Subsection Category W0E and S0CE}, where we enlarge the chain complex $SC^*(E)\subset SC_{\Diamond}^*(E)$ to also include a $QC^*(E)$-summand, so that $c^*$ becomes  the inclusion of this summand at the chain level.

Associativity can also be proved directly, by verifying the $A_{\infty}$-relation relating $\mu^1,\mu^2,\mu^3$, obtained from the bubbling analysis for the domains in \ref{Subsection Ainfinity product on SC} for $n=3$.

We sketch the argument from \cite{Ritter3} that the product $\mu^2$ is unital. The unit is represented by the image of $1$ under the PSS isomorphism $QH^*(E) \to HF^*(H)$. This corresponds to a count of Floer solutions $\C\to E$ with a tautological intersection condition $[E]$ at the origin, using a form $\gamma$ such as $\gamma=c(s)\, dt$ where $c:\R\to [0,1]$ is an increasing cut-off function with $c=0$ near $s=-\infty$ (so the Floer solution is $J$-holomorphic near the origin), $c=1$ near $s=\infty$, and where $e^{2\pi(s+it)}$ parametrizes $\C\setminus \{0\}$.  The particular choice of $\gamma$ only affects the representative by a chain homotopy, so choices do not matter on cohomology. We call these solutions the \emph{$E$-spiked discs}. Gluing the $E$-spiked discs onto a pair-of-pants at a $CF^*(H)$ input puncture, yields the Floer solutions defined on a cylinder which on cohomology determine the continuation map $HF^*(w_{\infty} H) \to HF^*(w_0 H)$. But this map represents the identity map in the direct limit $SH^*(E)$ (generators get identified via continuation maps), which shows that $\mu^2(1,\cdot)$ represents the identity map on $SH^*(E)$, and similarly for $\mu^2(\cdot,1)$.
%
\subsection{$SH^*(E)$ is a module}
\label{Subsection product on SH is a module}
$SH^*(E)$ is an $SH^*(E)$-module by acting by $\mu^2$, and a $QH^*(E)$-module by using the ring homomorphism $c^*:QH^*(E)\to SH^*(E)$ and then acting by $\mu^2$.
\section{The compact Fukaya category}
\label{Section Fukaya category}
%
\subsection{The compact Fukaya category $\bE$}
\label{Subsection Fukaya categories B and E}
\label{Subsection Fukaya category of E}
Let $E$ be a monotone symplectic manifold conical at infinity. Denote the (compact) Fukaya category by $$\bA=\scrF(E).$$ 
The objects form the subset of $\mathrm{Ob}(\wE)$ consisting of the \emph{closed} monotone Lagrangians (and recall from \ref{Subsection the role of monotonicity} that we fix a common $m_0$-value) . Since the Lagrangians are compact, the maximum principle from \ref{Subsection moduli space of pseudoholo maps in wrapped case} forces all Floer solutions to lie in a compact subset determined by the Lagrangians, so the non-compactness of $E$ is unproblematic and the constructions for $\scrF(E)$ also carry over to $\scrF(B)$ for closed monotone symplectic manifolds $B$.

\begin{lemma}\label{Lemma HF=HW for closed Lags}
$HF^*(L_0,L_1)\cong HW^*(L_0,L_1)$ for any closed Lagrangians $L_0,L_1\in \mathrm{Ob}(\mathcal{F}(E))$.
\end{lemma}
\begin{proof}
$HW^*(L_0,L_1)\cong \varinjlim HF^*(L_0,L_1;H)$ taking the direct limit over continuation maps $HF^*(L_0,L_1;H) \to HF^*(L_0,L_1;H')$ for linear Hamiltonians, where $H'$ has larger slope at infinity than $H$. For $L_0,L_1$ closed, we can choose a cofinal sequence of Hamiltonians, so that each $H'$ is obtained from the previous $H$ by increasing the slope at infinity outside of a large compact set $C$ containing $L_0,L_1$. By the maximum principle, the Floer continuation solutions are trapped in $C$. But in $C$, the homotopy $H_s$ from $H'$ to $H$ is $s$-independent, so Floer solutions would have an $\R$-symmetry in $s$ if they were not constant. Since they are rigid they must be constant. So these continuation maps are identity maps at the chain level.
\end{proof}

Suppose first that $L_0,\ldots,L_n \in \mathrm{Ob}(\bA)$ are pairwise transverse. Then no Hamiltonians arise in the definition of 
$$\mu^n_{\bA}: \bA(L_n,\ldots,L_0) \to \bA(L_n,L_0).$$ 
Namely, $\mu^n_{\bA}(x_n,\ldots,x_1)$ counts $J$-holomorphic discs $u: S \to E$ modulo reparametrization (weighing counts by $t^{E_{\mathrm{top}}(u)}$ as usual, with signs as in \ref{Subsection wrapped A infinity structure}), where $S$ is illustrated below. 
\begin{center}\input{AinfinityCompact.tex}\end{center}
On the boundary there are marked points (not punctures) which are free to move subject to being distinct and ordered anticlockwise; they receive inputs $x_j\in CF^*(L_{j-1},L_{j})$. We chose to fix the position of the output at $-1$ (which gives the output in $CF^*(L_0,L_n)$), and following \ref{Subsection A comment about domain reparametrizations and moduli} we count moduli of solutions, under the residual (real $2$-dimensional) $PSL(2,\R)/\R$-reparametrization freedom which moves the input boundary marked points whilst fixing $-1$.

Compared with the wrapped case, formally this would correspond to taking $w=0$ and allowing only $\mathbf{q}^0$ terms (in the wrapped case we only allow weights $w\geq 1$). So moduli spaces are defined using the equation $du^{0,1}=0$ rather than $(du-X\otimes \gamma)^{0,1}=0$, and $CF^*(L_i,L_j)$ is generated by the intersections $L_i\cap L_j$. So there are no auxiliary $\alpha,\beta,\gamma$ forms, nor do we have strip-like end parametrizations. Just as for $\mathfrak{d}$ in \ref{Subsection the floer differential}, the domain defining $\mu^1_{\bA}$ has two boundary marked points which we fix at $\pm 1$ leaving a residual $\R$-symmetry, and we count only non-constant solutions modulo this symmetry. The $A_{\infty}$-equations follow by a standard bubbling argument depicted in the picture of \ref{Subsection Grading conventions}.
%
\subsection{The Floer datum: non-transverse Lagrangians}
\label{Subsection When Lagrangians do not intersect transversely}
The ordinary Lagrangian Floer complexes are related to the wrapped versions via
$$
CF^*(L_i,L_j;H) \equiv CF^*(\varphi_H^1(L_i),L_j)
$$
where $\varphi_H^1$ is the time $1$ flow of $X_H$, and where one must choose suitable almost complex structures (for example \cite{Ritter3} explains this). When $L_i,L_j$ are not transverse, then $CF^*(L_i,L_j)$ is in fact \emph{defined} as above using a compactly supported $H$ so that $\varphi_H^1(L_i),L_j$ are transverse. Setting up the $A_{\infty}$-category to keep track of the Hamiltonian perturbation data requires substantial work, carried out in detail by Seidel \cite[Sec.(8e),(8f),(8j)]{Seidel}.
 So we only recall the key ideas. This perturbation was not required in the wrapped case as the weights $w\neq 0$.\\[1mm]
\noindent {\bf Remark.} \emph{For $L=L_i=L_j$ one can use a Morse-Bott model using $C_*(L)=CF^*(L,L)$ as carried out by Auroux \cite{Auroux}. However it would be a non-trivial task to implement such a Morse-Bott theory approach more generally for the whole $A_{\infty}$-category $\mathcal{F}(E)$. }\\[1mm]
\indent
In the Floer datum approach \cite{Seidel}, loosely speaking, the $A_{\infty}$-operations $\mu^k_{\bA}$ ``carry'' the Hamiltonian perturbation data which is used to perturb the $J$-holomorphic curve equation which defines the moduli spaces, to achieve transversality. The equation is defined on the associahedron and a point on the associahedron determines the following data: boundary marked points; strip-like ends; Hamiltonians on each strip-like end such that the flow moves $L$ to $\varphi_{H}^1(L)$ so that transversality is achieved. The Floer chain complexes $CF^*(L_0,L_1)\equiv CF^*(L_0,L_1;H_{L_0,L_1},J_{L_0,L_1})$ are generated by the Hamiltonian chords using auxiliary data
$$H_{L_0,L_1}=H_{L_1,L_0} \qquad\qquad J_{L_0,L_1}=J_{L_1,L_0}$$
called \emph{Floer datum}. This is chosen generically once and for all for each pair $L_0,L_1$ (the generators then correspond bijectively to the transverse intersections $\varphi_H^1(L_0)\cap L_1$). Outside of a compact set containing $L_0,L_1$ we choose $H_{L_0,L_1}=0$ and $J_{L_0,L_1}=J$ of contact type, so the maximum principle applies at infinity (this would not work in the wrapped setup for non-compact $L_0,L_1$ because a non-compactly supported $H_{L_0,L_1}$ is usually needed to make $\varphi_H^1(L_0)$, $L_1$ transverse). For transverse $L_0,L_1$ one can pick $H_{L_0,L_1}=0$ and $J_{L_0,L_1}=J$. Even if one restricted the category to a subcollection $L_i$ of pairwise transverse Lagrangians, one would still need a Floer datum for pairs of type $L,L$.

The \emph{perturbation datum} is a universally consistent choice (over the associahedron) of interpolating data $H,J$ for each operation $\mu^k_{\bA}$ with given Lagrangians $L_0,\ldots,L_k$, which agrees with the Floer datum on the strip-like ends. So in fact we are still working, in general, with boundary punctures rather than boundary marked points; choices of strip-like ends; and the equation $(du-X\otimes \gamma)^{0,1}=0$ rather than $du^{0,1}=0$. However, now we can always choose $\gamma=dt$ near the strip-like ends of the domain of $u$, we do not require $d\gamma\leq 0$ and we can choose $\gamma$ to be zero away from the strip-like ends (this would have been impossible in the wrapped setup where $d\gamma\leq 0$ is required by the maximum principle). On the strip-like ends we can ensure the equation is $(du-X_{H_{L_0,L_1}}\otimes c(s)dt)^{0,1}=0$ for the given Floer datum, for a cut-off function $c:\R \to [0,1]$ which is $1$ for $|s|\gg 0$ and equals $0$ for $|s|$ close to $0$, so explicitly:
$$\partial_s u + J_s (\partial_t u - c(s)X_{H_{L_0,L_1}})=0,$$
where $J_s=J_{L_0,L_1}$ for $|s|\gg 0$, $J_s=J$ for $|s|$ close to $0$. An a priori energy estimate follows by the same argument as in \ref{Subsection SC Floer datum}. Unlike the wrapped setup, we do not need a telescope construction for the Floer chain complex (compare Lemma \ref{Lemma HF=HW for closed Lags}).\\
\noindent {\bf Remark.}\,\emph{In the wrapped setup, for non-compact $L_0,L_1$ the maximum principle requires $H_{L_0,L_1}$ to have non-zero slope $\epsilon$ at infinity. Stokes's theorem and $d\gamma\leq 0$ force the output of $\mu^n$ to involve a Hamiltonian of slope at least $n\epsilon$. But for large $n$, $CF^*(L_0,L_1;n H_{L_0,L_1})$ is typically not quasi-isomorphic to $CF^*(L_0,L_1;H_{L_0,L_1})$. So the telescope construction is necessary.}
%
%
\subsection{The bimodule structure}
\label{Subsection The bimodule structure for curly B}

Just as in \ref{Subsection wrapped diagonal bimodule}, $\mu^d$ determines via \ref{Subsection Bimodules} the composition maps
$$\mu_{\bM}^{r|s}: \bA(L_r,\ldots,L_0)\otimes \bM(L_0,L_0') \otimes \bA(L_0',\ldots,L_s') \to \bM(L_r,L_s').$$
 for the diagonal $\bA$-bimodule $\bM=\bA$. Below, the first picture is $\mu_{\bM}^{3|2}:\bA(L_3,L_2,L_1,L_0)\otimes \bM(L_0,L_0') \otimes \bA(L_0',L_1',L_2') \to \bM(L_3,L_2')$, and the bubbling pictures show the $A_{\infty}$-relations.
\begin{center}
\input{Bimod_Comp_B1b.tex} \qquad \qquad \input{Ainfinity_B1b.tex}\\
\end{center}
%
\subsection{The open-closed string map}
\label{Subsection open closed map OC_B}
To define $\mathrm{OC}:\mathrm{CC}_*(\bA,\bM)\to QC^*(E)$, we let
$$
\mathrm{OC}^n: \bM(L_0,L_n) \otimes \bA(L_n,\ldots,L_0) \to QC^*(E)
$$
be the count of maps as in the first picture (weighing counts by $t^{E_{\mathrm{top}}(u)}$, with signs as in \ref{Subsection open-closed with q coefficients}):
\begin{center}
\input{OC1b.tex}
\end{center}
 The underlying parametrized domains are the same as those used to define $\mu_{\bA}^{n}$ except we have an additional marked point $q_c$ in the interior, which determines the output in $QH^*(E)$.
\begin{remark}\label{Remark cannot pull back data via forgetful map}
We cannot simply pull back the $PSL(2,\R)$-invariant auxiliary data used for $\mu_{\bA}$ via the forgetful map, which forgets the new marked point $q_c$.
Consider the moduli spaces of domains (modulo the $PSL(2,\R)$-reparametrization action, whereas later we will fix this choice by placing $q_c$ at $0$ and the module input at $+1$). If $q_c$ converges to a boundary point which is not a puncture, then a $J$-holomorphic disc bubble appears which is attached to the main component at a boundary node rather than a boundary puncture. This is not consistent with the required breaking, which is a bubble carrying a module input puncture (counted by $\mathrm{OC}^0$).
So given the auxiliary data obtained via the forgetful map, we must deform the data when $q_c$ gets close to a boundary point which is not a puncture (equivalently, if we fix the domain parametrization so that $q_c=0$, this occurs when all boundary punctures converge to the module input puncture). As in \ref{Subsection A comment about the existence of universal choices of auxiliary data}, the universal and consistent construction of auxiliary data then requires the data on those bubbles to be the data inductively chosen for $\mathrm{OC}^0$.
\end{remark}
We will fix the parametrization of the domain, removing all reparametrization freedom, by fixing at $0$ the interior marked point, and at $+1$ the positive puncture which receives the module input from $\bM(L_0,L_n)$. The other $n$ positive boundary punctures can move freely subject to being distinct and ordered; they receive the $\bA(L_n,\ldots,L_0)$ inputs of $\mathrm{OC}^n$.
Thus $\mathrm{OC}^n$ counts isolated maps $u: \D \setminus\{\textrm{punctures}\}\to E$ solving Floer's equation, subject to an intersection condition at $0$.
If all $L_i,L_{i+1}$ were transverse, Floer's equation is simply the $J$-holomorphic equation $du^{0,1}=0$, and we could use boundary marked points rather than boundary punctures. In general, however, as discussed in \ref{Subsection When Lagrangians do not intersect transversely} on each strip-like end of a boundary puncture the equation is of type $(du-X\otimes \gamma)^{0,1}=0$ using the relevant Floer datum, with $\gamma=dt$ near infinity and $\gamma=0$ away from the end.

The intersection condition at $0$ follows the usual routine: we fix a generic choice of intersection-dual cycles $C_j^{\vee}$ in $C_*(E)$ for the basis of lf-cycles $C_j$ that we chose to define $QC^*(E)=QH^*(E)$; then $\mathrm{OC}(\textrm{input})=\sum n_jC_j$ where $n_j$ is the weighted count of solutions above (with boundary conditions determined by the input) which pass through $C_j^{\vee}$ at $0$. The second picture above shows what the image of $u$ looks like in the target manifold $E$.\\[2mm]
\noindent {\bf Technical remark about transversality.} \emph{The condition that the solution ``passes'' through $C_j^{\vee}$ is  an intersection condition between $C_j^{\vee}$ and the evaluation map at the centre of the disk. However, some care is needed because the evaluation does not sweep a pseudo-cycle (the
moduli space has a boundary of real codimension one). If a disk or a sphere bubbles off (simple or not), then by monotonicity the index of the main component drops at least by 2, whilst remaining generically regular. Hence generically that main component will not hit $C_j^{\vee}$.
}\\[2mm]
To define
$$
\mathrm{OC}^n: \bM(L_0,L_n) \otimes \bA(L_n,\ldots,L_0) \to HF^*(H^E)
$$
we modify the above setup by making $0$ a negative puncture (here negative means: with a choice of cylindrical parametrization $(-\infty,0]\times S^1$ near the puncture, where the $S^1$ is parametrized by the argument of $z$ and the puncture corresponds to the limit at $-\infty$). 
The equation on the cylindrical end near the interior puncture is also of type $(du-X\otimes \gamma)^{0,1}=0$, indeed
$(du-X_{H^E}\otimes c(s)dt)^{0,1}=0$, where recall the Hamiltonian $H^E$ at infinity is linear of positive slope. Explicitly this is the Floer continuation equation 
$$\partial_s u + J (\partial_t u - c(s)X_{H^E})=0$$
in this cylindrical parametrization, where $c:\R \to [0,1]$ is a decreasing cut-off function which equals $1$ for $s\ll 0$ and equals $0$ for $s$ close to $0$ (so the equation near $s=0$ is $\partial_s u + J \partial_t u=0$ and thus agrees with the condition that $u$ is $J$-holomorphic away from the punctures). The third picture above is an equivalent way of viewing the map $\mathrm{OC}:\mathrm{CC}_*(\bA,\bM)\to CF^*(H^E)$: the domain is a half-infinite cylinder $(-\infty,0]\times S^1$ with $n+1$ punctures on the circle $\{0\}\times S^1$, one of which is a distinguished puncture at $(0,0)$, and we require the maps to satisfy the above Floer equation, so the left asymptotic circle $\{-\infty\}\times S^1$ plays the role of the original puncture at $0$ from the first picture. 

That $\mathrm{OC}$ is a chain map follows by considering the possible degenerations of the $1$-dimensional moduli spaces: (1) if boundary punctures move together, we obtain $\mu_{\bA}^d$ or $\mu_{\bM}^{r|s}$ contributions; and (2) if the Floer trajectory breaks (on the region where $c(s) \equiv 1$), we obtain contributions described by the Floer differential $\mathfrak{d}: CF^*(H^E) \to CF^{*+1}(H^E)$ (in the above case, when we map $\mathrm{OC}$ into $QC^*(E)$, type (2) breaking would have corresponded to a limiting solution $u$ which intersects the boundary chain $\partial C_j^{\vee}$, but in our setup $C_j^{\vee}$ are cycles so this does not contribute).
Thus we obtain the following two maps on homology:
\begin{equation}\label{Equation two OC maps}
\mathrm{OC}: \mathrm{HH}_*(\bA) \to QH^*(E) \qquad\quad \mathrm{OC}: \mathrm{HH}_*(\bA) \to HF^*(H^E).
\end{equation}
\begin{lemma}\label{Lemma OC compatible with c}
The two maps in \eqref{Equation two OC maps} commute via the PSS map $c^*: QH^*(E) \to HF^*(H^E)$.
\end{lemma}
\begin{proof}
This is a standard gluing argument. At the chain level, a chain homotopy is involved because the glued data from the composite needs to be homotoped to the universal data which defines the second map. A clean way to implement this argument, is to place a new preferred point $\phi_f\in (0,\frac{1}{2}]$ on the geodesic $(0,+1)$ with an associated $\beta_f$ form (as in the wrapped constructions), so that the new $\gamma$ satisfies $\gamma=\beta_f=dt$ near $0$ in the coordinates of the cylindrical end.
As $\phi_f\to 0$ we can ensure that the support of $\beta_f$ itself also converges to $0$ (so $\beta_f=0$ except near $0$), and we can ensure that $\beta_f$ becomes the $S^1$-invariant form that we use to define the PSS map (compare the unitality argument at the end of \ref{Subsection product on SH is associative}).
Consider a $1$-family of Floer solutions on those domains. Suppose $\phi_f\to 0$. Then a Floer continuation cylinder breaks off at $0$ involving Hamiltonians $H^E$ at one end and $0$ at the other ($\beta_f=0$ on the main component). Since the solution has finite energy, a standard removal of singularities argument implies that the Floer continuation solution extends holomorphically over the puncture at the $0$ end, and thus these cylinders are counted by a PSS map $QC^*(E)\to CF^*(H^E)$. The main component in this degeneration has $\beta_f=0$, so these define $\mathrm{OC}:\mathrm{CC}_*(\bA,\bM)\to QC^*(E)$. Other possible degenerations of the $1$-family are: a disc bubble not carrying $\phi_f$ which is counted by the bar differential on $\mathrm{CC}_*(\bA,\bM)$, or a Floer cylinder not carrying $\phi_f$  which breaks off at $0$ which is counted by the Floer differential. In both these cases the main component (which carries $\phi_f$) is a rigid solution on a domain as above, and the count of these rigid solutions is the chain homotopy. If $\phi_f\to \frac{1}{2}$ and no bubbling or breaking occurs, we obtain solutions for the above domains with $\phi_f=\frac{1}{2}$ and this is (one of the possible definitions of) the map $\mathrm{OC}:\mathrm{CC}_*(\bA,\bM)\to CF^*(H^E)$.
\end{proof}
\subsection{The closed-open string map}
\label{Subsection closed open map CO_B}
In the notation of \ref{Subsection Hochschild cohomology},
$$
\mathrm{CO}^d: QC^*(E) \to \prod_{n\geq 0} \prod \mathrm{Hom}(\bA(L_n,\ldots,L_0),\bM(L_n,L_0)[d])
$$
counts Floer solutions (with weight $t^{E_{\mathrm{top}}(u)}$ and sign as in \ref{Subsection closed-open with q coefficients}) on the following domain.

\begin{center}
\input{CO1b.tex}
\end{center}
Except for the following two modifications, this is the same setup as for $\mathrm{OC}$ (in particular, as in Remark \ref{Remark cannot pull back data via forgetful map}, one can pull back the auxiliary data from $\mu_{\bA}$ and then make a perturbation of the data when all boundary punctures converge to the module output).
\begin{enumerate}

\item to define $\mathrm{CO}^d(C_j)$, the intersection condition at the marked point $0$ is now with the lf-cycle $C_j$ (as opposed to the dual cycle $C_j^{\vee}$ used for $\mathrm{OC}$);

\item there is a negative boundary puncture at $-1$ for the module output in $\bM(L_n,L_0)$ (instead of the module input puncture at $+1$  used for $\mathrm{OC}$).
%

\end{enumerate}

The proof that $\mathrm{CO}$ is a chain map is analogous to the $\mathrm{OC}$ case. So
$$
\mathrm{CO}: QH^*(E) \to \mathrm{HH}^*(\bA).
$$
%
%
%
\section{The open-closed and closed-open string maps}
\label{Section open-closed string map}
%
\subsection{The open-closed string map}
\label{Subsection open-closed with q coefficients}
\begin{figure}[h]
\input{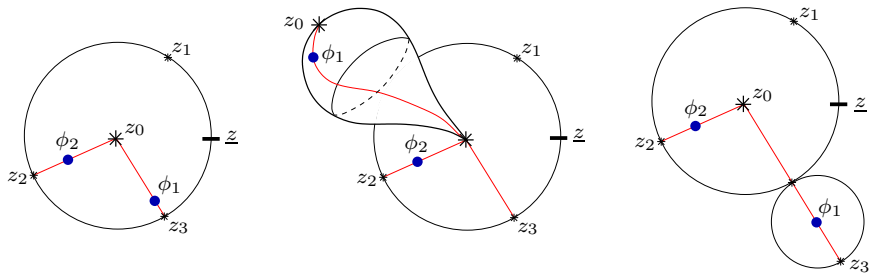}
\caption{Left: $n=3$, $F=\{1,2\}$, $p_1=3$, $p_2=2$. Right: two broken configurations caused because the preferred point $\phi_1$ converged to a puncture.} 
\label{Figure popsicle 2}
\end{figure}
We will use the notation from \ref{Subsection Grading conventions}-\ref{Subsection Bimodules}: we denote the wrapped category and its diagonal bimodule by 
\begin{equation}\label{Eqn curlyA and curlyM}
\bA=\mathcal{W}(E) \qquad \qquad \mathcal{M}=\bA=\mathcal{W}(E).
\end{equation}

As always, $E$ is monotone or exact, and in the monotone setting we always mean $\mathcal{W}_{\lambda}(M)$ for a fixed $m_0$-value $\lambda$. We now define the analogue of \ref{Subsection open closed map OC_B} for $\wE$, $$\mathrm{OC}: \mathrm{CC}_*(\mathcal{A},\mathcal{M}) \to SC^*(E).$$ We first construct
$$
\begin{array}{rl}
\mathrm{OC}^{n,\mathbf{p},\mathbf{w}}: & CF^*(L_n,L_0;w_{n+1}H)[\mathbf{q}] \otimes CF^*(L_{n-1},L_n;w_n H)[\mathbf{q}]\otimes \cdots \otimes CF^*(L_0,L_1;w_1H)[\mathbf{q}] \\
 &
 \longrightarrow CF^*(w_0H)[\mathbf{q}].
\end{array}
$$
%
Observe Figure \ref{Figure popsicle 2}. Consider discs $S$ as in \ref{Subsection the auxiliary data for wrapped category} for $d+1=n+2$ except now, mimicking \ref{Subsection open closed map OC_B}:
\begin{enumerate}[labelsep=*,leftmargin=1.5pc]
\setlength\itemsep{0em}
\item\label{ItemFixParamtr}  We fix the parametrization of the domain by taking $S=\D\setminus\{\textrm{punctures}\}$, $\underline{z}=z_{n+1}=+1$ and $z_0=0$ (compare  \ref{Subsection open closed map OC_B} and \ref{Subsection A comment about domain reparametrizations and moduli}).

\item The last positive boundary puncture $z_{n+1}=\underline{z}$ is distinguished (it receives the module input from $\mathrm{CC}_*(\mathcal{A},\mathcal{M})$). We denote it by a dash in Figure \ref{Figure popsicle 2}.

\item The interior puncture $z_0$ is \emph{negative} (it receives the output from $CF^*(w_0H)[\mathbf{q}]\subset SC^*(E)$), meaning we make a choice $\epsilon_0: (-\infty,0]\times S^1\to S$ of a cylindrical end parametrization, with $s\to -\infty$ corresponding to $z_0$. We fix $\epsilon_0(s,t)=\frac{1}{2}e^{2\pi (s+it)}$ to be the parametrization by polar coordinates of $\{z\in \C: 0<|z|\leq 1/2\}\subset \D\setminus \{\textrm{punctures}\}$. This fixes the $S^1$ parametrization at the asymptote (generators of $CF^*(w_0 H)$ are \emph{parametrized} $1$-orbits).

\item We no longer interpret preferred points $\phi_f$ in terms of maps (compare
 \ref{Subsection the auxiliary data for wrapped category}). The map formalism is not useful when geodesics connect to an interior puncture because the location of a boundary puncture together with an interior puncture fix the disc's reparametrization freedom, leaving no freedom to move the preferred point.

\item The preferred point $\phi_f$ can move along the geodesic connecting $z_0$ to $z_{p_f}$. By the Rotational symmetry trick in \ref{Subsection Ainfinity product on SC} it does not matter at which angle the $\phi_f$ come into $z_0$ (the $\beta_f$ are $S^1$-invariant near $0$). When compactifying the moduli spaces, a cylinder can break off at $0$ carrying preferred points $\phi_f$. By \ref{Subsection Ainfinity product on SC}, the location of each $\phi_f$ on the cylinder is only defined up to rotation, but once we glue the cylinder back onto the main component we fix these rotational-freedoms by placing $\phi_f$ on the geodesic connecting $z_0$, $z_{p_f}$.

\item Just as in Remark \ref{Remark cannot pull back data via forgetful map}, if one uses the auxiliary data pulled back from the data for $\mu_{\bA}$, one must deform this data when all punctures converge to the module input.
Consistent universal auxiliary data is constructed as in \ref{Subsection A comment about the existence of universal choices of auxiliary data}, inductively over strata of the compactification of the above moduli space of decorated domains. On disc bubbles which do not carry the interior puncture $z_0$, we require that the auxiliary data agrees with the data defining $\mu_{\bA}$ and $\mu_{\mathcal{M}}$. On sphere bubbles at $z_0$ we require that the data agrees with that defining $\mu_{SC^*(E)}^1$. (These requirements will ensure that $\mathrm{OC}$ is a chain map.)

\end{enumerate}

Analogously to the $\mathcal{R}^{d+1,\mathbf{p}}(\mathbf{x})$ defined in \ref{Subsection moduli space of pseudoholo maps in wrapped case}, except for the aforementioned modifications, we obtain a moduli space of maps solving $(du-X\otimes \gamma)^{0,1}=0$ which we denote $\mathcal{OC}^{n,\mathbf{p}}(\mathbf{x})$. In particular, note that $(du-X\otimes \gamma)^{0,1}=0$ turns into $\partial_s u + J(\partial_t u - w_0 X) = 0$ via $\epsilon_0$ near $z_0$.

To define $\mathrm{OC}^{n,\mathbf{p},\mathbf{w}}$ we mimic the definition of the $\mu^{n,\mathbf{p},\mathbf{w}}$ in \ref{Subsection wrapped A infinity structure}.
For the same symmetry reasons \cite[Lemma 3.7]{Abouzaid-Seidel}, there is a cancellation of the counts of solutions when more than one preferred point lies on a geodesic, so we only need to keep track of whether there is  or there isn't a preferred point on each geodesic (encoded by $\mathbf{q}^1$, $\mathbf{q}^0$).
 First we determine the ${\mathbf{q}}^0$ coefficient of the image. 
An isolated $u\in \mathcal{OC}^{n,\mathbf{p}}(x_0,x_1,\ldots,x_n,\underline{x})$ will contribute $\pm t^{E_{\mathrm{top}}(u)} x_0$ to  $\mathrm{OC}^{n,\mathbf{p},\mathbf{w}}({\mathbf{q}}^{i_{n+1}}\underline{x} \otimes {\mathbf{q}}^{i_{n}}x_n \otimes \cdots \otimes {\mathbf{q}}^{i_1}x_1)$ where $i_k=1$ for $k\in \mathbf{p}(F)$, $i_k=0$ for $k\notin \mathbf{p}(F)$, and it will contribute $0$ otherwise (for signs mimic \cite[Sec.3.8]{Abouzaid-Seidel}, see below). The ${\mathbf{q}}^1$ coefficient of the image is determined by the requirement that the map $\mu^{d,\mathbf{p},\mathbf{w}}$ respects the $\partial_{\mathbf{q}}$ operator.\\[1mm]
\indent Finally, sum up the $\mathrm{OC}^{n,\mathbf{p},\mathbf{w}}$ as $\mathbf{p},\mathbf{w}$ vary to obtain:
$$
\mathrm{OC}^n : \mathcal{M}(L_0,L_n)\otimes \bA(L_n,\ldots,L_0) \to  SC^*(E)
$$
\noindent\textbf{Example.}\;In the first picture of Figure \ref{Figure popsicle 2}, after relabeling $\phi_1,\phi_2$ to $\phi_3,\phi_2$ (as we only sum over inclusions $\mathbf{p}:F\subset \{1,\ldots,n+1\}$), an isolated solution $u$ with asymptotic conditions $x_j$ at $z_j$ will contribute $\pm t^{E(u)} x_0\in CF^*(w_0H)\subset SC^*(E)$ to  $\mathrm{OC}^3(\underline{x}\otimes \mathbf{q}x_3\otimes \mathbf{q}x_2 \otimes x_1)$.\\[1mm]
\noindent {\bf Remark.} \emph{The sum is weighted by signs \cite[(5.24)]{Abouzaid} like in \ref{Subsection wrapped A infinity structure}: $\mathrm{OC}^{n,\mathbf{p},\mathbf{w}}({\mathbf{q}}^{i_{n+1}}\underline{x} \otimes {\mathbf{q}}^{i_{n}}x_n \otimes \cdots \otimes {\mathbf{q}}^{i_1}x_1)$ is weighted $(-1)^{\dagger\dagger}$ where $$\textstyle \dagger\dagger=(n+1)(\mathrm{deg}(\underline{x})+1)+\sum_{j=1}^{n} j (\|x_j\|+1) + \sum_{j\in F} (\sigma(x)_{j+1}^{n} +\mathrm{deg}(\underline{x}))$$ (the second sum is the Koszul sign expected from viewing $\mathbf{q}$ as an operator of degree $-1$ acting from the left, the first sum is as in \cite[(5.24)]{Abouzaid}, and $\mathrm{deg}(\underline{x})=\|\underline{x}\|+1$ is unreduced).}

\begin{lemma}
 $\mathrm{OC}$ is a chain map, so it induces $\mathrm{OC}: \mathrm{HH}_*(\wE) \to SH^*(E)$.
\end{lemma}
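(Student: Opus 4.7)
The plan is to analyze the codimension-one boundary strata of the compactified $1$-dimensional components of the moduli spaces $\overline{\mathcal{OC}}^{n,\mathbf{p}}(\mathbf{x})$ and to match them, term by term, with the two sides of the chain-map identity
$$
\mathrm{OC}_E \circ b \;=\; \mu^1_{SC^*(E)} \circ \mathrm{OC}_E
$$
on $\mathrm{CC}_*(\wE)$, where $b$ is the cyclic bar differential of Section \ref{Subsection cyclic bar complex} and $\mu^1_{SC^*(E)}$ is the telescope differential of Section \ref{Subsection symplectic cohomology}.

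First I would set up compactness by restricting to the locus $\overline{\mathcal{OC}}^{n,\mathbf{p}}(\mathbf{x};C)$ with $E_{\mathrm{top}}\le C$, exactly as in Section \ref{Subsection moduli space of pseudoholo maps in wrapped case}: the inequality $E(u)\le E_{\mathrm{top}}(u)\le C$ controls the geometric energy, the maximum principle of \cite[Sec.7d]{Abouzaid-Seidel} keeps the boundary in a compact region, and a variant of the ``no escape'' argument used in the proof of Theorem \ref{Theorem acceleration commutative diagram} prevents the interior negative puncture $z_0$ from running off to infinity; counting with Novikov weight $t^{E_{\mathrm{top}}(u)}$ then produces well-defined elements of $\Lambda$. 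Sphere bubbling is ruled out by monotonicity and an index count. Disc bubbling a priori gives $\mathfrak{m}_0$-contributions, but by the Convention of Section \ref{Subsection the role of monotonicity} all Lagrangians in a given eigensummand share the same $m_0$-value, so these contributions cancel in exactly the same pattern as they do in the bar differential on $\mathrm{CC}_*(\wE)$.

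Next I would enumerate the remaining codimension-one degenerations. \textbf{(a)} A cluster of consecutive \emph{non-distinguished} boundary punctures $z_S,\ldots,z_R$ bubbles off, yielding a $\mu_{\wE}^{R-S+1}$-disc glued to a smaller $\mathrm{OC}$-disc; this matches the first ($\mu_{\bB}$-type) sum in $b$. \textbf{(b)} A cluster that includes the distinguished puncture $\underline{z}$ bubbles off (``cyclic breaking''), so that the bubble contains $\underline{z}$ together with some inputs wrapping through the cyclic order; this reproduces the second, bimodule-type sum of $b$, with the cyclic sign $\dagger$ of Section \ref{Subsection cyclic bar complex}, following the orientation conventions of \cite[Sec.3h]{Abouzaid-Seidel}. \textbf{(c)} Floer-cylinder breaking at $z_0$ produces the Hamiltonian Floer differential $\mathfrak{d}$ on the output factor $CF^*(w_0H)$, accounting for the $\mathfrak{d}$ and $\mathbf{q}\mathfrak{d}$ pieces of $\mu^1_{SC^*(E)}$. \textbf{(d)} A preferred point $\phi_f$ escapes into either a boundary puncture or the interior puncture $z_0$, as depicted in Figure \ref{Figure popsicle 2} (Right): escape into a boundary puncture is handled exactly as in Remarks 3--4 of Section \ref{Subsection wrapped A infinity structure}, competing gluings being matched by the $\partial_{\mathbf{q}}$-equivariance used to define the $\mathbf{q}^1$-components and redundant configurations with two preferred points on the same geodesic cancelling by the symmetry of \cite[Lemma 3.7]{Abouzaid-Seidel}; escape into $z_0$ raises $w_0$ by one and produces the continuation map $\mathfrak{K}$ (together with the identity-subtraction $-y$) appearing in $\mu^1_{SC^*(E)}$.

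The hardest step will be \textbf{(d)}: I need to verify that the purely algebraic $\partial_{\mathbf{q}}$-extensions of $\mathrm{OC}_E^{n,\mathbf{p},\mathbf{w}}$ built from the $\mathbf{q}^0$-counts faithfully reproduce the continuation-cum-identity combination $\mathbf{q}\mathfrak{d}\partial_{\mathbf{q}} + \mathfrak{K}\partial_{\mathbf{q}} - \partial_{\mathbf{q}}$ appearing in the telescope differential on $SC^*(E)$. This is the analogue for $\mathrm{OC}_E$ of the bookkeeping carried out for $\mu^d$ in Remarks 3--4 of Section \ref{Subsection wrapped A infinity structure}; once it is established, summing \textbf{(a)}--\textbf{(d)} over all $\mathbf{p}$ and $\mathbf{w}$ yields the chain-map identity, and passing to cohomology gives the induced map $\mathrm{OC}_E:\mathrm{HH}_*(\wE)\to SH^*(E)$.
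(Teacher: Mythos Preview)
Your proposal is correct and follows essentially the same approach as the paper's sketch: analyze the codimension-one boundary strata of the $1$-dimensional components of $\overline{\mathcal{OC}}^{n,\mathbf{p}}(\mathbf{x})$ and match them against the two sides of $\mathrm{OC}_E\circ b = \mu^1_{SC^*(E)}\circ \mathrm{OC}_E$, with the preferred-point escapes handled via $\partial_{\mathbf{q}}$-equivariance as in Remarks 3--4 of Section \ref{Subsection wrapped A infinity structure}. The paper's proof is only a sketch illustrated by the single example of Figure \ref{Figure popsicle 2}; your enumeration (a)--(d) is a more systematic write-up of the same argument, and your identification of (d) as the crux (a preferred point escaping into $z_0$ bubbles off a two-punctured sphere contributing the $\mathfrak{K}$-part of $\mu^1_{SC^*(E)}$) is exactly what the paper highlights in its example.
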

 \begin{proof}[Sketch Proof]
  The proof that $\mathrm{OC}$ is a chain map is similar to the proof of the $A_{\infty}$-relations for $\mu_{\bA}$ (see Remarks 3 and 4 in 
\ref{Subsection wrapped A infinity structure}). 
We illustrate the argument in a concrete example. Consider picture 1 in Figure \ref{Figure popsicle 2}. If $\phi_1$ moves towards $z_3$, the limit (picture 3) contributes to $\mathrm{OC}^3 (\underline{x}\otimes (\mathfrak{K}\partial_{\mathbf{q}})(\mathbf{q}x_3) \otimes \mathbf{q}x_2 \otimes x_1)$ which is a part of $\mathrm{OC}^3\circ (\underline{\mathrm{id}}\otimes \mu^1_{\bA} \otimes \mathrm{id}^{\otimes 2})$. If $\phi_1$ moves towards $z_0$, in the limit (picture 2) a $2$-punctured sphere bubbles off with $\phi_1$ lying on the geodesic joining the two punctures. This breaking contributes to $\mu^1_{\mathrm{SC^*(E)}}(\mathrm{OC}^3(\underline{x}\otimes x_3 \otimes \mathbf{q}x_2 \otimes x_1))$. Now consider the $1$-dimensional components of the compactified moduli space $\mathcal{OC}^{n,\mathbf{p}}(\mathbf{x})$. The oriented sum of the possible boundary degenerations sums up to zero; this corresponds to the equation $\mathrm{OC}\circ b = \mu^1_{SC^*(E)} \circ \mathrm{OC}$ where $b$ is the bar differential. 
 \end{proof}
 
\subsection{The closed-open string map}
\label{Subsection closed-open with q coefficients}
\begin{figure}[h]
\input{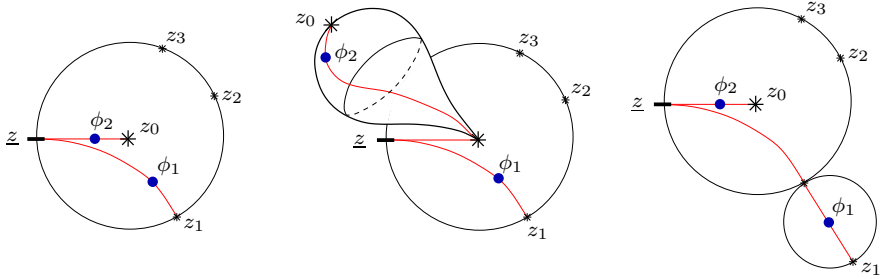}
\caption{Left: $n=3$, $F=\{1,2\}$, $p_1=1$, $p_2=0$. Right: two broken configurations in which a preferred point converged to a puncture.} 
\label{Figure popsicle 2CO}
\end{figure}
We now define the analogue of \ref{Subsection closed open map CO_B} for $\wE$,
\begin{equation}\label{Eqn CO from SC to CC bA M}
\mathrm{CO}: SC^*(E) \to \mathrm{CC}^*(\bA,\mathcal{M}).
\end{equation}
 We first construct
$$
\begin{array}{l}
\mathrm{CO}^{n,\mathbf{p},\mathbf{w}}:  CF^*(w_0H)[\mathbf{q}] \longrightarrow\\ \;
\qquad
\mathrm{Hom}\,(\,
CF^*(L_{n-1},L_n;w_n H)[\mathbf{q}]\otimes \cdots \otimes CF^*(L_0,L_1;w_1H)[\mathbf{q}],
\;\;
CF^*(L_n,L_0;w_{n+1}H)[\mathbf{q}]\,)
\end{array}
$$
This involves discs $S$ as in \ref{Subsection the auxiliary data for wrapped category} for $d+1=n+2$ except now mimicking \ref{Subsection closed open map CO_B}:

\begin{enumerate}[labelsep=*,leftmargin=1.5pc]
\setlength\itemsep{0em}

\item\label{ItemFixParamtr2}  We fix the parametrization of the domain by taking $S=\D\setminus\{\textrm{punctures}\}$, $\underline{z}=z_{n+1}=-1$, $z_0=0$ (compare  \ref{Subsection closed open map CO_B} and \ref{Subsection A comment about domain reparametrizations and moduli}).

\item $\underline{z}=-1$ is a \emph{negative} boundary puncture, and it is distinguished (it receives the module output from $CF^*(L_n,L_0;w_{n+1}H)[\mathbf{q}]$). We denote it by a dash in Figure
\ref{Figure popsicle 2CO}. The other boundary punctures, $z_1,\ldots,z_n$, are positive.

\item The interior puncture $z_0$ is \emph{positive} (it receives the $1$-orbit input from $CF^*(w_0H)[\mathbf{q}]\subset SC^*(E)$), so we make a choice $\epsilon_0: [0,\infty)\times S^1\to S$ of a cylindrical end parametrization, with $s\to +\infty$ corresponding to $z_0$. We fix $\epsilon_0(s,t)=\frac{1}{2}e^{2\pi (-s+it)}$ to be the parametrization by polar coordinates of $\{z\in \C: 0<|z|\leq 1/2\}\subset \D\setminus \{\textrm{punctures}\}$. This fixes the $S^1$ parametrization at the asymptote (generators of $CF^*(w_0 H)$ are \emph{parametrized} $1$-orbits).
To $z_0$ we associate a closed form $\alpha_0$, and the subclosed forms $\beta_f$ satisfying $p_f=0$.

\item By the Rotational symmetry trick (\ref{Subsection Ainfinity product on SC}) it does not matter that the preferred points on the geodesic $(-1,0)$ connecting $\underline{z},z_0$ do not line up with the $t=0$ direction (see \ref{Subsection open-closed with q coefficients}). 

\item  Just as in Remark \ref{Remark cannot pull back data via forgetful map}, if one uses the auxiliary data pulled back from the data for $\mu_{\bA}$, one must deform this data when all punctures converge to the module output. Analogously to \ref{Subsection open-closed with q coefficients}, one constructs consistent universal auxiliary data following \ref{Subsection A comment about the existence of universal choices of auxiliary data}, by extending the auxiliary data prescribed on bubbles counted by $\mu_{\bA}$, $\mu_{\mathcal{M}}$, $\mu_{SC^*(E)}^1$.

\item For $p_f\neq 0$, so preferred points on the geodesics connecting $\underline{z}$ to $z_{p_f}$, the corresponding map (compare \ref{Subsection open closed map OC_B}) is $\phi_f: S\to \D\setminus \{-1,1\}$, $\phi_f(\underline{z})=-1$, $\phi_f(z_{p_f})=1$, and we identify the map with the associated preferred point $\phi_f(0)$. 
The $\phi_f\in \D$ belong to the open unit disc with the puncture $z_0=0$ filled in.
That a collision between $\phi_f$ and $z_0$ is not a problem is implicitly concealed in the construction of the universal data, as $\beta_f$ is required to vanish at $z_0$ (recall $\gamma=w_0\, dt$ near $z_0$), so the support of $d\beta_f$ will not touch $z_0$.

\end{enumerate}

The analogous discussion as in \ref{Subsection open-closed with q coefficients} holds for $\mathrm{CO}$. Namely it involves taking a signed sum over the choices of $\mathbf{p},\mathbf{w}$, then extending linearly (since $SC^*(E)=\oplus CF^*(w_0H)[\mathbf{q}]$), and then taking the product of all $\mathrm{Hom}$ spaces as we let $n$ and the $L_j$ vary. This yields \eqref{Eqn CO from SC to CC bA M}, and thus
\begin{equation}\label{Eqn CO on cohomology}
\mathrm{CO}: SH^*(E) \to \mathrm{HH}^*(\wE).
\end{equation}
\noindent {\bf Remark.} \emph{As in \ref{Subsection open-closed with q coefficients}, the sum over $\mathbf{p},\mathbf{w}$ involves signs: $\mathrm{CO}^{n,\mathbf{p},\mathbf{w}}({\mathbf{q}}^{i_{0}}x_0)({\mathbf{q}}^{i_{n}}x_n \otimes \cdots \otimes {\mathbf{q}}^{i_1}x_1)$ is weighted $(-1)^{\star}$ where $\star=\sum_{j=1}^{n} j (\|x_j\|+1) + \sum_{j\in F} \sigma(x)_{j+1}^{n}$.}\\[1mm]
\indent The $0$-part of \eqref{Eqn CO from SC to CC bA M}, for $L\in \mathrm{Ob}(\wE)$
is
$
\mathrm{CO}^0:SC^*(E)\to \mathrm{Hom}(\K, CW^*(L,L)).
$
The $\mathrm{CC}^*$-differential $\delta$ on those homomorphisms is just the differential $\mu_{\mathcal{M}}^{0|0}=-\mu^1_{\bA}$ acting on $CW^*(L,L)$. Thus, evaluating at $1\in \K$, the map $\mathrm{CO}^0(\cdot)(1)$ defines
\begin{equation}\label{Eqn COzero from SH to HW}
\mathrm{CO}^0: SH^*(E) \to HW^*(L,L).
\end{equation}
\begin{remark}\label{Remark CO from QH to wrapped} One can also define a map
\begin{equation}\label{Eqn QH to HH WE}
\mathrm{CO}: QH^*(E)\to \mathrm{HH}^*(\bA) 
\end{equation}
by composing $c^*:QH^*(E)\to SH^*(E)$ with $\mathrm{CO}:SH^*(E)\to \mathrm{HH}^*(\bA)$. In particular, the $0$-part $\mathrm{CO}^0=\mathrm{CO}^0(\cdot)(1)$ of \eqref{Eqn QH to HH WE} yields
\begin{equation}\label{Eqn CO0 from QH to HW}
\mathrm{CO}^0: QH^*(E) \to HW^*(L,L).
\end{equation}
 Arguing as in the proof of Lemma \ref{Lemma OC compatible with c}, \eqref{Eqn QH to HH WE} can be defined at the chain level as before, except:
\begin{enumerate}
\item we replace the interior puncture at $z_0=0$ by an interior marked point, with lf-cycle intersection condition given by the $QC^*(E)$ input (compare \ref{Subsection closed open map CO_B}),
\item we place a \textrm{fixed} preferred point $\phi_{\textrm{fixed}}$ on the geodesic $(-1,0)$, say at $\phi_{\textrm{fixed}}=-1/2$ (the choice of location of $\phi_{\textrm{fixed}}$ and the choice of associated subclosed form $\beta_{\textrm{fixed}}$, will not matter up to chain homotopy, by a standard homotopy argument).
\end{enumerate}
The second condition is necessary, otherwise \eqref{Eqn CO0 from QH to HW} would have output weight zero, which is not allowed.
The role of $\phi_{\textrm{fixed}}$ is essentially analogous to the Floer datum for the compact category $\scrF(E)$, which is necessary when defining $\mathrm{CO}^0: QH^*(E)\to HF^*(L,L)$ for $L\in \mathrm{Ob}(\scrF(E))$.
\end{remark}
%
%
\section{The acceleration functor}
\label{Section acceleration functor}
\subsection{The $A_{\infty}$-categories $\mathbf{\mathcal{F}_{\Diamond}(E)},\mathbf{\mathcal{W}_{\Diamond}(E)}$ and the cochain complex $\mathbf{SC_{\Diamond}^*(E)}$}
\label{Subsection Category W0E and S0CE}

We will now construct an $A_{\infty}$-functor $\mathcal{AF}:\scrF(E) \to \wE$ from the compact category to the wrapped category. On objects, $\mathcal{AF}(L)=L$ is the inclusion of the compact Lagrangians $L\in \mathrm{Ob}(\scrF(E))\subset \mathrm{Ob}(\wE)$.
On morphism spaces there is no obvious inclusion: we cannot in general allow weight $w=0$ in Definition \ref{Definition wrapped floer complex}, because the Floer datum approach of \ref{Subsection When Lagrangians do not intersect transversely} does not work for non-compact Lagrangians.
The key idea will be to allow weight $w=0$ only when $L_0,L_1$ are compact Lagrangians, which is all we need to define $\mathcal{AF}$.

The $A_{\infty}$-category $\mathcal{W}_{\Diamond}(E)$ has the same objects as $\mathrm{Ob}(\wE)$ and the same morphism spaces $CW^*(L_0,L_1)$ as $\wE$ except when $L_0,L_1$ are both compact, in which case 
  $$\boxed{\begin{array}{rcl}\mathrm{hom}_{\mathcal{W}_{\Diamond}(E)}(L_0,L_1)&=&CF^*(L_0,L_1)[\mathbf{q}] \,\oplus\, CW^*(L_0,L_1)\\[1mm]
&=&   CF^*(L_0,L_1)[\mathbf{q}] \,\oplus\, \bigoplus_{w=1}^{\infty} CF^*(L_0,L_1;wH)[\mathbf{q}].\end{array}}$$ 
  The $A_{\infty}$-operations are defined by the same disc counts as in the construction of $\wE$, with the novelty that if one of the boundary punctures carries an input from $CF^*(L_0,L_1)[\mathbf{q}]$ then no closed $1$-form $\alpha$ is associated to that puncture (see \ref{Subsection the auxiliary data for wrapped category}), so $(du-X\otimes \gamma)^{0,1}=0$ turns into the $J$-holomorphic equation $du^{0,1}=0$ near that puncture when $L_0,L_1$ are transverse (since the solution has finite energy it extends continuously over the puncture, so that puncture can be replaced by a marked point receiving as input an intersection point from $L_0\cap L_1$). When $L_0,L_1$ are not transverse then the equation becomes $(du-X_{H_{L_0,L_1}}\otimes c(s)dt)^{0,1}=0$ on the strip-like end of that puncture, using the Floer datum machinery of \ref{Subsection When Lagrangians do not intersect transversely}.

To extend the differential $\mu^1$ of Definition \ref{Definition wrapped floer complex} to $CF^*(L_0,L_1)[\mathbf{q}]$, we need to define
$$
\mathfrak{d}: CF^*(L_0,L_1) \to CF^{*+1}(L_0,L_1) \quad\textrm{ and }\quad
\mathfrak{K}: CF^*(L_0,L_1) \to CF^*(L_0,L_1;H).
$$
The first (following \ref{Subsection the floer differential}) is precisely the Floer differential as defined for the compact Fukaya category (in particular, using a Floer datum when $L_0,L_1$ are not transverse, see \ref{Subsection When Lagrangians do not intersect transversely}). The second (following \ref{Subsection kappa map}) 
counts finite-energy Floer continuation solutions on the strip $\R \times [0,1]$, where the homotopy $H_s$ interpolates $H_s=H$ for $s\ll 0$ 
with $H_s=0$ for $s\gg 0$ (this becomes $H_s=H_{L_0,L_1}$ for $s\gg 0$ using the Floer datum, when $L_0,L_1$ are not transverse). In the transverse setup, the solutions for the $\mathfrak{K}$ map extend continuously over the puncture at $s=+\infty$. We remark that on cohomology, the map $$\mathfrak{K}:HF^*(L_0,L_1) \to HF^*(L_0,L_1;H)$$ is the analogue of the PSS map for open strings (compare \ref{Subsection PSS}).

Let $\mathcal{F}_{\Diamond}(E)$ be the full subcategory of $\mathcal{W}_{\Diamond}(E)$ whose objects are the compact Lagrangians. So $\mathcal{F}_{\Diamond}(E)$ enlarges the morphism spaces of $\scrF(E)$ from $CF^*(L_0,L_1)$ to
$\mathrm{hom}_{\mathcal{W}_{\Diamond}}(L_0,L_1)$.

Similarly, enlarge $SC^*(E)$ from \ref{Subsection symplectic cohomology} to
$$\boxed{
\begin{array}{rcl}
SC_{\Diamond}^*(E)&=&QC^*(E)[\mathbf{q}] \,\oplus\, SC^*(E)\\[1mm] &=&
QC^*(E)[\mathbf{q}]\,\oplus\, \bigoplus_{w=1}^{\infty} CF^*(wH)[\mathbf{q}].
\end{array}}
$$
We can extend the product $\mu^2$, by allowing an insertion from $QC^*(E)$ at a puncture: this means that no $\alpha$ form is associated to that puncture, so the Floer equation becomes holomorphic near that puncture, and as we only count finite-energy solutions this implies that the solution extends holomorphically over that puncture. In particular, if both of the two positive punctures defining $\mu^2$ involve $QC^*(E)$-inputs, and there are no preferred geodesics, then $\gamma=0$ so $\mu^2$ counts holomorphic spheres and coincides with the quantum product of the $QC^*(E)$-inputs.

To extend $\mu^1=\mu^1_{SC^*(E)}$ defined in \eqref{Eqn differential for big SCE} to $QC^*(E)[\mathbf{q}]$, we again need to define
$$
\mathfrak{d}: QC^*(E) \to QC^{*+1}(E) \quad\textrm{ and }\quad
\mathfrak{K}: QC^*(E) \to CF^*(H).
$$
Let $\mathfrak{d}$ be the differential for the chain complex $QC^*(E)$: in our convention, we pick locally finite cycles representing a basis for $H_*^{lf}(E)\cong H^{2n-*}(E)$, so $\mathfrak{d}=0$. Let $\mathfrak{K}$ be the Floer continuation map for the cylinder $\R \times S^1$, with $H_s=H$ for $s\ll 0$ and $H_s=0$ for $s\gg 0$, with the lf-cycle intersection condition at $s=+\infty$ (finite-energy implies the solution holomorphically extends over the puncture at $s=+\infty$). On cohomology, $\mathfrak{K}$ is the PSS map from \ref{Subsection PSS},
$$
\mathfrak{K}: QH^*(E) \to HF^*(H).
$$
The natural inclusion on subcomplexes $SC^*(E)\to SC_{\Diamond}^*(E)$ yields an isomorphism
\begin{equation}\label{Eqn iso from SH to SH Diamond}
SH^*(E)\to SH_{\Diamond}^*(E)
\end{equation}
on cohomology. This is because the telescope construction corresponds to a direct limit on cohomology, so we can compare cocycles for large slopes (indeed slope $w\geq 1$ suffices), but in that case the two complexes coincide.
There is a natural map
\begin{equation}\label{Eqn c from QH to SH Diamond}
c^*: QH^*(E) \to SH_{\Diamond}^*(E)
\end{equation}
induced by the inclusion of the subcomplex $QC^*(E)\to SC_{\Diamond}^*(E)$, and our previous comments about the $\mu^2$ map imply that $c^*$ is an algebra homomorphism. On cohomology $c^*$ is unital since $c^*[E]$ is the unit for $SH_{\Diamond}^*(E)$ 
(by the same proof as in \ref{Subsection product on SH is associative}). Also, \eqref{Eqn c from QH to SH Diamond} factorizes as
$
c^*: QH^*(E) \to SH^*(E) \to SH_{\Diamond}^*(E)
$
 through the usual $c^*$ map and the isomorphism \eqref{Eqn iso from SH to SH Diamond} (at the chain level, \eqref{Eqn c from QH to SH Diamond} and that factorization are homotopic via the homotopy $x\mapsto x\, \mathbf{q}$).

Finally, one can extend the string maps compatibly with the above enlargements,
$$
\mathrm{OC}:\mathrm{HH}_*(\mathcal{W}_{\Diamond}(E))\to SH_{\Diamond}^*(E) \quad \textrm{ and } \quad
\mathrm{CO}: QH^*(E)\to SH_{\Diamond}^*(E) \to \mathrm{HH}^*(\mathcal{W}_{\Diamond}(E)),
$$
by allowing zero-weight insertions for pairs of compact Lagrangians $L_i,L_{i+1}$, using the Floer datum \ref{Subsection When Lagrangians do not intersect transversely} when $L_i,L_{i+1}$ are not transverse. To define $\mathrm{CO}:SH_{\Diamond}^*(E) \to \mathrm{HH}^*(\mathcal{W}_{\Diamond}(E))$ we use  Remark \ref{Remark CO from QH to wrapped}: we always place a fixed preferred point $\phi_{\textrm{fixed}}$ at $-1/2$, whenever $z_0=0$ receives an lf-cycle intersection condition from $QC^*(E)$. We still allow a (free) preferred point to lie on $(-1,0)$; this does not cause symmetry cancellations because $\phi_{\textrm{fixed}}$ is fixed. 
%
\subsection{The acceleration functor $\mathbf{\mathcal{AF}}$}
\label{Subsection Acceleration functor}

Define the $A_{\infty}$-functor
$$f:\wE \to \mathcal{W}_{\Diamond}(E)$$
to be the identity on objects, the inclusion on morphisms, and zero on higher order tensors. 

\begin{lemma}\label{Lemma the inclusion of W0}
 $f$ is a quasi-isomorphism (analogously $\mathcal{F}(E) \to \mathcal{F}_{\Diamond}(E)$ is a quasi-isomorphism).
\end{lemma}
\begin{proof}
$f$ is an isomorphism at cohomology because the wrapped cohomology is a direct limit over the weights $w\to \infty$ (see \ref{Subsection the wrapped floer complex}), so it does not matter if the weights start at $w=0$ or $w=1$. The analogous claim for the inclusion $\mathcal{F}(E) \to \mathcal{F}_{\Diamond}(E)$ follows by Lemma \ref{Lemma HF=HW for closed Lags}.
\end{proof}
\indent
Since $A_{\infty}$-functors which are quasi-isomorphisms can be inverted up to homotopy 
\cite[Sec.(1i)]{Seidel}, we can pick an inverse $f^{-1}:\mathcal{W}_{\Diamond}(E) \to \wE$. Composing the natural inclusion $\scrF(E)\to \mathcal{F}_{\Diamond}(E) \to \mathcal{W}_{\Diamond}(E)$ (inclusion on objects and morphisms, zero on higher order tensors) with $f^{-1}$ yields an $A_{\infty}$-functor, which we call the \emph{acceleration functor},
$$
\mathcal{AF}: \scrF(E) \to \mathcal{F}_{\Diamond}(E) \to \mathcal{W}_{\Diamond}(E) \to \wE.
$$
By Lemma \ref{Lemma change of rings}, this induces chain maps on Hochschild homology, so:
$$
\mathrm{HH}_*(\mathcal{AF}): \mathrm{HH}_*(\scrF(E))\to \mathrm{HH}_*(\mathcal{F}_{\Diamond}(E)) \to \mathrm{HH}_*(\mathcal{W}_{\Diamond}(E)) \to \mathrm{HH}_*(\wE)
$$
where the first and last maps are in fact isomorphisms by Theorem \ref{Theorem Morita invariance} and
Lemma \ref{Lemma the inclusion of W0}.

Hochschild cohomology is not in general covariantly functorial  (see \ref{Subsection Functorial properties}). Nevertheless, $\mathcal{F}_{\Diamond}(E) \to \mathcal{W}_{\Diamond}(E)$ is the inclusion of a full subcategory, so by Lemma \ref{Lemma HH under fully faithful functors} it induces a restriction map $\mathrm{HH}^*(\mathcal{W}_{\Diamond}(E)) \to \mathrm{HH}^*(\mathcal{F}_{\Diamond}(E))$. By Theorem \ref{Theorem Morita invariance} and
Lemma \ref{Lemma the inclusion of W0}, we obtain:
$$
\mathrm{HH}^*(\mathcal{AF}):
\mathrm{HH}^*(\wE) \cong \mathrm{HH}^*(\mathcal{W}_{\Diamond}(E)) \to \mathrm{HH}^*(\mathcal{F}_{\Diamond}(E)) \cong \mathrm{HH}^*(\scrF(E)).
$$
%
%
\subsection{The acceleration diagram}
\label{Subsection the acceleration diagram}
%
\begin{theorem}\label{Theorem acceleration commutative diagram}
There are commutative diagrams of linear maps
$$
\xymatrix@C=50pt{ \mathrm{HH}_*(\scrF(E)) \ar@{->}^{\mathrm{HH}_*(\mathcal{AF})}[r] \ar@{->}_{\mathrm{OC}}[d] & \mathrm{HH}_*(\wE)
\ar@{->}^{\mathrm{OC}}[d] 
%
%
&
\mathrm{HH}^*(\mathcal{F}_{\Diamond}(E)) \ar@{<-}^{\mathrm{HH}^*(\mathcal{AF})}[r] \ar@{<-}_{\mathrm{CO}}[d] & \mathrm{HH}^*(\mathcal{W}_{\Diamond}(E))
\ar@{<-}^{\mathrm{CO}}[d]
\\
 QH^*(E)
\ar@{->}[r]^-{c^*} & SH^*(E)
%
%
&
 QH^*(E)
\ar@{->}[r]^-{c^*} \ar@{->}[ur]^-{\mathrm{CO}}& SH_{\Diamond}^*(E)
}
$$
In \ref{Subsection Acceleration diagram respects products} we prove that all maps are $QH^*(E)$-module homomorphisms, and that in the diagram on the right all maps are unital algebra homomorphisms.
\end{theorem}
\begin{proof}
For the first diagram, we prove that the following diagram commutes,
$$
\xymatrix@C=50pt{ \mathrm{HH}_*(\scrF(E)) \ar@{->}^{\mathrm{HH}_*(\mathrm{incl})}[r] \ar@{->}_{\mathrm{OC}}[d] & \mathrm{HH}_*(\mathcal{W}_{\Diamond}(E))  \ar@{->}_{\mathrm{OC}}[d] \ar@{<-}^{\mathrm{HH}_*(f)}[r] & \mathrm{HH}_*(\wE)
\ar@{->}_{\mathrm{OC}}[d] \\
 QH^*(E)
\ar@{->}[r]^-{c^*} & SH_{\Diamond}^*(E) \ar@{<-}[r]^-{\mathrm{incl}} & SH^*(E)
}
$$
The square on the right tautologically commutes since $f$ is the inclusion. The left square commutes at the chain level because the extension of the $\mathrm{OC}$ map to $\mathcal{W}_{\Diamond}(E)$ has been precisely defined so as to coincide with the definition of $\mathrm{OC}$ for the compact category.
Indeed $\mathrm{CC}_*(\mathcal{F}(E))\subset \mathrm{CC}_*(\mathcal{W}_{\Diamond}(E))$ is a subcomplex, and since no $\mathbf{q}$-terms appear in this subcomplex, the domains involved in the $\mathrm{OC}$ map do not carry preferred points. Therefore there are no $\beta_f$ forms. By construction, when inputs come from $CF^*(L_i,L_{i+1})$ there are also no $\alpha_i$ forms. So the total form used is $\gamma=0$ (or a perturbation datum, when non-transverse $L_i,L_{i+1}$ arise, in which case $X\otimes\gamma$ vanishes outside of a compact subset of $E$). Thus the output weight is $w_0=0$, so the solutions land in the subcomplex $QC^*(E)\subset SC_{\Diamond}^*(E)$.

The second diagram commutes by construction. Note that  $\mathrm{HH}^*(\mathcal{AF})\circ \mathrm{CO}\circ c^*$ takes values in $CF^*(L_0,L_n;H)\subset \mathrm{hom}_{\mathcal{F}_{\Diamond}(E)}(L_0,L_n)$ rather than $CF^*(L_0,L_n)\subset \mathrm{hom}_{\mathcal{F}(E)}(L_0,L_n)$, due to the fixed preferred point, as explained at the end of \ref{Subsection Category W0E and S0CE}; these values agree with the values of $\mathrm{CO}:QC^*(E)\to \mathrm{CC}^*(\mathcal{F}_{\Diamond}(E))$ up to the quasi-isomorphism $x\mapsto \mathfrak{K}(x)$.
\end{proof}
\begin{remark}
The second diagram can be rephrased in terms of the $\psi$-string map of
\ref{Subsection Acceleration diagram respects products}. The following is a commutative diagram of unital algebra homomorphisms.
$$
\xymatrix@C=50pt{ 
&
\mathrm{End}(\mathcal{F}_{\Diamond}(E)) \ar@{<-}^{\mathrm{restriction}}[r] \ar@{<-}_{\psi}[d] & \mathrm{End}(\mathcal{W}_{\Diamond}(E))
\ar@{<-}^{\psi}[d]
\\
&
 QH^*(E)
\ar@{->}[r]^-{c^*} \ar@{->}[ur]^-{\psi}& SH_{\Diamond}^*(E)
}
$$
\end{remark}
%
\section{Algebraic structures}
\label{Section algebraic structures}
%
%
%
\subsection{The $\psi$-string map}
\label{Subsection psi structure map}
\label{Subsection Comparison of Ganatra}
We use the notation $\bA=\mathcal{W}(E)$, $\mathcal{M}=\mathcal{W}(E)$ from \eqref{Eqn curlyA and curlyM}. Our discussion applies equally well to the compact category, with obvious simplifications (\ref{The psi structure for the compact Fukaya category}).
%

\begin{theorem}\label{Theorem unital alg hom QH to End M}
There is a unital algebra homomorphism
$$
SH^*(E) \to \mathrm{End}(\bM) \to \mathrm{End}(\mathrm{HH}_*(\bM)), \;\; c \mapsto \mathrm{HH}_*(\psi_c),
$$
defining a module action of $SH^*(E)$ on $\mathrm{HH}_*(\bM)$.
\end{theorem}

By Corollary \ref{Corollary HHomology module over bimodule endos}, we just need to construct a linear map
$
SH^*(E)\to \mathrm{End}(\bM)$, $c\mapsto \psi_c,
$
which is a unital algebra homomorphism. We also claim:
\begin{theorem}\label{Theorem CO is unital alg hom}\label{Theorem commutative diagram compared to Ganatra}
There is a commutative diagram of unital algebra homomorphisms
$$
\xymatrix@C=50pt{ QH^*(E) \ar@{->}^-{c^*}[r] \ar@{->}_-{\mathrm{CO}}[rrd]  & SH^*(E) \ar@{->}^-{\psi}[r]  \ar@{->}^-{\mathrm{CO}}[rd] & \mathrm{End}(\mathcal{M})
\ar@{->}^-{\mathrm{HH}_*(\cdot)}[r] & \mathrm{End}(\mathrm{HH}_*(\mathcal{M})) \\
& & \mathrm{HH}^*(\mathcal{M}) \ar@{->}_-{\strut\textrm{cap product}}[ru] \ar@{->}_{\mu_{\mathcal{M}}\circ_{\mathrm{left}}}[u] &
}
$$
using composition product for $\mathrm{End}(\bM)$ and $\mathrm{End}(\mathrm{HH}_*(\bM))$, cup product for $\mathrm{HH}^*(\bM)$ as in \ref{Subsection Product on Hochschild cohomology}, the vertical map is the algebra homomorphism in \ref{Subsection Hochschild cohomology as endomorphisms of the diagonal bimodule}, and see \ref{Subsection Cap product} for the cap product.

The $A_{\infty}$-category $\mathcal{A}$ is cohomologically unital: the cohomological units are $$e_L=\mathrm{CO}^0(c^*[E])\in HW^*(L,L).$$
By Corollary \ref{Corollary HCohom to Hhom}, it follows that
 $\mathrm{CO}(c^*[E])\in \mathrm{HH}^*(\mathcal{A})$ is a unit for the cup product and the vertical map above is an isomorphism. In particular, the $0$-part $
\mathrm{CO}^0: SH^*(E)\to HW^*(L,L)
$ (see \ref{Subsection closed-open with q coefficients})
is a unital algebra homomorphism, using $\mu^2_{\wE}$ on $HW^*(L,L)$.
\end{theorem}
%
\subsection{The construction of $\psi$}
\label{Subsection The construction of psi}
Consider the following domain $S$,
\begin{center}
\input{psi_structure2.tex}
\end{center}
which is the same domain and auxiliary data as for $\mu_{\mathcal{M}}^{*|*}$ (see \ref{Subsection wrapped diagonal bimodule}),
except we also have a positive interior puncture at $q_c=0$ (compare \ref{Subsection closed-open with q coefficients}). Associated to $c$ there is a closed form $\alpha_c$, and any $\beta_f$ form with $p_f=q_c$ (i.e. if the preferred point $\phi_f$ lies on the new geodesic $(-1,0)$).\\
\noindent {\bf Remark.} \emph{We fixed the $PSL(2,\R)$-parametrization of the disc by fixing punctures at $-1,0,+1$. If one wanted to work with moduli of domains, one would allow the module input/output boundary punctures to move as well, and $q_c$ would be allowed to move on the hyperbolic geodesic connecting those two boundary punctures.}\\[1mm]
Given a $1$-orbit $\widetilde{c}$ for $w_c H$, for some integer weight $w_c\geq 1$, consider
$$
c=\widetilde{c}\,\mathbf{q}^{i_c}
\in
CF^*(w_cH)[\mathbf{q}]\subset SC^*(E)=\bigoplus_{w=1}^{\infty} CF^*(wH)[\mathbf{q}].
$$
We may abusively refer to ``the asymptotic condition $c$ at $q_c$'' when we mean $\widetilde{c}$.
We now define
$$\psi_c^{r|s}: \bA(L_r,\ldots,L_0)\otimes \bM(L_0,L_0') \otimes \bA(L_0',\ldots,L_s') \to \bM(L_r,L_s'),$$
for $r,s\geq 0$. We follow the routine of \ref{Subsection wrapped A infinity structure}: we first define the map on a vector space generator,
$$
\psi_{c}^{r|s}(\mathbf{q}^{i_r}x_r,\ldots,\mathbf{q}^{i_1}x_1,\mathbf{q}^{i_0}\underline{m},\mathbf{q}^{j_1}y_1,\ldots,\mathbf{q}^{j_s}y_s) \in CF^*(L_s',L_r;wH)[\mathbf{q}]
$$
where $\underline{m}\in CF^*(L_0',L_0;w_0 H)$, $x_i\in CF^*(L_{i-1},L_i;w_i H)$, $y_i\in CF^*(L_{i}',L_{i-1}';w_i' H)$.
We define the $\mathbf{q}^0$-coefficient of the output to be the 
 count (as usual, with weights $t^{E_{\mathrm{top}}(u)}$ and with orientation signs) of the isolated solutions $u: S \to E$ of Floer's equation $(du-X\otimes \gamma)^{0,1}=0$, on the above decorated domains (compare \ref{Subsection closed-open with q coefficients}).
In particular 
$$\textstyle\gamma=w_c\alpha_c + w_0\alpha_0 + \sum w_i \alpha_i + \sum w_j'\alpha_j' + \sum \beta_f \;\;\textrm{ and }\;\;  w=w_c + w_0 + \sum w_i + \sum w_j' + |F|$$
 where $|F|$ is the number of preferred points.
 The exponents $i_r,\ldots,i_1,i_0,j_1,\ldots,j_s\in \{0,1\}$ determine whether or not a preferred point arises on the geodesic connecting $-1$ to the relevant boundary puncture, and $i_c\in \{0,1\}$ is associated to the geodesic $(-1,0)$. As usual, for symmetry reasons \cite[Lemma 3.7]{Abouzaid-Seidel}, the counts of Floer solutions which involve more than one preferred point on a geodesic cancel out.
  As always, we extend $\psi_c$ so that it is $\partial_{\mathbf{q}}$-equivariant and $\Lambda$-multilinear (compare \ref{Subsection wrapped A infinity structure}).
 \\[1mm]
\noindent {\bf Remark about signs.}\,\emph{As in \ref{Subsection wrapped A infinity structure} and \ref{Subsection closed-open with q coefficients}, solutions are counted with a sign 
$(-1)^{k+\sum j\, \mathrm{Deg}(z_j)}$ where: $(z_{r+s+1},\ldots,z_1)=(x_r,\ldots,x_1,\underline{m},y_1,\ldots,y_s)$ in that order, $\mathrm{Deg}(z_j)=\|z_j\|+1$ for non-module inputs, $\mathrm{Deg}(z_j)=\mathrm{deg}(\underline{m})+1$ for the module input (when $j=s+1$). The $k$ is the Koszul sign obtained from viewing $\mathbf{q}$ as an operator of degree $-1$ acting from the left (for example if $j_2=1$, then $\mathbf{q}^{j_2}y_2$ above contributes $\|x_r\| + \cdots + \|x_1\|+ \mathrm{deg}(\underline{m}) + \|y_1\|$ to $k$).}
\begin{remark} Geometrically, $\psi_c$ is similar to $\mathrm{CO}(c)$, with the following differences. For $\psi_c$ there is a fixed input boundary puncture at $+1$ receiving the module input, whereas for $\mathrm{CO}$ all input boundary punctures are free to move ($\mathrm{CO}$ does not receive a distinguished module input). By Remark \ref{Remark cannot pull back data via forgetful map}, for $\mathrm{CO}$ one cannot simply pull back the auxiliary data used for $\mu_{\bA}$, whereas for $\psi$ we can pull back the data used for $\mu_{\bM}$ (which is the same as for $\mu_{\bA}$) because $q_c$ can only bubble off at a module input/output puncture. 
In the bubbling analysis for $\mathrm{CO}$, there can arise a disc bubble carrying $q_c$ with just one output puncture, counted by $\mathrm{CO}^0(c)$; for $\psi_c$ a disc bubble carrying $q_c$ will always have both an input and an output puncture.\end{remark}
\subsection{Bubbling analysis: the possible degenerations}
\label{Subsection The possible degenerations}
\indent We now consider the degenerations for a $1$-family of Floer solutions on such domains.
\begin{center}
\input{psi_structure4b.tex}
\end{center}
\indent As an illustration, in the case $R=3$, $S=2$, the above pictures (1)-(4) are four possible degenerations. In (1), a subset of the boundary punctures converges to $+1$. In (2) a subset of the boundary punctures converges to $-1$. In (3) a subset of the boundary punctures on the lower arc connecting $-1,+1$ converges to a puncture on the lower arc different from $\pm 1$. In (4) the analogue of (3) happens for the upper arc. In all four cases, a subset of the preferred points may converge to the same puncture, so these will reappear on the new disc bubble.

The main component of the above broken solutions is the one carrying $q_c$, and it is counted by the $\psi$ map. The new disc bubbles that appear, which do not carry $q_c$, are counted by $\mu_{\mathcal{M}}^{*|*}$ in pictures (1)-(2) and by $\mu_{\mathcal{A}}^*$ in pictures (3)-(4). We recall that the auxiliary data has been constructed universally and consistently as in \ref{Subsection A comment about the existence of universal choices of auxiliary data}, so that those new disc bubbles (which do not carry $q_c$) will be endowed with the universal data constructed for $\mu_{\mathcal{M}}$ and $\mu_{\mathcal{A}}$.

\indent The only other remaining degeneration, occurs when a Floer cylinder breaks off at $q_c=0$, possibly carrying a subset of the preferred points. These cylinders are counted by the differential $\mu^1_{SC^*(E)}(c)$ on $SC^*(E)$.
%
\subsection{Proof that $\psi_c$ is a morphism of bimodules for $\mathbf{[c]\in SH^*(E)}$}
\label{Subsection Proof that psi is a morph of bimods}
Extending $\psi_c$ linearly in $c$, we may now replace $c$ by a cycle in $SC^*(E)$ (i.e. a representative of a class $[c]\in SH^*(E)$). The Floer cylinders breaking off at $q_c$ all cancel out, since $\mu^1_{SC^*(E)}(c)=0$, so the above degenerations (1)-(4) prove that $\psi_c$ is a morphism of $\bA$-bimodules (as defined in \ref{Subsection Bimodules morphism})
$$\psi_c: \mathcal{M}\to \mathcal{M}.$$
As usual, it is the condition of $\partial_{\mathbf{q}}$-equivariance that ensures that the algebra accurately reflects the bubbling analysis (compare the Remarks in \ref{Subsection wrapped A infinity structure}). By Lemma \ref{Lemma functoriality of HH}, we obtain a chain map associated with the bimodule map $\psi_c$, 
 $$\mathrm{CC}_*(\psi_c): \mathrm{CC}_*(\bA,\mathcal{M})\to \mathrm{CC}_*(\bA,\mathcal{M}).$$
We may abusively write $\psi_c$ when we mean $\mathrm{CC}_*(\psi_c)$.  
Explicitly, the chain map $\mathrm{CC}_*(\psi_c)$ on a vector space generator $\underline{m}\otimes x_{n}\otimes \cdots \otimes x_1 \in \mathrm{CC}_n(\bA,\mathcal{M})$ equals
$$
\sum (-1)^{\sigma_1^{r}(\mathrm{deg}(\underline{m})+\sigma_{r+1}^{n})}\, \psi_c^{r|n-s+1}(x_{r} \otimes \cdots \otimes x_1 \otimes \underline{m} \otimes x_{n}\otimes \cdots \otimes x_s)\otimes x_{s-1}\otimes \cdots \otimes x_{r+1}
$$
summing over all obvious choices of $r,s\geq 0$. 
Thus, on homology, we obtain:
$$
\mathrm{HH}_*(\psi_c): \mathrm{HH}_*(\bA,\mathcal{M}) \to  \mathrm{HH}_*(\bA,\mathcal{M}).
$$
A standard argument shows that $\mathrm{HH}_*(\psi_c)$ does not depend on the choice of representative $c$ for $[c]\in SH^*(E)$ (one checks that changing $c$ by a boundary chain corresponds, by the above degeneration analysis, to changing $\psi_c$ by the boundary of a pre-morphism, as in \ref{Subsection Bimodules morphism}).

\subsection{Proof that $\mathbf{\psi}$ is an $\mathbf{SH^*(E)}$-module action}
\label{Subsection Proof that psi is a SH module action}
We consider a ``higher-order'' $\psi$-string map: in addition to $q_c=0$, we introduce another positive interior puncture $q_{c'}\in (0,1)$ which is allowed to move along the geodesic joining $q_c=0$ and $+1$.
\begin{center}
\input{psi_structure3b.tex}
\end{center}
The possible degenerations in a $1$-family are: 
\begin{enumerate}
\item If $q_{c'}$ converges to $q_c=0$, then a $3$-punctured sphere counted by the $\mu^2_{SC^*(E)}$ product appears as in picture 2. These breakings are counted by $\psi_{c*c'}$.

\item If $q_{c'}$ converges to $+1$ (possibly together with a subset of the boundary punctures and a subset of the preferred points), then a disc bubble carrying $q_{c'}$ appears which is counted by $\psi_{c'}$ as in picture 3. These degenerations are counted by $\psi_{c}\circ \psi_{c'}$.

\item If $q_{c'}$ converges to a point in the interval $(0,1)$, then a subset of the boundary punctures (possibly together with a subset of the preferred points) may converge to $+1$, or $-1$, or a boundary puncture different from $\pm 1$. These disc bubbles are counted respectively by the composition maps $\mu_{\mathcal{M}}$, $\mu_{\mathcal{M}}$, $\mu_{\bA}$. The main component of the broken configuration still carries $q_c,q_{c'}$, and the count of these (now rigid) main components defines a pre-morphism $K:\mathcal{M}\to \mathcal{M}$ as in \ref{Subsection Bimodules morphism}. The count of the broken configurations is therefore $\delta K$, where $\delta$ is the differential on pre-morphisms.
\end{enumerate}

This proves that $SH^*(E) \to \mathrm{End}(\bM)$ is an algebra homomorphism. By Lemma \ref{Lemma functoriality of HH image of exact elements}(1), 
$$\mathrm{CC}_*(\psi_{c*c'})-\mathrm{CC}_*(\psi_{c}) \circ \mathrm{CC}_*(\psi_{c'}) = \mathrm{CC}_*(\delta K)=\mathrm{CC}_*(K)\circ b + b \circ \mathrm{CC}_*(K),$$
so $\mathrm{CC}_*(K)$ is a chain homotopy which shows 
$\mathrm{HH}_*(\psi_{c*c'})=\mathrm{HH}_*(\psi_{c}) \circ \mathrm{HH}_*(\psi_{c'}),$
as required.
%
%
\subsection{The $\mathbf{\psi}$-string map for $\mathbf{c\in QH^*(E)}$}
\label{Subsection The psi structure for QH}
%
Analogously, for any locally finite cycle $c\in QH^*(E)$, we can construct an algebra homomorphism
$$
\psi: QH^*(E)\to \mathrm{End}(\bM), \; c\mapsto \psi_c.
$$
In the construction in 
\ref{Subsection The construction of psi} we now require $q_{c}$ to be an interior marked point, rather than an interior puncture, and we have no auxiliary forms associated to $q_c$. In \ref{Subsection The construction of psi}, all auxiliary forms vanished near $q_{c}$ except for the $\alpha$ form associated to $q_c$. So in this new setup, $\gamma=0$ near $q_c$, therefore the Floer solutions $u: S\to E$ are $J$-holomorphic near the marked point $q_c$. So it remains to discuss the intersection condition at $q_c$.

As the solutions $u$ are $J$-holomorphic near $q_c$, we may require that they intersect the given lf-cycle $c$ at $q_c$, so $u(q_c)\in c$. More precisely, consider the moduli space $\mathcal{S}$ of all solutions $u$ but without imposing the condition $u(q_c)\in c$. We can ensure that the lf-cycles chosen for the definition of $QH^*(E)$ are transverse to the evaluation map $\mathrm{ev}:\mathcal{S}\to E$ at $q_c=0\in \D$. Then $\mathrm{ev}^{-1}(c)\subset \mathcal{S}$ is the space of all solutions $u$ described above satisfying the condition $u(q_c)\in c$. Then $\psi_c$ is the count of the isolated points (i.e. the zero-dimensional components) of $\mathrm{ev}^{-1}(c)\subset \mathcal{S}$ with appropriate orientation signs and with weight $t^{E_{\mathrm{top}}(u)}$ (as in \ref{Subsection moduli space of pseudoholo maps in wrapped case}).

Following the same arguments as in \ref{Subsection Proof that psi is a morph of bimods}, \ref{Subsection The possible degenerations} and \ref{Subsection Proof that psi is a SH module action}, we obtain a $QH^*(E)$-module structure on $\mathrm{HH}_*(\mathcal{A},\mathcal{M})$ (in particular in \ref{Subsection Proof that psi is a SH module action} the $3$-punctured sphere bubbles which carry lf-cycle intersection conditions at $q_c,q_{c'}$ are precisely those which count the quantum product $c*c'$).

\begin{lemma}\label{Lemma SH and QH action on HH agree}
There is a commutative diagram
$$
\xymatrix@C=30pt@R=17pt{ SH^*(E)  \ar@{->}^-{\psi}[r] & \mathrm{End}(\bM) \ar@{->}^{}[r] & \mathrm{End}(\mathrm{HH}_*(\bM))\\
 QH^*(E) \ar@{->}^{c^*}[u] \ar@{->}_{\psi}[ur] &
}
$$
\end{lemma}
\begin{proof}
This can be proved just like Lemma \ref{Lemma OC compatible with c}. A cleaner proof, is to first define
$$
\psi: SH_{\Diamond}^*(E) \to \mathrm{End}(\mathcal{W}_{\Diamond}(E))
$$
using notation from \ref{Subsection Category W0E and S0CE}, by combining the two constructions from \ref{Subsection Proof that psi is a SH module action} and \ref{Subsection The psi structure for QH} (see \ref{Subsection Acceleration diagram respects products}). Now observe that by definition this is compatible with the inclusion $c^*:QC^*(E)\to SC_{\Diamond}^*(E)$.
\end{proof}
%
%
%
\subsection{Proof that $\psi$ is unital}
%
\begin{lemma}\label{Lemma psi E is unit}
For $\psi: QH^*(E) \to \mathrm{End}(\bM)$, 
$$\psi_{[E]} = (-1)^{1+\mathrm{deg}(\cdot)}\, 1_{\bM} \in \mathrm{End}(\bM).$$
\end{lemma}
\begin{proof}
$\psi_{[E]}:\mathcal{M}(L_0,L_0')\to \mathcal{M}(L_0,L_0')$ is a count of $J$-holomorphic discs as in the picture,
\begin{center}
\input{psi_structure5.tex}
\end{center}
involving a tautological intersection condition $[E]$ at $q_{[E]}=0$.
We claim that $\psi_{[E]}$ is the identity bimodule map $\bM \to \bM$ from \ref{Subsection Unit pre-morphism} (up to sign). In the case when there are $r+s\geq 1$ non-module inputs, if the moduli space of solutions for $\psi_{[E]}^{r|s}$ were non-empty, then it would contain a non-constant solution (as $r+s\geq 1$, there are at least three Lagrangians involved, and these do not have a triple intersection, for generic Hamiltonian perturbation data). But this non-constant solution $u$ gives rise to a $1$-family of solutions $u\circ g$ where $g\in SL(2,\R)$ belongs to the $\R$-family of hyperbolic isometries which fix $\pm 1$ (the module output/input). These isometries move the $r+s\geq 1$ other boundary markers and the marker $q_{\mathrm{[E]}}=0$, but we can simply reposition the marker $q_{[E]}$ to be at the origin of the disc since the intersection condition with $[E]$ is automatically satisfied also for $u\circ g$. These solutions are therefore not rigid, which is not allowed for dimension reasons, so the moduli space must in fact be empty. 
\\
\noindent {\bf Clarification.} \emph{The auxiliary data in the construction of $\psi_{[E]}$ is, by construction, $PSL(2,\R)$-equivariant so $u\circ g$ is a legitimate solution counted by $\psi_{[E]}$ (it automatically hits $[E]$ at $0$). Although for $\mu_{\bM}$ the solutions $u$, $u\circ g$ are considered to be the same (since we must quotient out by the residual $\R$-freedom of the $PSL(2,\R)$ maps fixing $\pm 1$), for $\psi_{[E]}$ these solutions are considered to be distinct because the $r+s\geq 1$ boundary markers have moved. This reparametrization trick is legitimate: transversality for solutions of $\psi_{[E]}$ is automatically implied by the transversality for solutions of $\mu_{\bM}$.} 
\\
\indent When $r+s=0$, there are only the module output/input markers, and only two Lagrangians $L_0,L_1$ are involved. The previous argument still holds, so solutions are not rigid unless they are constant, except that now constant solutions are allowed: an intersection point in $L_0\cap L_1$ when $L_0,L_1$ are transverse (if they are not transverse, then we identify $\D\setminus \{\pm 1\}\cong \R \times S^1$ and use the perturbation datum $H_{L_0,L_1},J_{L_0,L_1}$ which is $s$-independent, see \ref{Subsection When Lagrangians do not intersect transversely}, so for any Hamiltonian chord $x\in CF^*(L_0,L_1)\equiv CF^*(L_0,L_1;H_{L_0,L_1})$ the analogue of the ``constant solutions'' are the $s$-independent solutions $u(s,t)=x(t)$). So $\psi_{[E]}^{0|0}$ is the identity.\\
\noindent {\bf Remark.} \emph{The sign $\psi_{[E]}^{0|0}(\underline{m})=(-1)^{\mathrm{deg}(\underline{m})+1}\underline{m}$ is caused by the Remark about signs in \ref{Subsection The construction of psi}, consistently with the sign for cohomological units $\mu_{\bM}^2(e_X,\underline{m})=(-1)^{\mathrm{deg}(\underline{m})+1}\underline{m}$ (see \ref{Subsection cohomological unit}).}
\end{proof}
\begin{corollary}
$\psi\!:\!SH^*(E)\!  \to\! \mathrm{End}(\bM) \!\to\! \mathrm{End}(\mathrm{HH}_*(\bM))$ is a unital algebra homomorphism.
\end{corollary}
\begin{proof}
We already proved that it is an algebra homomorphism in \ref{Subsection Proof that psi is a SH module action}. Unitality follows from Lemmas \ref{Lemma SH and QH action on HH agree} and \ref{Lemma psi E is unit} because $c^*$ is a unital ring homomorphism.
\end{proof}
%
%
\subsection{Proof of Theorem \ref{Theorem CO is unital alg hom}}
\label{Subsection proof of theorem CO is unital alg hom}
We already proved in Lemma \ref{Lemma algebra hom from CC to hom}
that the right half of the diagram commutes, we now prove the left half commutes.
\begin{center}\input{COvsPsi.tex}\end{center}
Consider the domains from \ref{Subsection The construction of psi}, except now we allow the positive interior puncture $q_c$ to move along the imaginary line segment $[0,1)i$, rather than fixing it at $0$, as in the first picture (in the picture we have omitted preferred points for simplicity). Consider the possible degenerations of a $1$-family of solutions for such data. The two interesting degenerations occur if $q_c\in [0,1)i$ converges to one of the end-points: if $q_c\to 0$ we get the second picture above which is just the $\psi_c$ map; if $q_c\to i$ then a bubble appears which carries $q_c$ and these bubbles are counted by $\mathrm{CO}(c)$, and the total broken configuration (in the third picture) is counted by $\mu_{\bM}\circ_{\mathrm{left}}\mathrm{CO}(c)$.\\ 
\noindent {\bf Technical Remark.} \emph{We used the residual $\R$-symmetry for the domains used to define $\mu_{\bM}$, to fix at $+i$ the input coming from $\mathrm{CO}(c)$.}\\
\indent
There other degenerations involve a $\mu_{\bA}$-bubble or a $\mu_{\bM}$-bubble, with  $q_c$ converging to a point in $[0,1)i$. The main component is a rigid solution as in the first picture above: call $K_c:\bM \to \bM$ the pre-morphism of bimodules which counts such rigid solutions. Thus, these degenerations are counted by $\delta K_c$. The total count of the boundaries of those $1$-families prove that $\psi_c = \mu_{\bM}\circ_{\mathrm{left}}\mathrm{CO}(c)$ up to a $\delta$-boundary term. Thus, on homology, $\mathrm{HH}_*(\psi)=\mu_{\bM}\circ_{\mathrm{left}}\mathrm{CO}$, as required. In \ref{Subsection the unit for HF and HHcohomology of FE} and \ref{Subsection CO is a unital algebra hom} we prove that $\mathrm{CO}$ is a unital algebra homomorphism.
%
\subsection{The cohomological unit}
\label{Subsection the unit for HF and HHcohomology of FE}
%
%
%
\begin{lemma}\label{Lemma units for HF and HH}
The maps in the diagram in Theorem \ref{Theorem commutative diagram compared to Ganatra} are unital, namely
\begin{enumerate}

\item $\bA$ is cohomologically unital: $e_L=\mathrm{CO}^0(1)\in HW^*(L,L)$ are cohomological units.

\item $\mathrm{CO}(1)\in \mathrm{HH}^*(\bA)$ is a unit for the cup product defined in \ref{Subsection Product on Hochschild cohomology}.
\end{enumerate}
\end{lemma}
\begin{proof}
Let $m\in \bA(L_1,L_0)= \bM(L_1,L_0)$ be a $\mu_{\bA}^1$-cycle, and consider $\mathrm{CO}^0(1) \in \bA(L_1,L_1)$. By the commutativity of the left half of the diagram in Theorem \ref{Theorem commutative diagram compared to Ganatra}, 
$$
\begin{array}{rcl}
\mu_{\bA}^2(\mathrm{CO}^0(1),m)
&=&
-\mu_{\bM}^2(\mathrm{CO}^0(1),\underline{m})
\\
&=&
-
(\mu_{\bM}\circ_{\mathrm{left}} \mathrm{CO}(1))(\underline{m})
\\
&=&
-(\psi_E(\underline{m}) + \delta K_c(\underline{m}))
\\
&=&
-(-1)^{\mathrm{deg}(\underline{m})+1} m -\mu_{\bM}^{0|0}(K_c(\underline{m}))
\\
&=&
(-1)^{\|m\|+1} m +\mu_{\bA}^{1}(K_c(\underline{m})),
\end{array}
$$
where $K_c$ was defined in \ref{Subsection proof of theorem CO is unital alg hom}.
Thus $\mu^2_{\bA}(e_L,m)=(-1)^{\|m\|+1} m$ on cohomology, as required by \ref{Subsection cohomological unit}.
A similar argument as in \ref{Subsection proof of theorem CO is unital alg hom}, using the obvious right-composition $\mu_{\bM}\circ_{\mathrm{right}}$ making $q_c\to -i$ instead of $+i$ (we are not using compatibility with product structures in this argument), shows that $\mu^2_{\bA}(m,e_L)=m$ up to a $\mu^1_{\bA}$-boundary.
So $\bA$ is cohomologically unital. The second claim follows by combining the first claim with Corollary \ref{Corollary HCohom to Hhom}.
\end{proof}
\begin{remark}\label{Remark cohomological unit, PSS, MorseBott} In general, constructing the cohomological unit for $\bA$ is not straightforward, even in the exact setup (see the element $\Phi_H$ in \cite[Sec.(8c) and (8k)]{Seidel}).
For $L,E$ exact, one can construct a relative PSS-map $c^*:H^*(L) \to HF^*(L,L)$ and show that it is a unital algebra isomorphism using TQFT arguments (see $c^*:H^*(L) \to HW^*(L,L)$ in \cite{Ritter3}).
Another approach (see the final comment of \cite[Sec.(8l)]{Seidel}), is to use a Morse-Bott chain level model for $HF^*(L,L)$: in the monotone setup, Auroux \cite{Auroux} takes $CF^*(L,L)=C_{n-*}(L)$ and shows that $[L]$ is a cohomological unit. However, it is not straightforward to make the Morse-Bott model compatible with the $A_{\infty}$-structure (indeed, we used the Floer datum approach, see \ref{Subsection When Lagrangians do not intersect transversely}).
\end{remark}
%
\subsection{The $\mathbf{CO}$ map is a unital algebra homomorphism}
\label{Subsection CO is a unital algebra hom}
%
\begin{corollary}\label{Corollary CO is unital}
The vertical map in Theorem \ref{Theorem commutative diagram compared to Ganatra} is an isomorphism, and the left half of the diagram consists of unital algebra homomorphisms.
\end{corollary}
\begin{proof}
By Lemma \ref{Lemma units for HF and HH}(1) and Corollary \ref{Corollary HCohom to Hhom}, the first claim follows. Since $\psi$ is a unital algebra homomorphism, the second claim follows by the left half of the commutative diagram.
\end{proof}
One can also show directly that $\mathrm{CO}:SH^*(E)\to \mathrm{HH}^*(\bM)$ is an algebra homomorphism by considering the degenerations of a $1$-family of Floer solutions shown in the first picture below: we place two positive interior punctures $q_c=ri$, $q_{c'}=-ri$ on the imaginary axis, where $r\in (0,1)$.
Apart from the obvious bubblings of type $\mu_{\bA},\mu_{\bM}$ which give $\delta$-boundary terms, there are degenerations when $r$ converges to $0$ or $1$. If $r\to 0$, the bubbling at $0$ is counted by the quantum product; see picture 3. If $r\to 1$, we obtain the broken configuration in picture 2, which is counted by the cup product $\mathrm{CO}(c)*\mathrm{CO}(c')$; see \ref{Subsection Product on Hochschild cohomology}.\\
\noindent {\bf Technical Remark.}\,\emph{We used the residual $\mathrm{PSL}(2,\R)/\R$-symmetry for the domains that define $\mu_{\bA}$, to fix at $\pm i$ the two inputs coming from $\mathrm{CO}(c),\mathrm{CO}(c')$.}\\
\begin{center}\input{CO_alg_hom.tex}\end{center}

 The above Corollary also shows that the $0$-part $\mathrm{CO}^0: SH^*(E)\to HW^*(L,L)
$ (see \ref{Subsection closed-open with q coefficients}), for any $L\in \mathrm{Ob}(\bA)$, is a unital algebra homomorphism, because the cup product on $\mathrm{CC}^*$ restricted to the $0$-parts is the usual $\mu^2_{\bA}$-product on $HW^*(L,L)$. Alternatively, the same direct proof as above applies, ignoring all boundary punctures except $\underline{z}=-1$.
%
%
\subsection{The $\mathbf{CO}$ and $\mathbf{OC}$ string maps respect the module structure}
\label{Subsection The open-closed string map respects the module structure}

That the $\mathrm{CO}$ map respects the module structure is immediate from the fact that it is a ring homomorphism (the module action by $c$ is defined by the cup product action of $\mathrm{CO}(c)$ on $\mathrm{HH}^*(\bM)$).

\begin{theorem}
$\mathrm{OC}:\mathrm{HH}_*(\wE) \to SH^*(E)$ respects the $SH^*(E)$-module structure:
$$
\boxed{c\bullet \mathrm{OC}(x) = \mathrm{OC}(\psi_c(x))}
$$
where $\bullet$ is the $\mu^2_{SH^*(E)}$-product,
$c\in SH^*(E)$, $x\in \mathrm{HH}_*(\wE)$. Similarly, $\mathrm{OC}:\mathrm{HH}_*(\scrF(E)) \to QH^*(E)$ and $\mathrm{OC}:\mathrm{HH}_*(\wE) \to SH^*(E)$ both respect the $QH^*(E)$-module structure.
\end{theorem}
\begin{proof}
We will just prove the first claim, since the other claims are analogous (indeed simpler). 
We consider a ``higher-order'' $\mathrm{OC}$-map: in addition to the negative interior puncture $z_0=0$, we introduce a positive interior puncture $q_{c}\in (0,1)$ which is allowed to move along the geodesic joining $z_0=0$ and the module input puncture $+1$, as in the first picture below.
\begin{center}
\input{OCisModMap.tex}
\end{center}
The domains are the same as those used to define $\mathrm{OC}$ except we introduce a new positive interior puncture $q_c\in (0,1)$ which is allowed to move on the geodesic connecting $z_0=0,\underline{z}=+1$ (this is an interior marked point when we work with the $QH^*(E)$-action). In particular, there may be preferred points on the geodesics connecting any puncture to $z_0=0$ (when working with the compact category these do not arise), and there may also be preferred points on the geodesic connecting $z_0=0$ to $q_c$ (these do not arise when working with the compact category, or when working with the $QH^*(E)$-action for the wrapped category). 
As usual, all $\beta_f=0$ near the puncture $q_c$, and the construction of universal auxiliary data follows the routine of \ref{Subsection A comment about the existence of universal choices of auxiliary data} (compare \ref{Subsection Ainfinity product on SC}, \ref{Subsection open-closed with q coefficients}, \ref{Subsection closed-open with q coefficients}). In particular, the auxiliary data is chosen to extend the data already chosen for $\mu_{\bA}^*$, $\mu_{\bM}^{*|*}$, $\mu_{SC^*(E)}^1$, $\mu_{SC^*(E)}^2$, $\mathrm{OC}$, $\psi_c$ (bubbles counted by these maps arise in the compactification of the above moduli spaces).

Let us consider the possible degenerations for a $1$-family of Floer solutions for these decorated domains. The uninteresting degenerations are: disc bubbling at the module input $\underline{z}$ counted by $\mu_{\bM}$ or disc bubbling at another boundary puncture counted by $\mu_{\bA}$ (these two contribute to the bar differential acting on $\mathrm{CC}_*(\bA,\bM)$); or a Floer cylinder breaks off at $z_0$ possibly carrying preferred points (these contribute to $\mu^1_{SC^*(E)}$); or a Floer cylinder breaks off at $q_c$ (possibly carrying preferred points). In all three cases, the main component is a rigid solution whose domain is as in the first picture above. The count $K_c$ of these rigid solutions is the chain homotopy. These degenerations are therefore counted by $K_{\mu^1_{SC^*(E)}(c)} +
K_c \circ b_{\mathrm{CC}_*(\wE)} + \mu^1_{SC^*(E)} \circ K_c$, and therefore disappear on cohomology.

The interesting degenerations occur when $q_c \to z_0=0$ or $q_c\to \underline{z}=+1$, in the second and third pictures above. When $q_c \to 0$, a bubble appears carrying two punctures $z_0,q_c$, and these contribute to $\mu^2_{SC^*(E)}$. When $q_c \to +1$, a disc bubbles off carrying $q_c$ (which due to the residual $\R$-translation freedom can be fixed to be located at $q_c=0$ of the new disc domain) which are counted by $\psi_c$, whilst the main component is counted by $\mathrm{OC}$. Thus, these degenerations are counted by $\mathrm{OC}\circ \psi_c - \mu^2_{SC^*(E)}(c,\cdot) \circ \mathrm{OC}$, as required.
\end{proof}
The analogous ``higher order'' $\mathrm{CO}$-map, with an additional positive interior puncture $q_c\in (-1,0)$, and the analogous two degenerations as in the previous picture are:
\begin{center}
\input{OCisModMap2.tex}
\end{center}
This implies that on cohomology,
$$
\boxed{\mathrm{CO}(c\bullet x) = \psi_c\circ \mathrm{CO}(x)}
$$
where $\bullet$ is the $\mu^2_{SH^*(E)}$-product, $c,x\in SH^*(E)$, and $\circ$ is the composition in the sense of \ref{Subsection Conventient abbreviations: the symbols circ and Mu}.
%
\subsection{The $\psi$-structure for the compact Fukaya category}
\label{The psi structure for the compact Fukaya category}
The construction of $\psi_c$ for the category $\mathcal{F}(E)$, and a given lf-cycle $c\in QH^*(E)$, is a simplification of \ref{Subsection The construction of psi}. There are no auxiliary $\alpha_k,\beta_f,\gamma$ forms, no preferred points / geodesics, so the Floer equation is replaced by the $J$-holomorphic equation $du^{0,1}=0$ when each pair of Lagrangians is transverse, whereas in the non-transverse case we apply the Floer datum machinery of \ref{Subsection When Lagrangians do not intersect transversely}. By the same algebra formalism as in \ref{Subsection The construction of psi}, $\psi$ is determined by the counts
$$
\psi_{c}^{r|s}(x_r,\ldots,x_1,\underline{m},y_1,\ldots,y_s) \in CF^*(L_s',L_r)
$$
where $\underline{m}\in CF^*(L_0',L_0)$, $x_i\in CF^*(L_{i-1},L_i)$, $y_i\in CF^*(L_{i}',L_{i-1}')$. Notice no Hamiltonians are present (except those coming from the Floer datum machinery, which are suppressed from the notation). These maps count $J$-holomorphic curves $u: S \to E$ (as usual, with weight $t^{E_{\mathrm{top}}(u)}$ and orientation signs) on the domains $S$ as in the picture in \ref{Subsection The construction of psi} except:\\
$\bullet$ the geodesic/preferred point decorations are omitted,\\
$\bullet$ $q_c=0$ is now just an interior marked point, rather than a puncture.\\
Therefore, the auxiliary data in the construction of $\psi_{c}^{r|s}$ is the same as the universal data which defines $\mu_{\bM}^{r|s}$ for the compact category in \ref{Subsection The bimodule structure for curly B} with the exception that we have additionally an interior marked point $q_{c}=0$. Recall that the data for $\mu_{\bM}^{r|s}$ is, by construction, $PSL(2,\R)$-equivariant (recall \ref{Subsection A comment about domain reparametrizations and moduli}), but unlike \ref{Subsection The bimodule structure for curly B} for the domains $S$ above we no longer have any residual $\R$-translation freedom because we fixed a new marker at $q_c=0$.

The discussion of the intersection condition at $q_c$ is the same as in \ref{Subsection The psi structure for QH}. In the solution counts, the weights $t^{E_{\mathrm{top}}(u)}$ are still defined as in \ref{Subsection moduli space of pseudoholo maps in wrapped case}. In the case of transverse Lagrangians, when no Floer datum is needed, $E_{\mathrm{top}}(u)=\int_{\D} u^*\omega$ is the usual energy of the $J$-holomorphic curve, but in general the Floer datum Hamiltonians will appear in the formula for $E_{\mathrm{top}}$). 
%
%
\subsection{The acceleration functor respects the algebraic structures}
\label{Subsection Acceleration diagram respects products}

Recall the definition of $\mathcal{W}_{\Diamond}(E)$ in \ref{Subsection Category W0E and S0CE}. Just as we extended the $\mathrm{OC},\mathrm{CO}$ maps to $\mathcal{W}_{\Diamond}(E)$, we can extend the module action. This reduces to extending the definition of the bimodule endomorphism $\psi_c$ to $\psi_c\in \mathrm{end}(\mathcal{W}_{\Diamond}(E))$
(see the proof of Lemma \ref{Lemma SH and QH action on HH agree}), so geometrically we need to explain what we do when we feed $CF^*(L_i,L_{i+1})$ inputs. As usual, this means we do not associate an $\alpha$-form to the input (but there may be a preferred geodesic and preferred points, as usual, determining $\beta_f$ forms). In the case when $L_i,L_{i+1}$ are transverse, the Floer solution is holomorphic near the boundary puncture, so as usual finite-energy implies that the Floer solution extends continuously over the puncture. When $L_i,L_{i+1}$ are not transverse, we use the Floer datum machinery from \ref{Subsection When Lagrangians do not intersect transversely}. The properties of the module structure for $\mathrm{HH}_*(\mathcal{W}_{\Diamond}(E))$ and $\mathrm{HH}^*(\mathcal{W}_{\Diamond}(E))$ now carry over just as for $\mathcal{W}(E)$.
To conclude the proof of the final claim in Theorem \ref{Theorem acceleration commutative diagram}, in light of \ref{Subsection The open-closed string map respects the module structure}, it remains to show the following result. 

\begin{theorem}\label{Theorem Acceleration functor is algebra hom}
For the acceleration functor $\mathcal{AF}$ from \ref{Subsection Acceleration functor}, $\mathrm{HH}_*(\mathcal{AF})$ and $\mathrm{HH}^*(\mathcal{AF})$ are $QH^*(E)$-module maps, and $\mathrm{HH}^*(\mathcal{AF})$ is a unital algebra homomorphism.
\end{theorem}
\begin{proof}
Let $c\in QC^*(E)$. Consider what happens when $\psi_c\in \mathrm{end}(\mathcal{W}_{\Diamond}(E))$ receives all its inputs from $CF^*(L_i,L_{i+1})$ summands. By the construction described above, in this case the Floer solutions counted by $\psi_c\in \mathrm{end}(\mathcal{W}_{\Diamond}(E))$ are precisely those counted by the map $\psi_c\in \mathrm{end}(\scrF(E))$ (in particular, the total output weight is zero, since no preferred points and no $\alpha$ forms are present, so $\psi_c$ lands again in a summand of the type $CF^*(L,L')$). Therefore 
$$\mathrm{incl}\circ \psi_c = \psi_c \circ \mathrm{incl}$$
at the level of bimodule maps, and hence also on Hochschild homology, where $\mathrm{incl}: \scrF(E)\to \mathcal{W}_{\Diamond}(E)$ is the natural inclusion. 
Recall the natural inclusion $f: \wE \to \mathcal{W}_{\Diamond}(E)$ from \ref{Subsection Acceleration functor}. Since the action on $\mathcal{W}_{\Diamond}(E)$ is defined by extending the action defined on $\mathcal{W}(E)$,
$$
f\circ \psi_c = \psi_c \circ f
$$
at the level of bimodule maps, and hence on Hochschild homology. Finally $\mathrm{HH}_*(\mathcal{AF})=\mathrm{HH}_*(\mathrm{incl})\circ \mathrm{HH}_*(f)^{-1}$ is a composite of maps which preserve the module structure, so the composite also preserves the module structure.

Similarly, the cup product action of $\mathrm{CO}(c)$ on $\mathrm{CC}^*(\mathcal{W}_{\Diamond}(E))$ with all inputs coming from $CF^*(L_i,L_{i+1})$ agrees with $\mathrm{CO}(c)$ acting on $\mathrm{CC}^*(\scrF_{\Diamond}(E))$ (up to a chain homotopy, due to the fixed preferred point; compare the proof of Theorem \ref{Theorem acceleration commutative diagram}). Thus
$$
\mathrm{HH}^*(\mathrm{incl})\circ \mathrm{CO}(c) = \mathrm{CO}(c) \circ \mathrm{HH}^*(\mathrm{incl})
$$
where $\mathrm{CC}^*(\mathrm{incl}):\mathrm{CC}^*(\mathcal{W}_{\Diamond}(E))\to \mathrm{CC}^*(\scrF_{\Diamond}(E))$ is the restriction map from Lemma \ref{Lemma HH under fully faithful functors}. So $\mathrm{HH}^*(\mathcal{AF})$ preserves the $QH^*(E)$-module structure.
That the inclusion $\mathcal{F}_{\Diamond}(E)\to \mathcal{W}_{\Diamond}(E)$ respects the
cup product action on $\mathrm{CC}^*$ follows by homological algebra (since the $\mu_{\bA},\mu_{\bM}$ maps agree on the subcategory). Thus $\mathrm{HH}^*(\mathcal{AF})$ preserves the cup product, so it is an algebra homomorphism. The fact that it is unital follows from the second diagram in Theorem \ref{Theorem acceleration commutative diagram}: $$\mathrm{HH}^*(\mathcal{AF})(\mathrm{CO}(c^*[E])) = (\mathrm{HH}^*(\mathcal{AF})\circ \mathrm{CO}\circ c^*)[E] = \mathrm{CO}([E]) \in \mathrm{HH}^*(\scrF(E)). \qedhere$$
\end{proof}
%
\section{The string maps respect eigensummands}
\label{Section OC and CO maps are module homs}
%
\subsection{Eigensummands}
\label{Subsection Eigensummands}

Let $a=c_1(TE)-\lambda 1\in QH^*(E)$, and let $a^N=a*a*\cdots *a$ be the quantum product $N$ times. We call $\lambda$-eigensummand $QH^*(E)_{\lambda}$ the subalgebra of $QH^*(E)$ given by the generalized eigenspace for $c_1(TE)$:
$$
QH^*(E)_{\lambda} = \{ q\in QH^*(E): a^N*q=0 \textrm{ for some }N\in \N\}.
$$
These are of course only non-zero when $\lambda$ is an eigenvalue of $c_1(TE)*\in \mathrm{End}(QH^*(E))$.

Similarly, since $SH^*(E)$ is a $QH^*(E)$-module via $c^*$ (see \ref{Subsection product on SH is a module}) we get a subalgebra $SH^*(E)_{\lambda}$ of $SH^*(E)$ given by the generalized $\lambda$-eigensummand for multiplication by $c^*(c_1(TE))$.\\
{\bf Remark.} \emph{$SH^*(E)$ is typically infinite-dimensional, so it is not clear whether it fully decomposes into eigensummands or whether there is a well-defined projection $SH^*(E) \to SH^*(E)_{\lambda}$.}
%
\subsection{Relating $m_0$ and $c_1$-eigenvalues}
\label{Subsection Relating m0 and c1 evals}
Recall $m_0(L)$ is the count of $J$-holomorphic discs, of Maslov index 2, with a boundary marked point constraint $[\mathrm{pt}]\in C_*(L)$, modulo $PSL(2,\R)$-reparametrization. Following \cite[Lemma 6.7]{Auroux}, each disc hits $\mathrm{PD}[c_1(TE)]$ once algebraically \cite[Lemma 3.1]{Auroux} so we fix the parametrization of the domain so that the intersection with $\mathrm{PD}[c_1(TE)]$ occurs at $0$ and the intersection with $[\mathrm{pt}]$ occurs at $-1$. This defines $\mathrm{CO}^0(c_1(TE))$  in the Morse-Bott model $CF_{Bott}^*(L,L)=C_{n-*}(L)$
 (compare Remark \ref{Remark cohomological unit, PSS, MorseBott}). This argument implies the following result, due to Kontsevich, Seidel and Auroux \cite{Auroux},
 \begin{equation}\label{CO of c1}
\boxed{\mathrm{CO}^0(c_1(TE)) = m_0(L)\,\mathrm{CO}^0([E])=m_0(L)\,[L]\in CF_{Bott}^*(L,L)}
\end{equation}
 where $\mathrm{CO}^0([E])=[L]$ since only constant discs contribute, as other solutions are not isolated (the intersection condition $[E]$ is automatic).
Recall \cite[Lemma 6.7]{Auroux} requires that the monotone Lagrangian $L$ is disjoint from a representative lf-cycle $D$ for $c_1(TE)$, to rule out a contribution of constant discs in the calculation of $\mathrm{CO}(D)$, implying \eqref{CO of c1}. We use:

\begin{lemma}[{\cite[Lemma 6.4]{Ritter5}}]\label{Lemma Maslov cycle disjoint from c1}
For any orientable Lagrangian $L$ in a monotone symplectic manifold $E$, there is a representative lf-cycle $D$ of $\mathrm{PD}[c_1(TE)]$ that does not intersect $L$.
\end{lemma}

The idea of the proof, is that a representative for $\mathrm{PD}[c_1(TE)]$ can be obtained from the vanishing set of a generic smooth section a complex line bundle on $E$ with Chern class $c_1(TE)$. Then, using that $L$ is orientable, we can deform this section near $L$ so that it does not vanish at points of $L$, because $c_1(TE)|_L=\lambda_E \omega|_L=0$ (using monotonicity of $E$, and $\omega|_L=0$).\\
{\bf Remark 1.}\,\emph{For non-orientable $L$ this may fail: $\R P^2\subset \C P^2$ always intersects $\mathrm{PD}[c_1(T\C P^2)]$.}\\
{\bf Remark 2.}\,\emph{If one imposes the condition $\mathrm{char}\, \K\neq 2$, then one can drop the assumptions that $L$ is orientable and that $E$ is monotone. Following \cite[Prop 3.1]{SmithQuadrics}, $H^2(E,L)\cong H_{\dim E-2}(E\setminus L)$ so the Maslov class\footnote{The Maslov index
$
\mu_L: H_2(E,L)\to \Z
$
defines a class $\mu_L\in H^2(E,L)$, and $\mu_L\mapsto 2c_1(TE)$ via $H^2(E,L)\to H^2(E)$. The Maslov index of a disc bounding $L$ equals the homological intersection number with $\mathrm{PD}[\mu_L]$.} $\mu_L$ can be chosen to be disjoint from $L$. Finally $\mathrm{CO}(c_1(TE))=\frac{1}{2}\mathrm{CO}(\mu_L)$.}

\begin{lemma}\label{Lemma COc1 and m0} Let 
$L\in \mathrm{Ob}(\mathcal{W}_{\lambda}(E))$, so $m_0(L)=\lambda$.\\ The map
$\mathrm{CO}^0:QC^*(E)\to CW^*(L,L)$, defined in Remark \ref{Remark CO from QH to wrapped}, satisfies
$$\qquad\qquad\qquad\boxed{\mathrm{CO}^0(c_1(TE)) = \lambda \, e_L + \mu^1(K) \in CW^*(L,L)} \qquad \textrm{for some }K\in CW^*(L,L),$$
where $e_L=\mathrm{CO}^0([E])\in CW^*(L,L)$ is the cohomological unit (Lemma \ref{Lemma units for HF and HH}).
\end{lemma}
\begin{proof}
Let $c=D$ be the lf-cycle from Lemma \ref{Lemma Maslov cycle disjoint from c1}. We run an argument similar to that in \ref{Subsection proof of theorem CO is unital alg hom}.
Consider the first picture below: the auxiliary data is obtained via the forgetful map from the data defining $\mathrm{CO}^0([E])$, as described in Remark  \ref{Remark CO from QH to wrapped}, forgetting the new interior marked point $q_c\in [0,1)i$.
Explicitly, the negative puncture at $-1$ will define the output in $CF^*(L,L;H)\subset CW^*(L,L)$; the interior marked point $z_0=0$ receives the automatic lf-cycle intersection condition $[E]$; the new interior marked point $q_c\in [0,1)i$ receives the lf-cycle intersection condition $c$; and the fixed preferred point is at $-1/2$.
\begin{center}
\input{cone_mzero.tex}
\end{center}
Call $K$ the count of rigid solutions as in picture 1.
Now consider a $1$-family of solutions. If $q_c\to 0$ a bubble appears, counting the quantum product $[E]*c=c$, and as it only involves constant solutions we get picture 2, counted by $\mathrm{CO}(c)$.
If $q_c\to i$, in picture 3, a disc bubble appears which is connected to the main component along a node, not a puncture (compare Remark \ref{Remark cannot pull back data via forgetful map}), in particular the bubble will carry a $PSL(2,\R)$-invariant almost complex structure. This bubble is counted by \eqref{CO of c1} (note it is not constant because $c$ is disjoint from $L$). The bubble imposes the automatic cycle intersection condition $[L]$ at the node of the main component, so the total count is $m_0(L)$ times the output of the main component, which is $\mathrm{CO}^0([E])$. If $q_c$ converges to a point in $[0,1)i$, in picture 4, bubbling can occur at $-1$, counted by $\mu_{CW^*(L,L)}^1$ of the output of the main component, which is $K$.
\end{proof}
\noindent {\bf Remark.}\,\emph{Lemma \ref{Lemma COc1 and m0} also holds for $\mathcal{F}_{\lambda}(E)$ taking $\mathrm{CO}^0:QC^*(E)\to CF^*(L,L)$ (using the Floer datum model of \ref{Subsection When Lagrangians do not intersect transversely}). The Floer datum then plays the role of $\phi_{\mathrm{fixed}}$.}
\subsection{Producing a nilpotent action of $c_1(TE)-\lambda\, I$ at the chain level}
\label{Subsection Producing a nilpotent action on the chain level}

Let $\bA$ be the $A_{\infty}$-category $\scrF_{\lambda}(E)$ or $\mathcal{W}_{\lambda}(E)$, and let $\bM=\bA$ be the diagonal bimodule.

\begin{theorem}\label{Theorem Eigensummand decomposition}
$c=c_1(TE)-\lambda 1\in QH^*(E)$ acts nilpotently on $\mathrm{HH}_*(\bM)$ $($for any cycle $w\in \mathrm{HH}_*(\bM)$, there is an integer $N$ possibly depending on $w$, such that $\mathrm{HH}_*(\psi_{c^N})(w)=0).$
\end{theorem}
\begin{proof}
We will construct a  pre-morphism $\mathcal{K}\in \mathrm{end}(\bM)$ such that $(\psi_c+\delta \mathcal{K})^{0|0}=0$ (at the chain level). Since on homology 
$$\mathrm{HH}_*(\psi_c)=\mathrm{HH}_*(\psi_c+\delta \mathcal{K}) \in \mathrm{End}(\mathrm{HH}_*(\bM)),$$ 
the Theorem then follows by Lemma \ref{Lemma trick if bimodule map has zero 0part} applied to $\psi_c+\delta \mathcal{K}\in \mathrm{end}(\bM)$.

Following the left half of the diagram in Theorem \ref{Theorem commutative diagram compared to Ganatra} (which commutes on homology) we obtain a bimodule pre-morphism $\mathcal{K}_1\in \mathrm{end}(\bM)$ such that
$$
\mu_{\bM}\circ_{\mathrm{left}}\mathrm{CO}(c) = \psi_c + \delta\mathcal{K}_1.
$$
Define $\mathcal{K}_2\in \mathrm{CC}^*(\bM)$ taking $\mathcal{K}_2^0: \K \to \bM(L,L)$ linear with $\mathcal{K}_2^0(1)=K\in CW^*(L,L)$ as in Lemma \ref{Lemma COc1 and m0}, and $\mathcal{K}_2^r=0$ for $r\geq 1$. 
By construction, $(\delta \mathcal{K}_2)^0(1)=\mu_{\bM}^{0|0}(\underline{\mathcal{K}_2^0(1)})=-\mu_{\bA}^1(K),$ so 
\begin{equation}\label{Equation CO(c) is K2} 
(\mathrm{CO}(c)+\delta \mathcal{K}_2)^0(1)=0
\end{equation}
by Lemma \ref{Lemma COc1 and m0}, at the chain level. 
Finally, define $\mathcal{K}= \mathcal{K}_1 + \mu_{\bM}\circ_{\mathrm{left}} \mathcal{K}_2 \in \mathrm{end}(\bM)$. Since $\mu_{\bM}\circ_{\mathrm{left}}$ is a chain map, we have
$$
\psi_c + \delta \mathcal{K} = 
\mu_{\bM}\circ_{\mathrm{left}}\mathrm{CO}(c) + \delta (\mu_{\bM}\circ_{\mathrm{left}} \mathcal{K}_2) = 
\mu_{\bM}\circ_{\mathrm{left}}(\mathrm{CO}(c) +\delta\mathcal{K}_2).
$$
The $0|0$ part evaluated on any $\underline{m}\in \bM(L,L)$, using Equation \eqref{Equation CO(c) is K2}, is zero as required:
$$
(-1)^{\mathrm{deg}(\underline{m})}\mu_{\bM}^{0|0}((\mathrm{CO}(c) +\delta\mathcal{K}_2)^0(1),\underline{m})=0.\qedhere
$$
\end{proof}

\begin{remark} One can equivalently define the module action by the lower half of Theorem \ref{Theorem commutative diagram compared to Ganatra}, so $\mathrm{CO}(c)$ acts by cap product on $\mathrm{HH}_*(\bM)$. To see that Equation \ref{Equation CO(c) is K2} ensures that the action of $\mathrm{CO}(c)+\delta \mathcal{K}_2$ is nilpotent at the chain level it then suffices to notice that the analogue of Lemma \ref{Lemma trick if bimodule map has zero 0part} holds:  each time we apply cap product by $\mathrm{CO}(c)+\delta \mathcal{K}_2$, the cap product from \ref{Subsection Cap product} must feed at least one of the non-module inputs to $\mathrm{CO}(c)+\delta \mathcal{K}_2$ (otherwise we get zero as the $0$ part of the map vanishes), thus shortening the word in $\mathrm{CC}_*(\bM)$.
\end{remark}

\subsection{The OC map respects eigensummand decompositions}
\label{Subsection CO and OC maps respect eigensummand decompositions}
\begin{theorem}\label{Theorem OC is a module map}\strut\begin{enumerate}
\item $\mathrm{OC}: \mathrm{HH}_*(\scrF_{\lambda}(E)) \to QH^*(E)$ lands in the eigensummand $QH^*(E)_{\lambda}$,
$$
\mathrm{OC}: \mathrm{HH}_*(\scrF_{\lambda}(E)) \to QH^*(E)_{\lambda}.
$$
\emph{(This also holds if we replace $E$ by a \emph{closed} monotone symplectic manifold $B$.)}\\[-2mm]
\item $\mathrm{OC}: \mathrm{HH}_*(\wE) \to SH^*(E)$ lands in the eigensummand $SH^*(E)_{\lambda}$,
$$
\mathrm{OC}: \mathrm{HH}_*(\mathcal{W}_{\lambda}(E)) \to SH^*(E)_{\lambda}. 
$$
\item The first acceleration diagram respects these structures:
$$
\xymatrix@C=50pt{ \mathrm{HH}_*(\scrF_{\lambda}(E)) \ar@{->}^{\mathrm{HH}_*(\mathcal{AF})}[r] \ar@{->}_{\mathrm{OC}}[d] & \mathrm{HH}_*(\mathcal{W}_{\lambda}(E))
\ar@{->}^{\mathrm{OC}}[d] \\
 QH^*(E)_{\lambda}
\ar@{->}[r]^-{c^*} & SH^*(E)_{\lambda}
}
$$
\end{enumerate}
\end{theorem}
\begin{proof}
This follows immediately from Theorem \ref{Theorem Eigensummand decomposition} (and the fact that $\mathrm{OC}$ and the acceleration diagram respect the module structures, see \ref{Subsection The open-closed string map respects the module structure} and \ref{Subsection Acceleration diagram respects products}).
%
\end{proof}
%
\subsection{The CO map respects eigensummand decompositions}
\label{Subsection CO maps respect eigensummand decompositions}
The $0$-parts 
$$\begin{array}{l}
\mathrm{CO}^0: QH^*(E)\to HF^*(L,L) \qquad \qquad
\mathrm{CO}^0:SH^*(E)\to HW^*(L,L)
\end{array}$$
restricted to an eigensummand $QH^*(E)_{\lambda}$, respectively $SH^*(E)_{\lambda}$, vanish unless $m_0(L)=\lambda$. This follows by Lemma \ref{Lemma COc1 and m0} via Theorem \ref{Theorem CO is unital alg hom} as $c_1(TE)-\mu\,\mathrm{Id}$ acts by multiplication by the scalar $\lambda-\mu \neq 0\in \Lambda$ on $HF^*(L,L)$ and $HW^*(L,L)$, for $\mu \neq \lambda$. The acceleration diagram respects the above decomposition: for any $L \in \mathrm{Ob}(\mathcal{F}_{\lambda}(E))$,
$$
\xymatrix@C=50pt{ HF^*(L,L) \ar@{<-}^{\mathrm{HH}^0(\mathcal{AF})}[r] \ar@{<-}_{\mathrm{CO}^0}[d] & HW^*(L,L)
\ar@{<-}^{\mathrm{CO}^0}[d] \\
 QH^*(E)_{\lambda}
\ar@{->}[r]^-{c^*} & SH^*(E)_{\lambda}
}
$$
We claim more generally that $\mathrm{CO}:QH^*(E)\to \mathrm{HH}^*(\mathcal{F}_{\lambda}(E))$ factorizes:
\begin{equation}\label{Eqn CO factorizes}
\mathrm{CO}=\bigoplus_{\lambda} \mathrm{CO}_{\lambda}: QH^*(E)=\bigoplus_{\lambda} QH^*(E)_{\lambda} \to \bigoplus_{\lambda} \mathrm{HH}^*(\mathcal{F}_{\lambda}(E)).
\end{equation}
If we replace $E$ by a closed monotone symplectic manifold then \eqref{Eqn CO factorizes} follows directly from Theorem \ref{Theorem OC is a module map} and a duality argument relating $\mathrm{OC}$ and $\mathrm{CO}$ maps (see Sheridan \cite[Prop 2.6, Cor 2.10]{Sheridan}).
In the non-compact setup, this duality no longer holds; $\mathrm{OC}$ is defined using ordinary cycles but $\mathrm{CO}$ is defined using locally-finite cycles.
Nick Sheridan and the anonymous referee pointed out to us a way to adapt Theorem \ref{Theorem OC is a module map} directly to $\mathrm{CO}$ bypassing this dualization.

\begin{theorem}\label{Theorem CO respects esummands}
If $\mu \neq \lambda$, then the following maps vanish,
$$
\begin{array}{rcrcl}
\mathrm{CO}_{\lambda,\mu}&:& QH^*(E)_{\mu} &\to & \mathrm{HH}^*(\mathcal{F}_{\lambda}(E)),\\
\mathrm{CO}_{\lambda,\mu}&:& SH^*(E)_{\mu} &\to & \mathrm{HH}^*(\mathcal{W}_{\lambda}(E)).
\end{array}
$$
\end{theorem}
\begin{proof}
Let $c=c_1(TE)$.
Given $q\in QH^*(E)_{\mu}$ (respectively $SH^*(E)_{\mu}$), we know $(c-\mu)^N*q=0$ for some $N\in \N$. Since $\mathrm{CO}$ is an algebra homomorphism (see \ref{Subsection CO is a unital algebra hom}),
\begin{equation}\label{Eqn CO(c) is zero}
\hspace{20mm}
\mathrm{CO}(c-\mu)^N * \mathrm{CO}(q)=0 \in \mathrm{HH}^*(\mathcal{F}_{\lambda}(E)) \quad\textrm{ (respectively }\mathrm{HH}^*(\mathcal{W}_{\lambda}(E))).
\end{equation}
By Lemma \ref{Lemma COc1 and m0}, $\mathrm{CO}^0(c-\mu)^N = (\lambda-\mu)^N\cdot e_L \in CF^*(L,L)$ (respectively $CW^*(L,L)$) where $e_L$ is the cohomological unit and $m_0(L)=\lambda$. If $\mu\neq \lambda$ then Lemma \ref{Lemma trick CO} applied to $\varphi=(\lambda-\mu)^{-N}\mathrm{CO}(c-\mu)^N$ forces $\mathrm{CO}(c-\mu)^N*\cdot$ to be an isomorphism, so $\mathrm{CO}(q)=0$ by \eqref{Eqn CO(c) is zero}.
\end{proof}
\begin{corollary}\label{Corollary decomposition into esummands and diagram}
Equation \eqref{Eqn CO factorizes} holds. Also the restriction of $\mathrm{CO}$ to $\oplus SH^*(E)_{\lambda}$ decomposes:
$$
\mathrm{CO}=\bigoplus_{\lambda} \mathrm{CO}_{\lambda}: \bigoplus_{\lambda} SH^*(E)_{\lambda} \to \bigoplus_{\lambda} \mathrm{HH}^*(\mathcal{W}_{\lambda}(E))
$$
Moreover, the acceleration diagram respects the eigensummand decomposition: the following is a commutative diagram of unital algebra homomorphisms (via Theorems \ref{Theorem CO is unital alg hom} and \ref{Theorem Acceleration functor is algebra hom}).
$$
\xymatrix@C=50pt{ \mathrm{HH}^*(\mathcal{F}_{\lambda}(X)) \ar@{<-}^{\mathrm{HH}^*(\mathcal{AF})}[r] \ar@{<-}_{\mathrm{CO}}[d] & \mathrm{HH}^*(\mathcal{W}_{\lambda}(X))
\ar@{<-}^{\mathrm{CO}}[d] \\
 QH^*(E)_{\lambda}
\ar@{->}[r]^-{c^*} & SH^*(E)_{\lambda}
}
$$
 In particular, $\mathrm{CO}^0: SH^*(E)_{\lambda}\to \mathrm{HW}^*(L,L)$ is unital, where $L\in \mathrm{Ob}(\mathcal{W}_{\lambda}(E))$.
\end{corollary}
%
\section{The coproduct}
\label{Section The coproduct}
%
\subsection{The coproduct as a bimodule map}
%
%
%
Continue with the notation $\bA=\mathcal{W}(E)$ and $\mathcal{M}=\bA=\mathcal{W}(E)$ from \ref{Subsection psi structure map}. Fix an object $K$ in $\mathrm{Ob}(\mathcal{A})$. 
Let $\mathcal{L}$, $\mathcal{R}$ denote the Yoneda $\mathcal{A}$-modules
$$
\begin{array}{lll}
\mathcal{L}(X) = \mathrm{hom}_{\mathcal{A}}(K,X) = CW^*(K,X),\\ 
\mathcal{R}(X) = \mathrm{hom}_{\mathcal{A}}(X,K) = CW^*(X,K),
\end{array}
$$
where $X\in \mathrm{Ob}(\mathcal{A})$. Recall $\mathcal{R}$ represents the object $K$ (see \cite[Sec.(2g)]{Seidel}).
By \ref{Subsection tensor products},
$$
\mathrm{HH}_*(\mathcal{A},\mathcal{L}\otimes \mathcal{R}) \cong 
H^*(\mathcal{R}\otimes_{\mathcal{A}} \mathcal{L}).
$$
We want to construct a \emph{coproduct} $\Delta$ as an $\mathcal{A}$-bimodule map 
$$\Delta:\mathcal{M} \to \mathcal{L}\otimes \mathcal{R},$$
in particular $H^*(\Delta^{0|0}): HW^*(L_0',L_0) \to HW^*(K,L_0) \otimes HW^*(L_0',K)$ is the usual coproduct. 

\begin{figure}
\input{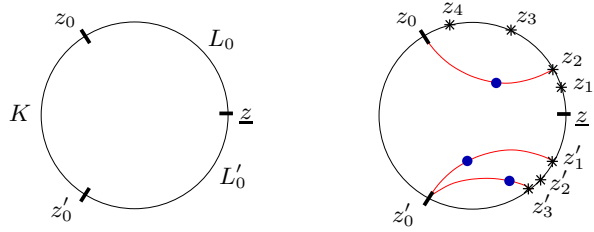}
\caption{$\Delta^{0|0}\!:\!CF^*(L_0',L_0;\underline{w}H)\!\to\!  CF^*(K,L_0;w_0H)\!\otimes\!
CF^*(L_0',K;w_0'H)$ and $\Delta^{4|3}(x_4,x_3,\mathbf{q}x_2,x_1,\underline{x},\mathbf{q}x_1',x_2',\mathbf{q}x_3')$.}  
\label{Figure coproduct}
\end{figure}
%
\subsection{The auxiliary data for the coproduct}
\label{Subsection First steps in a preliminary definition of coproduct}
Consider domains $S$ as in the second picture in Figure \ref{Figure coproduct}.
We consider a universal family $\mathcal{R}^{r|s,\mathbf{p},\mathbf{p}'} \to \mathcal{R}^{r+s+3}$ in which there are:\\
$\bullet$ two distinguished negative boundary punctures $z_0,z_0'$ on $S$ for the outputs of $\Delta$,\\
$\bullet$ one distinguished positive puncture $\underline{z}$ for the input of $\Delta$ from the module $CF^*(L_0',L_0;\underline{w}H)$.\\
We call $z_0$ the \emph{left output} and $z_0'$ the \emph{right output}.
The distinguished punctures $\underline{z},z_0,z_0'$ are fixed to lie at $1,e^{2\pi i/3},e^{4\pi i/3}$, after identifying $\overline{S}$ with the disc $\D$. This kills the reparametrization group of $\mathcal{R}^{r+s+3}$ and the local coordinates for $\mathcal{R}^{r+s+3}$ are given by the angular coordinates of the positive punctures $z_1,\ldots,z_r$, called \emph{left punctures}, lying on the arc $\underline{z},z_0$ and of the positive punctures $z_s',\ldots, z_1'$, called \emph{right punctures}, lying on the arc $z_0',\underline{z}$. The punctures are ordered as shown in Figure \ref{Figure coproduct}. The puncture $\underline{z}$ will be considered both a left and a right puncture.
%
%
The $\mathbf{p}$-labelling map now breaks into two maps 
%
%
%
\begin{equation}\label{Eqn p p' maps}
\mathbf{p}: F \to \{ r,\ldots,1,\infty\} \quad \textrm{ and } \quad \mathbf{p}': F' \to \{ \infty,1,\ldots,s\}
\end{equation}
where $F,F'$ are finite sets of indices $f,f'$ and where we use $\infty$ to denote $\underline{z}=z_{\infty}$. These give rise to labels $p_f,p_{f'}'$ (possibly not distinct). The map $\mathbf{p}$ encodes the presence of preferred points $\phi_f$ on geodesics connecting the left-output puncture $z_0$ with a left-input puncture $z_r,\ldots,z_1,z_{\infty}$, whereas the $\mathbf{p}'$ map encodes the presence of preferred points $\phi_{f'}$ on geodesics connecting 
the right-output $z_0'$ and the right-inputs $z_{\infty},z_1',\ldots,z_s'$. The module input puncture $\underline{z}=z_{\infty}$ can play both left/right roles: there can be geodesics connecting it to either $z_0$ or $z_0'$.

For the non-module inputs, the $\alpha$-forms are separated into two classes: $\alpha_k$ forms for the pairs $(z_0,z_k)$ and $\alpha_k'$ forms for the pairs $(z_0',z_k')$.
For the module input puncture $\underline{z}$, we have a form $\alpha_L$ associated to the left-pair $(z_0,\underline{z})$ and a form $\alpha_R$ associated to the right-pair $(z_0',\underline{z})$. 
The weight data is as follows, where all entries are integers $\geq 1$:
$$
\mathbf{w}=(w_r,\ldots,w_1) \quad \quad \mathbf{w}'=(w_1',\ldots,w_s')
 \quad \quad \underline{w} = \underline{w}_L + \underline{w}_R.
$$
%
%
%
%
%
%
%
The total one-form $\gamma$ on $S$ is:
$$
\gamma = \sum_{k=1}^r w_k \alpha_k + \underline{w}_L \alpha_L + \underline{w}_R \alpha_R + \sum_{k=1}^s w_k' \alpha_k' +  \sum_{f\in F} \beta_f + \sum_{f'\in F'} \beta_{f'},
$$
which satisfies $d\gamma \leq 0$; $d\gamma=0$ near $\partial S$; the pull-back of $\gamma$ to $\partial S$ is $0$. The output weights are:
$$
w_0 = \sum w_k + \underline{w}_L +  |F| \quad \textrm{ and } \quad 
w_0'=\sum w_k' + \underline{w}_R +  |F'|.
$$
Near the ends, in the relevant parametrization, $\gamma$ has the form: $w_k\, dt$, $\underline{w}\, dt$,  $w_k'\, dt$, $w_0\, dt$, $w_0'\, dt$ respectively near $z_k$, $\underline{z}$, $z_k'$, $z_0$, $z_0'$. This is consistent with Stokes' Theorem: $0\leq -\int d\gamma = w_0 + w_0' - \sum w_k - \sum w_k' - \underline{w}= |F|+|F'|$.

By mimicking \ref{Subsection moduli space of pseudoholo maps in wrapped case}, we obtain a moduli space $\mathcal{R}^{r|s,\mathbf{p},\mathbf{p}'}(\mathbf{x},\underline{x},\mathbf{x}')$ of solutions $u: S \to E$ of $(du-X\otimes \gamma)^{0,1}=0$ satisfying the relevant asymptotic and boundary conditions. Arguing as in \cite[Lemma 3.7]{Abouzaid-Seidel}, we can always assume that there is never more than one preferred point on a geodesic, since for symmetry reasons the contributions of isolated solutions will cancel when $\mathbf{p}$ or $\mathbf{p}'$ is not injective. When defining $\Delta$ as a sum of contributions $\Delta^{r|s,\mathbf{p},\mathbf{w},\underline{w}_L,\underline{w}_R,\mathbf{w}',\mathbf{p}'}$, we only sum over inclusions of subsets
\begin{equation}\label{Eqn p p' maps inclusions}
\mathbf{p}:F\subset \{r,\ldots,1,\infty\} \quad \textrm{ and } \quad 
\mathbf{p}':F'\subset \{\infty,1,\ldots,s\}.
\end{equation}
Because $\underline{z}$ can arise for both $\mathbf{p},\mathbf{p}'$ (i.e. there are two geodesics connecting to $\underline{z}$), we first need a chain model for $\mathcal{M}$ having two separate $\mathbf{q}$ variables.
\subsection{A chain level direct limit using a double telescope construction}
Let
$$
\boxed{\;\;\overline{CW}(X,X') \quad =\quad \bigoplus_{n\geq 2} \quad\bigoplus_{a + b = n,\, a\geq 1, b\geq 1}\quad CF^*(X,X';(a+b)H)[\mathbf{q}_L,\mathbf{q}_R]\;\;}
$$
for $X,X'\in \mathrm{Ob}(\mathcal{A})$. This vector space comes with two gradings $a,b \geq 1$ in $\N$, and it has two variables $\mathbf{q}_L,\mathbf{q}_R$ in degree $-1$ satisfying $\mathbf{q}_L^2=0$, $\mathbf{q}_R^2=0$, $\mathbf{q}_L\mathbf{q}_R=-\mathbf{q}_R \mathbf{q}_L$. 

The differential is defined linearly and equivariantly with respect to $\partial_{\mathbf{q}_L}$, $\partial_{\mathbf{q}_R}$. Summing along the row, the differential of $x + \mathbf{q}_L y  + \mathbf{q}_R z + \mathbf{q}_L\mathbf{q}_R w$ has the following contributions:
$$
\begin{array}{|l|r|r|r|r|}
\hline
\textrm{Input} & \multicolumn{4}{l}{\textrm{Contributions to } \mu^1(\textrm{Input})}\\
\hline
x  & (-1)^{|x|}\partial x & \strut & \strut & \strut \\
\hline
\mathbf{q}_L y   & (-1)^{|y|}(\mathfrak{K}_L \!-\! \mathrm{id})y &   (-1)^{|y|}\mathbf{q}_L\partial y & \strut & \strut \\
\hline
\mathbf{q}_R z    & (-1)^{|z|}(\mathfrak{K}_R \!-\! \mathrm{id})z &  & (-1)^{|z|} \mathbf{q}_R\partial z & \strut \\
\hline
\mathbf{q}_L\mathbf{q}_R w   & & 
-(-1)^{|w|} \mathbf{q}_R(\mathfrak{K}_L \!-\! \mathrm{id})w &   (-1)^{|w|}\mathbf{q}_L(\mathfrak{K}_R \!-\! \mathrm{id})w &   (-1)^{|w|}\mathbf{q}_L\mathbf{q}_R \partial w\\
\hline
\end{array}
$$
where we mimic Definition \ref{Definition wrapped floer complex}, in particular the $\mathfrak{K}$-maps are continuation maps: 
\begin{equation}\label{Eqn kappa L and R}
\begin{array}{rcl}
\mathfrak{K}_L:CF^*(X,X';(a+b)H)[\mathbf{q}_L,\mathbf{q}_R] & \to &
CF^*(X,X';(a+1+b)H)[\mathbf{q}_L,\mathbf{q}_R]\\
\mathfrak{K}_R:CF^*(X,X';(a+b)H)[\mathbf{q}_L,\mathbf{q}_R] & \to &
CF^*(X,X';(a+b+1)H)[\mathbf{q}_L,\mathbf{q}_R]
\end{array}
\end{equation}
where we stipulate that on $\overline{CW}^*(X,X')$ the map $\mathfrak{K}_L$ increases $a$ by $1$, and $\mathfrak{K}_R$ increases $b$ by $1$. At the analysis level (i.e. choices of auxiliary data) we construct the two maps in \eqref{Eqn kappa L and R} to be precisely the same if one ignores the grading by $(a,b)$:
$$\qquad \qquad \qquad \mathfrak{K}_L=\mathfrak{K}_R \qquad (\textrm{in particular } \mathfrak{K}_L\circ \mathfrak{K}_R=\mathfrak{K}_R\circ\mathfrak{K}_L).$$
An easy verification shows that $\mu^1\circ \mu^1=0$, so $\overline{CW}(X,X')$ is a chain complex.\\
{\bf Remark.} \emph{A double telescope at the topological level would usually result in an asymmetry in the chain complex. This would be present above if we had chosen more generally to consider the sum over $CF^*(X,X';aH_L + bH_R)[\mathbf{q}_L,\mathbf{q}_R]$ for two different Hamiltonians $H_L,H_R$, with $\mathfrak{K}_L,\mathfrak{K}_R$ increasing respectively $a,b$ by one. In this case $\mathfrak{K}_L,\mathfrak{K}_R$ are genuinely different, and to ensure $\mu^1\circ \mu^1=0$ an additional contribution $(-1)^{|w|}\mathbf{q}_L\mathbf{q}_R T(w)$ would be required in $\mu^1$ for a chain homotopy $T$ satisfying $\mathfrak{K}_L\circ \mathfrak{K}_R-\mathfrak{K}_R\circ\mathfrak{K}_L = \partial \circ T + T \circ \partial$.}

Define an ``inclusion'' $i_L: CW^*(X,X')\to \overline{CW}^*(X,X')$ by sending $\mathbf{q} \mapsto \mathbf{q}_L$ and
$$
CF^*(X,X';nH) \to CF^*(X,X';(n+1)H), \; x\mapsto \mathfrak{K}_R(x).
$$
Define the projection $\mathrm{pr}:\overline{CW}^*(X,X') \to CW^*(X,X')$ by sending $\mathbf{q}_L\mapsto \mathbf{q}$, $\mathbf{q}_R\mapsto \mathbf{q}$, and
$$
CF^*(X,X';(a+b)H) \to CF^*(X,X';nH), \; x\mapsto x \qquad \textrm{ where }n=a+b.
$$
Observe that $\mathrm{pr}\circ i_L = \mathfrak{K}_R|_{CW^*(X,X')}$.
\begin{lemma}\label{Lemma qis CW and overlineCW}
$\overline{CW}^*(X,X')$ is quasi-isomorphic to $CW^*(X,X')$, intertwining the action of $\partial_{\mathbf{q}}^L+\partial_{\mathbf{q}}^R$ on $\overline{CW}^*(X,X')$ with the action of $\partial_{\mathbf{q}}$ on $CW^*(X,X')$. Explicitly, $i_L$ and $\mathrm{pr}$ are quasi-isomorphisms, and  $\mathrm{pr}\circ i_L$ and $i_L \circ \mathrm{pr}$ are homotopic to identity maps.
\end{lemma}
\begin{proof}
Observe that, taking $x,y,z$ to be $\partial$-cycles, implies the identifications $[\mathfrak{K}_L y] = [y]$ and $[\mathfrak{K}_R z] = [z]$
on cohomology, and $[\mathfrak{K}_L \mathfrak{K}_R x] = [\mathfrak{K}_R\mathfrak{K}_L x]$ already at the chain level.
In particular, any $\partial$-cycle $x$ defines a class in the cohomology $\overline{HW}^*(X,X')$. Although many copies of this appear for given $n$, since $x\in CF^*(X,X';(a+b)H)$ for all $a,b\geq 1$ with $a+b=n$, these are all identified on cohomology since we can compare any two copies by applying $\mathfrak{K}_R,\mathfrak{K}_L$ appropriately many times until they both lie in the same summand of $CW^*(X,X')$. So there is a copy of each cohomology representative $[x]\in HF^*(L,L';nH)$ in grading $(a,b)=(n-1,1)$ (for $n\geq 2$), thanks to the cohomological identifications provided by the $\mathfrak{K}$-maps.

We now run an argument analogous to \cite[Sec.3.7]{Abouzaid-Seidel}.

Abbreviate $F_{x,y}=CF^*(X,X';(x+y)H)$ for integers $x,y\geq 1$. Abbreviate
$$
C_{k,\ell} = \bigoplus_{1 \leq x < k} \; \bigoplus_{1 \leq y < \ell} F_{x,y}[\mathbf{q}_L,\mathbf{q}_R]
\;\;\oplus\;\; \bigoplus_{1 \leq x < k} F_{x,\ell}[\mathbf{q}_L]
\;\;\oplus\;\; \bigoplus_{1 \leq y < \ell} F_{k,y}[\mathbf{q}_R]
\;\;\oplus\;\; F_{k,\ell}.
$$
Imagine these summands arranged as a grid of squres in the $xy$-plane, so $\mathfrak{K}_R$ maps upwards by one square, $\mathfrak{K}_L$ maps to the right by one square, and all non-$\mathfrak{K}$ terms in the differential map from a square to itself. Thus, $C_{k,\ell}$ are subcomplexes which exhaust $CW^*(X,X')$.
The key observation is that if we take any one of the squares (i.e. a summand above), and we consider the differential obtained by dropping $\mathfrak{K}$-terms, then this square becomes acyclic.

Consider the smaller subcomplex $C'$ obtained by removing the summand $F_{1,1}[\mathbf{q}_L,\mathbf{q}_R]$. Then $C'$ is quasi-isomorphic to $C_{k,\ell}$, because the quotient $C_{k,\ell}/C'$ is the acyclic square in position (1,1). More generally, up to quasi-isomorphism we can inductively remove one square at a time (always removing a square with minimal occurring $x$ value or minimal $y$ or both minimal). Thus $C_{k,\ell}$ is quasi-isomorphic to $F_{k,\ell}$. Just as in the original argument \cite[Sec.3.7]{Abouzaid-Seidel}, one checks that $\mathfrak{K}_R:F_{k,\ell}\to F_{k,\ell+1}$ (respectively $\mathfrak{K}_L:F_{k,\ell}\to F_{k+1,\ell}$) commutes up to chain homotopy with the inclusions $C_{k,\ell}\subset C_{k,\ell+1}$ (respectively $C_{k,\ell}\subset C_{k+1,\ell}$); the chain homotopy is $x\mapsto \mathbf{q}_R x$ (respectively $x\mapsto \mathbf{q}_L x$). On cohomology, the direct limit of the cohomologies $H^*(F_{k,\ell})$ over the maps $\mathfrak{K}_R$, $\mathfrak{K}_L$ is $HW^*(X,X')$ essentially by definition. Thus $H^*(CW^*(X,X');\mu^1)\cong HW^*(X,X')$ are canonically isomorphic.
\end{proof}
%
\subsection{A new construction of the diagonal bimodule}
\label{Subsection A new construction of the diagonal bimodule}
 We will define a new $\mathcal{A}$-bimodule $\overline{\mathcal{M}}$, which is quasi-isomorphic to $\mathcal{M}$, satisfying
$$
\overline{\mathcal{M}}(X,X') = \overline{CW}^*(X',X).
$$
The composition maps
$
\mu_{\overline{\mathcal{M}}}^{r|s}: \bA(L_r,\ldots,L_0)\otimes \overline{\mathcal{M}}(L_0,L_0')\otimes \bA(L_0',\ldots,L_s')\to \overline{\mathcal{M}}(L_r,L_s')
$
are defined by mimicking \ref{Subsection wrapped diagonal bimodule} using the same auxiliary data, so that we count the same Floer solutions as for $\mu_{\mathcal{M}}^{r|s}$, but we now ``separate'' the total output by distributing it according to weights. 
Explicitly, $\mu_{\overline{\mathcal{M}}}^{r|s}$ on a generator 
\begin{equation}\label{Eqn big module input}
\mathbf{q}^{i_r}x_r\otimes \cdots \otimes \mathbf{q}^{i_1}x_1 \otimes \mathbf{q}_L^{i_{\infty}}\mathbf{q}_R^{i_{\infty}'}\underline{m} \otimes \mathbf{q}^{i_1'}y_1 \otimes \cdots \otimes \mathbf{q}^{i_s'}y_s
\end{equation}
with input weights $w_r,\ldots,w_1,\underline{w}_L+\underline{w}_R,w_1',\ldots,w_s'$ (where the module input has grading $(a,b)=(\underline{w}_L,\underline{w}_R)$) will contribute to the output summand $$CF^*(L_s',L_r;(w_{0,L}+w_{0,R})H)\subset \overline{\mathcal{M}}(L_r,L_s')$$ in grading
$w_{0,L} = \sum w_k + \underline{w}_L + |F|$ and $w_{0,R} = \sum w_k' + \underline{w}_R + |F'|$
where $F,F'$ are defined as in \eqref{Eqn p p' maps inclusions}. Note we use two $\mathbf{p}$-maps as in \eqref{Eqn p p' maps}, one for the left punctures and one for the right punctures. So we place a preferred point on the geodesic connecting $(z_0,z_k)$, (respectively $(z_0,z_k')$) if $i_k=1$ (respectively $i_k'=1$).
To define the $\mathbf{q}_L$, $\mathbf{q}_R$,  $\mathbf{q}_L \mathbf{q}_R$ outputs of $\mu_{\overline{\mathcal{M}}}^{r|s}$, we impose $\partial_{\mathbf{q}}^L$ and $\partial_{\mathbf{q}}^R$ equivariance. Here, we define
the left and right operators $\partial_{\mathbf{q}}^L$, $\partial_{\mathbf{q}}^R$ to act respectively on the left and right inputs, associated to $\mathbf{p}$ and $\mathbf{p}'$. These should be viewed in grading $+1$ and as acting from the left, so:
$$
\begin{array}{rcl}
\partial_{\mathbf{q}}^L(x_r\otimes\cdots\otimes x_1\otimes\underline{x} \otimes x_1' \otimes\cdots \otimes x_s')&=&
\partial_{\mathbf{q}}(x_r\otimes \cdots \otimes x_1  \otimes \underline{x}) \otimes x_1' \otimes \cdots \otimes x_s'\\[1mm]
\partial_{\mathbf{q}}^R(x_r\otimes\cdots\otimes x_1\otimes\underline{x} \otimes x_1' \otimes\cdots \otimes x_s')&=&
(-1)^{\sigma(x)_{1}^r}\, x_r\otimes \cdots \otimes x_1 \otimes \partial_{\mathbf{q}}(\underline{x} \otimes x_1' \otimes \cdots \otimes x_s')
\end{array}
$$
and for the outputs we define $\partial_{\mathbf{q}}^L(x,y)=\partial_{\mathbf{q}}(x)\otimes y$ and $\partial_{\mathbf{q}}^R(x,y)=(-1)^{\mathrm{deg}(x)}x\otimes \partial_{\mathbf{q}}(y)$.

The $A_{\infty}$-relations required for $\overline{\mathcal{M}}$ to be an $\mathcal{A}$-bimodule hold because we know that these relations hold for the diagonal $\mathcal{A}$-bimodule $\mathcal{M}$ and because the gluing/breaking involved in compositions respects the grading on weights $(a,b)=(\underline{w}_L,\underline{w}_R)$ described above.

\noindent { \bf Example.} \emph{Given $x\in CF^*(X',X; (a+b)H)$, the following picture contributes to the $\mathbf{q}^0$-output of $\mu_{\overline{\mathcal{M}}}^{0|0}(\mathbf{q}_L \mathbf{q_R} x)$ lying in $CF^*(X',X; (a+1+b+1)H)$. Here $F=\{\infty\}$, $F'=\{\infty\}$.
\begin{center}
\input{DoubleTelescope.tex}
\end{center}
Given such a Floer solution, there is another Floer solution contributing to the same output (in the same grading) with $\phi_{\infty},\phi_{\infty'}$ interchanged (analytically it is the same PDE solution, since our auxiliary data is constructed independently of the labelling of the preferred points). The usual symmetry argument (see the \emph{Remark about symmetry} in \ref{Subsection wrapped A infinity structure}) implies that these two Floer solutions are counted with opposite orientation signs, so they cancel.}
\begin{lemma}\label{Lemma qis M and overlineM}
The $\mathcal{A}$-bimodule $\overline{\mathcal{M}}$ is quasi-isomorphic to the diagonal $\mathcal{A}$-bimodule $\mathcal{M}$, intertwining the action of $\partial_{\mathbf{q}}^L+\partial_{\mathbf{q}}^R$ on $\overline{\mathcal{M}}$ with the action of $\partial_{\mathbf{q}}$ on $\mathcal{M}$. 

An explicit quasi-isomorphism $\mathbf{pr}:\overline{\mathcal{M}}\to \mathcal{M}$ of bimodules is given by:
$$
\begin{array}{l}
\mathbf{pr}^{r|s}:\mathcal{A}(X_r,\ldots,X_0)\otimes \overline{\mathcal{M}}(X_0,X_0')\otimes \mathcal{A}(X_0',\ldots,X_s') \to \mathcal{M}(X_r,X_s')
\\
\mathbf{pr}^{0|0}=\mathrm{pr}: \overline{\mathcal{M}}(X_0,X_0') \to \mathcal{M}(X_0,X_0').
\\
\mathbf{pr}^{r|s}=0 \textrm{ if } r+s \geq 1.
\end{array}
$$
where $\mathrm{pr}$ is the projection defined in Lemma \ref{Lemma qis CW and overlineCW}.
\end{lemma}
\begin{proof}
It remains to check that the map $\mathbf{pr}$ satisfies the $A_{\infty}$-relations described in \ref{Subsection Bimodules morphism}. This reduces to checking the identity
$$
\mathbf{pr}^{0|0}(\mu_{\overline{\mathcal{M}}}^{r|s}(x_r,\ldots,x_1,\underline{m},y_1,\ldots,y_s)) = \mu_{\mathcal{M}}^{r|s}(x_r,\ldots,x_1,\underline{\mathbf{pr}^{0|0}(\underline{m})},y_1,\ldots,y_s).
$$
This equation simply says that the solutions that $\mu_{\mathcal{M}}^{r|s}$ counts are the same as those counted by $\mu_{\overline{\mathcal{M}}}^{r|s}$ if we ignore the $(a,b)$-grading of the outputs. This holds by construction.
\end{proof}
\noindent {\bf Remark.} \emph{A priori this $A_{\infty}$-bimodule quasi-isomorphism has a quasi-inverse $\mathcal{M}\to \overline{\mathcal{M}}$, but it is probably difficult to describe it explicitly.}
%
\subsection{Definition of the coproduct}
\label{Subsection The coproduct}

The $\mathcal{A}$-bimodule map
$
\Delta: \overline{\mathcal{M}}\to \mathcal{L}\otimes \mathcal{R}
$
is determined by the $\partial_{\mathbf{q}}^L$- and $\partial_{\mathbf{q}}^R$-equivariant maps
$$
\begin{array}{ll} \mathcal{W}(L_r,\ldots,L_0;\mathbf{w}) \otimes 
CF^*(L_0',L_0;(\underline{w}_L+\underline{w}_R)H)[\mathbf{q}_L,\mathbf{q}_R] \otimes \mathcal{W}(L_0',\ldots,L_s';\mathbf{w}')
 \\  \longrightarrow CF^*(K,L_r;w_0H)[\mathbf{q}]\otimes 
CF^*(L_s',K;w_0'H)[\mathbf{q}],
\end{array}
$$
counting Floer solutions for the auxiliary data described in \ref{Subsection First steps in a preliminary definition of coproduct}.
Thus, given the input as in \eqref{Eqn big module input}, we require $k\in \mathbf{p}(F)$ iff $i_k=1$ (respectively $k\in \mathbf{p}'(F')$ iff $i_k'=1$). Equivalently: there is a $\mathbf{q}$ iff there is a preferred point on the geodesic connecting the relevant puncture with $z_0$ (respectively $z_0'$). For the module input we have a left/right  $\mathbf{q}$-variable which keeps track of the presence of a preferred point on the left/right geodesic connecting to $\underline{z}=z_{\infty}$.
The $x_0\otimes x_0'$ coefficient of the output is then the count with weight $\pm t^{E_{\mathrm{top}}(u)}$ of the isolated solutions $u\in \mathcal{R}^{r|s,\mathbf{p},\mathbf{p}'}(x_r,\ldots,x_1,\underline{x},x_1',\ldots,x_s')$.\\
\noindent {\bf Remark.} \emph{Solutions are counted with a sign $(-1)^{\ddagger}$ \cite[(4.22)]{Abouzaid} where
$$
\textstyle \ddagger = k+\sum_{j=1}^s (s-j+1)\mathrm{deg}(x_j') + s\, \mathrm{deg}(\underline{x}) + \sum_{j=1}^r (j+s)\mathrm{deg}(x_j)
$$
where $k$ is the Koszul sign obtained from viewing the $\mathbf{q}$ variables as operators of degree $-1$ acting from the left (compare \ref{Subsection wrapped A infinity structure} and \ref{Subsection The construction of psi}).
}

As an illustration, we show the decorated domains counted by $\Delta^{0|0}$, where we label an output with $\mathbf{q}$ if it contributes to the $\mathbf{q}$-part of that output:
\begin{center}
\input{Coproduct-count.tex}
\end{center}
To illustrate the $A_{\infty}$-equations, we show the pictures involved in the equation $\Delta^{0|0}(\mu_{\overline{\mathcal{M}}}^{0|0}(\mathbf{q}_L \underline{m})) = \mu_{\mathcal{L}\otimes \mathcal{R}}^1(\Delta^{0|0}(\mathbf{q}_L \underline{m}))$. Ignoring signs, the left-hand side is $\Delta^{0|0}(\mathbf{q}_L\partial \underline{m}+(\mathfrak{K}_L-\mathrm{id})(\underline{m}))$ counting:
\begin{center}
\begin{picture}(0,0)%
\includegraphics{Coproduct-test1b.pstex}%
\end{picture}%
\setlength{\unitlength}{1119sp}%
\begingroup\makeatletter\ifx\SetFigFont\undefined%
\gdef\SetFigFont#1#2#3#4#5{%
  \reset@font\fontsize{#1}{#2pt}%
  \fontfamily{#3}\fontseries{#4}\fontshape{#5}%
  \selectfont}%
\fi\endgroup%
\begin{picture}(13387,2406)(1416,1745)
\put(8925,3920){\makebox(0,0)[lb]{\smash{{\SetFigFont{5}{6.0}{\rmdefault}{\mddefault}{\updefault}{\color[rgb]{0,0,0}$\mathbf{q}$}%
}}}}
\end{picture}%

\end{center}
and the right-hand side is $(\mu^1_{\mathcal{A}}\otimes \mathrm{id} + \mathrm{id}\otimes \mu^1_{\mathcal{A}})\circ \Delta^{0|0}(\mathbf{q}_L \underline{m})$ counting:
\begin{center}
\input{Coproduct-test2b.tex}
\end{center}
With the obvious labelling: pictures $1.1$, $1.2$, $2.1$, $2.2$, $2.3$ cancel as they are the boundary of a $1$-dimensional moduli space; similarly for $1.3$, $2.4$, $2.5$; pictures $1.4$, $2.6$ cancel due to signs.

That $\Delta$ satisfies the $A_{\infty}$-equations required from an $\mathcal{A}$-bimodule map $\overline{\mathcal{M}} \to \mathcal{L}\otimes \mathcal{R}$ is now an exercise in bookkeeping, compare Remark 4 in \ref{Subsection wrapped A infinity structure}. The key observation is that in codimension $1$, either a subset of the punctures (and possibly some preferred points) converges to a common point on the boundary. If the common point is not $z_0,z_0',\underline{z}$, it gives rise to a $\mu_{\mathcal{A}}^d$-term (part of the bar differential for the $\mathcal{A}$-bimodule $\overline{\mathcal{M}}$). If the common point is $z_0$ it gives rise to a $\mu^{R|0}_{\mathcal{L}\otimes \mathcal{R}}$-term; if $z_0'$, it gives rise to a $\mu^{0|S}_{\mathcal{L}\otimes \mathcal{R}}$-term;
if $\underline{z}$, it gives rise to a $\mu_{\overline{\mathcal{M}}}^{R|S}$-term.
%
%
%
\subsection{The $\mathrm{OC}$ map using the new diagonal bimodule}
\label{Subsection The OC map using new diag bimodule}

We define 
$$\mathrm{OC}:\mathrm{CC}_*(\mathcal{A},\overline{\mathcal{M}})
\to \mathrm{CC}_*(\mathcal{A},\mathcal{M})
\to SC^*(E)$$ 
by composing $\mathrm{CC}_*(\mathrm{pr})$ and the usual $\mathrm{OC}$ map,
where $\mathrm{pr}$ was defined in Lemma \ref{Lemma qis M and overlineM}.
The new $\mathrm{OC}$ map can also be defined directly, mimicking $\mu_{\overline{\mathcal{M}}}^{r|s}$, and all weights contribute to the output weight for $SC^*(E)$ (which only uses a $\mathbf{q}$-variable; we do not form a double complex).\\[1mm]
\noindent {\bf Remark.} \emph{We will not need a map $\mathrm{CO}$ which lands in $\mathrm{CC}^*(\mathcal{A},\overline{\mathcal{M}})$. For such a map, one would need to use a double telescope chain model for $SC^*(E)$.}
\section{The generation criterion in the monotone setting}
\label{Section Generation criterion}

\subsection{The generation criterion in the monotone setting}
\label{Subsection The generation criterion in the monotone setting}

\begin{definition}[Split-generating {\cite[Sec.4c,3j]{Seidel}}]
A full subcategory $\mathcal{S}$ \emph{split-generates} an $A_{\infty}$-category $\mathcal{A}$ if for each $A\in \mathrm{Ob}(\mathcal{A})$, there is a twisted complex built from $\mathrm{Ob}(\mathcal{S})$ admitting $A$ as a summand.
We sometimes say $\mathrm{Ob}(\mathcal{S})$ \emph{split-generates} $\mathcal{A}$.
\end{definition}

\begin{criterion*}[Abouzaid  \cite{Abouzaid}]\label{Theorem Abouzaid generating criterion}
Let $(M,d\theta)$ be an exact symplectic manifold conical at infinity. Suppose
$\mathcal{S}$ is a full subcategory of $\mathcal{W}(M)$ such that $1$ lies in the image of %
$$
\xymatrix@C=40pt{ 
\mathrm{HH}_*(\mathcal{S}) \ar@{->}^-{\mathrm{HH}_*(\mathrm{incl})}[r]  &  \mathrm{HH}_*(\mathcal{W}(M)) 
\ar@{->}[r]^-{\mathrm{OC}_M} & SH^*(M).
}
$$
Then $\mathcal{S}$ split-generates $\mathcal{W}(M)$.
\end{criterion*}

By Theorem \ref{Theorem OC is a module map}, it in fact suffices that $\mathrm{im}(\mathrm{OC}_M)$ contains an $SH^*(M)$-invertible:
 
\begin{lemma}\label{Lemma QH 1 is enough}
 If $s\in \mathrm{im}(\mathrm{OC}_M)$ is invertible in $SH^*(M)$ then $1 \in \mathrm{im}(\mathrm{OC}_M)$.
\end{lemma}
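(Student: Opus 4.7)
The plan is to deduce this immediately from the $SH^*(M)$-module structure on $\mathrm{HH}_*(\mathcal{W}(M))$ and the fact that $\mathrm{OC}_M$ is a module homomorphism with respect to it, which is exactly Theorem \ref{Theorem OC is a module map}(2). No further geometric input is needed.

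Concretely, choose $\alpha \in \mathrm{HH}_*(\mathcal{W}(M))$ with $\mathrm{OC}_M(\alpha) = s$, and let $s^{-1} \in SH^*(M)$ be the multiplicative inverse guaranteed by hypothesis. Form the class $s^{-1} \cdot \alpha \in \mathrm{HH}_*(\mathcal{W}(M))$ using the $SH^*(M)$-action. By the module property of $\mathrm{OC}_M$,
$$\mathrm{OC}_M(s^{-1} \cdot \alpha) \;=\; s^{-1} \cdot \mathrm{OC}_M(\alpha) \;=\; s^{-1} \cdot s \;=\; 1 \in SH^*(M),$$
so $1 \in \mathrm{im}(\mathrm{OC}_M)$, as required.

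There is no serious obstacle; the entire content of the statement is packaged into Theorem \ref{Theorem OC is a module map}. One minor point worth a remark in the actual write-up is the compatibility with the eigenvalue decomposition: if one wants the stronger form needed for Theorem \ref{Theorem Introduction 2}, where $\mathcal{S} \subset \mathcal{W}_\lambda(M)$ and $s \in SH^*(M)_\lambda$ is invertible in that eigensummand, then by part (5) of Theorem \ref{Theorem OC is a module map} the class $\alpha$ can be taken in $\mathrm{HH}_*(\mathcal{W}_\lambda(M))$, and the module action of $s^{-1} \in SH^*(M)_\lambda$ preserves this summand, so the argument goes through verbatim within $\mathcal{W}_\lambda(M)$. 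Similarly, if $s$ lies in the image of $\mathrm{OC}_M|_\mathcal{S}$ for some full subcategory $\mathcal{S}$, then since the module action of $SH^*(M)$ on $\mathrm{HH}_*(\mathcal{W}(M))$ is defined via domain-side deformations that do not alter the Lagrangian labels (cf.\ Step~7 in the proof of Theorem \ref{Theorem OC is a module map}), $s^{-1} \cdot \alpha$ still lies in the image of $\mathrm{HH}_*(\mathcal{S}) \to \mathrm{HH}_*(\mathcal{W}(M))$, and hence $1 \in \mathrm{im}(\mathrm{OC}_M|_\mathcal{S})$.
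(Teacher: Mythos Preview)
Your proof is correct and essentially identical to the paper's: both choose a preimage of $s$, act by $s^{-1}$ using the $SH^*(M)$-module structure on $\mathrm{HH}_*(\mathcal{W}(M))$, and invoke Theorem~\ref{Theorem OC is a module map}(2) to conclude. Your additional remarks on the eigensummand and subcategory refinements are correct elaborations that the paper handles elsewhere (Remarks~\ref{Remark about idempotent summands} and~\ref{Remark idempotent summands}).
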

\begin{proof}
$\mathrm{OC}_M(x)=s$ for some $x \in \mathrm{HH}_*(\mathcal{W}(M))$, so since $\mathrm{OC}_M$ is an $SH^*(M)$-module map by Theorem \ref{Theorem OC is a module map} (which of course holds also in the exact setup), $\mathrm{OC}_M(s^{-1}\cdot x) = s^{-1}\cdot s = 1$.
\end{proof}

\begin{theorem}\label{Theorem generation in monotone setting}
Let $E$ be a monotone symplectic manifold conical at infinity. Let $\mathcal{S}$ be a full subcategory of $\mathcal{W}_{\lambda}(E)$. If 
$$
\xymatrix@C=40pt{ 
\mathrm{HH}_*(\mathcal{S}) \ar@{->}^-{\mathrm{HH}_*(\mathrm{incl})}[r]  &  \mathrm{HH}_*(\mathcal{W}_{\lambda}(E)) 
\ar@{->}[r]^-{\mathrm{OC}} & SH^*(E)_{\lambda},
}
$$
hits an invertible element (in the notation of \ref{Subsection Eigensummands}), then $\mathcal{S}$ split-generates $\mathcal{W}_{\lambda}(E)$.
\end{theorem}

\subsection{Outline of the proof of the generation criterion}
\label{Subsection Outline of the proof of generation}

Fix an object $K$ in $\mathcal{W} (E)$ (as usual, in the monotone setting, we always mean $\mathcal{W}_{\lambda}(E)$ for a fixed $m_0$-value $\lambda$). Recall $\mathcal{S}$ is a full subcategory of $\mathcal{W}(E)$. 
Let $\mathcal{L}$, $\mathcal{R}$ denote the Yoneda $\mathcal{S}$-modules
$$
\begin{array}{lll}
\mathcal{L}(X) = \mathrm{hom}_{\mathcal{S}}(K,X) = CW^*(K,X),\\ 
\mathcal{R}(X) = \mathrm{hom}_{\mathcal{S}}(X,K) = CW^*(X,K),
\end{array}
$$
where $X\in \mathrm{Ob}(\mathcal{S})$. Recall $\mathcal{R}$ represents the object $K$ (see \cite[Sec.(2g)]{Seidel}).
By \ref{Subsection tensor products},
\begin{equation}
\label{Eqn HH mathcal S LR is H R times S}
\mathrm{HH}_*(\mathcal{S},\mathcal{L}\otimes \mathcal{R}) \cong 
H^*(\mathcal{R}\otimes_{\mathcal{S}} \mathcal{L}).
\end{equation}
Recall from Section \ref{Section The coproduct} that the coproduct $\Delta$ is an $\mathcal{S}$-bimodule map 
$$\Delta:\mathcal{S} \to \mathcal{L}\otimes \mathcal{R},$$
where $\mathcal{S}$ is the diagonal $\mathcal{S}$-bimodule as in \ref{Subsection A new construction of the diagonal bimodule}. Consider the composites
$$
\xymatrix@C=35pt@R=6pt{
\mathrm{C}_1:\mathrm{HH}_*(\mathcal{S},\mathcal{S}) 
\ar@{->}^-{\mathrm{OC}}[r] &
 SH^*(E)
\ar@{->}^-{\mathrm{CO}^0}[r] &
 HW^*(K,K)\\
\mathrm{C}_2:\mathrm{HH}_*(\mathcal{S},\mathcal{S}) \ar@{->}^-{\mathrm{HH}_*(\Delta)}[r] &
 \mathrm{HH}_*(\mathcal{S},\mathcal{L}\otimes \mathcal{R}) \cong H^*(\mathcal{R}\otimes_{\mathcal{S}}\mathcal{L}) 
\ar@{->}^-{\mu_{\mathcal{S}}}[r] &
 HW^*(K,K)
}
$$
using the $\mu_{\mathcal{S}}^n$ to compose all morphisms of the chain complex $\mathcal{R}\otimes_{\mathcal{S}}\mathcal{L}$.

The key ingredient of the proof is to show that these composites are equal (up to a global sign $(-1)^{n(n+1)/2}$, where $\dim E=2n$), which we will prove in \ref{Subsection the key ingredient}. The hypothesis that $\mathrm{OC}$ hits $1\in SH^*(E)$ implies $\mathrm{C}_1$ hits the identity $e_K\in HW^*(K,K)$ since $\mathrm{CO}^0$ is a unital algebra homomorphism (see Theorem \ref{Theorem CO is unital alg hom}). So $\mathrm{C}_2$ hits $e_K$. For purely algebraic reasons \cite[Appendix A]{Abouzaid} this implies that $\mathcal{R}$ (which represents $K$) is a summand of a twisted complex built from objects in $\mathcal{S}$. Thus any object $K$ in $\mathcal{W}(E)$ is split-generated by objects in $\mathcal{S}$, as required. 


\begin{remark}\label{Remark idempotent summands}
 Suppose $\mathcal{S}$ is a full subcategory of $\mathcal{W}_{\lambda}(E)$, such that the composite
 $$
 \mathrm{CO}^0\circ \mathrm{OC}:\mathrm{HH}_*(\mathcal{S}) \to SH^*(E) \to HW^*(K,K)
 $$
 hits the identity $e_K$. Suppose $K$ is Floer cohomologically essential: $HW^*(K,K)\neq 0$. We claim that $m_0(K)=\lambda$, i.e. $K \in  \mathrm{Ob}(\mathcal{W}_{\lambda}(E))$. Indeed, by Theorem \ref{Theorem OC is a module map}, $\mathrm{OC}(\mathrm{HH}_*(\mathcal{S}))$ must land in the eigensummand $SH^*(E)_{\lambda}$, and the restricted map $\mathrm{CO}^0:SH^*(E)_{\lambda}\to HW^*(K,K)$ will vanish unless $m_0(K)=\lambda$ (see \ref{Subsection CO maps respect eigensummand decompositions}) and it is unital when $m_0(K)=\lambda$, by Corollary \ref{Corollary decomposition into esummands and diagram}.
So it suffices to consider Lagrangians with a single $m_0$-value $\lambda$ in the proof of the generation criterion. Also, in the Generation Criterion and in Lemma \ref{Lemma QH 1 is enough} we only care that $\mathrm{OC}|_{\mathcal{S}}$ hits the idempotent summand of $1$ in the generalized eigensummand $SH^*(E)_{\lambda}$.
\end{remark}

This outline is precisely the argument of Abouzaid \cite{Abouzaid}. To prove  Theorem \ref{Theorem Introduction 2} we will therefore only highlight the novelties of the proof in our monotone setup.
\subsection{The moduli space of marked annuli}
\label{Subsection the key ingredient}

We now prove that the composites $\mathrm{C}_1,\mathrm{C}_2$ from \ref{Subsection Outline of the proof of generation} agree. This involves studying a moduli space of annuli, which in Abouzaid's case is described in \cite[Appendix C4,C5]{Abouzaid}, and in our case will be endowed with additional decorations coming from geodesics and preferred points. Keeping track of this additional data may seem cumbersome at first, but it has the advantage that all gluings are natural: so in fact we will not need the first homotopy of Abouzaid \cite[Sec.6.2]{Abouzaid} which changes the weights so that gluing becomes possible, and we will only need to construct the analogue of his second homotopy \cite[Sec.6.3]{Abouzaid}. Moduli spaces of annuli with $n$ boundary marked points are given Deligne-Mumford compactifications by being viewed as moduli spaces of elliptic curves with real structures via doubling, and an \'{e}tale forgetful map to $\mathcal{M}_{1,n}$, see \cite[Ch.2]{FOOO}.

\begin{lemma}
Inside the Deligne-Mumford moduli space of annuli with two marked points, there is a real one-dimensional submanifold (not transverse to the boundary) whose boundary points are depicted in the first and last pictures of Figure \ref{Figure coproduct2} (ignoring decorations).
\end{lemma}
\begin{proof}[Sketch Proof.]
For $r\in (1,\infty)$, consider the subfamily defined by
\begin{equation}
\label{Eqn varepsilon annuli family}
\varepsilon_r = \{ z\in \C: 1 \leq |z| \leq r \} \setminus \{ -1,+r \}.
\end{equation}
(Recall the space of conformal structures on annuli in $\C$ centred at $0$ is parametrized by the ratio of the outer and inner radius.)
By doubling up the annulus, \eqref{Eqn varepsilon annuli family} determines a real 1-family in $\mathcal{M}_{1,2}$ parametrised by $e^r$.  Abouzaid shows in \cite[Appendix C.4]{Abouzaid} that the boundary strata carry two nodes (even though this would typically be a complex codimension two phenomenon).
When $r\to \infty$, one of the two nodes forms in the interior of the annulus, thus in the limit the annulus breaks into two discs joined at an interior point as in picture 1. When $r\to 1$, the two nodes arise as a limit of two pairs of boundary points of the annulus connected by two shrinking geodesic segments (the dotted lines in picture 3). In the limit this yields two discs joined at two nodes as in picture 5. 
\end{proof}

Analogous families of annuli can be constructed when there are several marked points on the outer circle $|z|=r$, which we view as families lying over the family \eqref{Eqn varepsilon annuli family} via the forgetful map. Thus the punctures at $-1,+r$ are distinguished (marked by dashes in Figure \ref{Figure coproduct2}).
The dotted lines in pictures 2 and 3 of Figure \ref{Figure coproduct2} are examples of hyperbolic geodesics which shrink in the limit to give respectively pictures 1 and 5. In picture 2, we work in the $(s,t)$ plane, where $z=e^{s+it}$ parametrizes the annulus, so we've passed to the cover $\R\to \R/\Z$ of the angular variable. The limit $r\to \infty$ corresponds to stretching the strip in picture 2 near the dotted line. Picture 4 is biholomorphic to 3, and shrinking the dotted lines in 3 corresponds to stretching the two handles in 4 (corresponding to the limit $r\to 1$). The dotted lines in pictures 2 and 3 play the same role as the ``cuts'' in Figure
\ref{Figure popsicle} of \ref{Subsection the auxiliary data for wrapped category} and Figure \ref{Figure cutting} of \ref{Subsection wrapped A infinity structure}.
%
%
\begin{figure}
\input{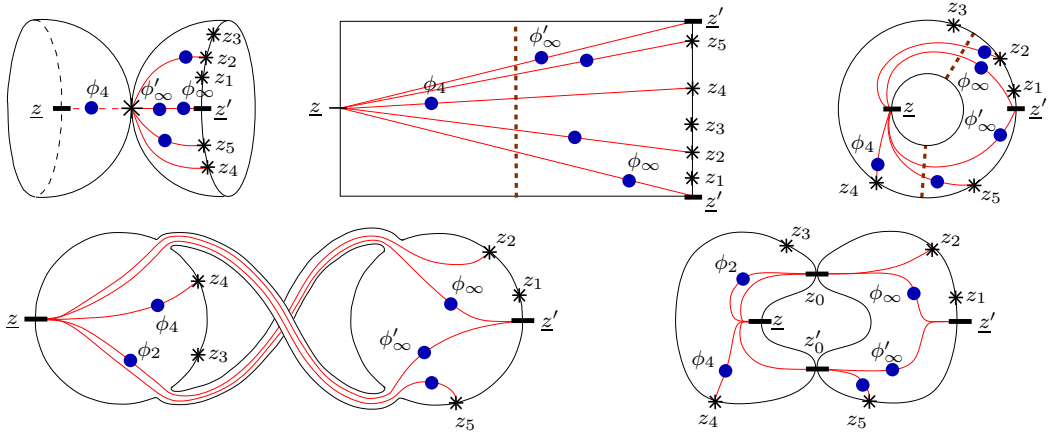}
\caption{Breaking analysis for the moduli space of annuli.} 
\label{Figure coproduct2}
\end{figure}

Observe our moduli space of annuli in picture 2 or 3 of Figure \ref{Figure coproduct2}. The distinguished boundary punctures are $\underline{z}=-1$ (which will be the output) and $\underline{z}'=\underline{z}_{\infty}=+r$ (which will be the bimodule input), and the other boundary punctures $z_1,\ldots,z_n$ are ordered counterclockwise on the outer boundary. Mimicking the construction of the coproduct, the auxiliary data involves two $\mathbf{p}$-maps as in \eqref{Eqn p p' maps} which keep track of the presence of preferred points on geodesics joining $\underline{z}$ with the other punctures, with the important feature that there are two types of (deformed) geodesics joining $\underline{z},\underline{z}'$ (in Figure \ref{Figure coproduct2}, the \emph{left geodesic} is the one carrying $\phi_{\infty}$, the \emph{right geodesic} carries $\phi_{\infty}'$). The geodesics are deformed so that they come in orthogonally to the boundary. We have weights $w_1,\ldots,w_n$ associated to the non-module inputs (coming from $\mathcal{A}$), and a graded weight $(a,b)=(\underline{w}_L',\underline{w}_R')$ for the module input (coming from the diagonal bimodule $\mathcal{S}\subset \overline{\mathcal{M}}$). There are closed forms $\underline{\alpha}_L, \underline{\alpha}_R,\alpha_1,\ldots,\alpha_n$; sub-closed forms $\beta_f$, $\beta_{f'}$. These define a total $1$-form $\gamma = \sum w_k \alpha_k + \underline{w}_L' \underline{\alpha}_L + \underline{w}_R' \underline{\alpha}_R + \sum \beta_f + \sum \beta_{f'}$, which pulls back to $w_k\, dt$, $(\underline{w}_L' + \underline{w}_R')\,dt$, $\underline{w}\, dt$ near $z_k,\underline{z}',\underline{z}$, where $\underline{w} = \sum w_k + \underline{w}_L' + \underline{w}_R'+|F|+|F'|$.

As usual, the auxiliary data is constructed consistently via \ref{Subsection A comment about the existence of universal choices of auxiliary data}, so that on the broken configurations we are using the auxiliary data previously constructed for other operations. So in picture 1, the left disc-bubble carries the data of the $\mathrm{CO}^0:SC^*(E)\to CW^*(K,K)$ map, the right disc-bubble carries the data of the $\mathrm{OC}:\mathrm{CC}_*(\mathcal{A},\overline{\mathcal{M}})\to SC^*(E)$ map constructed in \ref{Subsection The OC map using new diag bimodule}. In picture 5 the left bubble carries the data for $\mu_{\mathcal{A}}^*$ constructed in \ref{Subsection wrapped A infinity structure}, and the right bubble carries the data for $\Delta: \overline{\mathcal{M}}\to \mathcal{L}\otimes \mathcal{R}$ constructed in \ref{Subsection The coproduct}.

Consider a $1$-dimensional moduli space of solutions $u: S \to E$ of $(du-X\otimes \gamma)^{0,1}=0$ defined on a fixed annulus $S$, and satisfying the relevant boundary and asymptotic conditions. Pictures 1 and 5 of Figure \ref{Figure coproduct2} are two examples of boundary points of this moduli space. 
We conclude that picture 1 is a contribution to the coefficient of $\underline{x}$ in\footnote{By $\partial_{\mathbf{q}_R}$-equivariance, the $(\mathbf{q}^0,\mathbf{q}^0)$-output of $\mathrm{OC}(\mathbf{q}_L\mathbf{q}_R\underline{x}',\mathbf{q}x_5,x_4,x_3,\mathbf{q}x_2,x_1)$ contributes to the $\mathbf{q}^1$-coefficient of
$\mathrm{OC}(\mathbf{q}_L\mathbf{q}_R\underline{x}',\mathbf{q}x_5,\mathbf{q}x_4,x_3,\mathbf{q}x_2,x_1)$. Compare the Remarks in \ref{Subsection the auxiliary data for wrapped category} and \ref{Subsection wrapped A infinity structure}. 
}
$$
\mathrm{CO}^0 \circ \mathrm{OC}(\mathbf{q}_L\mathbf{q}_R\underline{x}',\mathbf{q}x_5,\mathbf{q}x_4,x_3,\mathbf{q}x_2,x_1),
$$
whereas picture 5 is a contribution to the coefficient of $\underline{x}$ in%
\footnote{By $\partial_{\mathbf{q}_L}$-equivariance, the $(\mathbf{q}^0,\mathbf{q}^0)$-output of $\Delta^{2|1}(x_2,x_1,\mathbf{q}_L\mathbf{q}_R\underline{x}',\mathbf{q}x_5)$ contributes to the $(\mathbf{q}^1,\mathbf{q}^0)$-coefficient of
$\Delta^{2|1}(\mathbf{q}x_2,x_1,\mathbf{q}_L\mathbf{q}_R\underline{x}',\mathbf{q}x_5)$. Compare the Remarks in \ref{Subsection the auxiliary data for wrapped category} and \ref{Subsection wrapped A infinity structure}. 
}
$$
\mu^{3}_{\mathcal{S}}\circ \mathcal{T}\left(\underline{\Delta^{2|1}(\mathbf{q}x_2,x_1,\mathbf{q}_L\mathbf{q}_R\underline{x}',\mathbf{q}x_5)}\otimes \mathbf{q}x_4 \otimes x_3\right), 
$$
where $\mathcal{T}$ is the reordering isomorphism from \ref{Subsection tensor products} which induces \eqref{Eqn HH mathcal S LR is H R times S} (notice we are inputting into $\mathcal{T}$ a typical summand of $\mathrm{CC}_*(\Delta)(\mathbf{q}x_2,x_1,\mathbf{q}_L\mathbf{q}_R\underline{x}',\mathbf{q}x_5,\mathbf{q}x_4, x_3)$; see Lemma \ref{Lemma functoriality of HH}).

This example now easily generalizes to show that the boundary points of such $1$-dimensional moduli spaces include the counts involved in the maps $\mathrm{C}_1=\mathrm{CO}^0\circ \mathrm{OC}$ and $\mathrm{C_2}=\mu_{\mathcal{S}}^{\mathrm{max}}\circ \mathcal{T} \circ \mathrm{CC}_*(\Delta)$. There are however also two other contributions which we now describe.

Let $\varphi: \mathrm{CC}_*(\mathcal{A},\mathcal{S})\to CW^*(K,K)$ denote the weighted count of isolated solutions $u$ defined on an annulus as above. In a $1$-family, when a subset of the outer boundary punctures converges to a common point, we obtain bubbling configurations counted by $\varphi\circ b$ where $b$ is the bar differential for $\mathrm{CC}_*(\mathcal{A},\mathcal{S})$. Secondly, when bubbling occurs on the inner boundary circle of the annulus, so at $\underline{z}=-1$, the broken configurations are counted by $\mu_{CW^*(K,K)}^1\circ \varphi$. 

We conclude that (up to signs) $\mathrm{C}_1$ and $\mathrm{C_2}$ differ at the chain level by $\varphi\circ b - \mu_{CW^*(K,K)}^1\circ \varphi$, therefore they agree on cohomology. This concludes the proof of Theorem \ref{Theorem generation in monotone setting}.\\ 
\noindent {\bf Technical Remark.} \emph{A more careful treatment of orientation signs \cite[Lemma 6.8]{Abouzaid} shows that
$C_1-(-1)^{n(n+1)/2}C_2=\varphi\circ b + (-1)^n \mu_{CW^*(K,K)}^1\circ \varphi$, so the maps $C_1$ and $C_2$ differ on homology by $(-1)^{n(n+1)/2}$, where $\dim E=2n$.}
\subsection{Generation for the compact category}
\label{Section Generation and functoriality for the compact category}
When working with $\mathcal{F}(E)$ or with $\mathcal{F}(B)$ for a closed monotone symplectic manifold $B$, the analogue of Theorem \ref{Theorem generation in monotone setting} holds, and the proof is analogous (indeed simpler, since one no longer needs a telescope chain model and no auxiliary forms $\alpha_k,\beta_f$ are in play, beyond using the Floer datum machinery of \ref{Subsection When Lagrangians do not intersect transversely}).

\begin{theorem}\label{Theorem generation for compact categories}
 Let $M$ be a closed monotone symplectic manifold, or a monotone symplectic manifold conical at infinity. Let $\mathcal{S}$ be a full subcategory of $\scrF_{\lambda}(M)$. If 
$$
\xymatrix@C=40pt{ 
\mathrm{HH}_*(\mathcal{S}) \ar@{->}^-{\mathrm{HH}_*(\mathrm{incl})}[r]  &  \mathrm{HH}_*(\scrF_{\lambda}(M)) 
\ar@{->}[r]^-{\mathrm{OC}} & QH^*(M)_{\lambda},
}
$$
hits an invertible element (in the notation of \ref{Subsection Eigensummands}), then $\mathcal{S}$ split-generates $\scrF_{\lambda}(M)$.
\end{theorem}

\section{Applications}
\label{Section Applications}

\subsection{Toric varieties and the Landau-Ginzburg superpotential}\label{Section Brief survey on Landau Ginzburg}
We assume the reader has some familiarity with the relevant literature (Cho-Oh \cite{Cho-Oh}, Auroux \cite{Auroux}, Fukaya-Oh-Ohta-Ono \cite{FOOOtoric}), and we refer to \cite[Appendix]{Ritter5} for details.
Let $(X,\omega_X)$ be a closed real $2n$-dimensional symplectic manifold together with an effective Hamiltonian action of the $n$-torus $U(1)^n$. This action determines a \emph{moment map} $\mu_X: X \to \R^n$
whose image $\Delta=\mu_X(X)\subset \R^n$ is a convex polytope, called the \emph{moment polytope}.
By Delzant's theorem $\Delta$ determines, up to isomorphism, $(X,\omega_X)$ together with the action. 
The moment polytope has the form
\begin{equation}\label{Eqn moment polytope}
\Delta = \{ y\in \R^n: \langle y,e_i \rangle \geq \lambda_i \textrm{ for } i=1,\ldots,r \},
\end{equation}
where $\lambda_i\in \R$ are parameters and $e_i\in \Z^n$ are the primitive inward-pointing normal vectors to the codimension 1 faces of $\Delta$.
%
The \emph{Landau-Ginzburg superpotential} is 
\begin{equation}\label{Eqn LG superpotential}
W:(\Lambda^*)^n \to \Lambda,\;\; W(z_1,\ldots,z_{n}) = \sum_{i=1}^r t^{-\lambda_i} z^{e_i}
\end{equation}
 where $\Lambda$ is the Novikov field defined over $\K=\C$ (see \ref{Subsection Novikov ring}), and $\Lambda^*=\Lambda\setminus \{0\}$.
%
%
The fibres $L_y=\mu_X^{-1}(y)$ of the moment map, for $y\in \mathrm{Int}(\Delta)\subset \R^n$, are Lagrangian tori.
Let $\mathrm{val}_t$ denote the valuation for the $t$-filtration on $\Lambda$, whose value is the lowest exponent of $t$ arising in the series. 
The conditions $\mathrm{val}_t(t^{-\lambda_i}z^{e_i}) > 0$ are equivalent to the equations
in \eqref{Eqn moment polytope} since
\begin{equation}\label{Eqvalt}
\mathrm{val}_t(t^{-\lambda_i}z^{e_i}) = \langle y,e_i \rangle - \lambda_i. 
\end{equation}
Thus $z$ determines a toric Lagrangian $L_y$, $y= \mathrm{val}_t(z)\in \mathrm{interior}(\Delta)\subset \R^n,$ together with a choice of holonomy around each generating circle of the torus $L_y$ given by 
 $t^{-\mathrm{val}_t(z)}z\in H^1(L,\Lambda_0^{\times}),$
using the ring $\Lambda_0^{\times}$ from \ref{Subsection Novikov rings for local systems}. 
Let $L_z$ be the Lagrangian $L_y$ with that holonomy.
\begin{lemma}[{\cite[Theorem 7.11]{FOOOtoric} using \cite[Theorem 8.1]{Cho-Oh}}]\label{Lemma crit pts give monotone torus at barycentre}
\strut

For a closed monotone toric manifold $X$, the $z\in \mathrm{Crit}(W)$ always 
have 
\begin{equation}\label{Eqn barycentre equation FOOO theorem}
y=\mathrm{val}_t(z)=\mathrm{barycentre}\in \mathrm{Int}(\Delta).
\end{equation}
and $L_y$ is a monotone Lagrangian. Thus $L_z$ for $z \in \mathrm{Crit}(W)$ is always the same Lagrangian torus $L_{\mathrm{crit}}$ (the fibre over the barycentre) but the choice of holonomy data depends on $z$.
\end{lemma}

%
\indent
In general, the barycentre $y_{\mathrm{bar}}\in \mathrm{Int}(\Delta)$ of $\Delta$ is determined by the equations
\begin{equation}\label{Eqvalt2}
\langle y_{\mathrm{bar}},e_i \rangle - \lambda_i = \tfrac{1}{\lambda_X}.
\end{equation}
In Lemma \ref{Lemma crit pts give monotone torus at barycentre}, given a critical point $z$, using \eqref{Eqvalt} and \eqref{Eqvalt2} we deduce
\begin{equation}
\label{Eqn ambiguity in z}
\boxed{z=(\zeta_1 t^{a_1},\ldots,\zeta_n t^{a_n}) \quad \textrm{ and } \quad
W(z) = c\,t^{1/\lambda_X}}
\end{equation}
where $\zeta_j \in \C^*$, $c \in \C$, $y_{\mathrm{bar}}=(a_1,\ldots,a_n)$ is the barycentre, and recall $t^{1/\lambda_X}$ has grading $2$. Also $L_z$, being monotone and orientable, defines an object of the (wrapped) Fukaya category in the sense of \ref{Subsection Lagrangians with local systems}.
We will only consider cases where $X$ is monotone (so for a Fano toric variety $X$ we always pick a monotone toric symplectic form), and in the non-compact case we require that $X$ is convex. We remark that the proof of the above Lemma still holds in the non-compact setting, after defining \emph{barycentre} as in the Appendix of \cite{Ritter5} to mean \eqref{Eqvalt2}. 
%
%
%
For $z\in\mathrm{Crit}(W)$, it follows \cite[Definition 3.3]{Auroux} that 
$$m_0(L_z) = W(z).$$
\begin{theorem}[Cho-Oh \cite{Cho-Oh}, see also Auroux \cite{Auroux}]\label{Theorem pt class is cycle for crit Lag}\strut\\
$HF^*(L_z,L_z) \neq 0$ if and only if $z \in \mathrm{Crit}(W)$, indeed this holds if and only if the point class $[\mathrm{pt}]\in C_*(L_z)$ is a Floer cycle in $HF^*(L_z,L_z)$ in the Morse-Bott model. In particular, if this holds, $L_z$ is not displaceable by a Hamiltonian isotopy.
\end{theorem}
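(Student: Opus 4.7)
The plan is to reduce the statement to Cho--Oh's classification of Maslov index 2 holomorphic discs with boundary on a toric fibre $L_r$, together with the machinery of Section \ref{Section Fukaya with local systems} to encode the choice of holonomy $z$ as a local system. By the same FOOO-type argument quoted before the statement (namely that monotonicity rules out disc bubbles of Maslov index $\leq 0$ and forces $\underline{\mathfrak{m}}_0(L_z, b_z)$ to be a multiple of $[L_z]$), the differential $\mu^1_{\underline{\scrF}(X)}$ on $\underline{CF}^*(L_z,L_z)$ is modelled, at the $E_1$-level of the action filtration, on the de Rham complex $(\Omega^*(L_z;\Lambda), d_{L_z} + \delta_W)$, where $\delta_W$ is the perturbation coming from counts of Maslov 2 discs passing through interior cycles.

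The first key step is to invoke Cho--Oh's theorem that the Maslov index 2 holomorphic discs bounding $L_r$ come in $r$ families $\mathcal{M}_i$, one for each facet of $\Delta$ with inward normal $e_i$, and that the evaluation $\mathrm{ev}: \mathcal{M}_i \to L_r$ sweeps the fundamental cycle $[L_r]$ exactly once, with symplectic area $\langle r, e_i\rangle - \lambda_i$. Weighting by the local system $b_z$ and summing gives $\underline{m}_0(L_z,b_z) = W(z)$, reproving the identity highlighted just above the statement. To compute the differential on the point class, one evaluates instead at an interior boundary marked point: the count of Maslov 2 discs through a generic $p\in L_z$, passing through $p$ with boundary class $e_i \in H_1(L_z;\Z)$, contributes $t^{-\lambda_i} z^{e_i} = z_i \frac{\partial W}{\partial \log z_i}$ times the Poincar\'e dual $[\gamma_i] \in H^{n-1}(L_z)$ of the $i$-th circle generator. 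Hence
\[
\mu^1_{\underline{\scrF}(X)}([\mathrm{pt}]) \;=\; \sum_{i=1}^n z_i \frac{\partial W}{\partial z_i}(z)\, [\gamma_i] \qquad \in H^{n-1}(L_z;\Lambda).
\]

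This formula immediately gives the equivalence $(c) \Leftrightarrow (b)$: $[\mathrm{pt}]$ is a Floer cycle iff every partial $z_i \partial_{z_i} W(z)$ vanishes, which (since $z_i \in \Lambda_0^\times$) is precisely the critical point equation for $W$ on the open torus $(\Lambda_0^\times)^n$. For the implication $(a) \Rightarrow (b)$, the hard direction, I would extend the above computation to the full Koszul-type model: since $L_z \cong T^n$ has $H^*(L_z;\Lambda) = \Lambda\langle \gamma_1, \ldots, \gamma_n\rangle$ as an exterior algebra, the bulk-deformed differential on all of $\underline{CF}^*(L_z,L_z)$ is contraction against the $1$-form $\sum_i z_i\partial_{z_i}W(z)\, d\log z_i$. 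The resulting cohomology is the Koszul cohomology of the ideal generated by the partials of $W$ at $z$, which vanishes unless $z \in \mathrm{Crit}(W)$. Conversely at a critical point $[\mathrm{pt}]$ survives and is non-zero, so $(b) \Rightarrow (c) \Rightarrow (a)$. The main obstacle here is justifying the Koszul model rigorously at chain level (rather than just for $[\mathrm{pt}]$), which requires ruling out higher-order corrections from non-trivial $A_\infty$ operations on the perturbed de Rham complex; this is precisely what Cho--Oh establish for monotone toric fibres using the transversality afforded by the torus action on the moduli spaces.

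Finally, non-displaceability is a standard consequence: if $\phi_H^1(L_z) \cap L_z = \emptyset$ for some Hamiltonian isotopy, then $\underline{CF}^*(L_z, \phi_H^1(L_z))$ is the zero complex, so $\underline{HF}^*(L_z, \phi_H^1(L_z)) = 0$. But Hamiltonian invariance of Lagrangian Floer cohomology (which applies in the monotone setting, including with local systems since the holonomy transports naturally along isotopies) gives $\underline{HF}^*(L_z, \phi_H^1(L_z)) \cong \underline{HF}^*(L_z, L_z) \neq 0$, a contradiction.
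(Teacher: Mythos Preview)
The paper does not include a proof of this theorem: it is stated as a cited result of Cho--Oh \cite{Cho-Oh}, with a parenthetical pointer to Auroux \cite[Proposition 6.9]{Auroux} for exposition, and is used as a black box thereafter. So there is no proof in the paper to compare your proposal against.

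That said, your sketch is the standard argument and is correct in outline. One small sloppiness: when you write that the $i$-th facet's disc family ``contributes $t^{-\lambda_i} z^{e_i} = z_i \frac{\partial W}{\partial \log z_i}$'', the equality is wrong as stated---a single facet contributes a single monomial $t^{-\lambda_i} z^{e_i}$ of $W$, not a logarithmic derivative. The correct accounting is that after summing over all $r$ facets and collecting by the basis $[\gamma_j]$ of $H^{n-1}(L_z)$, the coefficient of $[\gamma_j]$ is $\sum_{i=1}^r (e_i)_j\, t^{-\lambda_i} z^{e_i} = z_j\,\partial_{z_j} W(z)$, which is indeed your displayed formula. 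The remainder of your outline---the exterior-algebra Koszul model for $\mu^1$ on $H^*(T^n;\Lambda)$, the resulting equivalence of the three conditions, and non-displaceability via Hamiltonian invariance of Floer cohomology with local systems---matches the Cho--Oh/Auroux treatment.
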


\begin{corollary}\label{Corollary m0 is critval of W for toric}
$z\!\in\! \mathrm{Crit}(W) \Rightarrow \lambda\!=\!m_0(L_z)\!=\!W(z)$ is an eigenvalue of $c_1(TX)\!\in\! QH^*(X)$.
\end{corollary}
\begin{proof}(Mimicking Auroux \cite[Sec.6]{Auroux})
Let $c=c_1(TX)-\lambda\, \mathrm{Id}$. If $c$ were invertible in $QH^*(X)$, then since $\mathrm{CO}^0:QH^*(X)\to HF^*(L_z,L_z)$ is a unital algebra homomorphism (Theorem \ref{Theorem CO is unital alg hom}),
$$
1_{L_z}\equiv \mathrm{CO}^0(1)=\mathrm{CO}^0(c^{-1}*c)=\mu^2(\mathrm{CO}^0(c^{-1}),\mathrm{CO}^0(c))=0,
$$
as $\mathrm{CO}^0(c)=0$ by Lemma \ref{Lemma COc1 and m0}. But by Theorem \ref{Theorem pt class is cycle for crit Lag}, the unit $1_{L_z}\neq 0$ as $HF^*(L_z,L_z)\neq 0$.
\end{proof}

For closed monotone toric manifolds $B$, a classical result due to Batyrev and Givental is 
\begin{equation}\label{Eq QH is Jac}
QH^*(B) \cong \mathrm{Jac}(W) = \Lambda[z_1^{\pm 1},\ldots,z_n^{\pm 1}]/(\partial_{z_1}W,\ldots,\partial_{z_n}W), \; c_1(TB) \mapsto W.
\end{equation}
Since the eigenvalues of multiplication by $W$ acting on the Jacobian ring are precisely the critical values of $W$, one can strengthen Corollary \ref{Corollary m0 is critval of W for toric}:

\begin{corollary}\label{Corollary evalues of c1 and critvals of W}
For closed monotone toric $B$, the eigenvalues of $c_1(TB)\in QH^*(B)$ are precisely the critical values of $W$.
\end{corollary}

\begin{theorem}[{Galkin \cite{Galkin}}]\label{Theorem Galkin}
 For $B$ any closed monotone toric manifold, the superpotential $W: (\C^*)^n\to \C$ (working over $\C$ instead of $\Lambda$) always has a non-degenerate critical point $p\in (\C^*)^n$ with strictly positive real coordinates.
\end{theorem}
\begin{corollary}\label{Corollary c1TB is non nilpotent in monotone toric B}
 For $B$ closed monotone and toric, $c_1(TB)\in QH^*(B)$ is not nilpotent.
\end{corollary}
\begin{proof}
Observe that (putting $t=1$) $W=\sum z^{e_i}$ will take a strictly positive real value on Galkin's critical point $p$, so $c_1(TB)\in QH^*(B;\C)$ has a strictly positive real eigenvalue $\lambda_p$. The same holds true for the superpotential defined over $\Lambda$, since we can insert powers of $t$ as dictated by the $\Z$-grading of $QH^*(B)$ (the eigenvalue will be $\lambda_p t^{1/\lambda_B}$, in grading 2).
\end{proof}
%
%
\subsection{Toric negative line bundles}
A complex line bundle $\pi: E \to B$ over a closed symplectic manifold $(B,\omega_B)$ is a \emph{negative line bundle} if
$$
c_1(E) = -k[\omega_B] \; \textrm{ for some real } k>0.
$$
We refer to \cite{Ritter4,Ritter5} for a detailed exposition.
The negative line bundle $E$ is a symplectic manifold conical at infinity whose Reeb flow is the $S^1$-rotation in the fibres and whose symplectic form $\omega$ can be chosen \cite[Section 4]{Ritter5} to satisfy
$$
\begin{array}{lll}
 \omega|_{\mathrm{fibre}\equiv \C} & = & 
\omega_{\C}
\\
 \omega|_{\mathrm{base}=B} & = &  \omega_B,
\end{array}
$$
so $[\omega] \equiv [\omega_B]$ via $H^2(E)\cong H^2(B)$.
We require that $E$ is monotone, so $c_1(TE) = \lambda_E [\omega]$ for some $\lambda_E>0$.
Splitting $TE$ using a Hermitian connection, $c_1(TE)=\pi^*c_1(TB)+\pi^*c_1(E)$, where $c_1(E)$ is the first Chern class of the line bundle. Therefore, monotonicity of $E$ is equivalent to $B$ being monotone, so $c_1(TB) = \lambda_B [\omega_B]$, and that $\boxed{0< k < \lambda_B}$. In particular,
$$
\boxed{\lambda_E = \lambda_B - k}
$$
Our motivation for studying this particular class of non-compact manifolds, is because it is the only family of examples outside of exact cotangent bundles where $SH^*$ was computed:

\begin{theorem}[Ritter \cite{Ritter4}]\label{Theorem about SH of neg l b}
$c^*:QH^*(E) \to SH^*(E)$ induces a unital algebra isomorphism 
$$
SH^*(E) \cong QH^*(E)/QH^*(E)_0
$$
(in the notation of \ref{Subsection Eigensummands}), where we restrict\footnote{This is the full $SH^*(E)$ for toric negative line bundles, since toric varieties are simply connected.} $SH^*(E)$ 
to contractible loops (see \ref{Subsection Contractible vs noncontractible}). Moreover, $QH^*(E)_0$ is always non-trivial, and $c^*(c_1(TE))$ acts invertibly on $SH^*(E)$ (so $SH^*(E)_0=0$).
\end{theorem}

The Appendix in \cite{Ritter5} constructs the moment polytope $\Delta_E$ for the complex line bundle $E=\mathcal{O}(\sum n_i D_i)\to B$, where $D_i\subset B$ are the toric divisors, that is: $c_1(E)=\sum n_i \mathrm{PD}[D_i]$. 
If the moment polytope for $B$ is
$\Delta_B
=\{ y\in \R^{n}: \langle y, b_i \rangle \geq \lambda_i^B \textrm{ for } i=1,\ldots,r \}\subset \R^{n},
$
then take $e_1=(b_1,-n_1)$, $\ldots$, $e_r =(b_r,-n_r)$, $e_{r+1}=(0,\ldots,0,1) \in \Z^{n+1}$, $\lambda_i^E=\lambda_i^B$, and $\lambda_{r+1}^E=0$:
$$
\Delta_E  = \{ y\in \R^{n+1}: \langle y, e_i \rangle \geq \lambda_i^B \textrm{ for } i=1,\ldots,r \textrm{ and } y_{n+1}\geq 0 \}\subset \R^{n+1}.
$$
Although a closed Fano toric variety $B$ has a canonical polytope $\Delta$, with all $\lambda_i^{\Delta}=-1$, called the reflexive polytope, this will not be useful for our purposes. The reason is that $[\omega_{\Delta}]=c_1(TB)$, but we want a polytope $\Delta_B$ with $[\omega_B]\in H^2(B,\Z)$ integral and primitive so that $c_1(TB)=\lambda_B [\omega_B]$, $c_1(E)=-k[\omega_B]$ for integers $\lambda_B>0$, $k>0$, because then we can take
$$
\textstyle E = \mathcal{O}(\sum k\lambda_i^B D_i)
$$
since the toric symplectic form can be written in terms of the divisors as $[\omega_B]=\sum -\lambda_i^B \mathrm{PD}[D_i]$. If we used $\Delta$ instead, then $k$ would be fractional, and it would not be clear how to obtain $E$ as an integral combination $\sum n_i D_i$. Such a polytope $\Delta_B$ was constructed in \cite{Ritter5} (namely $\Delta_B=(\Delta - v)/\lambda_B$, for a vertex $v$ of $\Delta$). The construction ensures that $\lambda_i^B\leq 0$ are integers and the primitive inward normals $b_i$ are the same as for $\Delta$. 

\begin{lemma}
The barycentres $a\in \R^n$ of $\Delta_B$, and $a'\in \R^{n+1}$ of $\Delta_E$, are related by
\begin{equation}\label{Eqn comparison of barycentres}
\boxed{a_j' = \tfrac{\lambda_B}{\lambda_E}a_j \quad \textrm{ and }\quad a_{n+1}'=\tfrac{1}{\lambda_E}}
\end{equation}
\end{lemma}
\begin{proof}
This follows from \eqref{Eqvalt2} using that $e_j=(b_j,-k\lambda_i^B)$ and $e_{n+1}=(0,\ldots,0,1)$.
\end{proof}
%
%
%
Although one could a posteriori translate the polytope $\Delta_E$, so that at least $\Delta_B$ has barycentre at $0$ (this would make $\Delta_B=\Delta/\lambda_B$ and $\lambda_i^B=-1/\lambda_B$), this would not enable us to make the following substitution trick which relates the superpotentials $W_B,W_E$ of $\Delta_B,\Delta_E$, via the change of variables $\boxed{y=t w^k}$, abbreviating by $w=z_{n+1}$ the new variable associated to $e_{n+1}$.
$$
\begin{array}{rcl}
W_{E} & = & W_E(z_1,\ldots,z_n,w)\\
& = & \sum t^{-\lambda_i^B} z^{b_i}w^{-k\lambda_i^B} + w\\
& = & \sum y^{-\lambda_i^B} z^{b_i} + w\\
& = & W_B(y;z)+w
\end{array}
$$
where we view the superpotential $W_B$ of $B$ as a function not just of $z=(z_1,\ldots,z_n)\in (\Lambda_B^*)^n$ but also of the Novikov variable $T\in \Lambda_B$ (which above undergoes the substitution $T=y$), so
$$
W_B = W_B(T;z) = \sum T^{-\lambda_i^B} z^{b_i}.
$$
Notice there are two Novikov variables in play: $T,t$ respectively for $B,E$ so respectively in the Novikov rings $\Lambda_B,\Lambda$, and lying in degrees $2\lambda_B,2\lambda_E$. Our goal is to relate the critical points of $W_E,W_B$. This relationship is not straightforward because the resulting holonomies of the relevant line bundles on the barycentric torus fibres for $B$ and $E$ are quite different.

\subsection{Phases and roots of Novikov series}
\label{Subsection Phases and roots of Novikov series}

Given a solution of $dW_B=0$, to construct a solution of $dW_E=0$ we may need fractional powers of $y$ (e.g. the critical points for $B=\P^2$ involve $T^{1/3}$). For the purpose of proving the existence of a solution, it will be enough to prove that, given $y\in \Lambda$  involving only strictly positive powers of $t$, there exists \emph{some} $\K$-algebra homomorphism $\Lambda_B \to \Lambda$ sending $T\mapsto y$.
Our choice of substitution is governed by the following Definition/Claim.\\[1mm]
{\bf Definition.} $\theta=\theta(y)\in \R$  is a \emph{phase}
for $y\in \Lambda$ if $y=y_0 e^{i\theta}t^{r}+\mathcal{O}$, where we abusively write $\mathcal{O}$ to mean ``higher order $t$ terms'', with $y_0\in \R^*$. The shifts in phase, $\theta\mapsto \theta + 2\pi n$ for $n\in \Z$, preserve $y$ but they will affect our choice of $y^p$.
We call $r=\mathrm{val}_t(y)\in \R$ the \emph{valuation} of $y$, and $y_0e^{i\theta}=\mathrm{lead}_t(y)\in \C^*$ the \emph{leading term} of $y$.
\\[1mm]
{\bf Claim.} \emph{Given $y\in \Lambda$ with $\mathrm{val}_t(y)>0$, the group of $\K$-algebra homomorphisms $\Lambda_B \to \Lambda$, which send $T\mapsto y$, which are continuous in the $T$-adic and $t$-adic topologies, and for which $\mathrm{lead}_t(y^p)\in \C^*$ is continuous in $p\in \R$, is isomorphic to the group $\Z$ (more explicitly, we will construct such a homomorphism for each choice of the phase of $y$).}\\
\emph{Proof.}
Observe that $y_0^{-1} e^{-i\theta}t^{-r}y = 1+\mathcal{O}$. For any $p\in \R$, $1+\mathcal{O}$ has a unique $p$-th power $(1+\mathcal{O})^p$ of the form $1+\mathcal{O}$, obtained by applying $\exp(p\log(\cdot))$ using the explicit series expansion (Newton's generalised binomial theorem). We now define $y^p$ by the Novikov series
$$y^{p}= y_0^{p} e^{ip\theta}t^{pr}(1+\mathcal{O})^{p}$$
where $y_0^{p}\in \R^*$. The relations $y^py^q=y^{p+q}$ follow because $\exp(p\log(\cdot))$ is a group homomorphism on formal series of type $1+\mathcal{O}$. Any other choice of $p$-th power $y^p$ would be
$
y^{p}= f(p)\cdot y_0^{p} e^{ip\theta}t^{pr} (1+\mathcal{O})^{p}
$
where $f: \R \to S^1$ is a group homomorphism (in particular $f(0)=1$) satisfying the additional condition $f(1)=1$. 
To ensure that $\mathrm{lead}_t(y^p)$ is continuous in $p\in \R$, $f$ must be continuous. 
Such $f$ are classified: $f(p)= e^{2\pi i np}$ for $n\in \Z$, so they are encoded by the shifts in phase $\theta \mapsto \theta + 2\pi n$.
$\qed$
\\[1mm]\indent
Following \cite[Lemma 7.15]{Ritter5}, we now prove that critical points arise in families. Using the above phase formalism, changing the Novikov variable $t$ to $e^{2\pi i}t$ (viewed as a formal change in symbols) induces the following $\K$-algebra automorphism, which we call \emph{phase shift},
$$\textstyle \varphi:\Lambda\to \Lambda,\, \sum k_j t^{r_j} \mapsto \sum k_j e^{2\pi i r_j} t^{r_j}.$$
Observe that $\varphi$ is also $t$-adically continuous.\\
{\bf Remark.}\,\emph{We will show that a non-zero object in the (wrapped) Fukaya category with holonomy systems gives rise to a family of non-zero objects by taking the orbit under the subgroup of the Galois group of $\Lambda$ over the field   $\C[[t]][t^{-1}]$ of Laurent series, generated by the phase shift (the full Galois group is a larger, profinite group, as the function $f$ above need not in general be continuous).
In retrospect, our approach is similar to that of Fukaya \cite[Equation (3)]{FukayaGalois}.}

\begin{lemma}\label{Lemma phase shift automorphism}
Let $X$ be a monotone toric variety, whose polytope \eqref{Eqn moment polytope} involves integer parameters $\lambda_i$.
Then the superpotential \eqref{Eqn LG superpotential} is $\varphi$-equivariant, meaning $$W \circ \varphi^{\oplus n} = \varphi \circ W,$$ and $\varphi$ determines a free cyclic action $z\mapsto \varphi(z)$ on $\mathrm{Crit}(W)$ of period $\lambda_X$, given on \eqref{Eqn ambiguity in z} by
$$
(\zeta_1 t^{a_1},\ldots,\zeta_n t^{a_n})\mapsto 
(\zeta_1 e^{2\pi i a_1} t^{a_1},\ldots,\zeta_n e^{2\pi i a_n} t^{a_n})
 \quad \textrm{ and } \quad
W = c\,t^{1/\lambda_X}\mapsto c\,e^{2\pi i/\lambda_X}\,t^{1/\lambda_X}.
$$
\end{lemma}
\begin{proof}
$W(z)=\sum t^{-\lambda_j}z^{e_j}$ so
$\varphi(W(z))=\sum e^{-2\pi i \lambda_j}t^{-\lambda_j}\varphi(z)^{e_j} = W(\varphi(z_1),\ldots,\varphi(z_n))$ proves the first claim, using that $e^{-2\pi i \lambda_j}=1$ since $\lambda_j\in \Z$. Now differentiate, so $(dW)(\varphi^{\oplus n}(z))=(\varphi\circ dW)(z)$, proving the second claim.
\end{proof}

\indent 
We now clarify some notation we use later. Suppose we are given a critical point $z=z(T)$ of $W_B$, where we emphasize that this is a function of the Novikov variable $T\in \Lambda_B$. The critical value is $W_B(T;z(T))$. Notice that there is a difference between taking the derivative $\partial_T W_B(T;z)$ \emph{before} evaluating at the critical point $z=z(T)$, and the derivative $\partial_T (W_B(T;z(T)))$ of the critical value. The chain rule implies
\begin{equation}\label{Eqn dzW}
\left.\tfrac{\partial}{\partial T}\right|_{T=y} (W_B(T;z(T)) = 
\left.\tfrac{\partial}{\partial T}\right|_{T=y,z=z(y)} W_B(T;z) + \langle (d_z W_B)(y;z(y)), \left.\tfrac{\partial}{\partial T}\right|_{T=y} z(T)\rangle
\end{equation}
where the second term is the sum of $(\partial_{z_j} W_B)(y;z(y))\cdot \partial_T|_{T=y}z_j(T)$ over $j=1,\ldots,n$.\\[1mm]
{\bf Example.} Suppose $W_B(T;z)=T^3z^2$ and $z(T)=T^2$, in the $n=1$ case (so $z\in \Lambda_B^*$). Then $W_B(T;z(T))=T^7$ so \eqref{Eqn dzW} reads: $(7T^6)|_{T=y}= (3T^2z^2)|_{T=y,z=y^2} + (2T^3z)|_{T=y,z=y^2}(2T)|_{T=y}$.
%
%
%
%
\subsection{Generation results for toric negative line bundles}
By ``\emph{phase-shift invariant family}'' we mean an orbit of a critical point under the action of $\varphi$ as in Lemma \ref{Lemma phase shift automorphism}.

\begin{lemma}\label{Lemma finding crit pts for E from B}
 Each phase-shift invariant family in $\mathrm{Crit}(W_B)$, with non-zero critical values, gives rise via \eqref{Eqn solutions z and w}-\eqref{Eqn phase is consistent} to a unique phase-shift invariant family in $\mathrm{Crit}(W_E)$.
\end{lemma}
\begin{proof}
Recall $W_E=W_B(y;z)+w$ and $y=t w^k$. By the chain rule, using $\tfrac{\partial y}{\partial w} = t k w^{k-1}$,
$$
dW_E = (d_z W_B)(y;z) + (tkw^{k-1}(\partial_T W_B)(y;z) + 1)\,dw.
$$

It follows that the $(z,w)\in \mathrm{Crit}(W_E)$ are the solutions of the following three equations: 
$$
\begin{array}{rrcl}
(i) & tw^k &=& y\\
 (ii) & w &=& -k y\, (\partial_T W_B)(y;z) \\
(iii) & (d_z W_B)(y;z) &=&  0.
\end{array}
$$
%
{\bf Remark.}\,$\mathrm{val}_t(w)>0$ as $(z,w)\in \mathrm{Crit}(W_E)$ lies in the interior of the polytope, indeed at the barycentre \eqref{Eqvalt}-\eqref{Eqvalt2}. So $\mathrm{val}_t(y)>0$, so we may substitute $T\mapsto y$ as explained in Sec.\ref{Subsection Phases and roots of Novikov series}.\\
{\bf Clarification.}\,\emph{We seek a solution $z=z(y)\in (\Lambda^*)^n$, $w=w(y)\in \Lambda^*$ in terms of $y$, involving the Novikov variable $t\in \Lambda$, not $T\in \Lambda_B$. Observe that $$z=z(T)\in \mathrm{Crit}(W_B)\subset (\Lambda_B^*)^n \Rightarrow (d_z W_B)(T;z(T)) =0\Rightarrow (d_z W_B)(y;z(y))=0\Rightarrow \textrm{(iii)}.$$ 
(However, it is not clear whether the converse (iii) $\Rightarrow z\in \mathrm{Crit}(W_B)$ holds.)
%
}\\[1mm] 
%
%
%
%
\indent A consequence of \eqref{Eqn dzW} is that for any Novikov series $z(T)$ with $(z(y),w)\in\mathrm{Crit}(W_E)$ we have $W_B(T;z(T))\neq 0$, otherwise $(\partial_T W_B)(y;z(y))=0$ by \eqref{Eqn dzW} so $dW_E= dw \neq 0$.\\
\indent Denote by $z=z(T)\in \mathrm{Crit}(W_B)$ a generator of the given $\lambda_B$-family, so by assumption $W_B(z(T))\neq 0$. We abbreviate
$$
\boxed{\ell = \tfrac{1}{\lambda_B} \qquad \textrm{(in particular }1-k\ell = \tfrac{\lambda_E}{\lambda_B})}
$$
By \eqref{Eqn ambiguity in z}, $z=z(T)=(\zeta_1 T^{a_1},\ldots,\zeta_n T^{a_n})$ and $W_B(T;z(T)) = c\,T^{\ell}$ where $\zeta_j \in \C^*$, $c\neq 0 \in \C$, $a=(a_1,\ldots,a_n)$ is the barycentre of $\Delta_B$. Thus 
%
\begin{equation}\label{Eqn ambiguity in W_B(y)}
W_B(y;z(y)) = cy^{\ell}.
\end{equation}
%
%
By \eqref{Eqn dzW} and equation (iii), namely $(d_z W_B)(y;z(y))=0$, we obtain:
$$
(\partial_T W_B)(y;z(y)) = 
\partial_T|_{T=y} (W_B(T;z(T)))
= \partial_T|_{T=y} (c T^{\ell}) = c \ell y^{\ell -1}.
$$
Solving equation (ii), given $y$, we obtain $(z(y),w(y))\in \mathrm{Crit}(W_E)$ defined by\footnote{{\bf Remark.} \emph{We already know a priori that $(z(y),w(y))$ lies at the barycentre of $\Delta_E$ by \eqref{Eqvalt2} (for $X=E$), but one can also check by hand that $\mathrm{val}_t(z(y)) = \tfrac{\lambda_B}{\lambda_E}\, a$ and $\mathrm{val}_t(w(y)) = \tfrac{1}{\lambda_E}$ (confirming \eqref{Eqn comparison of barycentres} and \eqref{Eqvalt2}) and in particular $\mathrm{val}_t(z,w)>0$ in each entry, so $(z,w)\in \mathrm{Int}(\Delta_E)$.}}
\begin{equation}\label{Eqn solutions z and w}
\boxed{z(y)=(\zeta_1 y^{a_1},\ldots,\zeta_n y^{a_n})\quad\textrm{ and }\quad w(y)=-kc \ell y^{\ell}}
\end{equation}
and the critical value is
 \begin{equation}\label{Eqn Ambiguity in W_E crit}
 \boxed{W_E=W_E(z(y),w(y)) = W_B(y;z(y)) + w(y) = \tfrac{\lambda_E}{\lambda_B}\, cy^{\ell} \neq 0}
 \end{equation}
where $y\neq 0$ since $w\in \Lambda^*$ by definition.
It remains to solve for $y$. Combining (i) and \eqref{Eqn solutions z and w}
gives $y^{\lambda_E/\lambda_B} = (-kc\ell )^{k}\, t$, so:
\begin{equation}\label{Eqn ambiguity in y to the lambdaB-k}
\boxed{y = (-kc\ell )^{k\lambda_B / \lambda_E}t^{\lambda_B / \lambda_E}}
\end{equation}
In particular, phases also need to be consistent:
\begin{equation}\label{Eqn phase is consistent}
\boxed{\theta(y)=\tfrac{k\lambda_B}{\lambda_E}\theta(c)\qquad \textrm{ (in particular } \theta(cy^{\ell})=\tfrac{\lambda_B}{\lambda_E}\theta(c))}
\end{equation}
By Lemma \ref{Lemma phase shift automorphism}, the phase shift automorphism for $B$ acts by $\zeta_j\mapsto \zeta_j e^{2\pi i a_j}$ and $c\mapsto ce^{2\pi i/\lambda_B}$. This induces the change $y\mapsto e^{2\pi i k/\lambda_E} y$ and hence $cy^{\ell} \mapsto e^{2\pi i/\lambda_E} c y^{\ell}$. Thus \eqref{Eqn solutions z and w}
changes by $\zeta_j y^{a_j}\mapsto \zeta_j e^{2\pi i a_j'} y^{a_j}$ (using $a_j'=a_j\lambda_B/\lambda_E$ from \eqref{Eqn comparison of barycentres}) and $w(y)\mapsto e^{2\pi i/\lambda_E} w(y)$, and \eqref{Eqn Ambiguity in W_E crit} changes by $W_E\mapsto e^{2\pi i/\lambda_E} W_E$. This agrees with the action of the phase shift automorphism for $E$, by Lemma \ref{Lemma phase shift automorphism}. The claim follows.
\end{proof}
\noindent {\bf Remark.} \emph{Applying the phase shift $\lambda_B$ times preserves $z(T)\in \mathrm{Crit}(W_B)$, but it typically induces a change in $(z(y),w(y))\in \mathrm{Crit}(W_E)$ as the phase of $y$ undergoes a shift by \eqref{Eqn phase is consistent}.}
\begin{theorem}\label{Theorem neg toric lb have nonzero SH}
 Let $E \to B$ be a monotone negative line bundle over a closed monotone toric symplectic manifold $B$. Then the monotone Lagrangian torus $\mathcal{L}\subset E$, lying over $L_{\mathrm{crit}} \subset B$ in a sphere bundle of $E$, 
is non-displaceable and admits holonomy data such that $HF^*(\mathcal{L},\mathcal{L})\cong HW^*(\mathcal{L},\mathcal{L})\neq 0.$ Moreover, $c_1(E)\in QH^*(E)$ is non-nilpotent and $SH^*(E)\neq 0$.
\end{theorem}
\begin{proof}
By Corollaries \ref{Corollary evalues of c1 and critvals of W} and \ref{Corollary c1TB is non nilpotent in monotone toric B}, there exists a $z=z(T)\in \mathrm{Crit}(W_B)$ with non-zero critical value. Lemma \ref{Lemma finding crit pts for E from B} implies that $\mathrm{Crit}(W_E)\neq \emptyset$, so the first claim follows 
by Theorem \ref{Theorem pt class is cycle for crit Lag} and Lemma \ref{Lemma HF=HW for closed Lags}. In particular, it is proved in the Appendix of \cite{Ritter5} (by the same argument used for Lemma \ref{Lemma crit pts give monotone torus at barycentre}, using \cite[Theorem 8.1]{Cho-Oh} and \eqref{Eqvalt2}), that the toric fibre $\mathcal{L}$ of the moment map for $E$ over the barycentre of $\Delta_E$ is a monotone Lagrangian torus in a sphere bundle of $E$ lying over the monotone Lagrangian torus $L_{\mathrm{crit}}\subset B$ (corresponding to the barycentre of $\Delta_B$). The sphere bundle is defined using an appropriate Hermitian metric \cite[Section 7]{Ritter4}, and the radius is determined by the monotonicity condition. Concretely, $\mathcal{L}$ has an additional fibre circle factor, arising from the natural $U(1)$-toric rotation action on the fibres, and the natural filling disc $u$ of that circle in the $\C$-fibre must satisfy the monotonicity condition: $\mathrm{MaslovIndex}([u]) = 2\lambda_E\int_{\D} u^*\omega_E $.

That $c_1(TE)\in QH^*(E)$ is not nilpotent (so $SH^*(E)\neq 0$ by Theorem \ref{Theorem about SH of neg l b}) follows because \eqref{Eqn Ambiguity in W_E crit} is an eigenvalue of $c_1(TE)$ by Corollary \ref{Corollary m0 is critval of W for toric}.
\end{proof}

\begin{lemma}\label{Lemma we found all critical points}
All the critical points of $W_E$ arise via \eqref{Eqn solutions z and w}-\eqref{Eqn phase is consistent} from the critical points of $W_B$ which have non-zero critical value.  
\end{lemma}
\begin{proof}
We reverse the previous proof: if $(z',w')\in \mathrm{Crit}(W_E)$, we want to find $z=z(T)\in \mathrm{Crit}(W_B)$ such that $z'=z(y)$, $w'=w(y)$ for some solution $y$ of \eqref{Eqn ambiguity in y to the lambdaB-k}. By \eqref{Eqvalt2} for $X=E$,
$$
z' = (\zeta_1't^{a_1'},\ldots,\zeta_n't^{a_n'}) \qquad\quad w' = \xi' t^{\ell'} \qquad\quad \; W_E(z',w') = c't^{\ell'}
$$
where $\ell'=\frac{1}{\lambda_E} = \frac{1}{\lambda_B-k}$,
$a'\in \R^n$ is the barycentre of $\Delta_E$ so \eqref{Eqn comparison of barycentres} holds, $\zeta'\in (\C^*)^n$, $\xi'\in \C^*$, $c'\in \C$. 
By \eqref{Eqn ambiguity in z}
we want $z=(\zeta_1t^{a_1},\ldots,\zeta_nt^{a_n})$, with $\zeta\in (\C^*)^n$ and $a\in \R^n$ the barycentre of $\Delta_B$, such that $z'=z(y)$, $w'=w(y)$ where 
$$y=tw'^k= \xi'^k t^{\lambda_B/\lambda_E}$$ 
(using $1+k\ell'=\lambda_B/\lambda_E$). So $z'=z(y)$ forces the equality: $ \zeta_j't^{a_j'} = \xi'^{ka_j} \zeta_j t^{a_j\lambda_B/\lambda_E}.$
By \eqref{Eqn comparison of barycentres}, this equality holds
for $z(T)$ by taking 
\begin{equation}\label{Eqn from WE to WB}
\boxed{\zeta_j=\zeta_j'\xi'^{-ka_j}, \textrm{\,or equivalently: }z_j(T) = (z_j'w'^{-ka_j})|_{t=T}}
\end{equation}
By equation (iii) of the previous proof, $(d_z W_B)(y;z(y))=0$. We claim that $(d_z W_B)(T;z)=0$, so $z=z(T)\in \mathrm{Crit}(W_B)$ (compare the Clarification in the proof of Lemma \ref{Lemma finding crit pts for E from B}). Indeed, because $y=\xi'^k t^{\lambda_B/\lambda_E}$ is homogeneous in $t$, $y$ plays the same role as a formal variable in the equation (iii): $(d_zW_B)(y,z(y))=0$, so we can replace it with the formal variable $T$.
\end{proof}

\begin{corollary}\label{Corollary count of solutions}
 There are precisely $(\lambda_B-k)N/\lambda_B$ distinct choices of holonomy for $\mathcal{L}\subset E$ in Theorem \ref{Theorem neg toric lb have nonzero SH} for which $HF^*(\mathcal{L},\mathcal{L})\neq 0$, where $N>0$ is the number of critical points $z\in \mathrm{Crit}(W_B)$ with non-zero critical value $W_B(T;z)\neq 0$.
Moreover, the critical values of $W_E$ are $\mu\, t^{1/\lambda_E}$ where $\mu\in \C^*$ are the roots of
\begin{equation}\label{Eqn mu from c}
(\tfrac{\mu}{\lambda_E})^{\lambda_E} = 
(\tfrac{c}{\lambda_B})^{\lambda_B}\,(-k)^k 
\end{equation}
as $c\,T^{1/\lambda_B}$ varies over possible non-zero critical values of $W_B$, and recall $\lambda_E=\lambda_B-k$.
\end{corollary}
\begin{proof}
The first claim follows by the one-to-one correspondence between families of critical points established by Lemmas \ref{Lemma finding crit pts for E from B} and \ref{Lemma we found all critical points}.
That critical values of $W_E$ correspond to eigenvalues of $c_1(TE)$ follows by Corollary \ref{Corollary m0 is critval of W for toric}. 
By \eqref{Eqn Ambiguity in W_E crit}, the $W_E$-critical value is $\mu= c\, \frac{\lambda_E}{\lambda_B}\, y^{\ell}$, so by \eqref{Eqn ambiguity in y to the lambdaB-k},
$\mu^{\lambda_E}=(c\tfrac{\lambda_E}{\lambda_B})^{\lambda_E}(-kc\ell )^{k}\,t = (\tfrac{c}{\lambda_B})^{\lambda_B}\lambda_E^{\lambda_E}(-k)^k\, t.$ 
\end{proof}

\noindent {\bf Conjecture.} \emph{$\mathcal{W}(E)$ is split-generated by $\mathcal{L}$ taken with the holonomies from Corollary \ref{Corollary count of solutions}. This holds when $W_B$ is Morse by \cite{Ritter5}, but it should also hold in general.}\\
{\bf Remark 1.}\;\emph{In fact \cite{Ritter5}, $\pi^*[\omega_B]=\tfrac{c_1(TE)}{\lambda_E}$, $[\omega_B]=\tfrac{c_1(TB)}{\lambda_B}$ and $c_1(E)^k=(-k)^k[\omega_B]^k$ are the cause of the constant factors in \eqref{Eqn mu from c}.}\\
{\bf Remark 2.}\;\emph{By Theorem \ref{Theorem about SH of neg l b}, zero-eigenvalues for $c_1(TE)$ must exist, but in the proof of Lemma \ref{Lemma finding crit pts for E from B} we showed these cannot arise from critical values of $W_E$ (which are non-zero). In turns out \cite{Ritter5} that all the non-zero eigenvalues of $c_1(TE)$ arise as critical values of $W_E$.}

\subsection{Generation for $\mathbf{\mathcal{O}(-k) \to \P^m}$}
\label{Subsection Generation for O of -k}

We will consider the negative line bundles 
$E=\mathcal{O}(-k)$ over $B= \P^m,$
so $c_1(E)=-k[\omega_{\P^m}]$.
These are monotone for $1\leq k \leq m$. Moreover, $$\lambda_B = 1+m \qquad\quad\lambda_E = 1+m-k.$$

\begin{theorem}[Ritter {\cite{Ritter4,Ritter5}}]\label{Theorem QH and SH of O(-k)}
 Let $E=\mathrm{Tot}(\mathcal{O}(-k)\to \P^m)$ with $1\leq k\leq m$. Then $QH^*(E)$ and $SH^*(E)$ are generated over $\Lambda$ by $w=\pi^*[\omega_B]\in QH^*(E)$  (viewing $SH^*(E)$ as a quotient of $QH^*(E)$ by Theorem \ref{Theorem about SH of neg l b}) subject to the relations%
$$\begin{array}{rcl}
w^{k}(w^{1+m-k}-(-k)^k \,t)=0 &\mathrm{ in } & QH^*(E),\\ w^{1+m-k}-(-k)^k \,t=0 &\mathrm{ in } & SH^*(E).
  \end{array}
$$
\end{theorem}

Recall that $QH^*(\P^m) \cong \Lambda[w]/(w^{1+m} - T)$, $\omega_{\P^m} \mapsto w$, where $T$ is the Novikov variable for $B=\P^m$. So the eigenvalues of $c_1(TB)*\cdot$ on $QH^*(B)$ involve $(1+m)$-th roots of unity (up to a fixed constant factor). On the other hand, for $E=\mathcal{O}_{\P^m}(-k)$ the above calculation shows that the non-zero eigenvalues of $c_1(TE)*\cdot$ on $QH^*(E)$ involve $(1+m-k)$-th roots of unity (up to a constant factor). The moment polytope for $B=\P^m$ is determined by $b_1=(1,0,\ldots,0)$, $\ldots$, $b_m=(0,\ldots,0,1)$, $b_{m+1}=(-1,\ldots,-1)$, and $\lambda_i^B=0$ for $i\leq m$ and $\lambda_{m+1}^B=-1$, so: 
$$
\begin{array}{rcl}
\Delta_B&=&\{ y\in \R^{m}: y_1\geq 0, \ldots, y_m\geq 0,\sum y_j \leq 1\}
\\
W_B(T;z) &=& z_1 + \cdots + z_{m} + T z_1^{-1}\cdots z_{m}^{-1}.
\end{array}
$$
The critical points of $W_B$ are: for each $\lambda_B$-th root of unity $\zeta$, there is one critical point 
$$z_{\mathrm{crit}}=(1,\ldots,1)\cdot\zeta\, T^{1/\lambda_B}\quad\qquad W_B(z_{\mathrm{crit}}) = \lambda_B\, \zeta \,T^{1/\lambda_B}.$$  
The toric divisors $D_i\subset B$ are given by setting the $i$-th homogeneous coordinate of $\P^m$ to zero (using the $0$-th coordinate for $D_{m+1}$). So each $D_i$ is Poincar\'{e} dual to $\omega_B$. The moment polytope for $E=\mathcal{O}(-k)=\mathcal{O}(-kD_{m+1}) \to \P^m$ is determined by $e_1=(b_1,0),\ldots,e_m=(b_m,0),e_{m+1}=(b_{m+1},k)$, and $\lambda_i^E=0$ for all $i=1,\ldots,m+2$ except $\lambda_{m+1}^E=\lambda_{m+1}^B=-1$, so:
$$
\begin{array}{rcl}
\Delta_E &=&\{ y \in \R^{m+1}: y_1 \geq 0, \ldots, y_m \geq 0, \sum_{j=1}^m y_j - ky_{m+1} \leq 1,y_{m+1}\geq 0\}
\\
W_E &=& z_1 + \cdots + z_m + t z_1^{-1}\cdots z_{m}^{-1} w^k + w = W_B(tw^k;z)+w.
\end{array}
$$
The critical points of $W_E$ are: for each $\lambda_E$-th root of unity $\zeta'$,
the solution $x=\zeta'(-k)^{k/\lambda_E}t^{1/\lambda_E}$ of $x^{1+m-k}=(-k)^k t$ corresponds to a critical point $(x,\ldots,x,-kx)$ with $W_E=\lambda_E x$:
$$
z_{\mathrm{crit}}= 
(1,\ldots,1,-k)\cdot\zeta'\,(-k)^{k/\lambda_E}\,t^{1/\lambda_E}
\quad\qquad W_E(z_{\mathrm{crit}}) = \lambda_E\, \zeta'\,(-k)^{k/\lambda_E}\,t^{1/\lambda_E}.
$$
In particular, we obtain
 $$y=tw^k = t(-k)^k x^k =  (-k)^{k\lambda_B/\lambda_E} (\zeta')^k t^{\lambda_B/\lambda_E}.$$
Since \eqref{Eqn ambiguity in y to the lambdaB-k} becomes $y=(-k\zeta)^{k\lambda_B/\lambda_E} t^{\lambda_B/\lambda_E}$, the one-to-one correspondence between families of critical points for $B$ and for $E$ boils down to the relation
$(\zeta')^{\lambda_E} = \zeta^{\lambda_B}=1$, and one can confirm by hand that \eqref{Eqn solutions z and w}, \eqref{Eqn Ambiguity in W_E crit}, \eqref{Eqn from WE to WB}, \eqref{Eqn mu from c} indeed hold.

The Jacobian ring $\mathrm{Jac}(W_E) \equiv \Lambda[z_1^{\pm 1},\ldots,z_m^{\pm 1},w^{\pm 1}]/(\partial_{z_1}W_E, \ldots, \partial_{z_m} W_E,\partial_{w}W_E)$ is
$$
\mathrm{Jac}(W_E) \equiv \Lambda[x]/(x^{1+m-k}-(-k)^k t).
$$
\begin{corollary}
$SH^*(E) \cong \mathrm{Jac}(W_E)$ for $1\leq k \leq m$, and $W_E=(1+m-k)x \mapsto c_1(TE)$.
\end{corollary}

This Corollary is noteworthy since it is a proof of closed-string mirror symmetry in this example.
Notice that $SH^*(E)$ now plays the role that $QH^*(E)$ did in the closed case \eqref{Eq QH is Jac}.

\begin{theorem}\label{Theorem wrapped cat of O of -k}
 Let $E=\mathcal{O}_{\P^m}(-k)$, for $1 \leq k\leq m$, and let $\mathcal{L}$ be the lift of the Clifford torus in the base $\P^m$ to the sphere subbundle. Then
\begin{enumerate}
 \item  The wrapped category $\mathcal{W}(E)$ is compactly generated. Indeed, for each of the $1+m-k$ choices of non-zero $\lambda\in \mathrm{Spec}(c_1(TE))$, the Fukaya categories $\mathcal{W}_{\lambda}(E)$ and $\mathcal{F}_{\lambda}(E)$ are both split-generated by $\mathcal{L}$ with a choice of holonomy determined by $\lambda$.

 \item $\mathcal{W}(E)$ is proper (cohomologically finite), so $\mathrm{dim}\, HW^*(L_1,L_2) < \infty$ for any $L_1,L_2$.

 \item $HF^*(\mathcal{L},\mathcal{L})\cong HW^*(\mathcal{L},\mathcal{L})\neq 0$ (Theorem \ref{Theorem neg toric lb have nonzero SH}), so $\mathcal{L}$ is non-displaceable.
 
 \item $\mathrm{OC}^0:HW^*(\mathcal{L},\mathcal{L}) \to SH^*(E)$ is non-zero.
\end{enumerate}
\end{theorem}
\begin{proof}
 By Theorem \ref{Theorem QH and SH of O(-k)}, $QH^*(E)$ splits as a direct sum of $QH^*(E)_0$ and $1$-dimensional field summands corresponding to the non-zero eigenvalues, and by Theorem \ref{Theorem about SH of neg l b}, the map $c^*: QH^*(E) \to SH^*(E)$ is the quotient by $QH^*(E)_0$.

Let $\mathcal{L}=\mathcal{L}_z$ be a Lagrangian torus in $E$ with appropriate holonomy arising from $z\in \mathrm{Crit}(W_E)$.
Call $\lambda_{\mathcal{L}}=m_0(\mathcal{L})=W_E(z)$ the non-zero eigenvalue of $c_1(TE)$ corresponding to $\mathcal{L}$. By Theorem \ref{Theorem QH and SH of O(-k)} and the above calculation of the critical points of $W_E$, all except for the $0$-eigenvalue of $c_1(TE)$ arise as $\lambda_{\mathcal{L}}$ for some critical point.

Using the Morse-Bott model \cite{Auroux}, and using that $[\mathrm{pt}]$ is a cycle by Theorem \ref{Theorem pt class is cycle for crit Lag}, observe that
 $\mathrm{OC}: HF^*(\mathcal{L},\mathcal{L}) \to QH^*(E)$ satisfies
$$
\mathrm{OC}([\mathrm{pt}]) = -kx \cdot \pi^*\omega_{\P^m}^m + (\textrm{higher order }t\textrm{ terms}) \neq 0 \in QH^*(E),
$$
where the first term arises from the regular Maslov 2 disc in the fibre (the standard disc filling $S^1\subset \C$) and $-kx\neq 0 \in \Lambda$ is the holonomy for the $S^1$-fibre direction of $\mathcal{L}$ (here $-kx$ also includes the factor $t^{1/\lambda_E}$ lying in degree $2$). Observe that there is no order $t^0$ term because the constant disc would sweep the lf-cycle $[\mathrm{pt}]\in H_0^{lf}(E)\cong H^{2m+2}(E)=0$. The $\pi^*\omega_{\P^m}^m$ factor arises because the fibre disc intersects once transversely the cycle intersection condition $[\P^m]\in C_*(E)$, and the intersection dual of this is the lf-cycle $[\mathrm{fibre}]\in H_2^{lf}(E)$ which corresponds to $\pi^*\omega_{\P^m}^m\in H^{2m}(E)$ (the fibre is the preimage of a point in the base, and taking preimages is Poincar\'e dual to pull-back on cohomology). It is a general feature of the toric setting that the Maslov index of a disc bounding a toric Lagrangian is twice the number of intersections of the disc with the anticanonical divisor (see Auroux \cite[Lemma 3.1]{Auroux}). In our case, the anticanonical divisor viewed as an lf-cycle, consists of the base $\P^m$ together with the preimage under projection of the anticanonical divisor in the base $\P^m$. So the fibre disc is the only Maslov two disc that can intersect the cycle $[\P^m]$. By monotonicity, all other discs with Maslov index greater than $2$ will contribute higher order $t$ terms. Thus:\\[1mm]
\noindent\fbox{\begin{tabular}{l}
 \quad \emph{(i) By Theorem \ref{Theorem OC is a module map}, $\mathrm{OC}([\mathrm{pt}])$ can only be non-zero in the eigensummand $QH^*(E)_{\lambda_{\mathcal{L}}}$.}\\
 \quad \emph{(ii) That eigensummand is a field, $\mathrm{OC}([\mathrm{pt}])$ is non-zero, hence $\mathrm{OC}([\mathrm{pt}])$ is invertible }\\
 \quad\qquad \emph{in the eigensummand.}\\
\quad \emph{(iii) Thus, by Theorem \ref{Theorem generation for compact categories}, $\mathcal{L}$ split-generates $\mathcal{F}_{\lambda_{\mathcal{L}}}(E)$.}\\
\quad \emph{(iv) Moreover, by the acceleration diagram (Theorem \ref{Theorem acceleration commutative diagram}),  also the map $\mathrm{OC}^0:$} \\
\quad\qquad \emph{$HW^*(\mathcal{L},\mathcal{L})\to SH^*(E)$ hits an invertible element, since by Theorem \ref{Theorem QH and SH of O(-k)} all } \\ 
\quad\qquad \emph{eigensummands except the $0$-eigensummand survive via $c^*:QH^*(E)\to SH^*(E)$.}\\
\quad \emph{(v) By Theorem \ref{Theorem generation in monotone setting}, $\mathcal{L}$ split-generates $\mathcal{W}_{\lambda_{\mathcal{L}}}(E)$.}
      \end{tabular}
}\vspace{1mm}

The finite-dimensionality claim follows because the split-generators are closed Lagrangians. In particular, $HF^*(\mathcal{L},\mathcal{L})\cong HW^*(\mathcal{L},\mathcal{L})$ (Lemma \ref{Lemma HF=HW for closed Lags}) is generated at the chain level by the finitely many intersections of $\mathcal{L}$ with a Hamiltonianly deformed copy of $\mathcal{L}$.
\end{proof}

\noindent {\bf Remark 1.}\;\emph{It is not clear what the split-generators for $\mathcal{F}_0(E)$ might be. By Theorem \ref{Theorem pt class is cycle for crit Lag}, they cannot be Lagrangian torus fibres of the moment map, with holonomy data, as those would be detected by $W_E$. But $W_E$ does not detect the zero eigenvalues of $c_1(TE)$.}\\
\noindent {\bf Remark 2.}\;\emph{We emphasise that although the wrapped category is proper (cohomologically finite), it does contain objects which are non-compact Lagrangians -- such as real line bundles over suitable Lagrangians in the base or Lefschetz thimbles for appropriate Lefschetz fibrations -- whose wrapped Floer cochain complexes are infinitely generated.}
%

\subsection{Generation results for closed monotone manifolds}%
\label{Subsection Generation results for closed toric Fano varieties}
In Lemma \ref{Lemma OC nonzero for monotone B}, we use the Morse-Bott model $CF^*(L,L)=C_{n-*}(L)$ of \cite{Auroux}, so there is a well-defined element $[\mathrm{pt}]\in CF^n(L,L)$. Note this need not a priori be a Floer cocycle (if it is not, the Lemma gives no information, as $[\mathrm{pt}]$ will not survive to cohomology).

\begin{lemma}\label{Lemma OC nonzero for monotone B}
For any closed monotone manifold $B$, and any monotone Lagrangian $L\in \mathrm{Ob}(\mathcal{F}_{\lambda}(B))$, the map $\mathrm{OC}^0: CF^*(L,L) \to QC^*(B)$ satisfies
$$
\mathrm{OC}([\mathrm{pt}]) = \mathrm{vol}_B + (\textrm{higher order }t\textrm{ terms})  \neq 0 \in QH^*(B).
$$
\end{lemma}
\begin{proof}
Since $L\subset B$ is monotone, Maslov index zero discs are constant. So the first term above corresponds to the constant disc, which is the lowest Maslov index disc, using that the volume form $\mathrm{vol}_B$ is the Poincar\'e dual of $[\mathrm{pt}]\in H_0(B)$.
All other terms will involve strictly positive powers of $t$ since they involve non-constant discs.
\end{proof}

\begin{theorem}
 Let $B$ be a closed monotone toric manifold, such that the eigenvalues of $c_1(TB)$ acting on $QH^*(B)$ are all distinct and have multiplicity one. Then $\mathcal{F}(B)$ is split-generated by $L_{\mathrm{crit}}$ with the holonomies described in Lemma \ref{Lemma crit pts give monotone torus at barycentre}.
\end{theorem}
 \begin{proof}
  The assumption on the eigenvalues of $c_1(TB)$ ensures that $QH^*(B)$ splits into $1$-dimensional eigensummands. By \eqref{Eq QH is Jac}, each $1$-dimensional eigensummand corresponds to a critical point $z\in \mathrm{Crit}(W_B)$ which determines a toric Lagrangian $L_z$ with suitable holonomy (see \ref{Section Brief survey on Landau Ginzburg}). By Lemma \ref{Lemma OC nonzero for monotone B}, $\mathrm{OC}^0: CF^*(L_z,L_z)\to QH^*(B)$ is non-zero on $[\mathrm{pt}]$. By Theorem \ref{Theorem pt class is cycle for crit Lag}, $[\mathrm{pt}]\in CF^*(L_z,L_z)$ is a cycle, so $\mathrm{OC}^0:HF^*(L_z,L_z)\to QH^*(B)$ is non-zero.

By the $QH^*(B)$-module structure of $\mathrm{OC}$, the element $\mathrm{OC}([\mathrm{pt}])$ can only be non-zero in the eigensummand $QH^*(B)_{\lambda_{L_z}}$ corresponding to the eigenvalue $\lambda_{L_z} = W_B(z)=m_0(L_z)$ of $c_1(TB)$. Being non-zero in a field implies invertibility, so $\mathrm{OC}$ hits an invertible element in the relevant eigensummand. By Theorem \ref{Theorem generation for compact categories}, the Lagrangians $L_z$ split-generate $\mathcal{F}(B)$.
 \end{proof}
%
\subsection{Generation results for convex symplectic manifolds}%
\label{Subsection Generation results for convex symplectic manifolds}

Let $(E,\omega)$ be a monotone symplectic manifold conical at infinity.
Let $L\in \mathrm{Ob}(\scrF(E)_{\lambda})$, so $L\subset E$ is a monotone closed orientable Lagrangian with $m_0(L)=\lambda$.

\begin{theorem} If $\lambda \neq 0$ and $[\mathrm{pt}]$ is a Floer cycle in $HF^*(L,L)$, then 
\begin{enumerate}
 \item $\mathrm{OC}([\mathrm{pt}]) \neq 0 \in QH^*(E)$;
 \item if the generalized eigensummand $QH^*(E)_{\lambda}$ is $1$-dimensional, then $L$ split-generates $\scrF(E)_{\lambda}$ and $\mathcal{W}(E)_{\lambda}$, in particular $\mathcal{W}(E)_{\lambda}$ is proper (cohomologically finite).
\end{enumerate}
\end{theorem}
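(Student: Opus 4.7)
The proof has two parts. For (1), I would establish $\mathrm{OC}_E([\mathrm{pt}])\neq 0$ by combining two facts: the eigenvalue decomposition of the open-closed map (Theorem \ref{Theorem OC is a module map}(4)) automatically places the image in $QH^*(E)_\lambda$, and the compact-category analogue of the key Cardy identity of Section \ref{Subsection the key ingredient} detects the nonvanishing via the closed-open map. Concretely, for the full subcategory $\{L\}$ the identity reads $\mathrm{CO}_E\circ \mathrm{OC}_E = \mu^2\circ \mathcal{T}\circ \Delta$ as maps $HF^*(L,L)\to HF^*(L,L)$, where $\Delta$ is the Floer coproduct constructed as in Section \ref{Subsection the coproduct}. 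Evaluated on $[\mathrm{pt}]$, the right-hand side is the Frobenius-trace element associated to $[\mathrm{pt}]$, and using the nondegenerate Floer Poincar\'e pairing $\langle[\mathrm{pt}],[L]\rangle_L=1$ together with the cycle hypothesis on $[\mathrm{pt}]$, one expects a nonzero result (with quantum corrections compensating for possible classical vanishing). Nonvanishing of $\mathrm{CO}_E(\mathrm{OC}_E([\mathrm{pt}]))$ then forces $\mathrm{OC}_E([\mathrm{pt}])\neq 0$.

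For (2), the hypothesis that $QH^*(E)_\lambda$ is one-dimensional over the Novikov field $\Lambda$ makes it a field, so the nonzero element $\mathrm{OC}_E([\mathrm{pt}])$ is invertible. Theorem \ref{Theorem generation for compact categories} immediately gives split-generation of $\scrF(E)_\lambda$ by $L$. For the wrapped category, trace this element through the acceleration diagram of Theorem \ref{Theorem acceleration commutative diagram}: $c^*(\mathrm{OC}_E([\mathrm{pt}]))\in SH^*(E)_\lambda$ equals $\mathrm{OC}_E(\mathrm{HH}_*(\mathcal{A})([\mathrm{pt}]))$. If this is nonzero, then $c^*\colon QH^*(E)_\lambda\to SH^*(E)_\lambda$ is a nonzero ring homomorphism out of a field, hence injective with field image, so $c^*(\mathrm{OC}_E([\mathrm{pt}]))$ is $SH^*(E)$-invertible; Theorem \ref{Theorem Introduction 2} then gives split-generation of $\mathcal{W}(E)_\lambda$ by $L$. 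If instead $c^*|_{QH^*(E)_\lambda}=0$, then $c^*(1_\lambda)=0$, forcing $SH^*(E)_\lambda=0$, so $\mathcal{W}(E)_\lambda$ is cohomologically trivial and split-generation is vacuous. In either case, cohomological finiteness of $HW^*(L,L)\cong HF^*(L,L)$ is automatic since $L$ is closed.

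The principal obstacle is the Cardy nonvanishing step in (1). Adapting the annuli-based argument of Section \ref{Subsection the key ingredient} from the wrapped to the compact Fukaya category should be routine: one strips away the telescope, the weights $w_i$, and the preferred-point decorations, and the $\mathfrak{m}_0$-cancellations of \ref{Subsection the role of monotonicity} close the $A_\infty$-relations exactly as before. The genuinely delicate point is controlling $\mu^2(\mathcal{T}(\Delta([\mathrm{pt}])))$: classically this Frobenius trace vanishes on a Lagrangian with $\chi(L)=0$, as is the case for the tori arising in applications, so the nonvanishing must come from quantum corrections involving Maslov-two disc contributions, which exist precisely because $m_0(L)=\lambda\neq 0$. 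The abstract nonvanishing mirrors the explicit Cho-Oh-type disc counts used in the toric computations of Section \ref{Subsection Generation for O of -k} and in the toric-Fano generation corollary, and would, in effect, be an input of Floer-theoretic critical-point data at the Lagrangian $L$.
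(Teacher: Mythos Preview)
Your treatment of (2) is correct and in fact slightly more careful than the paper's, which simply refers back to the $\mathcal{O}(-k)$ computation; your case split on whether $c^*|_{QH^*(E)_\lambda}$ vanishes is the right way to handle the general situation.

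However, your argument for (1) has a genuine gap. You invoke the Cardy identity $\mathrm{CO}\circ\mathrm{OC} = \mu^2\circ\mathcal{T}\circ\Delta$ and then need the right-hand side applied to $[\mathrm{pt}]$ to be nonzero. As you yourself observe, the classical term vanishes whenever $\chi(L)=0$ (e.g.\ for tori), and you offer no argument that the quantum corrections are nonzero beyond saying ``one expects'' this and that it ``would be an input''. Nothing in the hypotheses (just $\lambda\neq 0$ and $[\mathrm{pt}]$ a cycle) lets you control this annulus count directly, so as written the step is circular: it needs exactly the kind of disc-counting input you are trying to avoid.

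The paper takes a completely different route which bypasses Cardy. It uses the Auroux--Kontsevich--Seidel identity $\mathrm{CO}(c_1(TE)) = m_0(L)\,[L]$, which says that the count of Maslov-two discs through a generic point of $L$ and hitting an lf-cycle Poincar\'e dual to $c_1(TE)$ is exactly $m_0(L)$. The key geometric point is that since $c_1(TE)=\lambda_E[\omega]$ and $\omega$ is exact at infinity, one can represent $c_1(TE)$ by a \emph{compact} cycle $C\in H_*(E)$. That same disc count is then, by definition, the coefficient of $\mathrm{PD}(C^\vee)$ in the expansion of $\mathrm{OC}([\mathrm{pt}])$ over a basis of $H^*(E)$. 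Linear independence of the basis elements forces $\mathrm{OC}([\mathrm{pt}]) = m_0(L)\,\mathrm{PD}(C^\vee) + (\text{other terms}) \neq 0$ since $m_0(L)=\lambda\neq 0$. This exploits the adjunction between $\mathrm{OC}$ and $\mathrm{CO}$ at the level of a single disc moduli space, and never needs to control an annulus count.
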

\begin{proof}
By Lemma \ref{Lemma COc1 and m0},
the image under $\mathrm{CO}^0: QH^*(E) \to HF^*(L,L)$ of $c_1(TE)$ is $m_0(L)\, e_L$.
Now $c_1(TE)$ is a multiple of $[\omega]$, and $\omega$ is exact at infinity. Therefore $c_1(TE)$ can be represented by a closed de Rham form which is compactly supported, which in turn is Poincar\'e dual to a cycle $C\in H_2(E)$. Thus we can represent $c_1(TE)$ by a \emph{compact} lf-cycle $C$. Since the disc count above does not depend on the choice of representative, we can use $C$. 

Denote by $\langle \cdot,\cdot \rangle_E$ the intersection pairing in $E$ between lf-cycles and cycles, and by $\langle \cdot,\cdot \rangle_L$ the intersection pairing in $L$ between cycles. Then observe that, by definition, the following two disc counts are equal:
\begin{equation}\label{EqnDualityOCandCO}
\langle \mathrm{OC}([\mathrm{pt}]),C \rangle_E = \langle [\mathrm{pt}],\mathrm{CO}^0(C) \rangle_L = m_0(L).
\end{equation}
This proves the first claim since $m_0(L) = \lambda \neq 0$ by assumption, so $\mathrm{OC}([\mathrm{pt}])\neq 0$. The second claim follows by the same argument as in the proof of Theorem \ref{Theorem wrapped cat of O of -k} and by the acceleration diagram (Theorem \ref{Theorem acceleration commutative diagram}).
\end{proof}

%
%
%

\begin{corollary}
 Let $E$ be any (non-compact) toric Fano variety convex at infinity. Let $z\in \mathrm{Crit}(W)$ be a critical point of the superpotential $W$ with non-zero critical value $\lambda=W(z)\neq 0$. If the generalized eigensummand $QH^*(E)_{W(z)}$ is $1$-dimensional then the Lagrangian torus $L_z$ split-generates $\scrF(E)_{\lambda}$ and $\mathcal{W}(E)_{\lambda}$, in particular $\mathcal{W}(E)_{\lambda}$ is proper (cohomologically finite).
\end{corollary}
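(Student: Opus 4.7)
The plan is to deduce this corollary directly from the preceding theorem, whose hypotheses $L_z$ will satisfy by standard toric Floer-theoretic input. The key point is that all the heavy lifting---the module structure of $\mathrm{OC}$, the generation criterion (Theorem \ref{Theorem generation in monotone setting}), the acceleration diagram (Theorem \ref{Theorem acceleration commutative diagram})---has already been packaged into that theorem, so what remains is simply to verify the input hypotheses for the toric Lagrangian $L_z$.

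First I would note that $E$, being a toric Fano variety convex at infinity, is monotone in the sense of Section \ref{Subsection monotonicity assumptions}, so the framework of the preceding theorem applies. Next, by the discussion in Section \ref{Section Brief survey on Landau Ginzburg} (with the addendum that monotonicity on the base extends to the non-compact toric setting as in Section \ref{Subsection Negative line bundles}), the critical point $z \in \mathrm{Crit}(W)$ corresponds geometrically to a monotone Lagrangian torus $L_z \subset E$ equipped with a local system, and under the identification $m_0(L_z) = W(z)$ one has $L_z \in \mathrm{Ob}(\scrF(E)_{W(z)})$ by Lemma \ref{Lemma fukaya cats split according to evalues} (with local systems, as reviewed in Section \ref{Subsection Review of the results}).

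The crucial Floer-theoretic input is then the Cho--Oh theorem (Theorem \ref{Theorem pt class is cycle for crit Lag}): precisely because $z$ is a critical point of $W$, the point class $[\mathrm{pt}] \in C_*(L_z)$ is a Floer cocycle in $HF^*(L_z, L_z)$ and in particular $HF^*(L_z, L_z) \neq 0$. Combined with the hypothesis $W(z) \neq 0$, the two non-triviality hypotheses ($\lambda = W(z) \neq 0$ and $[\mathrm{pt}]$ is a Floer cycle) of the preceding theorem are satisfied.

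Applying the preceding theorem with $L = L_z$ and $\lambda = W(z)$ then gives: $\mathrm{OC}([\mathrm{pt}]) \neq 0$ in $QH^*(E)$, and since $QH^*(E)_{W(z)}$ is one-dimensional by assumption, the element $\mathrm{OC}([\mathrm{pt}])$---which by Theorem \ref{Theorem OC is a module map} must land in the eigensummand $QH^*(E)_{W(z)}$---is a non-zero element of a one-dimensional $\Lambda$-vector space, hence invertible in $QH^*(E)_{W(z)}$. By the compact analogue of the generation criterion (the monotone version, stated in Theorem \ref{Theorem generation for compact categories} and in the remark following Theorem \ref{Theorem Introduction 2}), $L_z$ split-generates $\scrF(E)_{W(z)}$. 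To transfer this conclusion to the wrapped category, I would invoke the acceleration diagram of Theorem \ref{Theorem acceleration commutative diagram} together with Theorem \ref{Theorem OC is a module map}(6): the map $c^*_\lambda$ is non-zero on $QH^*(E)_{W(z)}$ (since $W(z) \neq 0$ and the only possible kernel of $c^*$ consists of nilpotent summands, but here the summand is one-dimensional and $\mathrm{OC}([\mathrm{pt}])$ is invertible), so $\mathrm{OC}_{W(z)}: \mathrm{HH}_*(\mathcal{W}(E)_{W(z)}) \to SH^*(E)_{W(z)}$ hits an invertible element through the image of $L_z$, and Theorem \ref{Theorem generation in monotone setting} concludes that $L_z$ split-generates $\mathcal{W}(E)_{W(z)}$. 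Cohomological finiteness is then automatic because $L_z$ is closed.

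The only real subtlety---not a difficulty, but a point requiring care---is the final step that $c^*_{W(z)}$ does not annihilate $\mathrm{OC}([\mathrm{pt}])$. In the one-dimensional summand this is immediate from invertibility, but if one wished to drop the one-dimensionality hypothesis and keep only $W(z) \neq 0$, one would have to identify precisely which portions of $QH^*(E)_{W(z)}$ survive in $SH^*(E)_{W(z)}$, and this can fail in general (as already illustrated by the discussion of negative line bundles in Section \ref{Subsection Generation results for toric negative line bundles}). Under the stated one-dimensional hypothesis, however, this issue does not arise.
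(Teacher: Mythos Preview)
Your proposal is correct and follows essentially the same approach as the paper: verify via Cho--Oh (Theorem \ref{Theorem pt class is cycle for crit Lag}) that $[\mathrm{pt}]$ is a Floer cycle, note $\lambda = m_0(L_z) = W(z) \neq 0$, and apply the preceding theorem. The paper's proof is just those three sentences; you have additionally unpacked the mechanism of the preceding theorem (the module structure forcing $\mathrm{OC}([\mathrm{pt}])$ into the one-dimensional summand, invertibility there, and the passage through the acceleration diagram), which is redundant since the preceding theorem already packages all of that, but not incorrect.
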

\begin{proof}
 The fact that $[\mathrm{pt}]$ is a Floer cycle in $HF^*(L_z,L_z)$ follows by Theorem \ref{Theorem pt class is cycle for crit Lag}. The Lagrangian $L_z$ (with holonomy) associated to $z$ has  eigenvalue $\lambda = m_0(L_z) = W(z) \neq 0$. So the claim follows by the previous Theorem.
\end{proof}

%


\begin{thebibliography}{10}

\bibitem{AbbondandoloSchwarz}
A. Abbondandolo, M. Schwarz, \emph{Floer homology of cotangent bundles and the loop product}, Geom. Topol. 14,  no. 3, 1569--1722, 2010.
%

\bibitem{Abouzaid}
M. Abouzaid, \emph{A geometric criterion for generating the Fukaya category}, Publ. Math. Inst. Hautes \'{E}tudes Sci. No. 112, 191--240, 2010.

\bibitem{AbouzaidFibreGenerates}
M. Abouzaid, \emph{A cotangent fibre generates the Fukaya category}, Adv. Math. 228, no. 2, 894--939, 2011.

\bibitem{AbouzaidWrappedBasedLoops}
M. Abouzaid, \emph{On the wrapped Fukaya category and based loops}, J. Symplectic Geom.  10,  no. 1, 27--79, 2012.

\bibitem{AbouzaidMaslovZeroLagrangians}
M. Abouzaid, \emph{Nearby Lagrangians with vanishing Maslov class are homotopy equivalent}, Invent. Math.  189,  no. 2, 251--313, 2012.

\bibitem{AbouzaidAurouxKatzarkovOrlov}
M. Abouzaid, D. Auroux, A. I. Efimov, L. Katzarkov, D. Orlov,
\emph{Homological mirror symmetry for punctured spheres},
 J. Amer. Math. Soc. 26, no. 4, 1051--1083, 2013. 

\bibitem{AFOOO}
M. Abouzaid, K. Fukaya, Y.-G. Oh, H. Ohta, K. Ono, work in progress.

\bibitem{Abouzaid-Seidel}
M. Abouzaid, P. Seidel,
\emph{An open string analogue of Viterbo functoriality},
Geom. Topol. 14, no. 2, 627--718, 2010. 


\bibitem{AbouzaidSmith4Torus}
M. Abouzaid, I. Smith, \emph{Homological mirror symmetry for the 4-torus}, Duke Math. J. 152, no. 3, 373--440, 2010. 

\bibitem{AbouzaidSmith}
M. Abouzaid, I. Smith, \emph{Exact Lagrangians in plumbings}, 
 Geom. Funct. Anal.  22, no. 4, 785--831, 2012.

\bibitem{Albers}
P. Albers, \emph{A Lagrangian Piunikhin-Salamon-Schwarz morphism and two comparison homomorphisms in Floer homology}, Int. Math. Res. Not. IMRN 2008, no. 4, 2008. 

\bibitem{Auroux}
D. Auroux, \emph{Mirror symmetry and T-duality in the complement of an anticanonical divisor},
J. G\"{o}kova Geom. Topol. GGT 1, 51--91, 2007. 



\bibitem{BEE}
F. Bourgeois, T. Ekholm, Y. Eliashberg, \emph{Effect of Legendrian Surgery}, 
 Geom. Topol.  16,  no. 1, 301--389, 2012.

\bibitem{BiranCorneaPearly}
P. Biran and O. Cornea, \emph{Rigidity and uniruling for Lagrangian submanifolds}, Geom. Topol.,
13(5):2881--2989, 2009.

\bibitem{Bridgeland}
T. Bridgeland,
\emph{$T$-structures on some local Calabi--Yau varieties}, J. Algebra 289, 453--483, 2005.



\bibitem{Cho}
C.-H. Cho, \emph{Holomorphic discs, spin structures and Floer cohomology of the Clifford torus}, Int. Math. Res. Not. (2004), 1803--1843, 2004.

\bibitem{Cho-Oh}
 C.-H. Cho, Y.-G. Oh, \emph{Floer cohomology and disc instantons of Lagrangian torus fibers in Fano toric manifolds},
Asian J. Math. 10, no. 4, 773--814, 2006. 



\bibitem{Cieliebak-Floer}
K. Cieliebak, A. Floer, H. Hofer, K. Wysocki, \emph{Applications of symplectic homology II: Stability of the action spectrum}, Math. Z. 223, 27--45, 1996.


\bibitem{FukayaGalois}
K. Fukaya, \emph{Galois symmetry on Floer cohomology}, Turkish J. Math. 27, no. 1, 11--32, 2003. 

\bibitem{FukayaCyclic}
K. Fukaya, \emph{Cyclic symmetry and adic convergence in Lagrangian Floer theory}, Kyoto J. Math. 50, no. 3, 521--590,  2010. 

\bibitem{FOOO}
K. Fukaya, Y.-G. Oh, H. Ohta, K. Ono, \emph{Lagrangian intersection Floer theory: anomaly and obstruction},
AMS/IP Studies in Advanced Mathematics, 46, AMS, 2009. 

 \bibitem{FOOOtoric}
 K. Fukaya, Y.-G. Oh, H. Ohta, K. Ono, 
 \emph{Lagrangian Floer theory on compact toric manifolds I},
 Duke Math. J. 151, no. 1, 23--174, 2010. 


\bibitem{FukayaSeidelSmith}
K. Fukaya, P. Seidel, I. Smith, \emph{The symplectic geometry of cotangent bundles from a categorical viewpoint}, in Homological Mirror Symmetry: New Developments and Perspectives, Lecture Notes in Phys. 757, Springer, 1--26, 2009.


\bibitem{Galkin}
S. Galkin, \emph{The conifold point}, arXiv:1404.7388v1, 2014.

\bibitem{Ganatra}
S. Ganatra, \emph{Symplectic Cohomology and Duality for the Wrapped Fukaya Category}, Ph.D. thesis, Massachusetts Institute of Technology, June 2012. See also arXiv:1304.7312, April 2013.

\bibitem{Getzler}
E. Getzler, \emph{Cartan homotopy formulas and the Gauss-Manin connection in cyclic homology}, Israel Math. Conf. Proc., 7, 65--78, 1993.

\bibitem{Griffiths}
P. Griffiths and J. Morgan, \emph{Rational Homotopy Theory and Differential Forms}, Progress in Mathematics, Vol 16, 2nd edition, Birkh\"{a}user, 2013.






\bibitem{Lazzarini}
L. Lazzarini, \emph{Existence of a somewhere injective pseudo-holomorphic disc}, Geom. Funct. Anal. (GAFA), Vol. 10, 829--862, 2000.


\bibitem{MaydanskiySeidel}
M. Maydanskiy, P. Seidel, \emph{Lefschetz fibrations and exotic symplectic structures on cotangent bundles of spheres}, J. Topol.  3,  no. 1, 157--180, 2010. 




\bibitem{Oh1}
Y.-G. Oh, \emph{Floer cohomology of Lagrangian intersections and pseudo-holomorphic disks. I.}, Comm. Pure Appl. Math. 46, no. 7, 949--993, 1993.

\bibitem{Oh2}
Y.-G. Oh, Addendum to \cite{Oh1}, Comm. Pure Appl. Math. 48, no. 11, 1299--1302, 1995.

\bibitem{PSS}
S. Piunikhin, D. Salamon, M. Schwarz, \emph{Symplectic
Floer-Donaldson theory and quantum cohomology},
Contact and symplectic geometry (Cambridge, 1994), 171--200,
Publ. Newton Inst. 8,
CUP, 1996.

\bibitem{Ritter2}
A. F. Ritter, \emph{Deformations of symplectic cohomology and exact Lagrangians in ALE spaces}, Geom. Funct. Anal. 20, no. 3, 779--816, 2010. 

\bibitem{Ritter3}
A. F. Ritter, \emph{Topological quantum field theory structure on symplectic cohomology}, J. Topol. 6, no. 2, 391--489, 2013. 

\bibitem{Ritter4}
A. F. Ritter, \emph{Floer theory for negative line bundles via Gromov-Witten invariants}, Adv. Math. 262, 1035--1106, 2014. 

\bibitem{Ritter5}
A. F. Ritter, \emph{Circle-actions, quantum cohomology, and the Fukaya category of Fano toric varieties}, arXiv:1406.7174, 2014. To appear in Geom. Topol.

\bibitem{Ritter-Smith-version1}
A. F. Ritter and I. Smith, \emph{The wrapped Fukaya category of a negative line bundle}, arXiv:1201.5880v1, January 2012.

\bibitem{Salamon}
D. Salamon, \emph{Lectures on Floer homology.  Symplectic geometry
and topology (Park City, UT, 1997)},  143--229, IAS/Park City
Math. Ser., 7, Amer. Math. Soc., Providence, RI, 1999.

\bibitem{SeidelICM}
P. Seidel, \emph{Fukaya categories and deformations}, Proceedings of the ICM, Beijing 2002, Vol II, 351--360.

\bibitem{Seidel-Subalgebras}
P. Seidel, \emph{$A_{\infty}$ subalgebras and natural transformations},
Homology, Homotopy Appl. 10, no. 2, 83--114, 2008.

\bibitem{Seidel}
P. Seidel, \emph{Fukaya categories and Picard-Lefschetz theory}, Z\"{u}rich Lectures in Advanced Mathematics, European Mathematical Society (EMS), Z\"{u}rich, 2008.

\bibitem{Seidel2}
P. Seidel, \emph{A biased view of symplectic cohomology}, Current
Developments in Mathematics, 2006, 211--253,  Int. Press, Somerville, MA, 2008.

\bibitem{SeidelSuspending}
P. Seidel, \emph{Suspending Lefschetz Fibrations, with an Application to Local Mirror Symmetry}, Commun. Math. Phys., 
Vol. 297, No. 2, 515--528, 2010. 

\bibitem{SeidelPOP}
P. Seidel, \emph{Some speculations on pairs-of-pants decompositions and Fukaya categories}, Surveys in differential geometry. Vol. XVII, 411--425,
Surv. Differ. Geom., 17, Int. Press, Boston, 2012. 


\bibitem{Sheridan}
N. Sheridan, \emph{On the Fukaya category of a Fano hypersurface in projective space}, arXiv:1306.4143, June 2013. To appear in Publ. Math. Inst. Hautes \'{E}tudes Sci.

\bibitem{SmithQuadrics}
I. Smith, \emph{Floer cohomology and pencils of quadrics},  
 Invent. Math.  189,  no. 1, 149--250, 2012. 

\bibitem{Steenrod}
N. Steenrod, \emph{The Topology of Fibre Bundles}, Princeton University Press, 1951.


\bibitem{Viterbo}
C. Viterbo, \emph{Functors and computations in Floer homology with applications. I.}, Geom. Funct. Anal. 9, no. 5, 985--1033, 1999. 

\bibitem{Weibel}
C. A. Weibel, \emph{An introduction to homological algebra}, Cambridge Studies in Advanced Mathematics, 38, Cambridge University Press, 1994.





%
%
%
%
%
%
%
%
%
%
%
%
%
%
%
%
%
%
%
%
%
%
%
%
%
%
%
%
%
%
%
\end{thebibliography}
\end{document}